\renewcommand{\epsilon}{\varepsilon}
\newcommand{\newsection}[1]
{\subsection{#1}\setcounter{theorem}{0} \setcounter{equation}{0}
\par\noindent}
\newtheorem{theorem}{Theorem}
\newtheorem{lemma}[theorem]{Lemma}
\newtheorem{corr}[theorem]{Corollary}
\newtheorem{proposition}[theorem]{Proposition}
\newtheorem{deff}[theorem]{Definition}
\newcommand{\bth}{\begin{theorem}}
\newcommand{\ble}{\begin{lemma}}
\newcommand{\bcor}{\begin{corr}}
\newcommand{\bdeff}{\begin{deff}}
\newcommand{\bprop}{\begin{proposition}}
\newcommand{\ele}{\end{lemma}}
\newcommand{\ecor}{\end{corr}}
\newcommand{\edeff}{\end{deff}}
\newcommand{\eprop}{\end{proposition}}
\newcommand{\Rn}{{\mathbb R}^n}
\newcommand{\la}{\lambda}
\newcommand{\e}{\varepsilon}
\renewcommand{\l}{\lambda}
\newcommand{\supp}{\mathrm{supp }}
\renewcommand{\Pi}{\varPi}
\renewcommand{\Re}{\rm{Re} \,}
\renewcommand{\Im}{\rm{Im} \,}
\newcommand{\Inj}{r_{\rm{Inj}}}
\newcommand{\one}{{\bf 1}}
\newcommand{\1}{{\bf 1}}
\renewcommand{\epsilon}{\varepsilon}
\newcommand{\dist}{{{\rm dist}}}
\newcommand{\tidle}{\tilde}
\newcommand{\R}{{\mathbb R}}
\newcommand{\A}{{\mathcal A}}
\newcommand{\N}{{\mathbb N}}
\newcommand{\diag}{\Upsilon^{\text{diag}}}
\newcommand{\far}{\Upsilon^{\text{far}}}
\newcommand{\farbar}{\overline\Upsilon^{\text{far}}}
\newcommand{\diagbar}{\overline\Upsilon^{\text{diag}}}
\begin{document}

\title[Lossless Strichartz and spectral projection estimates]
{Lossless Strichartz and spectral projection estimates on unbounded manifolds}
\thanks{The first author was supported in part by the Simons Foundation, the second author was  supported in part by the NSF (DMS-2348996)
and the third author was partially supported by the Simons Targeted Grant Award No. 896630.}
\author{Xiaoqi Huang}
\author{Christopher D. Sogge}
\author{Zhongkai Tao}
\author{Zhexing Zhang}
\address{XH: Department of Mathematics, Louisiana State University, Baton Rouge, LA 70803}
\address{CDS and ZZ: Department of Mathematics,  Johns Hopkins University,
Baltimore, MD 21218}
\address{ZT: Institut des Hautes \'Etudes Scientifiques, 91440 Bures-sur-Yvette, France}

\begin{abstract}
We prove new lossless Strichartz and spectral projection estimates on asymptotically hyperbolic surfaces,
and, in particular, on all convex cocompact hyperbolic surfaces.
In order to do this, we also obtain log-scale lossless Strichartz and spectral projection estimates on manifolds
of uniformly bounded geometry
with nonpositive and negative sectional curvatures, extending the recent works of the first two authors 
for compact manifolds.  
We
are able to use these along with 
  known $L^2$-local smoothing and new 
$L^2 \to L^q$ half-localized resolvent
estimates to obtain our lossless bounds.
\end{abstract}

\maketitle

\newsection{Introduction}

Two of the main goals of this paper are to prove lossless Strichartz and spectral projection estimates
on negatively curved asymptotically hyperbolic surfaces.
We also obtain frequency-dependent estimates on general manifolds of uniformly bounded geometry in all
dimensions all of whose sectional curvatures are negative or nonpositive.

Our first result is the following Strichartz estimates for solutions $u=e^{-it\Delta_g}u_0$ of the 
Schr\"odinger equation
\begin{equation}\label{ii.0}
i\partial_t u(x,t)=\Delta_g u(x,t), \quad u(x,0)=u_0(x).
\end{equation}

\begin{theorem}\label{thm1}
Let $(M,g)$ be an even asymptotically hyperbolic surface with negative curvature.  Then, for
$\tfrac1p+\tfrac1q=\tfrac12$, $p,q\ge 2$ and $(p,q)\ne(2,\infty)$, there exists $C_q=
C_q(M)$ such that
\begin{equation}\label{ii.1}
\bigl\| e^{-it\Delta_g}u_0\bigr\|_{L^p_tL^q_x(M\times [0,1])}\le C_q \|u_0\|_{L^2(M)}.
\end{equation}
Suppose further that $-\Delta_g$ does not have resonance at the bottom of the continuous spectrum, then for any $u_0\in L^2(M)$ orthogonal to the $L^2$ eigenfunctions of $\Delta_g$, we have
\begin{equation}\label{eq:str-global}
\bigl\| e^{-it\Delta_g}u_0\bigr\|_{L^p_tL^q_x(M\times \mathbb{R})}\le C_q \|u_0\|_{L^2(M)}.
\end{equation}
\end{theorem}

We shall review the hypotheses concerning $(M,g)$ in the next section.  We point out that any
convex cocompact hyperbolic surface is an even asymptotically hyperbolic surface of (constant)
negative curvature.  See the figure below, and see \cite{Borthwick} for more details.


\begin{figure}[ht]
\includegraphics[scale=0.1]{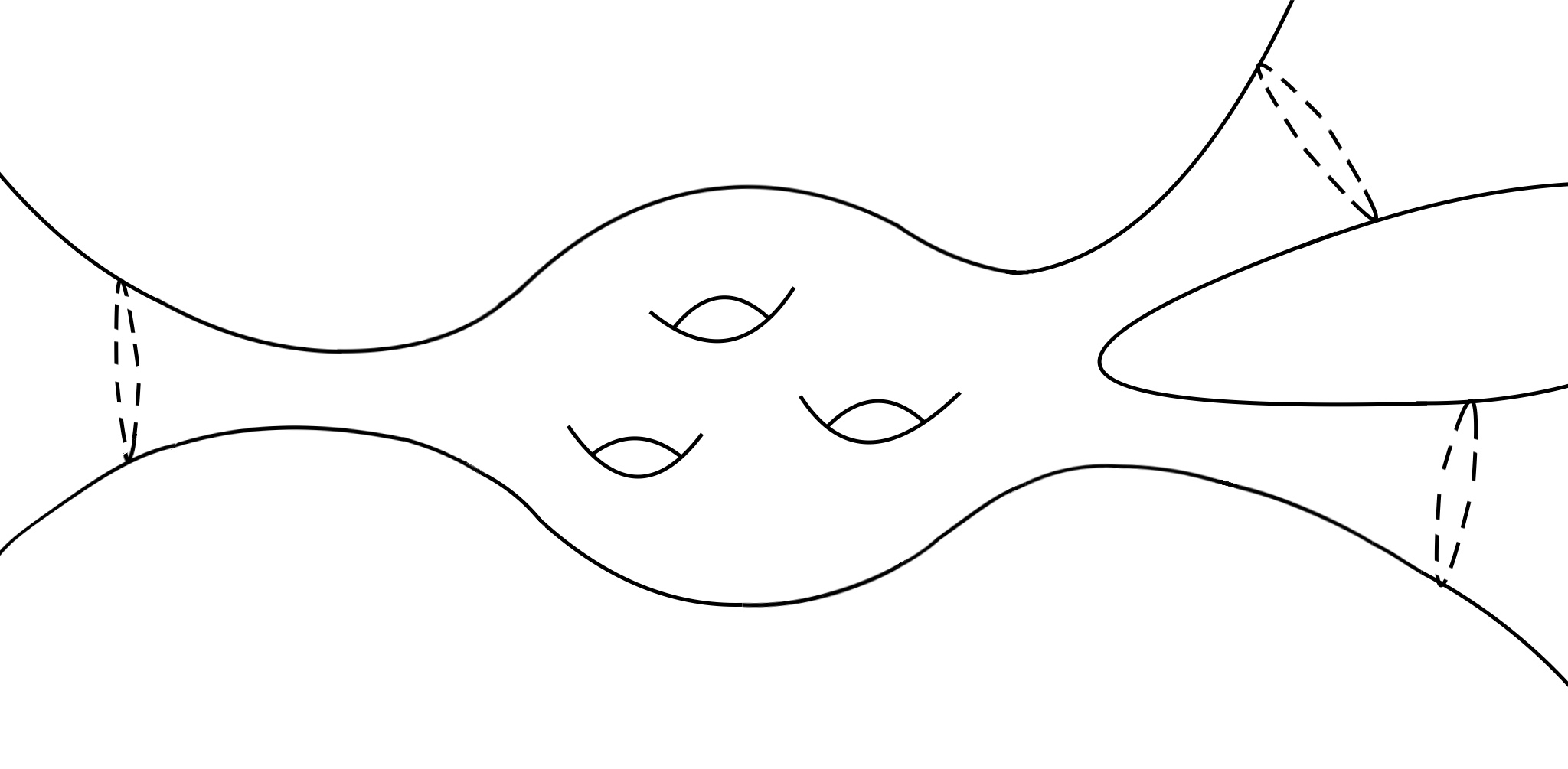}
\caption{Convex cocompact hyperbolic surfaces}
\centering
\end{figure}

The estimates \eqref{ii.1} are analogous to the standard Strichartz~\cite{Strichartz77} and Keel-Tao~\cite{KT} estimates
for ${\mathbb R}^2$, in which case, by scaling, estimates as above over $t\in [0,1]$ are
equivalent to ones over $t\in {\mathbb R}$.  We are only able to treat the two-dimensional case
of asymptotically hyperbolic manifolds here since some of the tools that we utilize, such as different types of $L^2$
local smoothing estimates for the Schr\"odinger propagators $e^{-it\Delta_g}$ seem to  only be available
in two-dimensions.  As we shall see, though, the lossless log-scale estimates that we also require hold
in all dimensions for manifolds of uniformly bounded geometry and nonpositive and negative sectional curvatures.

Besides the Euclidean estimates, there is a long history of Strichartz estimates for negatively curved asymptotically
hyperbolic manifolds.  On hyperbolic space ${\mathbb H}^n$, Anker and Pierfelice~\cite{AnPier}
and Ionescu and Staffilani~\cite{IoSt} independently proved the mixed-norm Strichartz
estimates via dispersive estimates that are unavailable for the manifolds that we are treating.
Subsequently, Bouclet~\cite{Boust} proved these results on
non-trapping asymptotically hyperbolic manifolds. Burq, Guillarmou and Hassell~\cite{BGH} then
were able to handle certain manifolds with trapped geodesics, including $n$-dimensional convex
cocompact hyperbolic manifolds whose limit set has Hausdorff dimension $<(n-1)/2$.  Among these
are hyperbolic cylinders ($n=2$) whose central geodesic $\gamma_0$ is periodic and hence
trapped. Burq, Guillarmou and Hassell~\cite{BGH} could obtain their Strichartz estimates for
these convex cocompact hyperbolic manifolds via a logarithmic time dispersive estimate.  Wang \cite{WangJ} proved Strichartz estimates for general (noncompact) convex cocompact hyperbolic surfaces with an $\epsilon$ loss of derivative. The results in Theorem~\ref{thm1}
above seem to be the first lossless Strichartz estimates with no pressure condition, which seems
to rule out the dispersive estimates that were used in these previous results. Note that for all asymptotically hyperbolic manifolds without trapping and with no resonance at the bottom of the spectrum, the analogous global-in-time Strichartz estimates were obtained by Chen~\cite{ChenSt}.

Burq, Guillarmou and Hassell also proved more general results involving
abstract hypotheses (cf. \cite[Theorem 3.3]{BGH}).  We are able to adapt their proof to obtain
our Theorem~\ref{thm1} using, as additional input, the local smoothing estimates following
from the local resolvent estimates of Bourgain and Dyatlov~\cite{bourgain2018spectral}
and the third author \cite{tao2024spectral}, as well as our new log-scale Strichartz estimates
for manifolds of nonpositive curvature and bounded geometry that we shall describe shortly.

As mentioned above, another of our main results concerns spectral projection operators
associated with the Laplace-Beltrami operator $\Delta_g$.  Before stating these, though,
let us recall the universal estimates for compact manifolds of the second author
\cite{sogge88} and the recent improvements by the first two authors \cite{HSp}.  If, for $q\in (2,\infty]$,
\begin{equation}\label{ii.2}
\mu(q)=
\begin{cases}
n(\tfrac12-\tfrac1q)-\frac12, \quad q\ge q_c=\tfrac{2(n+1)}{n-1},
\\
\tfrac{n-1}2(\tfrac12-\tfrac1q), \quad q\in (2,q_c],
\end{cases}
\end{equation}
and $(M,g)$ is an $n$-dimensional compact manifold then the main result in \cite{sogge88} 
says that for $\la\gg 1$ and $q>2$
\begin{equation}\label{ii.3}
\| \1_{[\la,\la+1]}(P) f\|_{L^q(M)}\le C\la^{\mu(q)} \|f\|_{L^2(M)}, \quad P=\sqrt{-\Delta_g},
\end{equation}
with $\1_I(P)$ being the spectral projection operator associated with the spectral window
$I\subset \R$.
It was shown by one of us in \cite{SFIO2} that the unit-band estimates \eqref{ii.3}
are always sharp.  On the other hand, recently, the first two authors were able to obtain
the following optimal bounds for compact manifolds all of whose sectional curvatures
are negative
\begin{equation}\label{ii.4}
\|\1_{[\la,\la+\delta]}(P) f\|_{L^q(M)}\le C_q \la^{\mu(q)}\delta^{1/2}
\|f\|_{L^2(M)}, \,  \, \delta\in [(\log\la)^{-1},1],
\la\gg 1 \, \, \text{and } \, q>2.
\end{equation}
Also, for later use, we note that, as was pointed out in \cite{AGL}, the proofs of the unit-band estimates
\eqref{ii.3} in \cite{sogge88} and \cite{SFIO2} also can be used to show that
\eqref{ii.3} is valid for any manifold of uniformly bounded geometry.

One of our main results (stated below) is that \eqref{ii.4} extends to all manifolds
of uniformly bounded geometry and curvature pinched below zero.  Using these
log-scale results and certain $L^2\to L^q$ localized resolvent estimates, 
we shall be able to adapt the proof of Theorem~\ref{thm1}
to obtain the following results optimal for much smaller spectral windows.

\begin{theorem}\label{thm2}
Let $(M,g)$ be an even asymptotically hyperbolic surface with negative curvature, and for $q>2$,
let $\mu(q)$ be as in \eqref{ii.2}.  Then for  $\la \gg 1$, we have
the uniform bounds
\begin{equation}\label{ii.5}
\| \1_{[\la,\la+\delta]}(P)f\|_{L^q(M)}\le C_{q} \,  \la^{\mu(q)}\delta^{1/2}\|f\|_{L^2(M)}, \, \, q \in (2,\infty],
\, \, \text{if } \, \delta \in (0,1].
\end{equation}
\end{theorem}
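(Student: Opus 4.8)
The plan is to follow the scheme of Burq--Guillarmou--Hassell \cite{BGH} for Theorem~\ref{thm1}, but run it at the level of spectral clusters with the improved, log-scale input coming from the nonpositive-curvature spectral projection bounds (the extension of \eqref{ii.4} to manifolds of uniformly bounded geometry). Concretely, I would first reduce \eqref{ii.5} to the range $\delta\in[\la^{-N_0},(\log\la)^{-1}]$, since for $\delta\in[(\log\la)^{-1},1]$ the desired estimate is weaker than (indeed, summing unit-length-in-$\log$ windows, is implied by) the log-scale estimate that we already have on $(M,g)$ by virtue of its uniformly bounded geometry and pinched-negative curvature. So the real content is pushing the spectral window all the way down to $\la^{-N_0}$.

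For that range I would set up a ``half-localized resolvent'' argument: write $\1_{[\la,\la+\delta]}(P)$ via Stone's formula in terms of $\Im R(\la+i\delta)$ where $R(\zeta)=(\,-\Delta_g-\zeta^2)^{-1}$ is the meromorphically continued resolvent, and exploit a factorization $\1_{[\la,\la+\delta]}(P)f = \1_{[\la,\la+\delta]}(P)\,\chi\,\1_{[\la-1,\la+2]}(P)f$ plus a cutoff $\chi$ supported where the geometry is effectively hyperbolic. The key new ingredient is an $L^2\to L^q$ bound for the half-localized resolvent $\chi R(\la+i\delta)\chi$ of the form $\la^{\mu(q)-1}$, valid uniformly for $\delta\in[\la^{-N_0},1]$; this is where the local resolvent estimates of Bourgain--Dyatlov \cite{bourgain2018spectral} and \cite{tao2024spectral} enter, giving the crucial $O(\log\la)$ (hence polynomially-controlled) bound on the truncated resolvent with no loss, so that no pressure/dimension-of-limit-set hypothesis is needed. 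Interpolating and combining the $L^2\to L^2$ resolvent bound with the unit-band $L^2\to L^q$ bound \eqref{ii.3} (valid here by bounded geometry) converts the $O(\la^{-1})$ gain in $\delta$ into the $\delta^{1/2}$ gain in \eqref{ii.5}, exactly as the $TT^*$/real-interpolation mechanism does in the Strichartz setting.

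The steps, in order: (i) dispose of $\delta\ge(\log\la)^{-1}$ using the already-established log-scale projection bound on $M$; (ii) microlocalize near the trapped set / convex core, reducing matters to a region where $(M,g)$ agrees with hyperbolic space outside a compact set, so that the resolvent continuation and the Bourgain--Dyatlov type estimates apply; (iii) establish the half-localized $L^2\to L^q$ resolvent estimate $\|\chi R(\la+i\delta)\chi\|_{L^2\to L^q}\lesssim \la^{\mu(q)-1}$ uniformly down to $\delta=\la^{-N_0}$, by writing $R=R_{\mathrm{free}}+R_{\mathrm{free}}VR$ (resolvent identity relative to the hyperbolic model) and iterating, using the log-loss local smoothing/resolvent bounds to absorb the error terms — this is also where the loss of an implicit constant depending on $N_0$ (and the interval length) comes from; (iv) feed this into Stone's formula together with the bounded-geometry unit-band bound \eqref{ii.3} and a $T^*T$ argument to extract the factor $\delta^{1/2}$, yielding \eqref{ii.5} for all $q\in(2,\infty]$ (the endpoint $q=\infty$ being handled directly since $\mu(\infty)=(n-1)/2$ and no Keel--Tao endpoint subtlety arises for the projection estimate).

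The main obstacle is step (iii): obtaining the half-localized resolvent bound \emph{uniformly} as $\Im\zeta=\delta\downarrow\la^{-N_0}$, i.e. extremely close to the real axis where resonances can cluster. One must show that the meromorphically continued resolvent, cut off away from infinity, does not blow up faster than polynomially in $\la$ in this regime — this is precisely the content one extracts from the essential spectral gap / fractal uncertainty principle estimates of \cite{bourgain2018spectral} and \cite{tao2024spectral} combined with the $L^2$ local smoothing estimates, and marrying those $L^2$-based statements to the $L^q$ spectral-cluster estimates without incurring a $\la^\epsilon$ loss is the delicate point. Everything else — the reduction in (i), the microlocalization in (ii), and the interpolation in (iv) — is routine given the tools quoted in the introduction.
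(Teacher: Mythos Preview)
Your outline has the right ingredients but a genuine gap at step (iii), and the assembly in (iv) is not the mechanism that actually works.  The central claim---that $\|\chi R(\la+i\delta)\chi\|_{L^2\to L^q}\lesssim \la^{\mu(q)-1}$ uniformly in $\delta\in[\la^{-N_0},1]$ follows from the resolvent identity $R=R_{\mathrm{free}}+R_{\mathrm{free}}VR$ together with the log-loss $L^2$ bound---does not close.  The perturbation $V$ is a compactly supported second-order operator, so on $\la$-frequency data a single iteration costs $\la^2\cdot\la^{-1}\log\la=\la\log\la$ in $L^2$, and composing with the best available $L^2\to L^q$ resolvent bound on the model leaves an uncompensated $\log\la$ factor; a bare resolvent iteration has no averaging mechanism to cancel it (this is exactly why the lossless Strichartz proof needs the $\la^{-1}\log\la$ time-slicing).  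Separately, a \emph{two}-sided cutoff bound does not feed directly into Stone/$TT^*$ for the \emph{global} projector: you must still control $(1-\chi)\one_{[\la,\la+\delta]}(P)$ and the cross terms, and the commutators $[\Delta_g,\chi]$ that arise are precisely the ``Duhamel terms'' the introduction warns cannot be handled by Christ--Kiselev in this setting.

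The paper's route is substantively different.  It writes the approximate projector as a Schr\"odinger-propagator integral \eqref{ii.7} and splits $M$ by $\chi_0+\chi_\infty$.  For $\chi_\infty$ it transfers to a simply connected background $(\tilde M,\tilde g)$ via Duhamel and then \emph{integrates by parts in $t$}; the boundary term has the form $(\Delta_{\tilde g}+\la^2+i\la\delta)^{-1}S_\la f$, and the $\delta^{1/2}$ emerges by pairing a genuinely \emph{one}-sided localized resolvent bound on the nontrapping background (Proposition~\ref{keya}: $\|(\Delta_{\tilde g}+(\la+i\delta)^2)^{-1}(\tilde\chi_0 h)\|_{L^q}\lesssim\la^{\mu(q)-1}$) with the lossless local-smoothing estimate \eqref{eq:local-sm-nontrap}---no Stone formula, no $TT^*$, no interpolation.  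For $\chi_0$ one further chops time into $\la^{-1}\log\la$ intervals as in the Strichartz argument; after integration by parts the boundary terms are controlled by a two-sided resolvent bound on $M$ (Proposition~\ref{keyb}), but only at scale $(\log\la)^{-1}$ and with one cutoff supported \emph{away} from $\pi(K)$---not the uniform-in-$\delta$, both-sides-near-trapping bound you posited.  Propositions~\ref{keya} and~\ref{keyb} themselves are proved not by resolvent-identity iteration but by a dyadic wave decomposition $R=\sum_j T_j$ combined with microlocal propagation (Lemmas~\ref{uniform}--\ref{propogation}) and an $R-R^*$ cancellation trick; this, not any interpolation step, is where the restriction $\delta\ge\la^{-N_0}$ actually enters.
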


As we pointed out earlier, a special case of our results is when $(M,g)$ is a convex cocompact hyperbolic surface.
Spectral projection estimates on these were studied in Anker, Germain and L\'eger~\cite{AGL}, where somewhat
weaker estimates were obtained with a $\la^\e$, $\forall \, \e>0$, loss compared to our estimates. As was pointed
out in \cite{AGL}, using arguments of one of us \cite{SFIO2} mentioned before, one sees that the bounds in
\eqref{ii.5} are optimal.  

A direct corollary of the above theorem is 
\begin{corr}\label{cor}
    Let $(M,g)$ be an even asymptotically hyperbolic surface with negative curvature, and for $q>2$,
let $\mu(q)$ be as in \eqref{ii.2}.  Then for  $\la \gg 1$, if $dE_{P}(\la)$ denote the spectral measure for $ P=\sqrt{-\Delta_g}$,
\begin{equation}\label{coro}
\| dE_{P}(\la)f\|_{L^q(M)}\le 
     C_{q}   \la^{2\mu(q)}\|f\|_{L^{q'}(M)},\,\,\, q>2 \,\,\,\text{and}\,\,\,\tfrac1q+\tfrac1{q'}=1.
\end{equation}
\end{corr}

We note by $TT^*$ argument, \eqref{ii.5} is equivalent to
\begin{equation}\label{eq:proj-TT*}
    \|\1_{[\lambda,\lambda+\delta]}(P)f\|_{L^q(M)}\leq C_q \lambda^{2\mu(q)}\delta \|f\|_{L^{q'}(M)}, \, \, q \in (2,\infty],
\, \, \text{if } \, \delta \in (0,1].
\end{equation}
Moreover, by Stone's formula, and \cite[Theorem 5.33]{dyatlov2019mathematical}, $\1_{[0,\lambda)}(P)(x,y)$ is smooth in $\lambda$ for $\lambda> 1/2$, and 
\begin{equation*}
    dE_P(\lambda)=\lim_{\epsilon\to 0+}\frac{\1_{[\lambda-\epsilon,\lambda+\epsilon)}(P)}{2\epsilon} = \frac{1}{2\pi i}((P-(\lambda+i0))^{-1}-(P-(\lambda-i0))^{-1}).
\end{equation*}
Therefore, \eqref{coro} follows form \eqref{eq:proj-TT*} and Fatou's lemma.

For convex cocompact hyperbolic manifolds whose limit set has Hausdorff dimension $\delta < \frac{n-1}{2}$, \eqref{coro} is due to Han~\cite[Theorem 2]{han2021tomas}. For asymptotically hyperbolic manifolds without trapping, \eqref{coro} was proved by Chen and Hassell~\cite[Theorem 1.6]{ChHa}.

As is well known, the Euclidean variant of \eqref{coro} for $q\ge 6$ is equivalent to the Stein-Tomas~\cite{Tomas} restriction theorem.  Unlike in the Euclidean case, though, \eqref{coro} holds for 
all $q>2$.


Let us say a few more words about the proofs of Theorem~\ref{thm1} and \ref{thm2}.   First, it will be relatively straightforward to use
our generalizations in Theorem~\ref{bgst} below of the recent Strichartz estimates of two of us \cite{HSst} and known
local smoothing estimates for the Schr\"odinger propagator to obtain \eqref{ii.1} in Theorem~\ref{thm1}.  We are able to do this 
by using an argument from \cite{BGH}, which we shall recall in the next section.  Roughly speaking, near the compact
trapping region in $M$ we are able to obtain the needed dyadic results by gluing together uniform Strichartz estimates
on intervals of length $\la^{-1}\cdot \log\la$ for solutions of \eqref{ii.0} involving $\la$-frequency data using the known optimal
log-loss local smoothing estimates associated with this region.   This will allow us to show that the analog of the bounds in \eqref{ii.1}
are valid when the $L^p_tL^q_x$-norms are taken over $x$ in a relatively compact neighborhood of the trapping region.
The complement of this region then can easily be treated by the arguments in \cite{BGH}. 

The proof of the global estimate \eqref{eq:str-global} in Theorem~\ref{thm1} for the trapped region follows from a similar strategy. The main difference is that we must treat more carefully the nonlocal terms arising from commuting the frequency cutoff with the spatial cutoff, so that we are still able to use  $L^2$ local smoothing estimates for these nonlocal terms.
For the non-trapping region, 
we need to construct an auxiliary manifold $\tilde M$ that agrees with $M$ near infinity, such that it is a simply connected asymptotically hyperbolic surface of negative curvature with no $L^2$ eigenvalues or resonances. This allows us to apply global estimate of Chen \cite{ChenSt}.

We shall employ a similar strategy to prove our new spectral projection estimates.  As is standard,
in order to prove the bounds in Theorem~\ref{thm2} for $\1_{[\la,\la+\delta]}(P)$, it is equivalent to prove the
same bounds for the ``approximate spectral projection operators''
\begin{equation}\label{ii.7}
\rho\bigl((\la\delta)^{-1}(-\Delta_g-\la^2)\bigr)=(2\pi)^{-1} \int_{-\infty}^\infty
\la \delta \, \Hat \rho(\la\delta t) \, e^{-it\Delta_g} \, e^{-it\la^2} \, dt,
\end{equation}
with fixed $\rho\in {\mathcal S}(\R)$ satisfying $\rho(0)=1$ and its Fourier transform, $\Hat \rho$, supported in $[-1,1]$.
Using this simple formula (also used in \cite{AGL}), we can adapt the proof of Theorem~\ref{thm1}, which seems to be
a new approach.  Near the trapping region we introduce spatial cutoffs, as well as $t$-cutoffs localizing to intervals
of length $\la^{-1}\cdot \log\la$.  We are able to naturally estimate some of the terms arising from the time
cutoffs using Theorem~\ref{bgsp} below for manifolds of uniformly bounded geometry along with the aforementioned
local smoothing estimates for the Schr\"odinger propagator.  Unfortunately, it is not as straightforward to handle all of the
commutator terms that will arise in handling the complement of the trapping region.  For, unlike in the proof of 
Theorem~\ref{thm1}, we cannot appeal to the Christ-Kiselev lemma to handle the various ``Duhamel terms'' that
arise in estimating \eqref{ii.7}, which, of course, involves a weighted superposition of the Schr\"odinger propagator, as opposed
to the propagator itself occurring in the proof of the space-time estimates in Theorem~\ref{thm1}.  To deal with the
problematic Duhamel terms that arise, we are led to a simple integration by parts argument, and the resulting
boundary terms naturally give rise to half-localized $L^2\to L^q$ resolvent estimates paired with the available
$L^2$ local smoothing estimates.

As in the proof of lossless Strichartz estimate,
 we shall handle the myriad issues that arise by constructing a ``background manifold'' $\tilde M$ that agrees with
$M$ near infinity.  In the treatment of convex cocompact hyperbolic surfaces in \cite{AGL},  the background manifold was a finite
union of hyperbolic cylinders on which optimal spectral projection estimates could be proved and utilized.  In our case, $\tilde M$
is a simply connected asymptotically hyperbolic surface of negative curvature, which allows us to use the optimal spectral
projection estimates of Chen and Hassell~\cite{ChHa} for its Laplacian $\Delta_{\tilde g}$.  As we alluded to before, to glue these together
with estimates for the ``trapping'' compact region of $M$, we shall adapt the proof of the Strichartz estimates in Theorem~\ref{thm1}.
Since we cannot use the Christ-Kiselev lemma, certain half-localized $L^q$ resolvent estimates involving the Laplacian on the
background will arise.  These uniform bounds appear to be new and are of the form
$$\bigl\| \, (\Delta_{\tilde g}+(\lambda+i\delta)^2)^{-1}(\chi h)\, \bigr\|_{L^q(\tilde M)}\le C_q \, 
\la^{\mu(q)-1} \|h\|_{L^2(\tilde M)}, \, \,
\delta\in (0,1), \, \, q\in (2,\infty), \, \, \chi\in C^\infty_0(\tilde M),$$
with $\mu(q)$ as in \eqref{ii.2}.

We are oversimplifying a bit here how we shall use the optimal estimates for the ``background manifold''
$\tilde M$
in our proof of spectral projection estimates in Theorem~\ref{thm2} for $M$.   These are much more difficult to handle
compared to the Strichartz estimates due to the ``Duhamel terms'' that seem to inevitably arise because we cannot use
the Christ-Kiselev lemma.

Let us now describe the 
log-scale results on manifolds of uniformly  bounded geometry that we mentioned above.
These generalize recent joint work for compact manifolds of two of us
\cite{BHSsp}, \cite{SBLOg},  \cite{HSp}, \cite{HSqm} and \cite{HSst}.   Recall that $(M,g)$ is of uniformly
bounded geometry if the injectivity radius $\Inj(M)$ is {\em positive} and the Riemann curvature tensor
$R$ and all of its covariant derivatives are uniformly bounded.  (See, e.g. 
Eldering \cite[\S 2.1]{normhyp}.)

We then have the following two results for general manifolds of bounded geometry with
nonpositive sectional curvatures.

\begin{theorem}\label{bgst}  Suppose that $(M,g)$ is a complete $(n-1)$-dimensional manifold of uniformly bounded geometry
all of whose sectional curvatures are nonpositive.  Then if $u=e^{-it\Delta_g}f$ denotes the solution of
Schr\"odinger's equation
\begin{equation}\label{i.1}
i\partial_t u(t,x)=\Delta_g u(t,x), \, \, (t,x)\in \R \times M, \quad u|_{t=0}=f,
\end{equation}
we have for fixed $\beta\in C^\infty_0((1/2,2))$ and all $\la\gg 1$ the uniform dyadic estimates
\begin{equation}\label{i.2}
\bigl\| \, \beta(\sqrt{-\Delta_g}/\la) \, u\, \bigr\|_{L^p_tL^q_x(M\times [0,\la^{-1}\log\la])}\le C\|f\|_{L^2(M)}
\end{equation}
for all exponents $(p,q)$ satisfying the Keel-Tao condition
\begin{equation}\label{i.3}
(n-1)(1/2-1/q)=2/p, \, \, p\in [2,\infty) \, \, \text{if } \, n-1\ge 3 \, \, \text{and } \, 
\, p\in (2,\infty) \, \, \text{if } \, \, n-1=2.
\end{equation}
\end{theorem}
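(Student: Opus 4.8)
The plan is to adapt to the bounded-geometry setting the argument of \cite{HSst} for compact manifolds of nonpositive curvature: pass to the universal cover, where the absence of conjugate points yields good global dispersive bounds, and then descend to $M$. Throughout write $P=\sqrt{-\Delta_g}$ and $T_\lambda=\lambda^{-1}\log\lambda$; the new point relative to the compact case is that every constant must be uniform in the base point, which is exactly what uniformly bounded geometry is designed to supply.

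\textbf{Dispersive estimates on the cover.} Let $\pi\colon(\tilde M,\tilde g)\to(M,g)$ be the universal Riemannian cover. Since $\tilde M$ is complete, simply connected and has all sectional curvatures $\le0$, the Cartan--Hadamard theorem gives that $\exp_{\tilde x}$ is a global diffeomorphism for each $\tilde x$, that $\tilde M$ has no conjugate points, and that the Jacobian of $\exp_{\tilde x}$ is $\ge1$; bounded geometry makes this Jacobian and its derivatives uniformly controlled on sets of bounded diameter. Hence the Hadamard parametrix for the half-wave group $e^{i\sigma\sqrt{-\Delta_{\tilde g}}}$ is valid \emph{globally} on $\tilde M$, with bounds uniform in the base point. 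Inserting it into the subordination identity $e^{-it\Delta_{\tilde g}}=(4\pi it)^{-1/2}\int_{\mathbb R}e^{-i\sigma^2/(4t)}e^{i\sigma\sqrt{-\Delta_{\tilde g}}}\,d\sigma$ and evaluating the $\sigma$-integral by stationary phase --- the critical point $\sigma\approx2\lambda|t|$ lies inside the range $|\sigma|\lesssim\log\lambda$ on which the parametrix is accurate --- gives, for the kernel $\tilde K_t$ of $\beta(\sqrt{-\Delta_{\tilde g}}/\lambda)\,e^{-it\Delta_{\tilde g}}\,\beta(\sqrt{-\Delta_{\tilde g}}/\lambda)$, the Euclidean-type bound $|\tilde K_t(x,y)|\le C|t|^{-(n-1)/2}$ for $0<|t|\le T_\lambda$, together with rapid decay once $d_{\tilde g}(x,y)$ leaves the window $[c\lambda|t|,C\lambda|t|]$. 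Combined with the trivial $L^2$-bound, this feeds into the Keel--Tao theorem and produces the lossless Strichartz inequality on $\tilde M$ for every $(p,q)$ as in \eqref{i.3}, with constant independent of $\lambda$ and of the base point.

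\textbf{Descent to $M$.} Fix a uniformly locally finite partition of unity $1=\sum_k\chi_k$ on $M$ with each $\supp\chi_k$ in a ball $B_k$ of radius $\le\Inj(M)/4$; lift each $B_k$ diffeomorphically to a ball $\tilde B_k\subset\tilde M$, taken pairwise disjoint, and set $\tilde f:=\sum_k\widetilde{\chi_kf}$ with $\widetilde{\chi_kf}$ the transplant of $\chi_kf$ into $\tilde B_k$. Then $\pi_*\tilde f=f$ and $\|\tilde f\|_{L^2(\tilde M)}\approx\|f\|_{L^2(M)}$; and since $\Delta_g\pi_*=\pi_*\Delta_{\tilde g}$ while each $\widetilde{\chi_kf}$ lives in a single sheet over $B_k$, one has the exact identity $e^{-it\Delta_g}f=\pi_*\!\bigl(e^{-it\Delta_{\tilde g}}\tilde f\bigr)$. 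Thus \eqref{i.2} reduces to comparing $\|\pi_*\tilde u\|_{L^p_tL^q_x(M\times[0,T_\lambda])}$ with $\|\tilde u\|_{L^p_tL^q_x(\tilde M\times[0,T_\lambda])}$, where $\tilde u=\beta(\sqrt{-\Delta_{\tilde g}}/\lambda)e^{-it\Delta_{\tilde g}}\tilde f$; equivalently, to controlling the periodized kernel $K^M_t(x,y)=\sum_{\gamma\in\pi_1(M)}\tilde K_t(\tilde x,\gamma\tilde y)$.

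\textbf{The main obstacle.} This descent is the heart of the matter. For $|t|\le T_\lambda$ only the $\gamma$ with $d_{\tilde g}(\tilde x,\gamma\tilde y)\lesssim\log\lambda$ contribute appreciably, but on a bounded-geometry manifold the number of such $\gamma$ may be as large as $\exp(C\log\lambda)=\lambda^C$, so estimating the sum term by term loses a power of $\lambda$. One gets around this, as in \cite{HSst}, by decomposing $\tilde u$ via the parametrix into pieces microlocalized to tubes of transverse width $\sim\lambda^{-1/2}$ about geodesics issuing from the $\tilde B_k$: for $|t|\le T_\lambda$ each piece is essentially supported in a set of small diameter, so it descends injectively, and the absence of conjugate points keeps tubes in distinct directions from interfering. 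What must then be shown is that the descended tubes are almost orthogonal, in the $L^p_tL^q_x$ sense, over $M\times[0,T_\lambda]$; this rests on the curvature-controlled balance between the exponential volume growth of $\tilde M$ and the dispersive decay of $\tilde K_t$ away from its generating geodesic --- the bounded-geometry incarnation of B\'erard's mechanism, and the step in which nonpositivity of the curvature is indispensable. Finally, every constant above must be uniform in the base point rather than drawn from compactness, which is precisely what the hypothesis of uniformly bounded geometry --- positive injectivity radius together with uniform bounds on the curvature tensor and its covariant derivatives --- furnishes, and this is what carries the proof of \cite{HSst} over to Theorem~\ref{bgst}.
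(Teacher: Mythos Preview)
Your overall instinct—lift to the universal cover, use the Hadamard parametrix there, and count deck transformations—is the same engine that drives the paper's kernel estimates. But the architecture you propose around that engine does not work, and the paper's actual proof is structurally quite different.

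The decisive gap is the descent step. The identity $e^{-it\Delta_g}f=\pi_*(e^{-it\Delta_{\tilde g}}\tilde f)$ is correct, but $\pi_*$ is not bounded $L^q(\tilde M)\to L^q(M)$, so knowing Strichartz on the cover does not by itself give Strichartz on $M$. Your fix—microlocalize into $\lambda^{-1/2}$-tubes and claim each ``descends injectively''—fails: a geodesic segment of length $\sim\log\lambda$ on $\tilde M$ projects to a geodesic on $M$ that may self-intersect exponentially many times, so the tube does \emph{not} descend injectively. Moreover, directional (tube) localization alone only cuts the number of contributing deck transformations from $\exp(C|t-s|)$ down to $O(|t-s|)$ (those along the tube), which for $n-1=2$ leaves a dispersive bound $|t-s|^{1/2}$ that does not even decay. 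The vague ``almost orthogonality of descended tubes in $L^p_tL^q_x$'' is not an established mechanism and is not how the exponential loss is actually absorbed.

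The paper never tries to descend a Strichartz estimate from $\tilde M$. It works directly on $M$ and lifts only to prove pointwise kernel bounds. The proof splits the solution by \emph{height}: on the set $A_+=\{|\A\sigma_\lambda S_\lambda f|\ge\lambda^{\frac{n-1}4+\varepsilon_1}\}$ one uses the \emph{lossy} dispersive estimate $\|S_{t,\lambda}S^*_{s,\lambda}\|_{L^1\to L^\infty}\lesssim\lambda^{\frac{n-1}2}|t-s|^{-\frac{n-1}2}\exp(C_M|t-s|)$ (Lemma~\ref{kerprop}) together with a Bourgain-type bootstrap that absorbs the exponential factor precisely because the function is large there. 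On the complementary set $A_-$ one needs genuine bilinear harmonic analysis in the style of Tao--Vargas--Vega and Lee (Lemmas~\ref{lemmas1}--\ref{lemmas2}), carried out chart by chart via a vector-valued setup to exploit bounded geometry, together with a second microlocalization by operators $Q^{\theta_0}_\nu=Q^{\theta_0}_{\nu'}\circ Q^{\theta_0}_\ell$ that localize in direction \emph{and} in the ``height'' $p(x,\xi)$. Only after this double localization does the kernel bound become lossless (Lemma~\ref{kerprop1}): the height window forces $d_{\tilde g}(\tilde x,\alpha(\tilde y))$ into an interval of length $|t-s|\lambda^{-\varepsilon_0}$, which combined with the tube condition leaves $O(1)$ deck transformations. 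None of this machinery—height splitting, bilinear oscillatory integrals, the $Q^{\theta_0}_\ell$ height cutoffs, or the commutator estimate \eqref{comms}—appears in your outline, and each is essential.
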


The arguments in Burq, G\'erard and Tzvetkov~\cite{BGT} yield the analog of \eqref{i.2}
with $[0,\la^{-1}\log\la]$ replaced by $[0,\la^{-1}]$
 for any complete manifold
of uniformly bounded geometry.  As in \cite{BHSsp} and \cite{HSst} we shall use the curvature
assumption in order to obtain the above logarithmic improvements.

As is well known, typically the standard Littlewood-Paley estimates which are valid for
$\Rn$ break down and must be modified for hyperbolic quotients; however, there are
variants that allow one to use dyadic estimates like \eqref{i.2}.  See Bouclet~\cite{Boulp}.
Using these, we obtain from Theorem~\ref{bgst} the following improvements of the compact
manifold estimates in \cite{BGT}.

\begin{corr}\label{bgsta}  Assume that $(M,g)$ is as in Theorem~\ref{bgst}.  Then for $(p,q)$ as in \eqref{i.3}
we have
\begin{equation}\label{i.3'}
\|(I+P)^{-1/p} \, (\log(2I+P))^{1/p} \, u\|_{L^p_tL^q_x(M\times [0,1])}\lesssim \|f\|_{L^2(M)}.
\end{equation}
\end{corr}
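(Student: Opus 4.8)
The plan is to derive \eqref{i.3'} from the dyadic estimate \eqref{i.2} by chopping the time interval $[0,1]$ into $O(\la/\log\la)$ subintervals of length $\la^{-1}\log\la$, summing the $p$-th powers of the resulting dyadic Strichartz norms, and then reassembling the frequency blocks by a Littlewood--Paley argument. The weight $(I+P)^{-1/p}(\log(2I+P))^{1/p}$ turns out to be exactly what is needed to swallow the loss produced by the time decomposition.

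First I would fix $\beta,\tilde\beta\in C^\infty_0((1/2,2))$ with $\tilde\beta\equiv 1$ on $\supp\beta$, set $\beta_k(P)=\beta(P/2^k)$, let $\Pi_0$ be the projection onto frequencies $P\le C_0$ for a fixed large $C_0$ (covering the bounded range excluded by the hypothesis $\la\gg1$ of Theorem~\ref{bgst}), and write $u=\Pi_0 u+\sum_{2^k>C_0}\beta_k(P)u$. Put $m(P)=(I+P)^{-1/p}(\log(2I+P))^{1/p}$. For each $\la=2^k$ partition $[0,1]$ into $N_k\le C\,2^k/\log(2^k)$ intervals $I_j=[t_j,t_j+2^{-k}\log(2^k)]$. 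Since $i\partial_t-\Delta_g$ is autonomous, $e^{-it\Delta_g}$ is unitary on $L^2(M)$, the operators $m(P)$ and $\beta_k(P)$ commute with $e^{-it\Delta_g}$, and $m(P)\beta_k(P)f$ is spectrally supported in $\supp\beta_k$ where $\tilde\beta(P/2^k)\equiv1$, applying \eqref{i.2} (with $\tilde\beta$ in the role of $\beta$ there) on each $I_j$ to $e^{-i(t-t_j)\Delta_g}\bigl(e^{-it_j\Delta_g}m(P)\beta_k(P)f\bigr)$ gives, uniformly in $j$,
\[ \big\|m(P)\beta_k(P)u\big\|_{L^p_tL^q_x(M\times I_j)}\lesssim \big\|m(P)\beta_k(P)f\big\|_{L^2(M)}. \]
Summing the $p$-th powers over $j=1,\dots,N_k$ yields $\|m(P)\beta_k(P)u\|_{L^p_tL^q_x(M\times[0,1])}\lesssim N_k^{1/p}\|m(P)\beta_k(P)f\|_{L^2(M)}$. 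Since $|m(\xi)|\lesssim 2^{-k/p}(\log 2^k)^{1/p}$ on $\supp\beta_k$ and $N_k^{1/p}\lesssim 2^{k/p}(\log 2^k)^{-1/p}$, the two powers of $2^k/\log(2^k)$ cancel and we obtain the bound, uniform in $k$,
\[ \big\|m(P)\beta_k(P)u\big\|_{L^p_tL^q_x(M\times[0,1])}\lesssim \big\|\beta_k(P)f\big\|_{L^2(M)}. \]

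To sum over $k$ I would invoke the Littlewood--Paley inequality for manifolds of bounded geometry of Bouclet~\cite{Boulp}: for $1<q<\infty$,
\[ \|F\|_{L^q(M)}\lesssim \|\Pi_0 F\|_{L^q(M)}+\Big\|\big(\textstyle\sum_{2^k>C_0}|\beta_k(P)F|^2\big)^{1/2}\Big\|_{L^q(M)}. \]
Applying this with $F=m(P)u(\cdot,t)$ for each $t$, and then using the triangle inequality in $L^{q/2}_x$ and in $L^{p/2}_t$ to pull the $\ell^2_k$ sum outside the mixed norm — legitimate since $p\ge2$ and $2<q<\infty$ along \eqref{i.3} — we get
\[ \|m(P)u\|_{L^p_tL^q_x(M\times[0,1])}\lesssim \Big(\sum_{2^k>C_0}\|m(P)\beta_k(P)u\|_{L^p_tL^q_x(M\times[0,1])}^2\Big)^{1/2}+\|m(P)\Pi_0 u\|_{L^p_tL^q_x(M\times[0,1])}. \]
By the previous step the first term is $\lesssim\bigl(\sum_k\|\beta_k(P)f\|_{L^2}^2\bigr)^{1/2}\lesssim\|f\|_{L^2(M)}$, by the spectral theorem and the finite overlap of the $\supp\beta_k$. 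For the second term, $m(P)\Pi_0$ is bounded on $L^2(M)$, $e^{-it\Delta_g}$ preserves this frequency localization, and the $L^q$-Bernstein estimate for frequencies $\lesssim1$ (a special case of \eqref{ii.3} with $\la\sim1$, valid on manifolds of uniformly bounded geometry, summed over the finitely many relevant unit bands) gives $\|m(P)\Pi_0u(\cdot,t)\|_{L^q_x}\lesssim\|f\|_{L^2}$ uniformly for $t\in[0,1]$, hence $\|m(P)\Pi_0u\|_{L^p_tL^q_x(M\times[0,1])}\lesssim\|f\|_{L^2(M)}$. Combining these proves \eqref{i.3'}.

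The one step that requires genuine care is the Littlewood--Paley reassembly: the classical $\Rn$ square-function estimates break down on hyperbolic quotients, so one must use Bouclet's bounded-geometry substitute and handle the bottom of the spectrum (the range of $\Pi_0$) separately; one also needs the inequality in the convenient direction — $\|F\|_{L^q}$ dominated by the square function — together with enough convexity ($p,q\ge2$, $q<\infty$) to interchange the dyadic $\ell^2$ sum with the $L^p_tL^q_x$ norm. Once Theorem~\ref{bgst} is granted, the subinterval decomposition, the $\ell^p$ summation in time, and the cancellation of the $\la^{1/p}(\log\la)^{-1/p}$ loss against the weight are all routine.
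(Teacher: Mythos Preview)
Your proof is correct and follows essentially the same route as the paper: apply the dyadic log-scale Strichartz estimate \eqref{i.2} on each of the $\approx 2^k/\log 2^k$ subintervals, sum in $\ell^p$, observe that the resulting loss is exactly absorbed by the weight $m(P)$ on $\supp\beta_k$, and then reassemble via the bounded-geometry Littlewood--Paley inequality together with Minkowski (since $p,q\ge 2$). The only minor difference is in the treatment of the low-frequency piece: the paper's Lemma~\ref{littlewood} carries the correction term $\|u\|_{L^2(M)}$ rather than $\|\Pi_0 u\|_{L^q(M)}$, so applying it to $m(P)u(\cdot,t)$ one simply bounds $\|m(P)e^{-it\Delta_g}f\|_{L^2}\lesssim\|f\|_{L^2}$ directly, without needing the Bernstein argument you give --- but your version is equally valid.
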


We shall postpone further discussion of the Littlewood-Paley estimates which can be used and the 
proof of this corollary in \S4.

We shall also be able to obtain similar improvements of the universal estimates \eqref{ii.3}:


\begin{theorem}\label{bgsp}  Suppose that $(M,g)$ is a complete $n$-dimensional manifold of uniformly bounded geometry all of whose
sectional curvatures are nonpositive. Then for $\la \gg 1$
\begin{equation}\label{i.6}
\| \, \one_{[\la,\la+(\log\la)^{-1}]}(P) \, \|_{L^2(M)\to L^q(M)}\lesssim
\begin{cases} \la^{\mu(q)}(\log\la)^{-1/2}, \quad \text{if } \, \, q>q_c
\\
\bigl(\la(\log\la)^{-1}\bigr)^{\mu(q)}, \quad \text{if } \, \, q\in (2,q_c].
\end{cases}
\end{equation}
Furthermore, if all of the sectional curvatures are pinched below $-\kappa_0^2$ with $\kappa_0>0$, then
\begin{equation}\label{i.7}
\| \, \one_{[\la,\la+(\log\la)^{-1}]}(P) \, \|_{L^2(M)\to L^q(M)}\le C_q \la^{\mu(q)}(\log\la)^{-1/2}, \, \, q\in (2,\infty].
\end{equation}
\end{theorem}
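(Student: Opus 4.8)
The strategy is to reduce to uniform bounds for the approximate spectral projections $\chi(T(P-\la))$ with $T=\eps_0\log\la$, $\eps_0>0$ small, and then to treat these by lifting to the universal cover, exactly as in the compact case handled in \cite{BHSsp,HSqm,HSp}. Fix $\chi\in\mathcal S(\R)$ which is real, even, nonnegative, satisfies $\chi(0)=1$ and $\chi\ge c_1>0$ on $[-1,1]$, and has $\widehat\chi$ supported in a small interval $[-c_0,c_0]$ (take $\chi=\widehat\varphi$ for a suitable nonnegative even $\varphi\in C_0^\infty$). Since $\one_{[\la,\la+(\log\la)^{-1}]}(P)$ is a spectral subprojection of $\one_{[\la-T^{-1},\la+T^{-1}]}(P)$ and $\chi(T(\tau-\la))\ge c_1$ there, $\|\one_{[\la,\la+(\log\la)^{-1}]}(P)\|_{2\to q}\le C_{\eps_0}\|\chi(T(P-\la))\|_{2\to q}$. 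By the half-wave formula,
\[
\chi(T(P-\la))=\frac1{2\pi T}\int_{-\infty}^{\infty}\widehat\chi(t/T)\,e^{-it\la}\,e^{itP}\,dt=A_\la+B_\la,
\]
where $A_\la$ and $B_\la$ collect the contributions of $|t|\le c_0$ and of $c_0\le|t|\le T$, respectively.

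The local piece $A_\la$ is harmless. It equals $T^{-1}$ times a spectral multiplier $\psi_0(P)$ with $|\psi_0(\tau)|\lesssim\langle\tau-\la\rangle^{-1}$; decomposing into dyadic clusters $|\tau-\la|\sim 2^j$ and applying the universal estimate \eqref{ii.3} — which holds on any manifold of uniformly bounded geometry, as recalled after \eqref{ii.3}, together with its consequence $\|\one_{[\la,\la+2^j]}(P)\|_{2\to q}\lesssim 2^{j/2}\la^{\mu(q)}$ — yields $\|A_\la\|_{2\to q}\lesssim T^{-1}\la^{\mu(q)}=\eps_0^{-1}(\log\la)^{-1}\la^{\mu(q)}$, which already beats the bounds claimed in \eqref{i.6} and \eqref{i.7} for all $q>2$. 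The theorem therefore reduces to estimating $B_\la$.

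For $B_\la$ the curvature hypotheses enter. Passing to the universal cover $(\tilde M,\tilde g)$, nonpositivity of the sectional curvatures makes $\tilde M$ a Cartan--Hadamard manifold: $\exp_{\tilde x}$ is a global diffeomorphism and there are no conjugate points, so the Hadamard parametrix for $e^{it\tilde P}$ on $\tilde M$ is valid for all $|t|\le T$ with amplitudes and remainders bounded uniformly in terms of the bounded-geometry constants. Writing $f$ as a $\Gamma$-periodic function on $\tilde M$ (with $M=\tilde M/\Gamma$), $B_\la f$ becomes a sum over $\gamma\in\Gamma$ of Fourier integral pieces microlocalized near the geodesic arcs from $\tilde x$ to $\gamma\tilde y$, with only the $\gamma$ satisfying $c_0\le d_{\tilde g}(\tilde x,\gamma\tilde y)\le T$ contributing, by finite propagation speed. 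Positivity of $\Inj(M)$ guarantees uniformly bounded overlap of the associated $\la^{-1/2}$-tubes and the same (at worst exponential) growth of $\#\{\gamma:\ d_{\tilde g}(\tilde x,\gamma\tilde y)\le T\}$ in $T$ as in the cocompact setting. One then runs the compact-manifold arguments of \cite{BHSsp,HSqm,HSp} in this uniform framework, dividing into the usual ranges: the case $q=\infty$ is the bounded-geometry version of B\'erard's logarithmically improved Weyl remainder, for which the $\gamma$-sum may be performed with the triangle inequality provided $\eps_0$ is small enough to absorb the factor $e^{C\eps_0\log\la}$; the case $q\in(q_c,\infty)$ uses a dispersive/$TT^*$ argument of Hassell--Tacy type, in which mere nonpositivity of the curvature already produces the gain $(\log\la)^{-1/2}$; the cases $q\in(2,q_c]$ use a Kakeya--Nikodym argument, reducing the $L^q$ bound to an $L^2$ quantity controlled by how often a geodesic segment of length $\eps_0\log\la$ on $\tilde M$ meets a fixed $\la^{-1/2}$-tube — under mere nonpositivity this gives the bound $(\la(\log\la)^{-1})^{\mu(q)}$ of \eqref{i.6}, while under the pinching $-\kappa_0^2$ (where tubes diverge exponentially) it improves to $\la^{\mu(q)}(\log\la)^{-1/2}$, giving \eqref{i.7}; for $q\in(2,q_c)$ in the nonpositive case one may instead interpolate the $q=q_c$ bound with the trivial bound $\|\one_{[\la,\la+(\log\la)^{-1}]}(P)\|_{2\to2}=1$.

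The main obstacle is this last step in the critical and subcritical ranges. Because $\tilde M$ may have exponential volume growth, the deck-group sum cannot be estimated by summing absolute values; one must genuinely exploit the oscillation $e^{i\la\,d_{\tilde g}(\tilde x,\gamma\tilde y)}$, i.e.\ the $L^2$/Kakeya--Nikodym geometry of geodesics of logarithmic length, and the improvement in \eqref{i.7} hinges on the exponential separation of geodesic tubes under pinched negative curvature. The rest is bookkeeping: checking that every geometric quantity used in the parametrix construction and the tube counting — injectivity radius, conjugate-point-free radius, curvature and derivative bounds, and the overlap numbers for $\la^{-1/2}$-tubes — is uniform under the bounded-geometry hypothesis, so that the compact-manifold proofs go through with only notational changes.
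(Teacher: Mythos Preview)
Your overall architecture is right: reduce to the smoothed projector $\rho(T(\la-P))$, split into a local piece and a global piece, handle $q>q_c$ by a $TT^*$/interpolation argument in the spirit of Hassell--Tacy using the kernel bound coming from the Hadamard parametrix on the universal cover, and recover the $q\in(2,q_c)$ nonpositive-curvature bound by interpolation. That part of your sketch matches the paper's proof almost verbatim (see Lemma~\ref{Gl} and the paragraph following it).

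Where your sketch diverges from the paper is the critical and subcritical range under pinched negative curvature. You describe this as a ``Kakeya--Nikodym argument'' that reduces to tube-intersection counts. The paper does not do this: following \cite{HSp}, it uses a height decomposition into $A_+$ and $A_-$ (\eqref{b.26}), handles $A_+$ by a Bourgain-type bootstrap using Lemma~\ref{Gl}, and handles $A_-$ by genuine \emph{bilinear} oscillatory-integral analysis --- a Whitney decomposition in phase space, Lee's bilinear theorem \cite{LeeBilinear}, and the Tao--Vargas--Vega machinery (Lemmas~\ref{lemma1} and \ref{lemma2}, Proposition~\ref{keyprop}), together with a second microlocalization via the operators $Q^{\theta_0}_\nu$ and the refined kernel estimate of Lemma~\ref{G2}. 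The pure Kakeya--Nikodym route historically gives weaker powers of $\log\la$ at $q_c$; the sharp $(\log\la)^{-1/2}$ in \eqref{i.7} comes from the bilinear argument.

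Your last paragraph also underestimates the work needed to pass from compact to noncompact $M$. It is not only bookkeeping: on a noncompact manifold one cannot cover $M$ by finitely many charts, so the bilinear analysis cannot be carried out ``in a fixed coordinate chart'' as in \cite{HSp}. The paper's genuine new device here is the vector-valued operator $\A$ in \eqref{b.211}--\eqref{b.23}, which turns the sum over the infinitely many balls $B(x_j,\delta)$ into an $\ell^q_j$-norm and lets one run the height decomposition and the bilinear estimates chart-by-chart with uniform constants, using the bounded-overlap property \eqref{b.00}. Without this (or an equivalent mechanism) the passage from the compact-manifold proofs to bounded geometry is not merely notational.
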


For $q=\infty$ and $q\in (q_c,\infty)$, the estimates in \eqref{i.6} for compact manifolds are due to
B\'erard~\cite{Berard} and Hassell and Tacy~\cite{HassellTacy}, respectively.  Also, the bounds in
\eqref{i.7} for hyperbolic space ${\mathbb H}^n$ were first proved by S. Huang and one of us \cite{SHSo} for
exponents $q\ge q_c$ and by Chen and Hassell~\cite{ChHa} for $q\in (2,q_c)$.  Additionally, it was shown in
\cite{HSp} and \cite{HSqm} that the bounds in \eqref{i.6} are sharp for flat compact manifolds,
and, as noted in \cite{HSp}, those in \eqref{i.7} can never be improved since they yield \eqref{ii.3}.  Also,
for $q\in (2,q_c)$, the standard Knapp example implies that the bounds in \eqref{i.7} do not hold for the spectral
projection operators associated with the Euclidean Laplacian in $\Rn$.

To prove the above results we shall need to make use of our assumption of (uniformly) bounded geometry.
To be able to adapt the Euclidean bilinear harmonic techniques of Tao, Vargas and Vega~\cite{TaoVargasVega} and 
Lee~\cite{LeeBilinear} that were used to prove analogous results for compact manifolds by two of us \cite{HSp},
\cite{HSst}, we shall make heavy use of the assumption regarding uniform bounds for derivatives of the
curvature tensor.  This will allow us to essentially reduce the local harmonic analysis step to individual coordinate
charts.  We shall also make heavy use of the assumption that $M$ has positive injectivity radius
in order to prove the global kernel estimates, which, along with the bilinear ones, will yield the above, just as
was done earlier for compact manifolds.

Let us now present a simple counterexample showing that the above estimates break down without the assumption that
$\Inj(M)>0$, even for hyperbolic quotients.  We shall use an argument in Appendix B of \cite{AGL} which provided counterexamples
for spectral projection bounds on hyperbolic surfaces with cusps.  (See also \cite[\S5.3]{Borthwick}.)

To be more specific let us consider the $n$-dimensional parabolic cylinder having a cusp at one end.  If we let
${\mathbb H}^n={\mathbb R}^{n-1}\times \R_+$ be the upperhalf space model for hyperbolic space, this is
$$M={\mathbb H}^n/\Gamma,$$
where $\Gamma$ is translation of ${\mathbb R}^{n-1}$ by elements of ${\mathbb Z}^{n-1}$.  So we identify 
$M$ with $x+ix_n\in {(-1/2\times 1/2]}^{n-1}\times \R_+$.

Recall that $\Delta_{{\mathbb H}^n}=(x_n)^2\sum_j \partial_j^2+(2-n)x_n\partial_n$.  
If we let $g(x)=x_n^{\frac{n-1}2-i\xi}$ a simple calculation shows that
$$-\Delta_{{\mathbb H}^n}g=\bigl( (\tfrac{n-1}2)^2+\xi^2\bigr) g.$$

Consider
$$\Phi_\la(x)=\frac1{\sqrt{2\pi}} \int_{-\infty}^\infty \phi(\delta^{-1}(\la-\xi))) \, x_n^{\frac{n-1}2-i\xi} \, d\xi,$$
where $\phi$ is supported in $[-1/10,1/10]$.  Note that $\Phi_\la$ is independent of
$(x_1,\dots,x_{n-1})$ and that
the $\sqrt{-\Delta_{{\mathbb H}^n}}$   spectrum of $\Phi_\la$ is in $[\la-\delta,\la+\delta]$
if $\la$ is large and $\delta\in (0,1]$.  Furthermore,
$$\Phi_\la(x)=\delta\,  x_n^{\frac{n-1}2-i\la}\Hat \phi(\delta \log x_n).$$
Using the change of coordinates $\omega =\log x_n$ we see that 
$$\|\Phi_\la\|_{L^2(M)}=\delta \Bigl(\int_0^\infty x_n^{n-1} \, |\Hat \phi(\delta \log x_n)|^2 \frac{dx_n}{x_n^n}\, \Bigr)^{1/2}
=\delta \Bigl(\int_{-\infty}^\infty |\Hat\phi (\delta \omega)|^2 \, d\omega \Bigr)^{1/2}.$$
On the other hand
\begin{align*}\|\Phi_\la\|_{L^q(M)}&=\delta \Bigl(\, \int_0^\infty x_n^{(n-1)\frac{q}2} \, |\Hat \phi(\delta \log x_n)|^q \, \frac{dx_n}{x_n^n} \, \Bigr)^{1/q}
\\
&=
\delta\Bigl( \int _{-\infty}^\infty e^{(\frac{q}2-1)(n-1)\omega} \, |\Hat \phi(\delta \omega)|^q \, d\omega\Bigr)^{1/q}.
\end{align*}
If we take $\phi(s)=a(s) \cdot \one_{[0,1]}(s)$ where $a\in C^\infty_0((-1/10,1/10))$ satisfies $a(0)=1$, then
$|\Hat \phi(\eta)|\approx |\eta|^{-1}$ for large $|\eta|$.  In this case, by the preceding two identities, $\Phi_\la\in L^2(M)$ but
$\Phi_\la\notin L^q(M)$ for any $q\in (2,\infty]$.  Based on this, it is clear that the spectral projection operators
$\one_{[\la,\la+\delta]}(\sqrt{-\Delta_{{\mathbb H}^n}})$ are unbounded between $L^2(M)$ and $L^q(M)$, and so
the estimates in Theorem~\ref{bgsp} cannot hold for this $M$, which has injectivity radius equal to zero.

One can similarly argue that the Strichartz estimates in Thereorm~\ref{bgst} also cannot hold for this hyperbolic quotient.
Indeed the proof of (1.15) in \cite{HSst} shows that, if the bounds in \eqref{i.2} were valid for a given pair $(p,q)$, then we would
have to have that for $\delta=\delta(\la)=(\log\la)^{-1}$ the spectral projection operators $\chi_{[\la,\la+\delta]}(\sqrt{-\Delta_{{\mathbb H}^n}})$
are bounded from $L^2$ to $L^q$ with norm $O((\la/\log\la)^{1/q})$, which, by the above discussion, is impossible.

This paper is organized as follows. In the next section we shall prove Theorems \ref{thm1} and \ref{thm2} using the above estimates for 
manifolds of uniformly bounded geometry and known local smoothing estimates for Schr\"odinger propagators.  
 In \S3 we shall prove our log-scale estimates for manifolds of uniformly bounded geometry
and appropriate curvature assumptions.  For the sake of completeness, in \S4, we shall also present the Littlewood-Paley estimates for manifolds of bounded
geometry that we are using.

Throughout this paper, we write $X\gg Y$ (or $X\ll Y$) to mean $X\ge C Y$ (or $X\le Y/C$) for some large constant $C>1$. Similarly, 
$X\gtrsim Y$ (or $X\lesssim Y$) denotes $X\ge C Y$ (or $X\le CY$) for some positive constant $C$.

{\bf Acknowledgments.}  The authors would like to express their gratitude to the referee for several comments and criticisms which improved the exposition.
We are also grateful to Xiaolong Han for raising a question that led us to strengthen an earlier version of our results to include
the global Strichartz estimates stated at the end of Theorem 1.1, as well as the uniform bounds for all $\delta\in (0,1]$
 in our spectral projection
estimates \eqref{ii.5}. 
 We also
would like to thank Maciej Zworski and Semyon Dyatlov for helpful discussions on the $L^2$ local smoothing estimate.

\newsection{Proofs of lossless estimates for asymptotically hyperbolic surfaces}

In this section, we shall see how we can apply Theorem~\ref{bgst} and ~\ref{bgsp} to prove lossless Strichartz and spectral projection estimates 
in Theorems \ref{thm1} and \ref{thm2}. The proof of Theorem~\ref{bgst} and ~\ref{bgsp} will be given in the next section.

Throughout this section, let us assume that
$(M,g)$ is a (even) asymptotically hyperbolic manifold. This means there exists a compactification $\overline{M}$, which is a smooth manifold with boundary $\partial M$, and the metric near the boundary takes the form
\begin{equation*}
    g=\frac{dx_1^2+g_1(x_1^2)}{x_1^2}, \quad x_1|_{\partial M}=0,\quad dx_1|_{\partial M}\neq 0
\end{equation*}
where $g_1(x_1^2)$ is a smooth family of metrics on $\partial M$. Examples include convex cocompact hyperbolic manifolds and their metric perturbation. A convex cocompact hyperbolic manifold is a hyperbolic manifold $M=\mathbb{H}^{n}/\Gamma$ such that the convex core is compact. Intuitively, it is a hyperbolic manifold with finitely many funnel ends and no cusps.

Let us also describe some dynamic properites of the geodesic flow $e^{tH_p}$ on asymptotically hyperbolic manifolds.
Let $S^*M=\{(x,\xi)\in T^*M: |\xi|_{g(x)}=1\}$ be the cosphere bundle of $M$ and $(x(t),\xi(t))=e^{tH_p}(x,\xi)$. The backward trapped set $\Gamma_+$ is defined as 
 \begin{equation*}
        \Gamma_+:=\{(x,\xi)\in S^*M:  x(t)\centernot\to \infty \,\,\,\text{as}\,\,\,t \to -\infty\}.
    \end{equation*}
    In other words, $(x,\xi)$ does not escape to $\infty$ along the backward geodesic flow. Similarly, the 
    forward trapped set $\Gamma_-$ is defined as
    \begin{equation*}
        \Gamma_-:=\{(x,\xi)\in S^*M: x(t)\centernot\to \infty \,\,\,\text{as}\,\,\,t \to +\infty \}.
    \end{equation*}
  The trapped set $K=\Gamma_+\cap \Gamma_-$ is the intersection of the backward trapped set and forward trapped set. In other words, $(x,\xi)\in K$ does not escape in either direction of the geodesic flow. For later use, let $\pi(K)$ be the projection of the trapped set $K$ onto $M$.

For all asymptotically hyperbolic manifolds such as convex cocompact hyperbolic manifolds, it is known that $\Gamma_\pm$ 
are both closed and the trapped set $K$ is compact, see 
e.g., Dyatlov--Zworski \cite[Chapter 6]{dyatlov2019mathematical} for more details. 
{Moreover, by the convexity of the geodesic flow at infinity \cite[Lemma 6.6]{dyatlov2019mathematical}, let $S\subset S^*M$ be a compact subset such that $S\cap \Gamma_-=\emptyset$ ($S\cap \Gamma_+=\emptyset$, respectively), then for any compact set $S'$, there exists a uniform constant $T=T(S,S')>0$ such that 
\begin{equation}\label{eq:convex}
    e^{tH_p}(x,\xi) \notin S',\quad (x,\xi)\in S
\end{equation}
for any $t\geq T$ ($t\leq -T$, respectively).
}

\noindent{\bf 2.1. Lossless Strichartz estimates.}

In this section, we shall give the proof of 
Theorem \ref{thm1}. We first establish \eqref{ii.1} and then modify the argument used to prove \eqref{eq:str-global}. We need three estimates 
to prove the Strichartz estimates for the exponents satisfying the Keel-Tao condition
 $\frac{2}{p}+\frac{n-1}{q}=\frac{n-1}{2}$, $p,q\geq 2$ and $(p,q)\neq (2,\infty)$.  Of course in the statement of Theorem~\ref{thm1}, $n-1=2$.  We are letting
$n$ denote the space-time dimension of $M\times \R$ to follow the convention that we are using in Theorem~\ref{bgst} (and used before in \cite{HSst}).

\noindent (a) Lossless Strichartz and local smoothing estimates in the nontrapping region. Let $\chi\in C_0^{\infty}(M)$ with $\chi=1$ near $\pi(K)$,
    \begin{equation}\label{eq:stri-nontrap}
    \|(1-\chi) e^{-it\Delta_g}u_0\|_{L^p_tL^q_x(M\times [0,1])}\leq C \|u_0\|_{L^2(M)}.
\end{equation}
One also needs a lossless local smoothing in the nontrapping region: Fix $\beta\in C_0^{\infty}((1/2,2))$,  for $\chi_1\in C_0^{\infty}(M)$ supported away from the trapped set $\pi(K)$ , we have 
\begin{equation}\label{eq:local-sm-nontrap}
   \|\chi_1 e^{-it\Delta_g}\beta(\sqrt{-\Delta_g}/\la)u_0 \|_{L^2_{t,x}(M\times [0,1])}\leq C\la^{-1/2}\|u_0\|_{L^2(M)}.
\end{equation}

\noindent (b) Local smoothing with logarithmic loss. Let $\chi\in C_0^{\infty}(M)$,
\begin{equation}\label{eq:local-sm-logloss}
    \|\chi e^{-it\Delta_g}\beta(\sqrt{-\Delta_g}/\la)u_0 \|_{L^2_{t,x}(M\times [0,1])}\leq C\la^{-\frac12}(\log \la)^{1/2}\|u_0\|_{L^2(M)}.
\end{equation}

\noindent(c) Lossless Strichartz with log-scale gains compared to the universal estimates in \cite{BGT}
 \begin{equation}\label{eq:stri-log-trap}
    \| e^{-it\Delta_g}\beta(\sqrt{-\Delta_g}/\la)u_0\|_{L^p_tL^q_x(M\times [0,\la^{-1}\log\la])}\leq C \|u_0\|_{L^2(M)}.
\end{equation}

We recall a lemma from \cite{BGH}.
\begin{lemma}\label{bghlemma}
    The estimates \eqref{eq:stri-nontrap}--\eqref{eq:stri-log-trap} imply the lossless Strichartz estimate
\begin{equation}\label{eq:stri-lossless}
    \|e^{-it\Delta_g}u_0\|_{L^p_tL^q_x(M\times [0,1])}\leq C \|u_0\|_{L^2(M)}.
\end{equation}
\end{lemma} 

To prove Lemma~\ref{bghlemma}, we use the following lemma whose proof we postpone to the end of this section.
\begin{lemma}\label{co}
    Let $\chi\in C_0^\infty(M)$ satisfy $\chi=1$ in a neighborhood of $\pi(K)$. Let $\beta\in C_0^{\infty}((1/2,2))$ be a Littlewood-Paley bump function satisfying $\sum_{k=-\infty}^\infty \beta(s/2^k)=1$,
$s>0$. Suppose $\tilde \chi\in C_0^\infty(M)$ supported away from the trapped set $\pi(K)$ and $\tilde\chi=1$ in a neighborhood of $\supp \nabla\chi$,  and let $\tilde \beta \in C^\infty_0((1/4,4))$ with $\tilde \beta=1$ in $(1/3,3)$, we have for $\la\gg 1$,
\begin{equation}\label{commuteerror}
    \chi \cdot \beta(\sqrt{-\Delta_g}/\la)f = \tilde\beta(\sqrt{-\Delta_g}/\la) \chi \cdot \beta(\sqrt{-\Delta_g}/\la)  f
+R_1f,
\end{equation}
as well as  
\begin{equation}\label{commuteerror1}
    [\Delta_g,\chi] \cdot \beta(\sqrt{-\Delta_g}/\la)f= \tilde\beta(\sqrt{-\Delta_g}/\la)  [\Delta_g,\chi] \cdot \beta(\sqrt{-\Delta_g}/\la)\tilde\chi \,\tilde\beta(\sqrt{-\Delta_g}/\la)  f
+R_2f,
\end{equation}
where for $i=1,2$,
\begin{equation}\label{r}
    \|(-\Delta_g)^{\alpha}R_if\|_{L^2(M)}\le C_{\alpha,N}\la^{-N}\|f\|_{L^2(M)}  \,\,\forall \alpha>0, N=1,2,3\cdots.
\end{equation}
Moreover,
\begin{equation}\label{commuteerror2}
\| [\Delta_g,\chi] \cdot \beta(\sqrt{-\Delta_g}/\la)f\|_{L^2}\lesssim \la\|f\|_{L^2}.
\end{equation}
\end{lemma}

\begin{proof}[Proof of Lemma~\ref{bghlemma}]
By the Littlewood Paley estimate in Lemma~\ref{littlewood} and the remark below it, we may assume $u_0=\beta(\sqrt{-\Delta_g}/\la)u_0$ with
$\beta$ as above and $\la\gg 1$.
By \eqref{eq:stri-nontrap}, it suffices to show for any $\chi\in C_0^{\infty}(M)$ with $\chi=1$ in a neighborhood of $\pi(K)$, we have
    \begin{equation*}
        \|\chi e^{-it\Delta_g}u_0\|_{L^p_tL^q_x(M\times [0,1])}\leq C \|u_0\|_{L^2(M)}.
    \end{equation*}

 Let $\alpha\in C_0^\infty((-1,1))$ satisfying $\sum \alpha(t-j)=1$, $t\in {\mathbb R}$. For $j\in \mathbb Z$  and $u(t)=e^{-it\Delta_g}u_0$, let us define $u_j=\alpha(t \,\la /\log \la-j)\chi u$. We have
    \begin{equation*}
        (i\partial_t-\Delta_g)u_j=v_j+w_j
    \end{equation*}
    where
    \begin{equation}\label{eq:def-ujvj}
        v_j=i\frac{\la}{\log \la}\alpha'(t\,\la /\log \la-j)\chi u,\quad w_j=-\alpha(t\,\la /\log \la-j)[\Delta_g,\chi]u.
    \end{equation}

Let $I_j=[(j-1)\la^{-1}\log\la, (j+1)\la^{-1}\log\la]\cap [0,1]$ and define
     \begin{equation}\label{oj}
  \tilde w_j=-\alpha(t\,\la /\log \la-j)[\Delta_g,\chi]\beta(\sqrt{-\Delta_g}/\la)\tilde\chi \,\tilde\beta(\sqrt{-\Delta_g}/\la)e^{-it\Delta_g}u_0,
    \end{equation}
  where  $\tilde \chi\in C_0^\infty(M)$ supported away from the trapped set $\pi(K)$ and $\tilde\chi=1$ in a neighborhood of $\supp \nabla\chi$.
 Let $\chi_-\in C_0^{\infty}$ satisfy $\chi_-=1$ on $\supp \chi$. Then 
    \begin{equation*}
    v_j=\chi_-v_j,\quad \tilde w_j=\tilde \chi\tilde w_j.
    \end{equation*}
By \eqref{commuteerror} and \eqref{commuteerror1}, we have 
\begin{equation}
\begin{aligned}
      u_j=&\int_{(j-1)\la^{-1}\log \la}^{t}e^{-i(t-s)\Delta_g}\tilde\beta(\sqrt{-\Delta_g}/\la)\chi_-v_j(s) ds\\
            &+  \int_{(j-1)\la^{-1}\log\la}^{t}e^{-i(t-s)\Delta_g} i\frac{\la}{\log \la}\alpha'(s\,\la /\log \la-j) R_1(e^{is\Delta_g}u_0)ds \\
      &+ \int_{(j-1)\la^{-1}\log\la}^{t}e^{-i(t-s)\Delta_g}\tilde\beta(\sqrt{-\Delta_g}/\la)\tilde \chi\tilde w_j(s)ds \\
      &+  \int_{(j-1)\la^{-1}\log\la}^{t}e^{-i(t-s)\Delta_g}\alpha(s\,\la /\log \la-j) R_2(e^{is\Delta_g}u_0)ds\\
      =& u_j^{(1)}+E_1+ u_j^{(2)}+E_2
\end{aligned}
\end{equation}
By Minkowski’s integral inequality, the Sobolev estimate, and \eqref{r},
\begin{equation}
\begin{aligned}
      \|E_1\|_{L_t^p L_x^q(M\times I_j)}&\lesssim \frac{\la}{\log \la}\left\|\int_{I_j} \|e^{-i(t-s)\Delta_g}(-\Delta_g)^{(\frac12-\frac1q)}R_1(e^{is\Delta_g}u_0)\|_{L^2(M)} ds\right\|_{L^p_t(I_j)} \\
      &\lesssim_N\la^{-N}\|u_0\|_{L^2(M)}.  
\end{aligned}
\end{equation}
A similar estimate holds for $E_2$. Thus, since the number of non-empty intervals $I_j$ is $O(\la(\log\la)^{-1})$, both error terms result in $O(\la^{-N})$ contributions to \eqref{eq:stri-lossless}.
    
Hence, to bound $u_j$, it suffices to estimate 
$u_j^{(1)}$
and $u_j^{(2)}$ 
on the interval $I_j$,  which contains the support of $u_j$.
 Let $\tilde{u}_j^{(1)}, \tilde{u}_j^{(2)}$ be the analog of 
${u}_j^{(1)}, {u}_j^{(2)}$ with the upper bound of the integrals replaced by $(j+1)\la^{-1}\log\la$. 
    Since $\tilde \chi \tilde w_j$ is supported in the nontrapped region, by \eqref{eq:local-sm-nontrap} and \eqref{eq:stri-log-trap} we have
    \begin{equation*}
        \|\tilde{u}_j^{(2)}\|_{L_t^p L_x^q(M\times I_j)}\lesssim \left\|\int_{(j-1)\la^{-1}\log\la}^{(j+1)\la^{-1}\log\la}e^{is\Delta_g}\tilde\beta(\sqrt{-\Delta_g}/\la)\chi_+\tilde w_j(s)ds\right\|_{L^2}\lesssim \la^{-1/2}\|\tilde w_j\|_{L^2_{t,x}}.
    \end{equation*}

    The same estimate holds for $u_j^{(2)}$ by the Christ--Kiselev lemma. On the other hand, on the trapped region, by \eqref{eq:stri-log-trap} and local smoothing estimate \eqref{eq:local-sm-logloss}, we have
    \begin{multline*}
        \|\tilde u_j^{(1)}\|_{L_t^pL_x^q(M\times I_j)}\lesssim \left\|\int_{(j-1)\la^{-1}\log\la}^{(j+1)\la^{-1}\log\la}e^{is \Delta_g}\tilde\beta(\sqrt{-\Delta_g}/\la)\chi_-v_j(s) ds\right\|_{L^2}\\\lesssim \la^{-\frac12}(\log \la)^{1/2}\|v_j\|_{L^2_{t,x}}.
    \end{multline*}
    The same estimate holds for $u_j^{(1)}$ by the Christ--Kiselev Lemma.

    Note that by using \eqref{commuteerror2},
    $$\|\tilde w_j\|_{L^2_{t,x}(M\times I_j)} \lesssim \la\|\alpha(t\,\la /\log \la-j)\tilde\chi \,\tilde\beta(\sqrt{-\Delta_g}/\la)e^{-it\Delta_g}u_0\|_{L^2_{t,x}}.
    $$
    Thus by the local smoothing estimates \eqref{eq:local-sm-nontrap} and \eqref{eq:local-sm-logloss}, we have
    \begin{multline*}
    \|\chi u\|_{L_t^pL_x^q(M\times I)}^{2}\le  \sum_j \|u_j\|_{L_t^pL_x^q(M\times I_j)}^2\\\lesssim \sum_j \left(\la^{-1}\|w_j\|_{L^2_{t,x}(M\times I_j)}^2+\la^{-1}\log \la\|v_j\|_{L^2_{t,x}(M\times I_j)}^2\right)+O(\la^{-N}\|u_0\|_{L^2}^2)\lesssim \|u_0\|_{L^2}^2.
    \end{multline*}
    This completes the proof of \eqref{eq:stri-lossless}.
\end{proof}

Turning to the proof of \eqref{ii.1} in Theorem~\ref{thm1}, let us first recall that 
\eqref{eq:stri-nontrap} is known for all asymptotically hyperbolic manifolds, see Bouclet \cite[Theorem 1.2]{Boust}. 
By \cite[Theorem 7.2]{dyatlov2019mathematical}, the assumptions \eqref{eq:local-sm-nontrap} and \eqref{eq:local-sm-logloss} follow from the following resolvent estimates. For $\chi\in C_0^{\infty}(M)$ supported away from the trapped set $\pi(K)$
\begin{equation}\label{eq:res-nontrap}
    \|\chi (-\Delta_g-(\lambda+i0)^2)^{-1}\chi\|_{L^2\to L^2}\leq C\lambda^{-1}.
\end{equation} 
Additionally, if $\chi\in C_0^{\infty}(M)$ with $\chi=1$ on $\pi(K)$,
\begin{equation}\label{b}
    \|\chi(-\Delta_g-(\lambda+i0)^2)^{-1}\chi\|_{L^2\to L^2}\le C\la^{-1}\log\la.
\end{equation}

\eqref{eq:res-nontrap} is known to hold for asymptotically hyperbolic manifolds if we assume $\chi$ is supported sufficiently far away from $\pi(K)$ by Cardoso--Vodev \cite{Vodev}, following the method of Carleman estimate in Burq \cite{Burq98}. Under the stronger condition that 
\begin{equation}\label{eq:res-trap-lambdaN}
    \|\chi (-\Delta_g-(\lambda+i0)^2)^{-1}\chi\|_{L^2\to L^2}\leq C\lambda^{N_0},\quad \chi\in C_0^{\infty}(M),
\end{equation}
the resolvent estimate \eqref{eq:res-nontrap} and the local smoothing estimate \eqref{eq:local-sm-nontrap} follows from standard propagation estimates, see Datchev--Vasy \cite{DatVa12} for the resolvent estimate in this case.

For convex cocompact hyperbolic surfaces, \eqref{b} follows from the result of Bourgain-Dyatlov \cite[Theorem 2]{bourgain2018spectral} and the interpolation argument of Burq \cite[Lemma 4.7]{burq2004smoothing}. This was generalized to even asymptotically hyperbolic surfaces with negative curvature by the third author in \cite{tao2024spectral}, following a result of Vacossin \cite{vacossin}. In higher dimensions, \eqref{b} also hold under certain conditional trapping conditions, such as the pressure condition and normally hyperbolic trapping, see Nonnenmacher and Zworski \cite{nozw-pressure, nozw-normhyp}. Finally, \eqref{eq:stri-log-trap} follows from Theorem ~\ref{bgst} which holds for all complete manifolds with bounded geometry and nonpositive sectional curvature.

Hence, \eqref{eq:stri-nontrap}–\eqref{eq:stri-log-trap} hold for all even asymptotically hyperbolic surfaces with  negative curvature, which completes the proof of Theorem~\ref{thm1}.  Addtionally, for $\la\gg 1$,  \eqref{eq:local-sm-nontrap} and \eqref{eq:local-sm-logloss} remain valid when $[0,1]$ is replaced by $\R$ on all asymptotically hyperbolic surfaces with  negative curvature. We will use this fact later in the proof of \eqref{eq:str-global} and Theorem~\ref{thm2}.

The lossless Strichartz estimate can be used to prove the following local well-posedness of the cubic nonlinear Schr\"odinger equation in the critical regularity.
\begin{proposition}
Let $(M,g)$ be as in Theorem~\ref{thm1}, or more generally, assume that $n-1=2$ and that \eqref{eq:stri-lossless} holds. Consider the Schr\"odinger equation
    \begin{equation}\label{eq:NLS}
        i\partial_t u -\Delta_g u =F(u),\quad u(0,\cdot)=u_0(x)\in L^2(M)
    \end{equation}
    where $F(u)$ is {a homogeneous cubic polynomial of $u$ and $\bar{u}$}. Then there exists  $T>0$ such that \eqref{eq:NLS} has a unique solution 
    \begin{equation*}
        u(t,x)\in C([-T,T];L^2(M)) \cap L^{3}([-T,T] ; L^6(M) ).
    \end{equation*}
\end{proposition}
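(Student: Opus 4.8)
The plan is to run the standard Cazenave--Weissler fixed point argument at the scaling-critical regularity $L^2$, using the admissible pair $(p,q)=(3,6)$ (note $\tfrac13+\tfrac16=\tfrac12$ and $(3,6)\neq(2,\infty)$, so \eqref{eq:stri-lossless} applies). Writing \eqref{eq:NLS} in Duhamel form,
\[
\Phi(u)(t):=e^{-it\Delta_g}u_0-i\int_0^t e^{-i(t-s)\Delta_g}F(u)(s)\,ds,
\]
I would first record the linear input. From \eqref{eq:stri-lossless} (which, by time reversal and complex conjugation, also holds on $M\times[-1,1]$), the trivial bound for the admissible pair $(\infty,2)$, and the Christ--Kiselev lemma (applicable since $3>1$), one obtains the retarded estimate
\[
\Bigl\|\int_0^t e^{-i(t-s)\Delta_g}G(s)\,ds\Bigr\|_{L^3_tL^6_x(M\times[-T,T])}\lesssim\|G\|_{L^1_tL^2_x(M\times[-T,T])},\qquad T\le1,
\]
with constant independent of $T$. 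Since $F$ is a homogeneous cubic polynomial in $u,\bar u$, we have $|F(u)|\lesssim|u|^3$ and $|F(u)-F(v)|\lesssim(|u|^2+|v|^2)|u-v|$; then Hölder in $t$ and $x$ (using $1=3\cdot\tfrac13$ and $\tfrac12=3\cdot\tfrac16$) gives $\|F(u)\|_{L^1_tL^2_x}\lesssim\|u\|_{L^3_tL^6_x}^3$ and $\|F(u)-F(v)\|_{L^1_tL^2_x}\lesssim\bigl(\|u\|_{L^3_tL^6_x}^2+\|v\|_{L^3_tL^6_x}^2\bigr)\|u-v\|_{L^3_tL^6_x}$.

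Next, I would set $\eta=\eta(T)=\|e^{-it\Delta_g}u_0\|_{L^3_tL^6_x(M\times[-T,T])}$; since the norm over $[-1,1]$ is finite, absolute continuity of the integral gives $\eta(T)\to0$ as $T\to0$. Choosing $T$ small so that $\eta(T)$ lies below a threshold fixed by the constants above, let $X_T$ be the closed ball of radius $2\eta$ in $L^3_tL^6_x(M\times[-T,T])$ with that norm. The two nonlinear estimates show $\Phi:X_T\to X_T$ and that $\Phi$ is a contraction, so there is a unique fixed point $u$. The Duhamel formula together with the $(\infty,2)$ bound and $\|F(u)\|_{L^1_tL^2_x}<\infty$ then places $u\in C([-T,T];L^2(M))$, and strong continuity of the group gives the $C_t$ regularity. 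To upgrade uniqueness from $X_T$ to the full class $C([-T,T];L^2)\cap L^3_tL^6_x$ one uses the standard argument: two solutions with the same data both lie in a small $L^3_tL^6_x$-ball on a sufficiently short subinterval, hence agree there, and this propagates to all of $[-T,T]$.

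For persistence of regularity, given $u_0\in H^s(M)$ with $s>0$, the plan is to re-run the contraction in $\{u:(I-\Delta_g)^{s/2}u\in L^3_tL^6_x\}\cap L^3_tL^6_x$. This needs a fractional Leibniz (Kato--Ponce) inequality $\|(I-\Delta_g)^{s/2}F(u)\|_{L^1_tL^2_x}\lesssim\|u\|_{L^3_tL^6_x}^2\,\|(I-\Delta_g)^{s/2}u\|_{L^3_tL^6_x}$, which is available on $M$ because $(M,g)$ has uniformly bounded geometry, so the multipliers $(I-\Delta_g)^{s/2}$ satisfy the same product estimates as on $\Rn$ (for integer $s$ one may just differentiate and use the ordinary Leibniz rule). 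Granting this, the fixed point inherits the $(I-\Delta_g)^{s/2}$-bound and the $(\infty,2)$ estimate yields $u\in C([-T,T];H^s(M))$. I expect the main technical obstacle to be exactly this fractional product rule on the asymptotically hyperbolic surface; the rest is the routine mass-critical Strichartz contraction.
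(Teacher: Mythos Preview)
Your proposal is correct and follows essentially the same route as the paper: the Duhamel formulation, the $(3,6)$ Strichartz pair with nonlinearity controlled in $L^1_tL^2_x$, smallness of $\|e^{-it\Delta_g}u_0\|_{L^3_tL^6_x}$ on short intervals, and a contraction on a ball measured in $L^3_tL^6_x$. The only place the paper adds substance beyond your sketch is precisely where you flag the obstacle: for the fractional Leibniz rule on $M$ it splits the Schwartz kernel of $(1-\Delta_g)^{s/2}$ into a near-diagonal piece (to which the Euclidean Kato--Ponce applies in local coordinates, using bounded geometry) and an off-diagonal piece that is smoothing and hence bounded $L^2\to L^q$.
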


\begin{proof}
    Consider the map
    \begin{equation*}
        G(u)(t,x)=e^{-it\Delta_g}u_0-i\int_0^t e^{-i(t-t')\Delta_g}F(u)(t',x) dt'.
    \end{equation*}
    Let $0<T\leq 1$. Define the norm 
    \begin{equation*}
        \|u\|_{Y_T}:= \sup_{t\in[-T,T]}\|u(t,\cdot)\|_{L^2(M)}+\|u\|_{L^{3}([-T,T]; L^6(M))}.
    \end{equation*}
    Then
    \begin{equation*}
        \|G(u)\|_{Y_T}\leq C\|u_0\|_{L^2}+ \int_{-T}^{T}\|F(u)\|_{L^2(M)}dt\leq C\|u_0\|_{L^2}+ C\|u\|_{L^3([-T,T];L^6(M))}^3
    \end{equation*}
    and
    \begin{equation}\label{eq:cubicNLS-diff}
    \begin{split}
        \|G(u)-G(v)\|_{Y_T}&\leq \int_{-T}^{T}\|F(u)-F(v)\|_{L^2(M)}dt\\
        &\leq C(\|u\|_{L^3([-T,T];L^6(M))}+\|v\|_{L^3([-T,T];L^6(M))})^{2}\|u-v\|_{Y_T}.
    \end{split}
    \end{equation}
    Choose $T>0$ such that $\|e^{-it\Delta_g}u_0\|_{L^3([-T,T];L^6(M))}$ is sufficiently small. Then $G$ is a contraction map on
    \begin{equation}\label{eq:contraction-G}
        \{u\in Y_T: \|u\|_{L^3([-T,T];L^6(M))}\leq \epsilon\}.
    \end{equation}
    Note that \eqref{eq:cubicNLS-diff} implies that $G$ maps \eqref{eq:contraction-G} to itself.
    This gives a unique fixed point of $G$, which is a solution to \eqref{eq:NLS} in the space $Y_T$.

    The uniqueness of the solution follows from \eqref{eq:cubicNLS-diff}.
    \end{proof}

\begin{proof}[Proof of Lemma~\ref{co}]

We first prove \eqref{commuteerror2}. It suffices to show for $\la_1\ge 8\la$
\begin{equation}\label{c1}
\| \beta(\sqrt{-\Delta_g}/\la_1)[\Delta_g,\chi] \cdot \beta(\sqrt{-\Delta_g}/\la)f\|_{L^2}\lesssim_N \la_1^{-N}\|f\|_{L^2}, 
\end{equation}
as well as 
\begin{equation}\label{c2}
\| \beta_0(\sqrt{-\Delta_g}/\la)[\Delta_g,\chi] \cdot \beta(\sqrt{-\Delta_g}/\la)f\|_{L^2}\lesssim \la\|f\|_{L^2},
\end{equation}
for any $\beta_0\in C_0^\infty(-10,10)$. 

To prove \eqref{c1}, 
 we can extended $\beta$ to be an even function by adding the negative part if necessary. Let $P=\sqrt{-\Delta_g}$, and let $\delta_0$ be fixed and smaller than the injectivity radius of $M$. 
Fix an even function $\rho\in C_0^\infty$   satisfying $\rho(t)= 1$, $|t|\le \delta_0/4$ and $\rho(t)=0 $, $|t|\ge \delta_0/2$ such that
\begin{equation}\label{c3}
\begin{aligned}
     \beta( P/\la)=&(2\pi)^{-1}\int_\R\la\hat \beta(\la t)\cos t P dt \\
     =&(2\pi)^{-1}\int\rho(t)\la\hat \beta(\la t)\cos t P dt+ (2\pi)^{-1}\int (1-\rho(t)) \la\hat \beta(\la t)\cos t P dt\\
     =& B_\la(P)+C_\la(P).
\end{aligned}
\end{equation}
It is not hard to check that the symbol of the operator $C_\la$ is $O_N((1+|\tau|+|\la|)^{-N})$, which implies that 
  \begin{equation}\label{cestimatee}
      \|\Delta_g^{N_0} C_\la(P)\|_{L^2\to L^2}\lesssim_{N_0,N}\la^{-N},
  \end{equation}
 for any fixed $N_0, N$.

   To handle the term in \eqref{c1} involving $C_{\la}(P)$, note that 
\begin{equation*}
    \begin{split}
         \|\beta(P/\la_1) [\Delta_g, \chi] C_\la(P) f\|_{L^2}
         &\lesssim \la_1^{-2N}\|\beta(P/\la_1)\Delta_g^N [\Delta_g, \chi] C_\la(P) f\|_{L^2}\\
         &\lesssim \la_1^{-2N}\|B_{\la_1}(P)\Delta_g^N [\Delta_g, \chi] C_\la(P) f\|_{L^2}\\
         &\quad+\la_1^{-2N}\|C_{\la_1}(P)\Delta_g^N [\Delta_g, \chi] C_\la(P) f\|_{L^2}.
    \end{split}
\end{equation*}
By \eqref{cestimatee}, the second term on the right side is bounded by
\begin{equation}\label{c4}
\begin{split}
&\la_1^{-2N}\|C_{\la_1}(P)\Delta_g^N [\Delta_g, \chi] C_\la(P) f\|_{L^2} 
\\ &\leq
    \la_1^{-2N}\|C_{\la_1}(P)\Delta_g^{N+1}  \chi C_\la(P) f\|_{L^2} 
    +  \la_1^{-2N}\|C_{\la_1}(P)\Delta_g^N \chi\,\Delta_g C_\la(P) f\|_{L^2}\\
    &\lesssim_N \la_1^{-N}\| f\|_{L^2}.
\end{split}
\end{equation}
For the first term, we have
\begin{equation}\label{c5}
    \begin{split}
    &\la_1^{-2N}\|B_{\la_1}(P)\Delta_g^N [\Delta_g, \chi] C_\la(P) f\|_{L^2} \\
    &\lesssim \la_1^{-2N}\sum\| B_{\la_1}(P)[\Delta_g,[\Delta_g,[\cdots,\chi]\cdots]\Delta_g^{k} C_{\lambda} (P)f\|_{L^2} \\
    &\lesssim \la_1^{-N+1}\sum_{k\le N}\|\Delta_g^k C_{\lambda} (P)f\|_{L^2}\\
    &\lesssim \la_1^{-N+1}\|f\|_{L^2}.
    \end{split}
\end{equation}
Here we used the fact that 
\begin{equation}\label{c6}
    \| B_{\la_1}(P)[\Delta_g,[\Delta_g,[\cdots,\chi]\cdots]\|_{L^2\to L^2}\lesssim \la_1^{N+1}
\end{equation}
if the number of brackets is bounded by $N+1$. By using the parametrix construction for $\cos tP$, 
 one can argue as in the compact manifold case to show that the local operator $B_{\la_1}$ is a 0-order pseudodifferential operator with principal symbol $\beta(p(x,\xi)/\la_1)$,
with $p(x,\xi)$ here being the principal symbol of $P$. Thus it is not hard to show  $B_{\la_1}(P)[\Delta_g,[\Delta_g,[\cdots,\chi]\cdots]$ is a pseudodifferential operator of order $N+1$ with symbol supported in the region $p(x,\xi)\approx \la_1$, which implies \eqref{c6}.

   To handle the term in \eqref{c1} involving $B_{\la}(P)$, note that
\begin{equation*}
\begin{split}
     \| \beta(P/\lambda_1)[\Delta_g, \chi]  B_{\la}(P)) f\|_{L^2}
\leq 
         \| B_{\la_1}(P)[\Delta_g, \chi]  B_{\la}(P)) f\|_{L^2}  +    \|C_{\la_1}(P)[\Delta_g, \chi]  B_{\la}(P)) f\|_{L^2}. 
\end{split}
\end{equation*}
The second term can be estimated as in \eqref{c4}.
Hence it suffices to show
\begin{equation}\label{c7}
    \|B_{\la_1}(P)[\Delta_g, \chi] B_{\la}(P)\|_{L^2\to L^2}\lesssim \la_1^{-N}.
\end{equation}
This follows from integration by parts after writing out the composition of the corresponding pseudodifferential operators explicitly.

To prove \eqref{c2},  write $  \beta_0( P/\la)
     = B^0_\la(P)+C^0_\la(P)$ as in \eqref{c3}. Repeating the argument above, it suffices to show that
\begin{equation}\label{c8}
    \|B^0_{\la}(P)[\Delta_g, \chi] B_{\la}(P)\|_{L^2\to L^2}\lesssim \la.
\end{equation}
This follows from the fact that the operator above is a pseudodifferential operator of order 1 with symbol supported in the region $p(x,\xi)\lesssim\la$.

To prove \eqref{commuteerror1}, it suffices to show 
\begin{equation}\label{c9}
    [\Delta_g,\chi] \cdot \beta(P/\la)f= \tilde\beta(P/\la)  [\Delta_g,\chi] \cdot \beta(P/\la)f
+R_1f,
\end{equation}
as well as 
\begin{equation}\label{c10}
   \tilde\beta(P/\la)  [\Delta_g,\chi] \cdot \beta(P/\la)f
   = \tilde\beta(\sqrt{-\Delta_g}/\la)  [\Delta_g,\chi] \cdot \beta(P/\la)\tilde\chi \,\tilde\beta(P/\la) f
+R_2f,
\end{equation}
where the operators $R_1$ and $R_2$ satisfy \eqref{r}.

As above, to prove \eqref{c9}, it suffices to show for $\la_1\ge 8\la$
\begin{equation}\label{c1a}
\| \beta(P/\la_1)(1-\tilde\beta)(P/\la)[\Delta_g,\chi] \cdot \beta(P/\la)f\|_{L^2}\lesssim_N \la_1^{-N}\|f\|_{L^2}, 
\end{equation}
as well as 
\begin{equation}\label{c2a}
\| \beta_0(P/\la)(1-\tilde\beta)(P/\la)[\Delta_g,\chi] \cdot \beta(P/\la)f\|_{L^2}\lesssim_N \la^{-N}\|f\|_{L^2},
\end{equation}
for any $\beta_0\in C_0^\infty(-10,10)$. 
Both \eqref{c1a} and \eqref{c2a} follow from the same argument used in the proof of \eqref{c1}.

To prove \eqref{c10}, note that by using \eqref{commuteerror2} and duality, it is not hard to show 
$$\|(-\Delta_g)^\alpha\tilde\beta(P/\la)  [\Delta_g,\chi]\|_{L^2\to L^2}\lesssim \la^{2\alpha}\|\tilde\beta(P/\la)  [\Delta_g,\chi]\|_{L^2\to L^2} \lesssim \la^{1+2\alpha}$$
Hence \eqref{c10} is a consequence of 
\begin{equation}\label{c12}
\chi_0 \cdot \beta(P/\la)f
   =\chi_0 \cdot \beta(P/\la)\tilde\chi \,\tilde\beta(P/\la) f
+Rf,
\end{equation}
if $\chi_0\in C_0^\infty$ is chosen so that $\chi_0=1$ on $\supp\, \nabla\chi$, and $\tilde \chi=1$ in a $\delta_0$-neighborhood of the support of $\chi_0$, with $\delta_0$ defined as in \eqref{c3}. Here we may assume $\chi_0$ is supported in a sufficiently small neighborhood of $\supp\, \nabla\chi$, and 
 $\delta_0$ is chosen small enough so that $\tilde \chi$ is supported away from the trapped set $\pi(K)$.

The proof of \eqref{c12}, as well as the remaining estimate \eqref{commuteerror} in Lemma~\ref{co}, follows from a simpler but analogous argument as above, together with the fact that the operator $B_\la(P)$ in \eqref{c3} is local due to finite propagation speed of the wave propagator $\cos(tP)$. We omit the details for simplicity.
\end{proof}

\bigskip

\noindent{\bf 2.1.1 Global Strichartz estimate.}

Now we prove the global Strichartz estimate \eqref{eq:str-global}. This relies on the construction of a ``background'' manifold $(\tilde M, \tilde g)$ which agrees with $M$ asymptotically and satisfies favorable no loss Strichartz estimates. 
Specifically, we shall assume that
$M=M_{tr}\cup M_\infty$ where $M_{tr} \subset M$ is compact and contains a neighborhood of the trapped set $\pi(K)$ defined at the beginning of this section. We shall construct $\tilde M$ such that  
the metric $\tilde g$ for $\tilde M$ agrees with the metric $g$ on $M_\infty$.

Recall that an even asymptotically hyperbolic surface has the topology of a finite disjoint union of $(0,\epsilon)\times S^1$ near infinity. We fill in a disk inside each $(0,\epsilon)\times S^1$ so that $\tilde{M}$ is a finite disjoint union of topological disks. Without loss of generality, we will discuss estimates on each connected component. On an even asymptotically hyperbolic surface, the metric near the boundary is given by
\begin{equation*}
\frac{dx_1^2}{4x_1^2}+\frac{h_1(x_1,\theta)d\theta^2}{x_1},\,\,\, h_1\in C^\infty\,\, \text{and} \,\, h_1(0,\theta)=1/4
\end{equation*}
For example, if $M$ is the hyperbolic plane, $h(x_1,\theta)=(x_1-1)^2/4$. See \cite[Chapter 5]{dyatlov2019mathematical} for more details.
Let $\chi\in C_0^\infty((-1,1))$ with $\chi=1$ in $(-1/2,1/2)$, let us define the 
 metric on $\tilde M$ as 
\begin{equation}\label{tildeM}
\tilde g= \frac{dx_1^2}{4x_1^2}+\frac{(x_1-1)^2 d\theta^2}{4x_1}+\chi(Rx_1)\frac{(h_1(x_1,\theta)-(x_1-1)^2/4)d\theta^2}{x_1}
\end{equation}
where $R$ is a fixed constant. Then we have the metric of $\tilde M$ agrees with $M$ on the set $x_1\le (2R)^{-1}$. Furthermore, note that $|h_1(x_1,\theta)-1/4|\lesssim R^{-1}$ in the support of 
$\chi(Rx_1)$. By choosing $R$ sufficiently large, it is straightforward to check that the Gaussian curvature $K$ (and hence the sectional curvature, since  $\dim\tilde M=2$) of $(\tilde M,g)$ satisfies the uniform bound $-\frac32\le K\le -\frac12$.


We need the following two lemmas.
\begin{lemma}\label{lemma:no-eigen}
    Let
\begin{equation*}
    \tilde g=\frac{dx_1^2}{4x_1^2}+\frac{h_0(x_1)d\theta^2}{x_1}+\chi(Rx_1)\frac{(h_1(x_1,\theta)-h_0(x_1))d\theta^2}{x_1}
\end{equation*}
where $h_0(x_1)=(x_1-1)^2/4$ corresponds to the hyperbolic plane and $h_1(0,\theta)=1/4$.
Then for sufficiently large $R>0$, the Laplacian has no $L^2$ eigenvalue and no resonance at the bottom of the spectrum. 
\end{lemma}
\begin{proof}
The proof strategy is to compare it with the Laplacian on the hyperbolic plane $g_0=\frac{dx_1^2}{4x_1^2}+\frac{h_0(x_1)d\theta^2}{x_1}$. By \cite[Section 5.3]{dyatlov2019mathematical}, we should consider the following operator
\begin{equation*}
    x_1^{i\lambda/2-5/4}(-\Delta_{\tilde g}-\lambda^2-1/4) x_1^{1/4-i\lambda/2}.
\end{equation*}
The eigenvalues of the operator are resonances and the $L^2$ eigenvalues of the original operator $\Delta_{\tilde g}$ corresponds to resonances in $\{\Re \lambda=0, \, \Im \lambda \in [0,1/2)\}$ (see the end of \cite[Section 5.2]{dyatlov2019mathematical}).
This operator has a natural extension on a larger manifold $X$ and is a Fredholm operator from 
\begin{equation*}
    D^s:=\{u \in \bar{H}^s(X): (-4x_1\partial_{x_1}^2-\Delta_{h_0}) u \in \bar{H}^{s-1}(X)  \}\to  \bar{H}^{s-1}(X).
\end{equation*}
We recall the formula \cite[Lemma 5.10]{dyatlov2019mathematical}:
\begin{equation*}
\begin{split}
     x_1^{i\lambda/2-5/4}(-\Delta_{\tilde g}-\lambda^2-1/4) x_1^{1/4-i\lambda/2} &= -4x_1\partial_{x_1}^2+4(i\lambda-1)\partial_{x_1}-\tilde \gamma(4x_1\partial_{x_1}+1-2i\lambda)-\Delta_{\tilde h},\\
     x_1^{i\lambda/2-5/4}(-\Delta_{g_0}-\lambda^2-1/4) x_1^{1/4-i\lambda/2} &= -4x_1\partial_{x_1}^2+4(i\lambda-1)\partial_{x_1}-\gamma_0(4x_1\partial_{x_1}+1-2i\lambda)-\Delta_{h_0},
\end{split}
\end{equation*}
where $\tilde h(x_1,\theta)=h_0(x_1)+ \chi(Rx_1)(h_1(x_1,\theta)-h_0(x_1))$ and
\begin{equation*}
    \tilde \gamma=(2\tilde h)^{-1}\frac{\partial \tilde h}{\partial x_1},\quad \gamma_0=(2h_0)^{-1}\frac{\partial h_0}{\partial x_1}.
\end{equation*}
Therefore,
\begin{equation*}
    x_1^{i\lambda/2-5/4}(\Delta_{\tilde g}-\Delta_{g_0})x_1^{1/4-i\lambda/2}=c_1 \partial_{x_1}+c_2\partial_{\theta}+c_3\partial_{\theta}^2+c_4
\end{equation*}
is a differential operator with coefficients satisfying $|c_1|+|c_2|+|c_3|\lesssim R^{-1}$, and $$c_4=\chi(Rx_1) \varphi_1(x_1,\theta)+Rx_1\chi'(Rx_1)\varphi_2(x_1,\theta)+O(R^{-1}),
$$ where $\varphi_1,\varphi_2$ are smooth functions. 


Since the first resonance of $\Delta_{g_0}$ appears at $-i/2$, we know the resolvent
\begin{equation*}
    \|x_1^{i\lambda/2-1/4}(-\Delta_{g_0}-\lambda^2-1/4)^{-1} x_1^{5/4-i\lambda/2}\|_{\bar{H}^{s-1}(X)\to D^s}\lesssim 1, \quad |\Re \lambda|<0.1,\,\,  -0.3<\Im \lambda<1.
\end{equation*}
We take $s=1$ so that $\Im \lambda>1/2-s$.
We can check that $x_1^{i\lambda/2-5/4}(\Delta_{\tilde g}-\Delta_{g_0})x_1^{1/4-i\lambda/2}$ has a small norm on $D^s\to \bar{H}^{s-1}(X)$, thus the operator with $g_0$ replaced by $\tilde g$ is also invertible by a Neumann series expansion. We use the fact that $u\mapsto \chi(Rx_1) u : H^{1}\to L^2$ and $u\mapsto Rx_1\chi'(Rx_1) u : H^{1}\to L^2$ is bounded by $R^{-1/2}$.
\end{proof}

Lemma~\ref{lemma:no-eigen} implies that the following local smoothing estimate holds: for any $\chi\in C_0^{\infty}(\tilde{M})$, $\beta\in C_0^{\infty}((1/2,2))$ and $\lambda>0$,
\begin{equation}\label{eq:local-sm-tilde}
    \int_{-\infty}^{\infty}\|\chi e^{-is\Delta_{\tilde{g}}} \beta(\sqrt{-\Delta_{\tilde{g}}}/\lambda) f\|_{L^2}^2 ds \lesssim (1+\lambda)^{-1}\|f\|_{L^2}^2.
\end{equation}

We also need the following variant of Lemma~\ref{co}.
\begin{lemma}\label{lem:loc-sm-diff}
Let $(\tilde M,\tilde g)$ be defined as in \eqref{tildeM}. Let $\beta$ be as in \eqref{commuteerror}, and define $\beta_k(s)=\beta(s/2^k)$, $\beta_0(s)= \sum_{k\le 0} \beta(s/2^k)$.
 Suppose $f\in L^2(M)$ is orthogonal to all the $L^2$ eigenvalues, and one of the following holds:
\begin{enumerate}
    \item $-\Delta_g$ does not have resonance at the bottom of the continuous spectrum;
    \item $\mathbf{1}_{[0,1/2+\epsilon]}(\sqrt{-\Delta_g})f=0$ for some $\epsilon>0$. 
\end{enumerate}
Then if $\chi_\infty=1-\chi$ for some $\chi \in C_0^{\infty}(M)$ with $\chi=1$ on $M_{tr}$, we have for $|k_1-k_2|> 10$
\begin{equation}\label{eq:guess}
   \left\| \int_{\mathbb{R}} e^{is\Delta_{\tilde{g}}} \beta_{k_1}(\sqrt{-\Delta_{\tilde{g}}})[\Delta_g,\chi_\infty] \beta_{k_2}(\sqrt{-\Delta_{g}}) e^{-is\Delta_g}f ds \right\|_{L^2}\lesssim_N 2^{-N\max\{k_1,k_2\}} \|f\|_{L^2}.
\end{equation}
Moreover, if $\bigcup_{j} I_j=\mathbb R$ where the intervals $I_j$ have finite overlap and length $\lesssim 1$, then for any $a_j\in C_0^\infty(I_j)$ with $|a_j|\lesssim 1$, we have
\begin{multline}\label{eq:guess-2}
  \left( \sum_{j}  \left\| \int_{I_j} e^{is\Delta_{g}} \beta_{k_1}(\sqrt{-\Delta_{g}})[\Delta_g,\chi] \beta_{k_2}(\sqrt{-\Delta_{g}}) a_j(s)e^{-is\Delta_g}f ds \right\|^2_{L^2}\right)^{\frac12}\\
  \lesssim_N     2^{-N\max\{k_1,k_2\}}    \|f\|_{L^2}.
\end{multline}
Similarly, if $|k_1-k_2|\le 10$, then
\begin{equation}\label{eq:guess1}
 \left\| \int_{\mathbb{R}} e^{is\Delta_{\tilde{g}}} \beta_{k_1}(\sqrt{-\Delta_{\tilde{g}}})[\Delta_g,\chi_\infty] \beta_{k_2}(\sqrt{-\Delta_{g}}) e^{-is\Delta_g}f ds \right\|_{L^2}\lesssim  \|f\|_{L^2},
\end{equation}
and 
\begin{equation}\label{eq:guess-22}
    \left( \sum_{j} \left\| \int_{I_j} e^{is\Delta_{g}} \beta_{k_1}(\sqrt{-\Delta_{g}})[\Delta_g,\chi] \beta_{k_2}(\sqrt{-\Delta_{g}}) a_j(s)e^{-is\Delta_g}f ds \right\|^2_{L^2}\right)^{\frac12}\lesssim \|f\|_{L^2}.
\end{equation}
Moreover, \eqref{eq:guess-2} and  \eqref{eq:guess-22} also hold if we replace $[\Delta_g,\chi]$ by $\chi$ (in \eqref{eq:guess-22}, the right side will be replaced by $2^{-k_2}(k_2+1)\|f\|_{L^2}$).
\end{lemma}

We defer the proof of this lemma to the end of this section.

\begin{proof}[Proof of the global Strichartz estimates \eqref{eq:str-global}]
Note that as a consequence of Lemma~\ref{lemma:no-eigen} and the fact that $\tilde M$ is a simply connected manifold with nonpositive sectional curvature, it follows from Chen \cite[Theorem 1]{ChenSt} that \eqref{eq:str-global} holds on $\tilde M$, namely,
\begin{equation}\label{tildems}
    \|e^{-it\Delta_{\tilde{g}}} u_0\|_{L_t^pL^q_x(\tilde{M}\times\R)}\lesssim \|u_0\|_{L^2}.
\end{equation}

Let $\chi\in C_0^\infty(M)$ with $\chi= 1$ on $M_{tr}$ and $\chi_\infty=1-\chi$. We write
\begin{equation*}
    u=e^{-it\Delta_g} u_0 =\chi_{\infty}e^{-it\Delta_g} u_0 +\chi e^{-it\Delta_g} u_0.
\end{equation*}
For the term $\chi e^{-it\Delta_g} u_0$, let $I_j=[j-1, j+1]$. 
We proceed similar to before by writing $u_j=\alpha(t-j) \chi u$ and 
\begin{equation*}
    (i\partial_t-\Delta_g) u_j = v_j+w_j
\end{equation*}
where
\begin{equation*}
    v_j=i\alpha'(t -j)\chi u,\quad w_j = -\alpha(t  -j) [\Delta_g,\chi] u .
\end{equation*}
Similar to the proof of \eqref{ii.1}, we have (where we use Christ--Kiselev lemma and the Strichartz estimate on the unit interval \eqref{ii.1})
\begin{equation}\label{eq:global-stri-add}
\begin{split}
    &\|\chi e^{-it\Delta_g} u_0\|_{L_t^pL_x^q(M\times \mathbb{R})}^2 \lesssim \sum_{j}\|u_j\|_{L_t^pL_x^q(M\times I_j)}^2\\
    &\lesssim \sum_j \left\|\int_{I_j} e^{is\Delta_g} v_j(s) ds\right\|_{L^2}^2+ \sum_j\left\|\int_{I_j} e^{is\Delta_g} w_j(s) ds\right\|_{L^2}^2.
\end{split}
\end{equation}
We only discuss the second term involving $w_j$ since the first term can be handled similarly. 
We write
\begin{equation}\label{eq:global-stri-diag}
\begin{split}
    &\sum_j\left\|\int_{I_j} e^{is\Delta_g} w_j(s) ds\right\|_{L^2}^2 \lesssim \sum_{j}\sum_{k_1\ge 0}\left\|\int_{I_j} e^{is\Delta_g} \beta_{k_1}(\sqrt{-\Delta_g}) w_j(s) ds\right\|_{L^2}^2\\
    &\lesssim \sum_{j}\left(\sum_{|k_1-k_2|>10}\left\|\int_{I_j} e^{is\Delta_g} \beta_{k_1}(\sqrt{-\Delta_g})\alpha(s-j) [\Delta_g,\chi]\beta_{k_2}(\sqrt{-\Delta_g})e^{-is\Delta_g}u_0 ds\right\|_{L^2}\right)^2\\
    &+\sum_{j}\sum_{|k_1-k_2|\leq 10}\left\|\int_{I_j} e^{is\Delta_g} \beta_{k_1}(\sqrt{-\Delta_g})\alpha(s-j) [\Delta_g,\chi]\beta_{k_2}(\sqrt{-\Delta_g})e^{-is\Delta_g}u_0 ds\right\|_{L^2}^2.
\end{split}
\end{equation}
By \eqref{eq:guess-2} and the Minkowski inequality, the first term in the right side of \eqref{eq:global-stri-diag} is bounded by
\begin{equation*}
    \left(\sum_{k_1,k_2\ge 0}(1+2^{k_1}+2^{k_2})^{-N}\|u_0\|_{L^2}\right)^2\lesssim \|u_0\|_{L^2}^2.
\end{equation*}
 Let  $\tilde \beta \in C_0^\infty(1/4, 4)$ satisfy $\tilde \beta\equiv 1$ in $(1/3, 3)$ as in Lemma~\ref{co}, 
and $\tilde \beta_k(s)=\tilde \beta(s/2^k)$, $\tilde \beta_0(s) =\sum_{k\leq 0} \tilde \beta_k(s)$. By \eqref{eq:guess-22},  the second term in the right side of \eqref{eq:global-stri-diag} is bounded by
\begin{equation*}
    \sum_{|k_1-k_2|\leq 10} \|\tilde \beta_{k_2}(\sqrt{-\Delta_g})u_0\|_{L^2}^2\lesssim \|u_0\|_{L^2}^2.
\end{equation*}
This finishes the proof for the second term on the right hand side of \eqref{eq:global-stri-add}. The first term on the right hand side of \eqref{eq:global-stri-add} can be handled similarly using \eqref{eq:guess-2} and \eqref{eq:guess-22} with $[\Delta_g,\chi]$ replaced by $\chi$.

For the term $\chi_{\infty}e^{-it\Delta_g} u_0$, we write
\begin{equation}\label{infty}
    \chi_{\infty} e^{-it\Delta_g} u_0 = e^{-it\Delta_{\tilde{g}}} \chi_{\infty} u_0 +\frac{i}{2\pi}\int_0^{t} e^{-i(t-s)\Delta_{\tilde{g}}}[\Delta_g,\chi_{\infty}] u(s,\cdot) ds. 
\end{equation}
By \eqref{tildems}, we have
\begin{equation*}
    \|e^{-it\Delta_{\tilde{g}}}\chi_{\infty} u_0\|_{L_t^pL^q_x(\tilde{M}\times \R)}\lesssim \|u_0\|_{L^2}.
\end{equation*}
By the  Christ--Kiselev lemma and \eqref{tildems}, to bound the second term in \eqref{infty} it suffices to estimate
\begin{equation*}
    \left\|\int_0^\infty e^{is\Delta_{\tilde{g}}}  [\Delta_g,\chi_{\infty}] u(s,\cdot) ds\right\|_{L^2(\tilde{M})} \lesssim \|u_0\|_{L^2}
\end{equation*}
This follows from the same argument as in \eqref{eq:global-stri-diag} if we use \eqref{eq:guess} and  \eqref{eq:guess1}. Thus, the proof of \eqref{eq:str-global} is complete.
\end{proof}

\begin{proof}[Proof of Lemma~\ref{lem:loc-sm-diff}]
We first remark that conditions (1) and (2) in Lemma~\ref{lem:loc-sm-diff} are used to ensure the desired local smoothing estimate at low frequencies, i.e., for any $\beta\in C_0^{\infty}(\mathbb{R})$ and $\chi\in C_0^{\infty}(M)$, we have
\begin{equation}\label{eq:loc-sm-low}
    \int_{-\infty}^{\infty} \| \chi e^{-is\Delta_g} \beta(\sqrt{-\Delta_g})f\|_{L^2}^2 ds \lesssim \|f\|_{L^2}^2.
\end{equation}
To see this, note that since there is no embedded eigenvalues in the continuous spectrum (see \cite{mazzeo,Vodev}), we have (see \cite[Theorem 5.33]{dyatlov2019mathematical}) for $\chi\in C_0^{\infty}(M)$ and any fixed constant $C>1$,
\begin{equation*}
     \|\chi (-\Delta_g-\lambda^2\pm i0)^{-1} \chi \|_{L^2\to L^2}\lesssim 1, \quad 1/2+\epsilon/2\leq \lambda\leq C;
\end{equation*}
and under the assumption (1):
\begin{equation*}
    \|\chi (-\Delta_g-\lambda^2\pm i0)^{-1} \chi \|_{L^2\to L^2}\lesssim 1,\quad 1/2-\epsilon\leq \lambda\leq C.
\end{equation*}
The fact that the resolvent estimates imply the local smoothing estimate \eqref{eq:loc-sm-low} is the same as \cite[Theorem 7.2]{dyatlov2019mathematical}.

In high frequencies, the local smoothing estimate holds as before: for $\beta\in C_0^{\infty}((1/2,2))$, $\chi\in C_0^{\infty}(M)$ and $\lambda>10$,
\begin{equation}\label{eq:loc-sm-high-0}
    \int_{-\infty}^{\infty} \|\chi e^{-is\Delta_g}\beta(\sqrt{-\Delta_g}/\lambda) f\|_{L^2}^2 ds\lesssim \lambda^{-1} \log \lambda\|f\|_{L^2}^2.
\end{equation}
If we further assume $\supp \chi \cap \pi(K)=\emptyset$, then
\begin{equation}\label{eq:loc-sm-high}
    \int_{-\infty}^{\infty} \|\chi e^{-is\Delta_g}\beta(\sqrt{-\Delta_g}/\lambda) f\|_{L^2}^2 ds\lesssim \lambda^{-1} \|f\|_{L^2}^2.
\end{equation}

The proofs of \eqref{eq:guess} and \eqref{eq:guess1} are analogous to those of \eqref{eq:guess-2} and \eqref{eq:guess-22} (except we use \eqref{eq:local-sm-tilde} on the left), so we only discuss \eqref{eq:guess-2} and \eqref{eq:guess-22} below. The proof of the last part of Lemma~\ref{lem:loc-sm-diff}, in which $[\Delta_g,\chi]$ is replaced by $\chi$, is similar and therefore omitted.

We first prove \eqref{eq:guess-22}.
Let $\tilde\beta $  be as in Lemma~\ref{co} and $\tilde \beta_k(s)=\tilde \beta(s/2^k)$,  it suffices to show for $P=\sqrt{-\Delta_g}$
\begin{equation}\label{goal}
   \left( \sum_{j\in \mathbb{Z}} \| \int_{I_j} e^{is\Delta_{{g}}} \beta_{k_1}( P)[\Delta_g, \chi] a_j(s) \beta_{k_2}(P)\tilde\beta_{k_2}(P)  e^{-is\Delta_g}f\,ds\|^2_{L^2}\right)^{\frac12}\\
   \lesssim \|f\|_{L^2}.
\end{equation}

As in \eqref{c3}, we may write $\beta_{k_2}(P)=B_{k_2}(P)+C_{k_2}(P)$. 
To handle the term in \eqref{goal} involving $C_{k_2}(P)$, if we apply the local smoothing estimate \eqref{eq:loc-sm-high-0} on the left (for low frequencies estimates when $2^{k_1}\le C$ in \eqref{eq:guess-2}, due to the lack of local smoothing estimate, we can instead use a simple $L^2$ estimate and apply Cauchy--Schwarz inequality in $ds$),  it suffices to show  
\begin{equation}\label{goal1}
   \left( \sum_{j\in \mathbb{Z}} \int_{I_j} \| [\Delta_g, \chi] C_{\la_2}(P)\beta_1(P/\lambda_2) a_j(s) e^{-is\Delta_g}f\|^2_{L^2}ds\right)^{\frac12}\lesssim 2^{-Nk_2}\|f\|_{L^2}.
\end{equation}
If we dyadically decompose frequencies again, \eqref{goal1} would be a consequence of 
\begin{equation}\label{goal2}
 \sum_{j\in \mathbb{Z}} \int_{I_j}  \|\beta_{k_3}(P) [\Delta_g, \chi] C_{k_2}(P)\tilde\beta_{k_2}(P) a_j(s) e^{-is\Delta_g}f\|^2_{L^2}ds\lesssim 2^{-N\max\{k_3,k_2\}}\|f\|_{L^2}^2.
\end{equation}

To prove \eqref{goal2}, if $k_3\le k_2$, then we have 
\begin{equation*}
\begin{split}
  \sum_{j\in \mathbb{Z}} \int_{I_j}  \|\beta_{k_3}(P) &[\Delta_g, \chi] C_{k_2}(P)\tilde\beta_{k_2}(P) a_j(s) e^{-is\Delta_g}f\|^2_{L^2}ds 
\lesssim \\
    & \sum_{j\in \mathbb{Z}} \int_{I_j}  \|\beta_{k_3}(P) \Delta_g \chi C_{k_2}(P)\tilde\beta_{k_2}(P)a_j(s)  e^{-is\Delta_g}f\|^2_{L^2}ds \\
    &+   \sum_{j\in \mathbb{Z}} \int_{I_j}  \|\beta_{k_3}(P) \chi\Delta_g C_{k_2}(P)\tilde\beta_{k_2}(P) a_j(s) e^{-is\Delta_g}f\|^2_{L^2}ds 
\end{split}
\end{equation*}
By using the local smoothing estimates \eqref{eq:loc-sm-low} and \eqref{eq:loc-sm-high-0}, the first term on the right side is bounded by
\begin{equation*}
\begin{split}
      \sum_{j\in \mathbb{Z}} \int_{I_j}&  \|\beta_{k_3}(P) \Delta_g \chi C_{k_2}(P)\tilde\beta_{k_2}(P)a_j(s)  e^{-is\Delta_g}f\|^2_{L^2}ds 
   \\
   &\le  2^{2k_2} \sum_{j\in \mathbb{Z}} \int_{I_j}\|\chi C_{k_2}(P)\tilde \beta_{k_2}(P) a_j(s) e^{-is\Delta_g}f\|^2_{L^2}ds  
   \le  2^{2k_2} \| C_{k_2}(P)f\|^2_{L^2} \lesssim 2^{-Nk_2} \|f\|_{L^2}^2.
\end{split}
\end{equation*}
The second term can be treated similarly.
On the other hand, if $k_3\ge k_2$, we write 
$\beta_{k_3}(P)=B_{k_3}(P)+C_{k_3}(P)$, then we have
\begin{equation*}
    \begin{split}
         & \sum_{j\in \mathbb{Z}} \int_{I_j} \|\beta_{k_3}(P) [\Delta_g, \chi] C_{k_2}(P)\tilde\beta_{k_2}(P) a_j(s) e^{-is\Delta_g}f\|^2_{L^2}ds\\
         &\lesssim 2^{-2Nk_3} \sum_{j\in \mathbb{Z}} \int_{I_j} \|\beta_{k_3}(P) \Delta_g^N [\Delta_g,\chi]C_{k_2}(P) \tilde\beta_{k_2}(P) a_j(s)  e^{-is\Delta_g}f\|_{L^2}^2 ds\\
         &\lesssim 2^{-2Nk_3} \sum_{j\in \mathbb{Z}} \int_{I_j} \|B_{k_3}(P) \Delta_g^N [\Delta_g,\chi]C_{k_2}(P) \tilde\beta_{k_2}(P) a_j(s)  e^{-is\Delta_g}f\|_{L^2}^2 ds\\
         &\quad +2^{-2Nk_3} \sum_{j\in \mathbb{Z}} \int_{I_j} \|C_{k_3}(P) \Delta_g^N [\Delta_g,\chi]C_{k_2}(P) \tilde\beta_{k_2}(P) a_j(s)  e^{-is\Delta_g}f\|_{L^2}^2 ds.
    \end{split}
\end{equation*}
The last term on the right side can be handled similarly as \eqref{goal2}. For the first term, by \eqref{c6} and the local smoothing estimate \eqref{eq:loc-sm-low} and \eqref{eq:loc-sm-high-0}, we have
\begin{equation}\label{goal2c}
    \begin{split}
    &2^{-2Nk_3}\sum_{j\in \mathbb{Z}} \int_{I_j}\|B_{k_3}(P)\Delta_g^{N}[\Delta_g, \chi] C_{k_2} (P)\tilde\beta_{k_2}(P) a_j(s)  e^{-is\Delta_g} f\|_{L^2}^2 ds \\
    &\lesssim 2^{-2Nk_3}\sum_{j\in \mathbb{Z}} \sum_{\ell\leq N}\int_{I_j} \| B_{k_3}(P)[\Delta_g,[\Delta_g,[\cdots,\chi]\cdots]\Delta_g^{\ell} C_{k_2} (P) \tilde\beta_{k_2}(P) a_j(s)  e^{-is\Delta_g} f\|_{L^2}^2 ds\\
    &\lesssim 2^{-(N-1)k_3}\sum_{j\in \mathbb{Z}}\sum_{\ell\leq N} \int_{I_j} \|\tilde \chi\Delta_g^{\ell} C_{k_2} (P)\tilde\beta_{k_2}(P) a_j(s) e^{-is\Delta_g} f\|_{L^2}^2 ds\\
    &\lesssim 2^{-(N-1)k_3}\|f\|_{L^2}^2.
    \end{split}
\end{equation}
Here $\tilde \chi\in C_0^\infty(M)$ is a cut-off function such that $\tilde\chi=1$ in a neighborhood of $\supp \nabla\chi$. 

So it remains to show 
\begin{equation}\label{goal3}
   \sum_{j\in \mathbb{Z}}\| \int_{I_j} e^{is\Delta_{{g}}} \beta_{k_1}( P)[\Delta_g, \chi] a_j(s) B_{k_2}(P)\tilde\beta_{k_2}(P)   e^{-is\Delta_g}f\,ds\|^2_{L^2}\lesssim \|f\|^2_{L^2}.
\end{equation}
which is a consequence of 
\begin{equation}\label{goal4}
   \sum_{j\in \mathbb{Z}} \| \int_{I_j} e^{is\Delta_{{g}}}\tilde\beta_{k_1}(P)B_{k_1}(P)[\Delta_g, \chi]a_j(s) B_{k_2}(P)\tilde\beta_{k_2}(P)  e^{-is\Delta_g}f\,ds\|^2_{L^2}\lesssim \|f\|^2_{L^2},
\end{equation}
and 
\begin{equation}\label{goal4a}
  \sum_{j\in \mathbb{Z}} \| \int_{I_j} e^{is\Delta_{{g}}}\tilde\beta_{k_1}(P)C_{k_1}(P)[\Delta_g, \chi] a_j(s) B_{k_2}(P)\tilde\beta_{k_2}(P)  e^{-is\Delta_g} f \,ds\|^2_{L^2}\lesssim 2^{-Nk_2}\|f\|^2_{L^2}.
\end{equation}
If $\tilde \chi \in C_0^\infty(M)$ is a cut-off function such that $\tilde \chi = 1$ in a $\delta_0$-neighborhood of $\supp\nabla \chi$, then by \eqref{c3} and the finite propagation speed of the wave propagator $\cos(tP)$, the estimates \eqref{goal4} and \eqref{goal4a} are equivalent to
\begin{equation}\label{goal4b}
  \sum_{j\in \mathbb{Z}} \| \int_{I_j} e^{is\Delta_{{g}}}\tilde\beta_{k_1}(P)\tilde\chi B_{k_1}(P)[\Delta_g, \chi] a_j(s) B_{k_2}(P)\tilde\chi \tilde\beta_{k_2}(P) e^{-is\Delta_g} f\,ds\|^2_{L^2}\lesssim \|f\|^2_{L^2}
\end{equation}
and 
\begin{equation}\label{goal4c}
   \sum_{j\in \mathbb{Z}}\| \int_{I_j} e^{is\Delta_{{g}}}\tilde\beta_{k_1}(P) C_{k_1}(P)[\Delta_g, \chi] a_j(s) B_{k_2}(P)\tilde\chi\tilde\beta_{k_2}(P)  e^{-is\Delta_g}f \,ds\|^2_{L^2}\lesssim 2^{-Nk_2}\|f\|^2_{L^2}.
\end{equation}
Here we choose $\delta_0$ in \eqref{c3} sufficiently small so that  $\tilde \chi$ is supported entirely in the nontrapping region.

\eqref{goal4b} follows from the two-fold application of the local smoothing estimates \eqref{eq:loc-sm-low} and \eqref{eq:loc-sm-high} (in low frequencies $2^{k_1}\leq C$, we apply a simple $L^2$ estimate and Cauchy--Schwarz inequality on the left), and the fact that 
\begin{equation}\label{btob}
    \|B_{k_1}(P)[\Delta_g, \chi] B_{k_2}(P)\|_{L^2\to L^2}\lesssim 2^{k_2},
\end{equation}
which follows from the fact that the operator above is a pseudodifferential operator of order 1 with symbol supported in the region $p(x,\xi)\approx 2^{k_2}$. 
The  $2^{k_2}$ loss in \eqref{btobb} can be cancelled by the two-fold application of the local smoothing estimate.

To prove \eqref{goal4c}, by the local smoothing estimates \eqref{eq:loc-sm-low} and \eqref{eq:loc-sm-high},
\begin{equation*}
   \sum_{j\in \mathbb{Z}}\int_{I_j}\|a_j(s) \tilde\chi\tilde\beta_{k_2}(P)  e^{-is\Delta_g}f \|^2_{L^2}ds\lesssim 2^{-{k_2}}\|f\|^2_{L^2}.
\end{equation*}
Hence it suffices to show 
\begin{equation}\label{goal4ca}
  \| \int_{I_j} e^{is\Delta_{{g}}}\tilde\beta_{k_1}(P)C_{k_1}(P)[\Delta_g, \chi] B_{k_2}(P)F(s,\cdot)\,ds\|^2_{L^2}\lesssim 2^{-Nk_2}\|F\|^2_{L^2{(I_j\times M)}}.
\end{equation}
By duality,  this is a consequence of 
\begin{equation}\label{goal4d}
  \int_{I_j} \|B_{k_2}(P) [\Delta_g, \chi] C_{k_1}(P)\tilde\beta_{k_1}(P)  e^{-is\Delta_{ g}}f\|^2_{L^2}ds\lesssim 2^{-Nk_2}\|f\|_{L^2}^2.
\end{equation}
This follows from the same argument as in \eqref{goal2c}. Thus, the proof of \eqref{eq:guess-2} is complete.

To prove \eqref{eq:guess-2}, we simply repeat the above argument. The only difference is that, instead of \eqref{btob}, we use 
\begin{equation}\label{btobb}
    \|B_{k_1}(P)[\Delta_g, \chi] B_{k_2}(P)\|_{L^2\to L^2}\lesssim 2^{-N\max\{k_1,k_2\}},
\end{equation}
which follows from the assumption that $|k_1-k_2|> 10$ and integration by parts after we write out the composition of pseudo-differential operators explicitly. 
 The proof of Lemma~\ref{lem:loc-sm-diff} is complete.
\end{proof}

For later reference, we record the following lemma, which will be used in the proof of the lossless spectral projection estimates.
\begin{lemma}\label{lemsp}
Let  $\beta$ be as in \eqref{commuteerror}. 
Then if $\chi \in C_0^{\infty}(M)$ satisfies $\chi=1$ on $M_{tr}$, for $\lambda\gg 1$ we have 
\begin{equation}\label{a}
     \|[\Delta_g,\chi] \beta(\sqrt{-\Delta_{g}}/\lambda) e^{-is\Delta_g}f \|_{L^2(\mathbb{R}\times M)} \lesssim \la^{\frac12}\|f\|_{L^2}.
\end{equation}
\end{lemma}
The proof of the lemma follows from similar arguments as in \eqref{goal}-\eqref{btobb}, we omit the details here.

\bigskip 

\noindent{\bf 2.2. Lossless spectral projection estimates.}

In this section we shall give the proof of Theorem \ref{thm2}.
We may assume $\delta<(\log\la)^{-1}$ since 
the sharp estimates for $\delta=(\log\la)^{-1}$ follow from Theorem~\ref{bgsp} which 
concerns a larger
class of unbounded manifolds. 

The proof of Theorem \ref{thm2} also relies on the auxiliary manifold  manifold $(\tilde M, \tilde g)$ which agrees with $M$ asymptotically and satisfies favorable spectral projection estimates. 
We define the metric $\tilde g$ on $\tilde M$ as in \eqref{tildeM} so that it is simply connected and has nonpositive sectional curvature.
As in the previous section, we decompose
$M=M_{tr}\cup M_\infty$ where $M_{tr} \subset M$ is compact and contains a neighborhood of the trapped set $\pi(K)$ and 
the metric $\tilde g$ for $\tilde M$ agrees with the metric $g$ on $M_\infty$.

By
Chen-Hassell \cite[Theorem 1.6]{ChHa}, we have the following sharp spectral projection estimates for $\tilde M$:
If $\tilde P=\sqrt{-\Delta_{\tilde g}}$, for $\mu\in [\la/2, 2\la]$ with $\la\gg1$, 
\begin{equation}\label{3.7}
\|\one_{[\mu,\mu+\delta]}(\tilde P)h\|_{L^q(\tilde M)}\lesssim \la^{\mu(q)}\delta^{\frac12}\|h\|_{L^2(\tilde M)}, \, \, \, \delta\in (0,1).
\end{equation}

To prove  Theorem \ref{thm2}, 
it suffices to prove the estimates in \eqref{ii.5} for $q<\infty$  since the bounds for a given $q\in [6,\infty)$ imply those for larger
$q$ by a simple argument using dyadic Sobolev estimates.  So, in what follows, we shall assume that $q\in (2,\infty)$. And if we fix $\beta\in C^\infty_0((1/2,2))$ with $\beta= 1$ in $(3/4,5/4)$, it suffices to replace $f$ in the left side of \eqref{ii.5} with $f_\la=\beta(\sqrt{-\Delta_g}/\la) f$.

Let $\rho\in {\mathcal S}(\R)$ satisfy $\rho(0)=1$ and have Fourier transform vanishing outside of $[-1,1]$, and let $\chi_0\in C_0^\infty(M)$ with $\chi_0= 1$ on $M_{tr}$ 
 and $\chi_\infty=1-\chi_0$.
To 
prove  \eqref{ii.5}, it suffices to show that for $\delta$ in this inequality we have
\begin{equation}\label{3.3}
\| \chi_\infty \rho((\la \delta)^{-1}(-\Delta_g-\la^2))f_\la\|_{L^q(M)}\lesssim 
    \la^{\mu(q)}\delta^{{\frac{1}{2}}}\|f\|_{L^2(M)},
\end{equation}
as well as 
\begin{equation}\label{3.4}
\| \chi_0 \rho((\la \delta)^{-1}(-\Delta_g-\la^2))f_\la\|_{L^q(M)}\lesssim\la^{\mu(q)}\delta^{\frac12} \|f\|_{L^2(M)}.
\end{equation}




To prove \eqref{3.3}, note that if $u=e^{-it\Delta_g}f_\la$ and we set $v=\chi_\infty u$, where $\chi_\infty$ is as above,
then $v$ solves the Cauchy problem on $(M,g)$
\begin{equation}\label{5}
\begin{cases}
(i\partial_t-\Delta_g)v=[\chi_\infty, \Delta_g]u
\\
v|_{t=0}=\chi_\infty f_\la.
\end{cases}
\end{equation}
Since $\Delta_g=\Delta_{\tilde g}$ on $\text{supp }\chi_\infty$, $v$ also solves the following Cauchy problem
on the ``background manifold'' $(\tilde M,\tilde g)$,
\begin{equation}\label{6}
\begin{cases}
(i\partial_t-\Delta_{\tilde g})v=[\chi_\infty, \Delta_g]u
\\
v|_{t=0}=\chi_\infty f_\la.
\end{cases}
\end{equation}
Thus,
\begin{equation}\label{7}
v=e^{-it\Delta_{\tilde g}}(\chi_\infty f)+i\int_0^t e^{-i(t-s)\Delta_{\tilde g}}\bigl(
[\Delta_g,\chi_\infty] u(s, \, \cdot \, )\bigr) \, ds. 
\end{equation}
By using the inverse Fourier transform, \eqref{7} implies
\begin{multline}\label{2.3}
\chi_\infty \rho((\la \delta)^{-1}(-\Delta_g-\la^2))f_\la = \rho((\la \delta)^{-1}(-\Delta_{\tilde g}-\la^2))(\chi_\infty f_\la)
\\
+(2\pi)^{-1} i
\int_{-\infty}^\infty \la\delta \, \Hat \rho\bigl( \la\delta t\bigr) \, e^{-it\la^2}
\, 
\Bigl(\int_0^t e^{-i(t-s)\Delta_{\tilde g}}\bigl([\Delta_g,\chi_\infty]u(s,\, \cdot\,)) \, ds\Bigr) \, dt.
\end{multline}

By using
the spectral projection estimates
 \eqref{3.7}, for $\tilde M$
it is not hard to check that we have the desired bounds for the first term in the right side of \eqref{2.3}.  So to prove \eqref{3.3} it suffices to show that
\begin{equation}\label{3.4.1}
\|R_\la f\|_{L^q(M_\infty)}\lesssim 
    \la^{\mu(q)}\delta^{1/2}\|f\|_{L^2(M)},  \,\,  2<q<\infty,
\end{equation}
where, 
if we set $\tilde \rho(t)=e^{-t} \hat{\rho}(t)$,
\begin{align}\label{3.4.2}
R_\la f&=\la\delta\int_{-\infty}^\infty
e^{-it(\Delta_{\tilde g}+\la^2+i\la\delta )} \tilde \rho(\la\delta t) \Bigl(\int_0^t
e^{is\Delta_{\tilde g}} [\Delta_g,\chi_\infty] \bigl(e^{-is\Delta_g} f_\la \bigr)\, ds\, \Bigr) \, dt
\\
&=-i (\Delta_{\tilde g}+\la^2+i\la \delta )^{-1} (\la\delta)\int_{-\infty}^\infty
e^{-it(\Delta_{\tilde g}+\la^2+i\la\delta )} \frac{d}{dt} \bigl( \tilde \rho(\la\delta t)\bigr) \Bigl(\int_0^t
e^{is\Delta_{\tilde g}} [\Delta_g,\chi_\infty] \bigl(e^{-is\Delta_g} f_\la \bigr)\, ds\, \Bigr) \, dt
\notag
\\
&\quad -i (\Delta_{\tilde g}+\la^2+i\la\delta )^{-1}  \int_{-\infty}^\infty  \la\delta[\Delta_g,\chi_\infty] {\hat{\rho}}(\la\delta t)e^{-it\la^2} e^{-it\Delta_g}f_\la \, dt
\notag
\\
&= -i(\Delta_{\tilde g}+\la^2+i\la\delta )^{-1}  \bigl[ R'_\la f+S_\la f\bigr],
\notag
\end{align}
where $R'_\la$ is the analog of $R_\la$ with $\tilde \rho(\la\delta t)$ replaced by its derivative, and where $S_\la$ is the last integral.

Note that by using Minkowski's integral inequality in the $t$-variable followed by a two-fold application of local smoothing as in the previous section, we have
\begin{equation}\label{3.6}
\|R'_\la f\|_{L^2(M_\infty)}\lesssim (\la\delta)\cdot\la \cdot (\la^{-1/2})^2 \|f\|_{L^2(M)}=\lambda\delta\|f\|_{L^2(M)},
\end{equation}
with the $\la$-factor arising due to the commutator. 
 Here we need to use Lemma~\ref{lem:loc-sm-diff} when applying local smoothing estimates.

Also, it is not hard to use \eqref{3.7} along with the Cauchy-Schwarz inequality and $L^2$ orthogonality to prove that
\begin{equation}\label{3.8}
\| (\Delta_{\tilde g}+\la^2+i\la\delta )^{-1} h\|_{L^q(\tilde M)} \lesssim \la^{\mu(q)-1}\delta^{-\frac12}\|h\|_{L^2(\tilde M)},\,\,\, q<\infty.
\end{equation}
For further details, see, for example, \cite[Proposition 1.3]{SZqm} and its proof.

By \eqref{3.6} and \eqref{3.8} we know that the second to last term in \eqref{3.4.2} satisfies the desired bounds  posited in 
\eqref{3.4.1}.

To handle the other term in \eqref{3.4.2}
 involving $S_\la$, we will rely on the following key result concerning half-localized resolvent
operators on the background manifold.
\begin{proposition}\label{keya}
Let $(\tilde M,\tilde g)$ be defined as in \eqref{tildeM}, which is asymptotically hyperbolic, simply connected and has negative curvature. If $\tilde \chi_0\in C^\infty_0(M_\infty)$ then for $\la\gg 1$ and $\delta\in (0,1/2)$, we have 
\begin{equation}\label{3.10}
\| (\Delta_{\tilde g}+\la^2+i\delta \la)^{-1} (\tilde \chi_0 h)\|_{L^q(\tilde M)}\lesssim 
    \la^{\mu(q)-1}\|h\|_{L^2(\tilde M)},\, 2<q<\infty.
\end{equation}
\end{proposition}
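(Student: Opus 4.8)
The plan is to combine three ingredients: the universal spectral cluster estimates \eqref{ii.3}, which are available on $\tilde M$ since its curvature is pinched (between $-\tfrac32$ and $-\tfrac12$) and hence $\tilde M$ has uniformly bounded geometry; the sharp spectral projection bounds \eqref{3.7} of Chen--Hassell for $\tilde M$; and the fact that $\tilde M$, being simply connected with negative curvature (a Cartan--Hadamard surface) and asymptotically hyperbolic, is non-trapping with no conjugate points, so that its unit-speed geodesic flow carries $S^*\tilde M$ over any fixed compact set out of that set in a uniformly bounded time and never returns, and waves propagate into the hyperbolic funnel ends.

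\emph{Reductions.} Fix $\chi_0\in C_0^\infty((-c,c))$ with $\chi_0=1$ near $0$ and write $\tilde P=\sqrt{-\Delta_{\tilde g}}$. The piece $(\Delta_{\tilde g}+\la^2+i\delta\la)^{-1}\bigl(1-\chi_0(\tilde P-\la)\bigr)$ has a spectral multiplier supported in $|\tau-\la|\ge c$, comparable to $(\la|\tau-\la|)^{-1}$ for $|\tau-\la|\lesssim\la$ and to $\min(\la^{-2},\tau^{-2})$ otherwise; decomposing it over unit frequency bands and using $\|\tilde\beta_\nu(\tilde P)\|_{L^2\to L^q}\lesssim\langle\nu\rangle^{\mu(q)}$ together with the Cauchy--Schwarz inequality and $L^2$-orthogonality of $\tilde\beta_\nu(\tilde P)(\tilde\chi_0 h)$ gives $\lesssim\la^{\mu(q)-1}\|h\|_{L^2}$ for this term — this is essentially the mechanism behind \eqref{3.8}, except that the spatial cutoff is not needed because the multiplier here decays summably on its own. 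For the remaining piece one replaces $(\Delta_{\tilde g}+\la^2+i\delta\la)^{-1}\chi_0(\tilde P-\la)$ by $-\tfrac1{2\la}(\tilde P-\la-i\delta/2)^{-1}\chi_0(\tilde P-\la)$ at the cost of a multiplier that is $O(\la^{-2})$ on $|\tau-\la|\le c$, hence harmless, and then, using $\tfrac1{\sigma-i\delta/2}=i\int_0^\infty e^{-ir\sigma}e^{-r\delta/2}\,dr$ with $\sigma=\tilde P-\la$ and $e^{-ir(\tilde P-\la)}=e^{ir\la}e^{-ir\tilde P}$, one is reduced to showing
\begin{equation*}
\Bigl\|\int_0^\infty e^{-r\delta/2}e^{ir\la}\,\chi_0(\tilde P-\la)\,e^{-ir\tilde P}(\tilde\chi_0 h)\,dr\Bigr\|_{L^q(\tilde M)}\ \lesssim\ \la^{\mu(q)}\|h\|_{L^2(\tilde M)}.
\end{equation*}

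\emph{Short and long time.} Split the $r$-integral at a fixed escape time $r_0$, chosen independently of $\la$ so that the geodesic flow carries $S^*\tilde M|_{\supp\tilde\chi_0}$ out of a fixed neighbourhood of $\supp\tilde\chi_0$ by time $r_0$. On $[0,r_0]$, Minkowski's inequality in $r$ together with $\|\chi_0(\tilde P-\la)e^{-ir\tilde P}(\tilde\chi_0 h)\|_{L^q}\le\|\chi_0(\tilde P-\la)\|_{L^2\to L^q}\,\|\tilde\chi_1(\tilde P-\la)e^{-ir\tilde P}(\tilde\chi_0 h)\|_{L^2}$ (with $\tilde\chi_1=1$ on $\supp\chi_0$), the unit-window case of \eqref{3.7} for the first factor, and unitarity of $e^{-ir\tilde P}$ for the second, gives $r_0\,\la^{\mu(q)}\|h\|_{L^2}$, which is admissible. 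On $[r_0,\infty)$ one uses finite propagation speed for $e^{-ir\tilde P}$ (and Lemma~\ref{parametrix} to absorb the $O(\la^{-\infty})$ spillover of the smooth cutoff $\chi_0$) to localize $\chi_0(\tilde P-\la)e^{-ir\tilde P}(\tilde\chi_0 h)$ to the shell at distance $\sim r$ from $\supp\tilde\chi_0$, deep in a funnel end; since $\tilde M$ has no conjugate points the wave escapes and does not return, and the hyperbolic volume growth there — the same computation as in the cusp counterexample of the introduction, now with the favorable sign because $q>2$ — yields a gain exponential in $r$. Peeling off the operator $W=e^{-r_0\delta/2}e^{ir_0\la}\chi_0(\tilde P-\la)e^{-ir_0\tilde P}$ and iterating (using a priori that $(\Delta_{\tilde g}+\la^2+i\delta\la)^{-1}(\tilde\chi_0 h)\in L^q(\tilde M)$, which holds since in two dimensions the resolvent maps $L^2$ into $H^2\subset L^\infty\cap L^2$), one gets a geometric series $\sum_k W^k(\text{short-time}+\text{elliptic})$ whose terms decay like $e^{-c(q)kr_0}$; summing gives $\la^{\mu(q)}\|h\|_{L^2}$. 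The hypothesis $\delta\in(\la^{-N_0},1/2)$ is used only to absorb the $O(\la^{-N})$ errors from the smooth spectral cutoffs, accumulated over the iteration, into $\la^{\mu(q)-1}\|h\|_{L^2}$.

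\emph{Main obstacle.} The long-time regime is the crux. A naive bound on the tail by limiting absorption on $\tilde M$, namely $\|\tilde\chi_1(\tilde P-\la)(\tilde P-\la-i\delta/2)^{-1}(\tilde\chi_0 h)\|_{L^2}\lesssim\delta^{-1/2}\|h\|_{L^2}$ combined with $\|\chi_0(\tilde P-\la)e^{-ir_0\tilde P}\|_{L^2\to L^q}\lesssim\la^{\mu(q)}$, only reproduces the lossy bound $\la^{\mu(q)-1}\delta^{-1/2}\|h\|_{L^2}$ of \eqref{3.8}; this is not enough, since tracing through \eqref{3.4.2}--\eqref{3.4.1} shows that exactly the $\delta$-free bound \eqref{3.10} is what recovers the $\delta^{1/2}$ in Theorem~\ref{thm2}. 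Thus the whole point is that the escaping frequency-$\la$ wave, which in the weighted-$L^2$ (limiting absorption) norm genuinely carries a resolvent-sized tail of size $\delta^{-1/2}$, has $L^q$-contributions that are not merely bounded but \emph{summable} in the propagation time — and that requires the non-trapping geometry of $\tilde M$ and the exponential $L^q$-gain (valid precisely because $q>2$) coming from the hyperbolic volume growth of the funnels to be used together and essentially.
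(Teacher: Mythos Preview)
Your reduction (splitting off the spectrally-elliptic piece and rewriting the near-$\la$ piece via $(\tilde P-\la-i\delta/2)^{-1}=i\int_0^\infty e^{-ir(\tilde P-\la)}e^{-r\delta/2}\,dr$) is fine, as is the short-time bound.  The gap is in the long-time/iteration step, and it is a polynomial-in-$\la$ gap, not a technical one.

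First, a small but real error: $e^{-ir\tilde P}$ does \emph{not} have finite propagation speed --- only $\cos(r\tilde P)$ does.  With the spectral cutoff $\chi_0(\tilde P-\la)$ one does get concentration of the kernel on the shell $d_{\tilde g}(x,y)\approx r$ by non-stationary phase, so the moral picture is right, but the justification is wrong.  More seriously, the quantitative estimate you need,
\[
\bigl\|\chi_0(\tilde P-\la)\,e^{-ir\tilde P}(\tilde\chi_0 h)\bigr\|_{L^q(\tilde M)}\ \lesssim\ \la^{\mu(q)}\,e^{-c(q)r}\,\|h\|_{L^2},
\]
is not what the kernel bound or the ``volume-growth'' heuristic gives.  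The Chen--Hassell spectral-measure bound yields $|K(x,y)|\lesssim \la^{1/2}e^{-d_{\tilde g}(x,y)/2}$ for the frequency-localized half-wave kernel; combining this $L^2\to L^\infty$ bound (using compact support of $\tilde\chi_0 h$) with the trivial $L^2\to L^2$ bound $=1$ gives
\[
\bigl\|\chi_0(\tilde P-\la)\,e^{-ir\tilde P}(\tilde\chi_0 h)\bigr\|_{L^q}\ \lesssim\ \la^{\tfrac12(1-\tfrac2q)}\,e^{-(\tfrac12-\tfrac1q)r}\,\|h\|_{L^2},
\]
and for $n=2$ one has $\tfrac12(1-\tfrac2q)>\mu(q)$ for every $2<q<\infty$.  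Integrating in $r$ (or summing your $W^k$-series) therefore produces $\la^{\frac12-\frac1q}\|h\|_{L^2}$, which after dividing by $2\la$ gives $\la^{-\frac12-\frac1q}$ rather than $\la^{\mu(q)-1}$; this is off by $\la^{1/q}$ for $q\ge q_c$ and by $\la^{1/4-1/(2q)}$ for $2<q<q_c$.  The iteration with $W$ does not repair this: the issue is the $\la$-prefactor attached to each term, not summability.  (Also, $\|W\|_{L^q\to L^q}$ is not $<1$; your series is really a statement about $W^k$ applied to a fixed compactly supported function, and the bound above is what that yields.)

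This is exactly why the paper does something different for the intermediate range $1\lesssim t\lesssim \log\la$.  The paper works with $\cos(t\tilde P)$ (genuine finite speed), decomposes the resolvent dyadically as $\sum_j T_j$, and for the delicate range introduces microlocal cutoffs $A_{k,0},A_{k,1}$ on the output side that separate directions pointing toward $\supp\tilde\chi_0$ from those pointing away (Lemmas~\ref{uniform} and~\ref{propogation}).  The ``away'' piece $A_{k,1}T_j\tilde\chi_0$ is $O(\la^{-N})$ by non-intersection of the backward flow with $\supp\tilde\chi_0$.  For the ``toward'' piece $A_{k,0}$ one compares $T_j$ with its adjoint $\bar T_j$ (where now the \emph{forward} flow misses $\supp\tilde\chi_0$, so $A_{k,0}\bar T_j\tilde\chi_0=O(\la^{-N})$); the remaining difference $\sum_j(T_j-\bar T_j)\tilde\chi_0$ is essentially $\mathrm{Im}\,(\Delta_{\tilde g}+(\la+i\delta)^2)^{-1}\tilde\chi_0$, whose symbol has the favorable $(\la\delta)^{-1}(1+|k|)^{-2}$ decay on $\delta$-bands, and \emph{this} is where the sharp projection estimate \eqref{3.7} and the compactly-supported local-smoothing bound \eqref{3.17} combine to give the correct $\la^{\mu(q)-1}$ without loss.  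Your scheme never produces the resolvent-minus-adjoint object, and that is the missing idea.
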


The estimate in \eqref{3.10} shows a gain of $\delta^{\frac12}$
  compared to the estimate in \eqref{3.8}. This gain arises from the presence of the compact cutoff 
$\tilde \chi_0$, and such estimates do not hold on compact manifolds.

We shall postpone the proof of \eqref{3.10} until the end of the section. 
Let us first see how we can use it to handle the last term in \eqref{3.4.2}.  Since $S_\la f $ is 
compactly supported in $M_\infty$, we find from \eqref{3.10}  that
\begin{equation}
    \begin{aligned}
        \| (\Delta_{\tilde g}+\la^2+i\delta \la)^{-1} S_\la f\|_{L^q(\tilde M)}\lesssim 
    \la^{\mu(q)-1}\|S_\la f\|_{L^2( \tilde M)},  \, \,  2<q<\infty.
    \end{aligned}
\end{equation}
On the other hand,  by Schwarz's inequality and Lemma~\ref{lemsp}
$$\|S_\la f\|_{L^2_x(\tilde M)}
\lesssim (\la\delta)^{\frac12}\cdot 
\| [\Delta_g, \chi_\infty] e^{-it\Delta_g}f_\la \|_{L^2([-(\la\delta)^{-1}, \, (\la\delta)^{-1}]\times M)}\lesssim (\la\delta)^{\frac12} \la^{\frac12}\|f\|_{L^2(M)}.$$

Let us see how we can combine this argument along with the proof of the Strichartz estimates in the previous section to obtain \eqref{3.4}.   To this end,
just as before,
choose $\alpha \in C^\infty_0((-1,1))$ satisfying $\sum \alpha(t-j)=1$, $t\in {\mathbb R}$.  Also, let
$$\alpha_j(t)=\alpha((\la/\log\la)t-j),$$
to obtain, like before a smooth partition of unity associated with $\la^{-1}\cdot \log\la$-intervals.  Then, if $\rho$ is as above and 
$$u_j =\alpha_j(t) \chi_0 e^{-it\Delta_g}f_\la,$$
 as was previously done, split
$$\Hat \rho(\la\delta t) u_j(t,x)=-i\Hat \rho(\la\delta t) \int_0^t e^{-i(t-s)\Delta_g}v_j(s,x)ds-i\Hat \rho(\la\delta t) \int_0^t e^{-i(t-s )\Delta_g} w_j(s,x)d s$$
where {$w_j$} and $v_j$ are defined in \eqref{eq:def-ujvj}. Let
$$I_j=[(j-1)\la^{-1}\log\la, (j+1)\la^{-1}\log\la],$$
then
\begin{align*}
\int \la\delta\Hat \rho(\la\delta t)
v_j(t) \, e^{-it\la^2}\, dt &=\la\delta\int_{I_j}
e^{-it(\Delta_g+\la^2+i\la/\log\la)} e^{-t\la/\log\la}\Hat \rho(\la\delta t)\\
&\qquad\cdot
\Bigl( \int_0^t \bigl( e^{is\Delta_g}[ \partial_s, \alpha_j] \, \chi_0 e^{-is\Delta_g}f_\la \bigr) \, ds \, \Bigr) \, dt
\\
&=-i(\Delta_g+\la^2+i\la/\log\la)^{-1} \bigl[ R'_{j,v,\la}f  +S_{j,v,\la}f\bigr],
\end{align*}
with
$$R'_{j,v,\la}f = \la\delta\int_{I_j} e^{-it(\Delta_g+\la^2+i\la/\log\la)}  \frac{d}{dt}\bigl(e^{-t\la/\log\la}\Hat \rho(\la\delta t)\bigr)
\Bigl( \int_0^t \bigl( e^{is\Delta_g} [\partial_s,\alpha_j ] \, \chi_0 e^{-is\Delta_g}f_\la \bigr) \, ds \, \Bigr) \, dt,
$$
and
$$S_{j,v,\la}f =\la\delta\int_{I_j} e^{-it\la^2} \Hat \rho(\la\delta t) [\partial_t, \alpha_j ] \chi_0 e^{-it\Delta_g}f_\la \, dt.$$
Similarly, we set
$$\int \la\delta\Hat \rho(\la\delta t)
w_j(t) \, e^{it\la^2}\, dt
={(\Delta_g+\la^2+i\la/\log\la)^{-1} }
\bigl[ R'_{j,w,\la}f  +S_{j,w,\la}f\bigr],$$
where
\begin{multline*}
R'_{j,w,\la}f 
\\
= \la\delta\int_{I_j} e^{-it(\Delta_g+\la^2+i\la/\log\la)}  \frac{d}{dt}\bigl(e^{-t\la/\log\la}\Hat \rho(\la\delta t)\bigr)
\Bigl( \int_0^t \bigl( e^{is\Delta_g} \alpha_j(s)[\Delta_g,\chi_0] \,  e^{-is\Delta_g}f_\la \bigr) \, ds \, \Bigr) \, dt,
\end{multline*}
and
$$S_{j,w,\la}f =\la\delta\int_{I_j} e^{-it\la^2} \alpha_j(t) \Hat \rho(\la\delta t)  [\Delta_g,\chi_0] e^{-it\Delta_g}f_\la \, dt.$$

Let us fix $\chi_1\in C_0^\infty (M)$ such that $\chi_1\equiv 1$ on the support of $\chi_0$. Then, we have $u_j=\chi_1u_j$, and  
since there are $O(\frac{1}{\delta\log\la})$ nonzero $\Hat \rho v_j$ and $\Hat \rho w_j$ summands, by the Cauchy-Schwarz
inequality, we would obtain \eqref{3.4} if we could show
\begin{multline}\label{3.13}
\Bigl(\, \sum_j \|\chi_1(\Delta_g+\la^2+i\la/\log\la)^{-1} R'_{j,v,\la}f \|^2_{L^q} \, \Bigr)^{1/2}
\\+
\Bigl(\, \sum_j \|\chi_1(\Delta_g+\la^2+i\la/\log\la)^{-1} S_{j,v,\la}f \|^2_{L^q} \, \Bigr)^{1/2}
\lesssim \la^{\mu(q)}\delta(\log\la)^{1/2} \, \, \|f\|_{L^q},
\end{multline}
and
\begin{multline}\label{3.14}
\Bigl(\, \sum_j \|\chi_1(\Delta_g+\la^2+i\la/\log\la)^{-1} R'_{j,w,\la}f \|^2_{L^q} \, \Bigr)^{1/2}
\\+
\Bigl(\, \sum_j \|\chi_1(\Delta_g+\la^2+i\la/\log\la)^{-1} S_{j,w,\la}f \|^2_{L^q} \, \Bigr)^{1/2}
\lesssim \la^{\mu(q)}\delta(\log\la)^{1/2} \, \|f\|_{L^2}.
\end{multline}
As we shall see later, the $\chi_1$ cutoff function is only needed to handle the ``$S-$term'' in \eqref{3.14}.

To prove the bounds for the ``$R'$-terms'' 
in \eqref{3.13} and \eqref{3.14}
we shall make use of the following analog of \eqref{3.8}
\begin{equation}\label{3.15}
\|(\Delta_g+\la^2+i\la/\log\la)^{-1}h\|_{L^q(M)} \lesssim \la^{\mu(q)}(\log\la)^{-1/2} \, (\la/\log\la)^{-1}\, \|h\|_{L^2(M)}.
\end{equation}
This follows from the sharp spectral projection estimates in Theorem~\ref{bgsp} and a simple argument using the Cauchy-Schwarz inequality and $L^2$ orthogonality.

{Let $I_j$ be as above.} We then claim that
\begin{equation}\label{3.16}
\|R'_{j,v,\la}f\|_{L^2(M)}\lesssim \la\delta \, (\la/\log\la)^{1/2} \, \|\chi_0 e^{-is\Delta_g}f_\la\|_{L^2(I_j\times M)}.
\end{equation}
If so, by applying \eqref{3.15} and using local smoothing estimates for $M$, we would obtain
\begin{align*}
\bigl(\sum_j \|\chi_1 &(\Delta_g+\la^2+i\la/\log\la)^{-1} R'_{j,v,\la}f \|^2_{L^q}\bigr)^{1/2}
\\
& \lesssim \la^{\mu(q)}\la\delta (\log\la)^{-1/2}(\la/\log\la)^{-1/2}
\|\chi_0 e^{-is\Delta_g}f_\la\|_{L^2_{s,x}(\R\times M)}
\\
&\lesssim \la^{\mu(q)}\delta(\log\la)^{1/2}\|f\|_{L^2},
\end{align*}
which  
gives us the desired bounds for the first term in the left side of \eqref{3.13}.

To prove \eqref{3.16}, note that {$\int_{I_j}e^{t\la/\log\la} |\tfrac{d}{dt}(e^{-t \la/\log\la}\Hat \rho(\la\delta t))| \, dt =O(1)$}.  Thus, by Minkowski's integral inequality,
$$\| R'_{j,v,\la}f \|_{L^2} \lesssim \la\delta\sup_{t\in I_j} \bigl\| \, \int_0^t e^{is\Delta_g} [\partial_s,\alpha_j] \chi_0 e^{-is\Delta_g} f_\la \, ds \, \bigr\|_{L^2(M)},
$$
which leads to \eqref{3.16} by the arguments used to prove the Strichartz estimates for the $v_j$ terms in the previous subsection.

Similarly, repeating arguments used before we obtain
$$\| R'_{j,w,\la}f \|_{L^2} \lesssim \la\delta\la^{-1/2} \|[\Delta_g,\chi_0] e^{-is\Delta_g}f_\la \|_{L^2(I_j\times M)}.$$
Therefore, by \eqref{3.15} and lemma~\ref{lemsp}
\begin{align*}
\bigl(\sum_j \|\chi_1& (\Delta_g+\la^2+i\la/\log\la)^{-1} R'_{j,w,\la}f \|^2_{L^q}\bigr)^{1/2}\\
& \lesssim \la^{\mu(q)}\la\delta(\log\la)^{-1/2}(\la/\log\la)^{-1}\la^{-1/2}\| [\Delta_g,\chi_0]e^{-is\Delta_g}f_\la\|_{L^2(\R\times M)}
\\
&\lesssim \la^{\mu(q)} \la\delta(\log\la)^{-1/2}(\la/\log\la)^{-1}\la^{-1/2}\cdot \la\cdot \la^{-1/2}\|f\|_{L^2},
\end{align*}
which  
means that we also have the desired bounds for the first term in the left side of \eqref{3.14}.

It remains to estimate the second terms in the left sides of \eqref{3.13} and \eqref{3.14},
i.e., the ``$S$-terms''.
First, by \eqref{3.15} and H\"older's inequality, we have
\begin{multline*}
\| (\Delta_g+\la^2+i\la/\log\la)^{-1}S_{j,v,\la}f \|_{L^q} \lesssim \la^{\mu(q)-1} \, (\log\la)^{1/2}  \| S_{j,v,\la}f\|_{L^2}
\\
\lesssim \la^{\mu(q)}\delta(\log\la)^{1/2}(\la/\log\la)^{-1/2}\|\chi_0 [\partial_s,\alpha_j]e^{-is\Delta_g}f_\la\|_{L^2(I_j\times M)}.
\end{multline*}
  Since $[\partial_s,\alpha_j]$ contributes $\la/\log\la$ to the estimates, if we square and sum over $j$ and use local smoothing estimate  \eqref{eq:local-sm-logloss} (with $[0,1]$ replaced by $\mathbb{R}$) in $M$, we obtain that
\begin{align*}
\Bigl(\, \sum_j \|&\chi_1(\Delta_g+\la^2+i\la/\log\la)^{-1} S_{j,v,\la}f \|^2_{L^q} \, \Bigr)^{1/2} 
\\
&\lesssim \la^{\mu(q)}\delta (\log\la)^{1/2}(\la/\log\la)^{-1/2}(\la/\log\la)^{1/2}\|f\|_{L^2}
\\
&=\la^{\mu(q)}\delta(\log\la)^{1/2}  \|f\|_{L^2},
\end{align*}
as desired.

To estimate the ``$S$-term'' in \eqref{3.14}, we
 shall need the following two-sided $L^2\to L^q$ localized resolvent estimate on $M$.

\begin{proposition}\label{keyb}
Let $(M,g)$ be an asymptotically hyperbolic surface with negative curvature,
 $ \chi_1\in C^\infty_0(M)$ with $\chi_1=1$ on $M_{tr}$, and $ \tilde\chi_1\in C^\infty_0(M_\infty)$  supported away from the trapped set.
Then, for 
$2<q<\infty$
\begin{equation}\label{3.15a}
\| \chi_1(\Delta_{ g}+\la^2+i (\log\la)^{-1} \la)^{-1} ( \tilde \chi_1 h)\|_{L^q( M)} \lesssim \la^{\mu(q)-1}  \, 
\|h\|_{L^2(M)}.
\end{equation}
\end{proposition}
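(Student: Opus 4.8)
### Proof proposal for Proposition~\ref{keyb}

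\textbf{Strategy.} The plan is to transfer the desired estimate from the background manifold $\tilde M$, where the half-localized resolvent bound of Proposition~\ref{keya} is already available, to the original manifold $M$, using the fact that $\Delta_g=\Delta_{\tilde g}$ on $M_\infty$ together with the nontrapping resolvent bounds \eqref{eq:res-nontrap}--\eqref{b} valid on $M$. Concretely, write $w=(\Delta_g+\la^2+i(\log\la)^{-1}\la)^{-1}(\tilde\chi_1 h)$, so that $(\Delta_g+\la^2+i(\log\la)^{-1}\la)w=\tilde\chi_1 h$. Since $\tilde\chi_1$ is supported in $M_\infty$ away from the trapped set, I would first use the nontrapping/log-loss resolvent estimates on $M$ (i.e. \eqref{eq:res-nontrap} and \eqref{b}, which hold by Bourgain--Dyatlov \cite{bourgain2018spectral} and its extension in \cite{tao2024spectral}, combined with the $\chi_1$ cutoff on the compact trapping region) to control $\|\chi_1 w\|_{L^2(M)}$ and, more importantly, to localize: pick $\psi\in C_0^\infty(M_\infty)$ supported away from $\pi(K)$ with $\psi=1$ on $\mathrm{supp}\,\tilde\chi_1\cup\mathrm{supp}\,\chi_1$ (shrinking $\chi_1$'s relevant part appropriately, or treating the genuinely compact-trapping piece separately via \eqref{b}), and estimate the ``far'' contribution where $\Delta_g=\Delta_{\tilde g}$ by comparison with the background resolvent.

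\textbf{The comparison step.} Let $\tilde w=(\Delta_{\tilde g}+\la^2+i(\log\la)^{-1}\la)^{-1}(\tilde\chi_1 h)$ on $\tilde M$. By Proposition~\ref{keya} (with $\delta=(\log\la)^{-1}$, which lies in the allowed range $(\la^{-N_0},1/2)$), we have $\|\tilde w\|_{L^q(\tilde M)}\lesssim \la^{\mu(q)-1}\|h\|_{L^2(M)}$. The difference $r=w-\tilde w$ (extended/restricted appropriately using a cutoff supported in the region where the two metrics agree) satisfies an equation of the form $(\Delta_g+\la^2+i(\log\la)^{-1}\la)r=[\Delta_g,\zeta]\,(\text{lower order})$, i.e. a commutator error supported where $\nabla\zeta\neq 0$, which can be placed far from $\pi(K)$ on both manifolds. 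This commutator is an order-one operator, producing a loss of $\la$, but it is localized, so I would apply the $L^2\to L^2$ nontrapping resolvent bound \eqref{eq:res-nontrap} (gain $\la^{-1}$, cancelling the commutator loss) to get $\|r\|_{L^2(\text{loc})}\lesssim \|h\|_{L^2(M)}\cdot\la^{-1}\cdot\la \cdot(\text{decay from the parametrix})$, and then upgrade from $L^2$ to $L^q$ using that $\chi_1 r$ has frequency localized near $\la$ (insert $\beta(\sqrt{-\Delta_g}/\la)$ modulo $O(\la^{-N})$ errors as in \eqref{commuteerror}) together with the fixed-window spectral projection bound \eqref{ii.3}, which holds for $M$ since it is of bounded geometry. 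This costs an extra $\la^{\mu(q)}$ while the resolvent localization supplies the needed $\la^{-1}$; the off-diagonal decay of the resolvent kernel between the disjoint supports of the commutator and $\chi_1$ absorbs any remaining slack.

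\textbf{Main obstacle.} The delicate point is handling the genuine trapping: $\chi_1=1$ on $M_{tr}$, which \emph{contains} the trapped set $\pi(K)$, so one cannot simply invoke the nontrapping resolvent estimate for $\chi_1(\cdots)^{-1}\chi_1$. Here is where the complex spectral parameter $\la^2+i(\log\la)^{-1}\la$ and the log-loss resolvent bound \eqref{b} of Bourgain--Dyatlov / \cite{tao2024spectral} become essential: the $(\log\la)^{-1}\la$ imaginary part exactly matches the $O(\la^{-1}\log\la)$ loss in \eqref{b}, so $\|\chi_1(\Delta_g+\la^2+i(\log\la)^{-1}\la)^{-1}\chi_1\|_{L^2\to L^2}\lesssim \la^{-1}$ with no log loss. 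Thus the compact-trapping contribution is handled by \eqref{b} with an $O(1)$ (rather than $O(\log\la)$) constant, and then the same frequency-localization-plus-\eqref{ii.3} argument promotes it to $L^q$. The remaining care is bookkeeping: splitting $\tilde\chi_1 h$'s influence into a genuinely trapped piece (controlled by \eqref{b}) and an $M_\infty$ piece (controlled by comparison with $\tilde M$ via Proposition~\ref{keya}), and checking that all commutator terms are supported in regions where the relevant nontrapping estimate applies. I expect the trapping bookkeeping, not the harmonic analysis, to be the part requiring the most attention.
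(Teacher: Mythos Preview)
Your proposal has a genuine gap in the ``main obstacle'' paragraph. The claim that the choice of imaginary part $(\log\la)^{-1}\la$ ``exactly matches'' the log loss in \eqref{b} so that $\|\chi_1(\Delta_g+\la^2+i(\log\la)^{-1}\la)^{-1}\chi_1\|_{L^2\to L^2}\lesssim \la^{-1}$ is not justified and in fact does not follow: both the spectral-theorem bound $(\delta\la)^{-1}$ and the limiting-absorption bound \eqref{b} give only $\la^{-1}\log\la$ at this value of $\delta$, so the log does not disappear. Even if you instead argue (correctly, via propagation) that the \emph{one-sided} cutoff $\chi_1(\cdots)^{-1}\tilde\chi_1$ has $L^2\to L^2$ norm $O(\la^{-1})$ because $\tilde\chi_1$ is supported away from $\pi(K)$, your ``upgrade to $L^q$ via \eqref{ii.3}'' step still fails: frequency localization to $|\xi|\sim\la$ gives only the Sobolev exponent $n(\tfrac12-\tfrac1q)>\mu(q)$, while summing unit spectral windows via \eqref{ii.3} reintroduces a $\log\la$ from the number of windows. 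Finally, the comparison with $\tilde M$ cannot by itself address the portion of $\chi_1 w$ living on $M_{tr}$, since $M_{tr}$ is not identified with any part of $\tilde M$; any cutoff argument that tries to isolate this piece ends up needing precisely the estimate you are proving.

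The paper's proof is structurally different and does \emph{not} pass through Proposition~\ref{keya} or the background manifold at all. Instead it works directly on $M$: after frequency localization one writes $\beta(P/\la)\tilde\chi_1=A_++A_-+R$, where $A_\pm$ are order-zero pseudodifferential operators with symbols supported away from $\Gamma_\mp$ respectively (this is possible exactly because $\tilde\chi_1$ is supported off $\pi(K)=\pi(\Gamma_+\cap\Gamma_-)$). The resolvent is expressed via $\int_0^\infty e^{it\la-t\delta}\cos(tP)\,dt$ and split dyadically in $t$. Short times are handled by the local Hadamard parametrix as in \cite{BSSY}; long times $t\gtrsim c_0\log\la$ use the sharp log-scale spectral projection bounds of Theorem~\ref{bgsp} together with Lemma~\ref{loc} (this is where the log is recovered without loss). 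The essential new input is for intermediate times $1\lesssim t\lesssim c_0\log\la$: lifting to the universal cover and applying the Hadamard parametrix, the propagation condition built into $A_+$ (that $\Phi_t(\mathrm{supp}\,A_+)$ escapes $\mathrm{supp}\,\chi_1$ for $t\ge C$) forces the composed kernel $\chi_1\cdot(\text{intermediate-time piece})\cdot A_+$ to be $O(\la^{-N})$. For $A_-$ one uses the same argument on the adjoint resolvent and handles the difference via the spectral projection bounds. This microlocal incoming/outgoing splitting is the missing idea in your plan; the upgrading from $L^2$ to $L^q$ that you describe cannot substitute for it.
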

We shall postpone the proof to the end of the section. As will become evident in the proof, similar results also hold on asymptotically hyperbolic manifolds with nonpositive curvature in all dimensions.

To use \eqref{3.15a}, we first note that
since $\chi_0\equiv 1$ on $M_{tr}$, $\nabla \chi_0$, and thus $S_{j,w,\la}$ are supported away from the trapped set $M_{tr}$. So, by the local smoothing estimate \eqref{eq:local-sm-nontrap} and Lemma~\ref{lemsp}, we have
\begin{equation}
    \begin{aligned}
\Bigl(\sum_j \|S_{j,w,\la}f\|^2_{L^2(I_j\times M)}\Bigr)^{1/2}&\lesssim \la\delta(\la/\log\la)^{-1/2}\Bigl(\sum_j \|[\Delta_g,\chi_0]e^{-is\Delta_g}f_\la\|^2_{L^2(I_j\times M)}\Bigr)^{1/2}
\\ &\lesssim \la\delta(\log\la)^{\frac12}\|f\|_{L^2}.        
    \end{aligned}
\end{equation}
Therefore, if we use \eqref{3.15a} and the above arguments, we see that the second term in the left side of \eqref{3.14} also
satisfies the desired bound.

Thus, to finish the proof of  \eqref{3.13} and \eqref{3.14}, it remains to prove Propositions~\ref{keya} and \ref{keyb}.  To do so we shall make use of the following easy consequence of the lossless $L^2$-local smoothing bounds.

\begin{lemma}\label{loc}  {Let} $\mu\in [\la/2,2\la]$, $\la\gg 1$, $\delta\in (0,1/2)$ and $\tilde \chi\in C^\infty_0(M_\infty)$. {Then}
\begin{equation}\label{3.17}
\| \one_{[\mu,\mu+\delta)}(\tilde P) (\tilde \chi h)\|_{L^2(\tilde M)}\lesssim \delta^{1/2}\|h\|_{L^2(\tilde M)}.
\end{equation}
and
\begin{equation}\label{3.19}
\| \one_{[\mu,\mu+\delta)}(P) (\tilde \chi h)\|_{L^2( M)}\lesssim \delta^{1/2}\|h\|_{L^2( M)}.
\end{equation}
\end{lemma}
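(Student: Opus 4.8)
The plan is to deduce both \eqref{3.17} and \eqref{3.19} from the lossless $L^2$-local smoothing estimates already established in this section, namely \eqref{eq:local-sm-nontrap} on $M$ and its analog on $\tilde M$ (which holds by the same reasoning, since $\tilde M$ is nontrapping, being simply connected with negative curvature). The two inequalities are identical in form, so I would prove, say, \eqref{3.19} in detail and remark that \eqref{3.17} follows verbatim with $M$ replaced by $\tilde M$ and $P$ by $\tilde P$. First I would reduce to a single frequency scale: since $\mu \in [\la/2, 2\la]$, I may insert a cutoff $\beta(\sqrt{-\Delta_g}/\mu)$ with $\beta \in C_0^\infty((1/2,2))$ equal to one near $1$, because $\one_{[\mu,\mu+\delta)}(P) = \one_{[\mu,\mu+\delta)}(P)\,\beta(P/\mu)$ for $\delta < 1/2$ and $\mu$ large. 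The point is that $\tilde\chi h$ need not be frequency localized, but composing with the spectral projector effectively localizes it, up to a harmless error from commuting $\beta(P/\mu)$ past $\tilde\chi$ (as in \eqref{commuteerror}, this error is $O(\la^{-N})$ and contributes negligibly).

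Next I would pass from the spectral projector to the Schr\"odinger propagator via the standard reproducing formula: with $\rho \in \mathcal{S}(\R)$, $\rho(0)=1$, $\hat\rho$ supported in $[-1,1]$, one has $\one_{[\mu,\mu+\delta)}(P) f$ controlled in $L^2$ by $\rho((\mu\delta)^{-1}(-\Delta_g - \mu^2)) f$ together with $L^2$-orthogonality of the spectral windows, which is exactly the mechanism used for \eqref{ii.7}. Writing
\[
\rho\bigl((\mu\delta)^{-1}(-\Delta_g-\mu^2)\bigr)(\tilde\chi h) = (2\pi)^{-1}\int_{-\infty}^\infty \mu\delta\,\hat\rho(\mu\delta t)\,e^{-it\mu^2}\,e^{-it\Delta_g}(\tilde\chi h)\,dt,
\]
I would bound the $L^2(M)$ norm by Minkowski's inequality and then exploit that $\hat\rho(\mu\delta t)$ is supported in $|t| \le (\mu\delta)^{-1}$, so the time integral is over an interval of length $\approx (\mu\delta)^{-1} \approx (\la\delta)^{-1} \ge 1$ for $\delta$ small. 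Here is where local smoothing enters: $\tilde\chi$ is supported in $M_\infty$, away from the trapped set, so $e^{-it\Delta_g}(\tilde\chi h)$ — more precisely its relevant frequency piece — obeys the lossless local smoothing bound $\|\tilde\chi' e^{-it\Delta_g}\beta(\sqrt{-\Delta_g}/\mu)(\tilde\chi h)\|_{L^2_{t,x}} \lesssim \mu^{-1/2}\|h\|_{L^2}$ with a cutoff $\tilde\chi'$ equal to one on $\supp\tilde\chi$; but since $\tilde\chi h$ is itself spatially localized, one may further localize the propagator by a fixed cutoff and invoke local smoothing on both ends (a "two-fold" application, exactly as in \eqref{3.6}) to gain the full $\mu^{-1}$.

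Assembling these pieces: Cauchy--Schwarz in $t$ over the interval of length $\approx (\mu\delta)^{-1}$ turns the $L^\infty_t L^2_x$ estimate into an $L^2_{t,x}$ one at the cost of $(\mu\delta)^{-1/2}$, and the prefactor $\mu\delta$ together with the two local-smoothing gains $\mu^{-1/2} \cdot \mu^{-1/2} = \mu^{-1}$ and the $(\mu\delta)^{-1/2}$ from Cauchy--Schwarz yields $\mu\delta \cdot \mu^{-1} \cdot (\mu\delta)^{-1/2} \cdot (\mu\delta)^{1/2}$-type bookkeeping that collapses to $\delta^{1/2}\|h\|_{L^2}$, after also summing the $O((\mu\delta)^{-1}\cdot\delta) = O(1)$-many unit spectral windows that make up $[\mu,\mu+\delta)$ with $L^2$-orthogonality. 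I expect the main obstacle to be the careful bookkeeping of constants across the frequency-localization, propagator, and Cauchy--Schwarz steps so that the net power of $\delta$ comes out to exactly $1/2$ and all $\mu$-powers cancel; a secondary subtlety is justifying that the commutator/smoothing errors from inserting $\beta(P/\mu)$ and spatial cutoffs are genuinely lower order, which follows from \eqref{commuteerror} and the nontrapping resolvent bound \eqref{eq:res-nontrap} but should be stated cleanly. Both subtleties are routine given the tools already in place, so the lemma is essentially a repackaging of the local smoothing estimates in spectral-projection language.
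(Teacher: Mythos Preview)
Your overall strategy—representing the approximate spectral projector as a time integral of the Schr\"odinger propagator and then invoking the lossless local smoothing estimate \eqref{eq:local-sm-nontrap}—is exactly the paper's approach. However, your execution misses the one essential step, and the ``two-fold local smoothing'' you propose does not work as written.

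The gap is this: in the display
\[
\rho\bigl((\mu\delta)^{-1}(-\Delta_g-\mu^2)\bigr)(\tilde\chi h)=(2\pi)^{-1}\int \mu\delta\,\hat\rho(\mu\delta t)\,e^{-it\mu^2}\,e^{-it\Delta_g}(\tilde\chi h)\,dt,
\]
the cutoff $\tilde\chi$ sits on the \emph{input} of the propagator, while you are taking the full $L^2(M)$ norm of the output. Local smoothing only gains $\la^{-1/2}$ for $\|\chi\, e^{-it\Delta_g}\beta(P/\la)\,\cdot\,\|_{L^2_{t,x}}$, i.e.\ when the cutoff is on the \emph{output}; Minkowski in $t$ on the display above just returns the trivial bound since $e^{-it\Delta_g}$ is unitary. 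Your appeal to ``two-fold local smoothing as in \eqref{3.6}'' is misplaced: in \eqref{3.6} there are genuinely two propagators (an inner $e^{is\Delta_{\tilde g}}$ and an outer $e^{-it\Delta_{\tilde g}}$) separated by a compactly supported commutator, so each contributes a $\la^{-1/2}$. Here there is a single propagator, and the fact that the initial data $\tilde\chi h$ is compactly supported does \emph{not} let you localize the output for free—the Schr\"odinger flow spreads it out immediately.

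The paper's fix is a one-line duality: since $\|\one_{[\mu,\mu+\delta)}(\tilde P)\tilde\chi\|_{L^2\to L^2}=\|\tilde\chi\, \one_{[\mu,\mu+\delta)}(\tilde P)\|_{L^2\to L^2}$, one instead bounds $\bigl\|\tilde\chi\, a\bigl((\la\delta)^{-1}(-\Delta_{\tilde g}-\mu^2)\bigr)\tilde\beta(\tilde P/\la)h\bigr\|_{L^2}$, which puts the cutoff on the outside. Cauchy--Schwarz in $t$ over $|t|\le(\la\delta)^{-1}$ followed by a \emph{single} application of local smoothing then gives $(\la\delta)\cdot(\la\delta)^{-1/2}\cdot\la^{-1/2}=\delta^{1/2}$. (Equivalently, a $TT^*$ argument on $\tilde\chi\,\one_{[\mu,\mu+\delta)}(\tilde P)\,\tilde\chi$ would make the ``two-fold'' heuristic rigorous, but that is not what you wrote.) Two smaller points: the frequency cutoff $\tilde\beta$ can be inserted freely on the spectral side without any commutator argument, so \eqref{commuteerror} is not needed here; and your remark about summing ``$O(1)$-many unit spectral windows that make up $[\mu,\mu+\delta)$'' is backwards, since for $\delta<1/2$ that interval lies inside a single unit window.
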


\begin{proof}  Choose $a\in {\mathcal S}(\R)$  satisfying $\supp \,  \Hat a\subset (-1,1)$ and $a(t)\ge 1$ on $[-20,20]$ and let $\tilde \beta=1$ on 
$[1/10,10]$ and supported in $[1/20,20]$.  Then, by orthogonality and duality, for $\mu\in [\la/2,2\la]$ 
\begin{align*}
\|\one_{[\mu,\mu+\delta)} (\tilde P)\tilde \chi \|_{L^2\to L^2}&\le \|a((\la\delta)^{-1}(-\Delta_{\tilde g}-\mu^2)) \, \tilde \beta(\sqrt{-\Delta_{\tilde g}}/\la)\tilde\chi \|_{L^2\to L^2}
\\
&= \bigl\| 
\tilde \chi\, \tilde\beta(\sqrt{-\Delta_{\tilde g}}/\la)  a((\la\delta)^{-1}(-\Delta_{\tilde g}-\mu^2)) \, \|_{L^2\to L^2}.
\end{align*} 
By using  the Cauchy-Schwarz inequality and the lossless local smoothing estimates, we have 
\begin{align*}
 \|\tilde \chi\,& \tilde\beta(\sqrt{-\Delta_{\tilde g}}/\la)  a((\la\delta)^{-1}(-\Delta_{\tilde g}-\mu^2)) \, h\|_{L^2(\tilde M)}
\\
&= \bigl\| 
\int_{-(\la\delta)^{-1}}^{(\la\delta)^{-1}} (\la\delta) \Hat a(\la\delta t) e^{-it\mu^2} \tilde\chi e^{-it\Delta_{\tilde g}} \tilde\beta(\sqrt{-\Delta_{\tilde g}}/\la) h \, dt \|_{L^2(\tilde M)}
\\
&\lesssim (\la\delta)\cdot (\la\delta)^{-1/2}\la^{-1/2}\|h\|_{L^2(\tilde M)}.
\end{align*}
Here
we require the variant of  \eqref{eq:local-sm-nontrap} with $[0,1]$ replaced by $\R$.  This leads to \eqref{3.17}.  A similar argument yields \eqref{3.19}.
\end{proof}

\begin{proof}[Proof of Proposition~\ref{keya}]

We first note that, by adjusting the values of $\delta$ slightly if necessary, proving \eqref{3.10} is equivalent to 
showing that  for all $\la\gg 1$, {$\delta \in ( 0,1/2)$},
\begin{equation}\label{3.10a}
\| (\Delta_{\tilde g}+(\la+i\delta)^2)^{-1} (\tilde \chi_0 h)\|_{L^q(\tilde M)}\lesssim 
    \la^{\mu(q)-1}\|h\|_{L^2(\tilde M)}, \,\, 2<q<\infty.
\end{equation}

Recall that if $\tilde P=\sqrt{-\Delta_{\tilde g}}$, we have the following identity (see e.g., \cite{BSSY})
\begin{equation}\label{resolvent}
    \left(\Delta_{\tilde g}+(\la+i\delta)^2\right)^{-1}=\frac1{i(\la+i\delta)}\int_0^\infty e^{it\la-t\delta}\cos (t\tilde P)\,dt.
\end{equation}

Let us fix $\beta\in C_0^\infty ((1/2, 2))$ satisfying $\sum_{j=-\infty}^\infty \beta(s/2^j)=1$, and 
define 
\begin{equation}\label{tj}
T_j f=\frac1{i(\la+i\delta)}\int_0^\infty \beta(2^{-j}t)e^{it\la-t\delta}\cos (t\tilde P)f\,dt.
\end{equation}
Then it suffices to obtain suitable bounds for the $T_j$ operators.  It is also straightforward to check that the symbol of $T_j$ is 
\begin{equation}\label{symbol}
    T_j(\tau)=\frac1{i(\la+i\delta)}\int_0^\infty \beta(2^{-j}t)e^{it\la-t\delta}\cos (t\tau)f\,dt=O(\la^{-1}2^j(1+2^j|\tau-\la|)^{-N}).
\end{equation}


Note that by \eqref{resolvent} we have $(\Delta_{\tilde g}+(\la+i\delta)^2)^{-1}=\sum_{-\infty}^\infty T_j$.  To prove the
half-resolvent estimates \eqref{3.10a}, it will be natural to separately consider
the contribution of the terms with
$2^j\le 1$, $1\le 2^j\lesssim \log \la$ and $\log\la \lesssim 2^j$.


\noindent(i) $2^j\le 1$.

This case can be handled using the local arguments in 
Bourgain, Shao, Sogge and Yao \cite{BSSY}, as well as related earlier work  of Dos Santos Ferreira, Kenig and Salo~\cite{DKS}, where resolvent estimates on compact manifolds were considered.  The $\tilde\chi_0$ cutoff function is not needed in this case.

First, if $2^j \in  [\la^{-1},1]$, we will show that
\begin{equation}\label{tjbound}
    \|T_j\|_{L^2\to L^q}\lesssim \la^{\mu(q)-1}2^{j/2},\,\,q\ge 6.
\end{equation}
This would  yield the desired result 
$\| \sum_{\la^{-1} <2^j \le 1} T_j\|_{2\to q}=O(\la^{\mu(q)-1})$ for all $q>2$
by interpolating with the trivial $L^2\to L^2$ bound and summing over $j$.

To prove \eqref{tjbound}, as in \cite{BSSY},
by using the Hadamard parametrix for $\cos (t\tilde P)$, it is not hard to show that if $\la^{-1}\le 2^j\le 1$, the kernel of $T_j$ operators satisfies
\begin{equation}
    T_j(x,y)=\begin{cases}
      \la^{-1/2}2^{-j/2} e^{i\la d_{\tilde g}(x,y)}a_\la(x,y),\,\, d_{\tilde g}(x,y)\in [2^{j-2},2^{j+2}] \\
     O( \la^{-1} 2^{-j}), \,\,\,d_{\tilde g}(x,y)\le 2^{j-2},
    \end{cases}
\end{equation}
where $ |\nabla_{x,y}^\alpha a_\la(x,y)|\le C_\alpha d_{\tilde g}(x,y)^{-\alpha}$. Additionally, by the finite propagation speed property of the wave propagator,  $T_j(x,y)=0$ if $d_{\tilde g}(x,y)\ge 2^{j+2}$.
Thus, if $d_{\tilde g}(x,y)\in [2^{j-2},2^{j+2}]$, the bound in \eqref{tjbound} {follows} from the 
 oscillatory integral bounds of H\"ormander \cite{HormanderIII}
and Stein \cite{steinbeijing}, combined with a  scaling argument. And the other case when $d_{\tilde g}(x,y)\le 2^{j-2}$ follows from Young's inequality.

On the other hand, if $2^j\le \la^{-1}$, 
 by integration by parts in $t$-variable once, one can show that the symbol of the operator $\sum_{\{j:2^j\le \la^{-1}\}} T_j(\tilde P)$ satisfies
\begin{equation}\nonumber
   \sum_{\{j:2^j\le \la^{-1}\}} T_j(\tau)=O(\la^{-1}(\la+|\tau|)^{-1}).
\end{equation}
Since we are assuming that $q<\infty$, by Sobolev estimates we  have
$$\|\sum_{\{j:2^j\le \la^{-1}\}} T_j(\tilde P)(\tilde \chi_0 h)\|_{L^q(\tilde M)}
\lesssim \la^{-1}\|\tilde \chi_0 h\|_{L^2(\tilde M)}\lesssim \la^{-1}\| h\|_{L^2(\tilde M)}.
$$



To deal with the two remaining cases corresponding to sums over  $2^j\ge1$,  we shall require the following lemma.
\begin{lemma}\label{kernelhyper}
Let $(\tilde M,\tilde g)$ be defined as in \eqref{tildeM}, which is asymptotically hyperbolic, simply connected and has negative curvature. For $2^j\ge 1$, if $T_j$ is defined as in \eqref{tj}, we have
    \begin{equation}\label{kernelnonlocal}
    \|T_j f\|_{L^\infty(\tilde M)}\lesssim_N (\la^{1/2}e^{-2^{j-4}}+(2^j\la)^{-N} )\|f\|_{L^1(\tilde M)}.
\end{equation}
\end{lemma}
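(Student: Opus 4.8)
The plan is to estimate the kernel $T_j(x,y)$ of the operator $T_j$ defined in \eqref{tj} and show that, for $2^j\ge 1$, it is bounded pointwise by the right-hand side of \eqref{kernelnonlocal}, so that the $L^1\to L^\infty$ bound follows from Young's inequality. The starting point is the exact formula \eqref{resolvent}--\eqref{tj}, together with the fact that on the simply connected, negatively curved surface $(\tilde M,\tilde g)$ (which by construction has curvature pinched in $[-3/2,-1/2]$ and no conjugate points), the Hadamard parametrix for $\cos(t\tilde P)$ is global: there is a single geodesic connecting $x$ and $y$, of length $d=d_{\tilde g}(x,y)$, and finite propagation speed forces $\cos(t\tilde P)(x,y)$ to be supported in $|t|\ge d$. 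Consequently $T_j(x,y)$ is nonzero only when $2^{j+2}\gtrsim d$, i.e. when $d\lesssim 2^j$.

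First I would treat the region $d\le 2^{j-3}$ (or more precisely $d$ much smaller than $2^j$, where the $\beta(2^{-j}t)$ cutoff is genuinely away from the light cone $t=d$). Here one can integrate by parts repeatedly in $t$: since $e^{it\la-t\delta}$ oscillates at frequency $\sim\la$ and the amplitude (coming from the Hadamard expansion) is smooth on $t\sim 2^j$, each integration by parts gains a factor $(2^j\la)^{-1}$, producing the error term $(2^j\la)^{-N}\|f\|_{L^1}$. The decaying factor $e^{-t\delta}$ only helps here. The main region is $d\in[2^{j-3},2^{j+2}]$, i.e. $d\approx 2^j$: here I would use the Hadamard parametrix near the light cone, where $\cos(t\tilde P)(x,y)$ has the usual $\tilde g$-dimensional singularity (in dimension $2$, a $(t^2-d^2)^{-1/2}$-type singularity smoothed against the $t$-integration). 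Carrying out the $t$-integral over $t\sim 2^j = t\sim d$ against $e^{it\la - t\delta}$ gives, by stationary phase / the standard half-wave kernel asymptotics, a factor $\la^{-1/2}$ from the oscillation, the prefactor $(\la+i\delta)^{-1}$ gives another $\la^{-1}$... but crucially, on a negatively curved simply connected surface the Jacobian of the exponential map — the volume density along geodesics — decays like $e^{-d}$ (more precisely like $(\sinh d)^{-1/2}\sim e^{-d/2}$ for the relevant power in dimension $2$, and one must be slightly careful: the relevant gain combined from the Hadamard coefficient and the measure is $e^{-d/2}\cdot e^{-d/2}=e^{-d}$ in the pointwise kernel, up to the $\la^{-1/2}$). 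Since $d\approx 2^j$, this produces the claimed $\la^{-1/2}e^{-2^{j-3}}$ (the constant $2^{j-3}$ rather than $2^j$ just absorbs the width of the $\beta$-cutoff). The factor $e^{-t\delta}$ with $t\approx 2^j$ gives an additional harmless $e^{-2^j\delta}\le 1$.

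The step I expect to be the main obstacle is obtaining the sharp exponential gain $e^{-2^{j-3}}$ in the kernel on the light-cone region $d\approx 2^j$. One has to track precisely how the negative curvature bound (pinched below $-1/4$, since curvature $\le -1/2$) forces the Jacobian factor $J(x,y)$ in the Hadamard parametrix to decay exponentially in $d_{\tilde g}(x,y)$ — comparison with the constant-curvature model via the Günther/Bishop volume comparison theorem gives $J(x,y)^{-1/2}\lesssim e^{-d_{\tilde g}(x,y)/2}$ for the surface case — and then to check that the $\la^{-1/2}$ loss from integrating the singular Hadamard factor against the oscillation is not accompanied by any growing factor in $d$. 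Since $\tilde M$ is simply connected with no conjugate points, there are no issues from multiple geodesics or from the parametrix breaking down, so the parametrix is valid for all $t$; one just needs uniform control of the Hadamard coefficients and their derivatives, which follows from the bounded-geometry hypotheses on $(\tilde M,\tilde g)$. After the kernel bound \eqref{kernelnonlocal} is established, the lemma is immediate by Young's convolution inequality, using $\|T_j f\|_{L^\infty}\le \|T_j(\cdot,\cdot)\|_{L^\infty_{x,y}}\|f\|_{L^1}$.
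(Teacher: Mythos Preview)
Your Hadamard-parametrix approach is a natural alternative to the paper's method, and your identification of the volume-comparison mechanism giving $w_0=J^{-1/2}\lesssim e^{-cd}$ for the leading coefficient is correct.  However, there is a genuine gap that makes the argument break down precisely in the regime where the lemma is actually needed.

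The gap is in the sentence ``one just needs uniform control of the Hadamard coefficients and their derivatives, which follows from the bounded-geometry hypotheses on $(\tilde M,\tilde g)$.''  Bounded geometry (even with pinched negative curvature) only yields the bounds \eqref{k16}--\eqref{k19}, namely $|w_\nu|\le Ce^{Cd}$ and $|\partial_t^kR_N|\le Ce^{Ct}$; it does \emph{not} give uniform or exponentially decaying control of the higher coefficients $w_\nu$, $\nu\ge 1$, or of the remainder $R_N$.  Your volume-comparison argument applies only to $w_0$.  Consequently, on the light-cone region $d\approx 2^j$ the $\nu$-th Hadamard term contributes (after the same stationary-phase computation you outline for $\nu=0$) a quantity of size roughly $\la^{-1/2-\nu}\,e^{C2^j}$, and the remainder contributes $\la^{-N}e^{C2^j}$.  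For $2^j\lesssim c_0\log\la$ with $c_0$ small this is harmless, since $e^{C2^j}\le \la^{Cc_0}$ is absorbed by the $\la^{-\nu}$ factor and the target bound $\la^{-1/2}e^{-2^{j-3}}\ge \la^{-1/2-c_0/8}$ is comfortably met.  But for $2^j\gg\log\la$ the factor $e^{C2^j}$ swamps every power of $\la^{-1}$, and neither your region-2 nor region-3 argument survives.  This is exactly the range $2^j\ge C\log\la$ where the lemma is invoked in case~(ii) of the proof of Proposition~\ref{keya} (see \eqref{a2}), so the gap is fatal for the application.

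The paper avoids this entirely by quoting the Chen--Hassell spectral-measure estimate \cite[Theorem~3]{ChHa}, which furnishes the exponential decay $|b_\pm(\la,x,y)|\lesssim \la^{-1/2}e^{-d/2}$ for $d\ge 1$ directly for the \emph{full} spectral measure kernel of $\tilde P$, not merely its leading Hadamard term.  That result does essentially the work you are trying to reproduce with the parametrix, and proving an equivalent statement from scratch on a general asymptotically hyperbolic surface is not a routine step --- it is the main analytic content of \cite{ChHa}.  With that estimate in hand, the paper localizes to $\mu\approx\la$ (the operator $\tilde T_j$ in \eqref{tjb}), integrates by parts in both $t$ and $\mu$ to show the kernel is $O((2^j\delta)^{-N_1}\la^{-N_2})$ when $d\notin[2^{j-2},2^{j+2}]$, and reads off the bound $\la^{-1/2}e^{-d/2}\le\la^{-1/2}e^{-2^{j-3}}$ directly from \eqref{bb} when $d\in[2^{j-2},2^{j+2}]$.
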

\begin{proof}[Proof of Lemma~\ref{kernelhyper}]
The proof of \eqref{kernelnonlocal} relies heavily on the kernel estimate for the spectral measure of $\tilde P$ established in the work of Chen and Hassell \cite{ChHa}.
Recall that if $dE_{\tilde P}(\mu)$ denote the spectral measure for $\tilde P$, we have
\begin{equation}\label{tja}
T_jf=\frac1{i(\la+i\delta)}\int_0^\infty \int_0^\infty \beta(2^{-j}t)e^{it\la-t\delta}\cos (t\mu) dE_{\tilde P} (\mu) f\, d\mu dt.
\end{equation}

Let us collect several useful facts about the spectral measure $\tilde P$. 
For high energies $\la\gg 1$, by \cite[Theorem 1.3]{ChHa}, we have 
\begin{equation}
    dE_{\tilde P} (\la)(x,y)=\sum_\pm\la e^{\pm i\la d_{\tilde g}(x,y)}b_\pm(\la,x,y)+a(\la,x,y),
\end{equation}
where $d_{\tilde g}(x,y)$ denotes the distance function on $\tilde M$,
\begin{equation}\label{bb}
 \Bigl|\Bigl( \frac{d}{d\la} \Bigr)^j b_\pm(\la,x,y)\Bigr| \lesssim_j \begin{cases}
      \la^{-j}(1+\la d_{\tilde g}(x,y))^{-\frac12},\,\, d_{\tilde g}(x,y)\le 1\\
      \la^{-\frac12-j}e^{-\frac{d_{\tilde g}(x,y)}{2}},\,\, d_{\tilde g}(x,y)\ge 1,
  \end{cases}
\end{equation}
and 
\begin{equation}\label{ab}
 \Bigl|\Bigl( \frac{d}{d\la} \Bigr)^ja(\la,x,y)\Bigr|\lesssim_{j,N} 
      \la^{-N}.
\end{equation}

If we fix $\rho\in C_0^\infty(1/4, 4)$ with $\rho=1$ in (1/2, 2), and define
\begin{equation}\label{tjb}
\tilde T_j f=\frac1{i(\la+i\delta)} \int_0^\infty \int_0^\infty \beta(2^{-j}t)e^{it\la-t\delta}\cos (t\mu)\rho(\mu/\la) dE_{\tilde P} (\mu) f\, d\mu dt,
\end{equation}
then, by  integrating by parts in the $t$ variable, we see that  the symbol of the operator $T_j-\tilde T_j$ is $O\left((2^j(|\tau|+\la))^{-N}\right)$. Thus, by using dyadic Sobolev estimates, it is not hard to show
 \begin{equation}\label{kernelnonlocala}
\|(T_j-\tilde T_j) f\|_{L^\infty(\tilde M)}\lesssim_N(2^j\la)^{-N} \|f\|_{L^1(\tilde M)}. 
\end{equation}

 Consequently, it suffices to show that the operators $\tilde T_j$ satisfy the desired $L^1\to L^\infty$  bound in \eqref{kernelnonlocal}.
If $d_{\tilde g}(x,y)\notin [2^{j-2}, 2^{j+2}]$, then by using \eqref{bb} and \eqref{ab}, and performing integration by parts in the $\mu$ variable enough times, we have 
\begin{equation}\label{c}
|\tilde T_j (x,y)|\le C_{N} \lambda^{-N}2^{-Nj}.
\end{equation}

On the other hand, if $d_{\tilde g}(x,y)\in [2^{j-2}, 2^{j+2}]$, \eqref{kernelnonlocal} follows directly from the pointwise bound in  \eqref{bb} and \eqref{ab}. This completes the proof of \eqref{kernelnonlocal}.
\end{proof}

Using this lemma we can handle the contribution of the $T_j$ terms with: 
\newline \noindent(ii) $2^j\ge C\log\la$.

First we note that, by spectral theorem,
\begin{equation}\label{a1}
    \|T_j \chi_0f\|_{L^2}\lesssim \la^{-1}2^j \|\chi_0f\|_{L^2}\lesssim \la^{-1}2^j \|f\|_{L^2}.
\end{equation}
And by \eqref{kernelnonlocal} and Schwarz's inequality, we also have
\begin{multline}\label{a2}
    \|T_j \chi_0f\|_\infty\lesssim_N (\la^{1/2}e^{-2^{j-4}}+(2^j\la)^{-N} )\|\chi_0f\|_1\\
    \lesssim_N (\la^{1/2}e^{-2^{j-4}}+(2^j\la)^{-N} )\|f\|_2.
\end{multline}
 Thus by  \eqref{a1}, \eqref{a2} and H\"older's inequality, we have
\begin{equation}\label{1a}
    \|T_j \chi_0f\|_{L^q}\lesssim \left(\la^{1/2-3/q}2^{\frac{2j}{q}}e^{-\tfrac{(q-2)2^{j-4}}{q}}+O_{q,N}(\la^{-N})\right) \|f\|_{L^2}.
\end{equation}
Since we are assuming $2^j\ge C\log\la$, we may choose $C$ sufficiently large so that $$2^{\frac{2j}{q}}e^{-\tfrac{(q-2)2^{j-4}}{q}}\lesssim \la^{-2}.$$ 
We can then sum over such $j$ to obtain the desired bound $\| \sum_{2^j\ge C\log \la } T_j\|_{L^2\to L^q}=O(\la^{\mu(q)-1})$.


\bigskip

Our final case involves the $T_j$ with:
\newline
\noindent(iii) $1\le 2^j\le C\log\la$.

To handle the contribution of these terms, we shall first prove that for each fixed $j$ with $2^j\ge1$, we have the uniform bounds
\begin{equation}\label{tjblarge}
   \| T_j\chi_0 f\|_{L^{q}(\tilde M)} \lesssim \la^{\mu(q)-1}  \|f\|_{L^2(\tilde M)}.
\end{equation}
To see this, let us define
\begin{equation}\label{ej}
    E_{\la,j,k}=\one_{[\la+2^{-j}k, \la +(k+1)2^{-j})}( \tilde P).
 \end{equation} 
 By using \eqref{3.7} along with \eqref{3.17}  for $\delta=2^{-j}$,
we have 
\begin{align*}
\|\one_{[\la/2,2\la]}( \tilde P) \, & T_j\chi_0 f\|_{L^{q}(\tilde M)}
\\
&\le \sum_{|k|\lesssim \la 2^{j}}
\|\one_{[\la/2,2\la]}( \tilde P) \, E_{\la,j,k} T_j\chi_0 f\|_{L^{q}(\tilde M)}
\\
&\le  \la^{\mu(q)}2^{-j/2}  \sum_{|k|\lesssim \la2^j}
\|\one_{[\la/2,2\la]}( \tilde P) E_{\la,j,k} T_j\chi_0 f\|_{L^2( \tilde M)}
\\
&\lesssim \la^{\mu(q)}2^{-j/2}\sum_{|k|\lesssim \la2^j} (1+|k|)^{-N} \la^{-1}2^j  \|\one_{[\la/2,2\la]}( \tilde P) E_{\la,j,k} \chi_0f\|_{L^2(\tilde M)}
\\
&\lesssim \la^{\mu(q)-1}  \|f\|_{L^2(\tilde M)},
\end{align*}
using \eqref{3.17} in the last step.
The case when the spectrum is outside $[\la/2,2\la]$ can be handled using Sobolev estimates, as in case (i). Thus, the proof of \eqref{tjblarge} is complete.

In view of \eqref{tjblarge}, it suffices to consider the values of  $j$ such that $C_0\le 2^j\le c_0\log\la$ where $C_0$ is sufficiently large and $c_0$ is sufficiently small. We shall specify the choices of $C_0$ and $c_0$ later in the proof. Furthermore, as shown in the proof of \eqref{kernelnonlocal}, $|T_j(x,y)|=O(\la^{-N})$ if $d_{\tilde g}(x,y)\notin [2^{j-2}, 2^{j+2}]$. Hence, if, as we may, 
we assume $\tilde \chi_0$ is supported in a small neighborhood of some point $y_0$, it suffices to show that 
\begin{multline}\label{n2}
\|\sum_{\{j:C_0\le 2^j\le c_0\log\la\}} T_j(\tilde \chi_0 h)\|_{L^q(S)}\lesssim 
    \la^{\mu(q)-1}\|h\|_{L^2(\tilde M)},
    \\
    \text{where }  \, S=\{x\in \tilde M: \frac {C_0}{4}\le d_{\tilde g}(x,y_0)\le 4c_0\log\la\}.
\end{multline}

To proceed, first note that by \eqref{symbol},
if we fix $\beta \in C^\infty_0((1/4,4))$ with $\beta= 1$ on (1/2,2), it suffices to show 
\begin{equation}\label{n3}
    \|\sum_{\{j:C_0\le 2^j\le c_0\log\la\}}\beta({\tilde{P}}/\la) T_j(\tilde \chi_0 h)\|_{L^q(S)}\lesssim 
    \la^{\mu(q)-1}\|h\|_{L^2(\tilde M)}.
\end{equation}

To prove \eqref{n3}, we need to introduce microlocal cutoffs involving pseudodifferential operators. If we fix {$\delta_0$} with $0<{\delta_0}\ll 1$, 
then since $\tilde M$ has bounded geometry, we can cover the set $S$ by a partition of unity $\{\psi_k\}$, which satisfies
\begin{equation}\label{n4}
1=\sum_k \psi_k(x), \quad \supp\,  \psi_k\subset B(x_k,{\delta_0}),
\end{equation}
with $|\partial_x^j \psi|\lesssim 1$ uniformly in the normal coordinates around $x_k$ for different $k$.  Here  $B(x_k,{\delta_0})$ denotes geodesic balls of radius {$\delta_0$} $\text{with } 
d_{\tilde g}(x_k,x_\ell)\ge {\delta_0} \, \, \text{if } \, k\ne \ell$,
and finitely many
 balls $B(x_k,2{\delta_0})$ overlap.  By a simple volume counting argument, the number of values of $k$ for which $\supp \, \psi_k\cap S\neq \emptyset$ is $O(\la^{Cc_0})$ for some fixed constant $C$. See \eqref{b.0}–\eqref{b.1} in the next section for more  discussions about the properties of manifolds with bounded geometry.

If we extend $\beta\in C_0^\infty ((1/4, 4))$ to an even function by letting $\beta(s)=\beta(|s|)$, then we can choose an even function $\rho\in C_0^\infty$   satisfying $\rho(t)= 1$, $|t|\le \delta_0/4$ and $\rho(t)=0 $, $|t|\ge \delta_0/2$ such that
\begin{equation}\label{n5'}
\begin{aligned}
     \beta(\tilde P/\la)=&(2\pi)^{-1}\int_\R\la\hat \beta(\la t)\cos t\tilde P dt \\
     =&(2\pi)^{-1}\int\rho(t)\la\hat \beta(\la t)\cos t\tilde P dt+ (2\pi)^{-1}\int (1-\rho(t)) \la\hat \beta(\la t)\cos t\tilde P dt\\
     =& B+C.
\end{aligned}
\end{equation}
  It is not hard to check that the symbol of the operator $C$ is $O((1+|\tau|+\la)^{-N})$. Therefore, by Sobolev estimates, we have $ \|C\|_{L^2\to L^q}\lesssim_N \la^{-N}$.
On the other hand, by  using the finite propagation speed property of the wave propagator, one can argue as in the compact manifold case to show that $B$ is a pseudodifferential operator with principal symbol $\beta(p(x,\xi))$,
with $p(x,\xi)$ here being the principal symbol of $\tilde P$. 
See Theorem 4.3.1 in \cite{SFIO2} for more details.

Next, choose $\tilde \psi_k\in C^\infty_0$ with $\tilde \psi_k(y)=1$ for $y\in B(x_k, \tfrac54 \delta_0)$ and
$\tilde \psi_k(y)=0$ for $y\notin B(x_k,\tfrac32 \delta_0)$.  As with the $\psi_k$ we may assume that the $\tilde \psi_k$ have bounded derivatives
in the normal coordinates about $x_k$ by taking $\delta_0>0$ small enough given that $\tilde M$ is of bounded geometry.  Then, if $B(x,y)$ is the kernel of $B$, 
we have $\psi_k(x)B(x,y)=\psi_k(x)B(x,y)\tilde \psi_k(y)+O(\la^{-N})$, and so
%
\begin{equation}\label{n5}
\begin{aligned}
    &\psi_k(x)B(x,y)\\
    &\quad=(2\pi)^{-n}\la^n\int e^{i\la\langle x-y,\xi \rangle} \psi_k(x){\beta(p(x,\xi))}\tilde\psi_k(y)d\xi +R_k(x,y)\\
    &\quad= A_k(x,y)+R_k(x,y).
\end{aligned}
\end{equation}
$R_k$ is a lower order pseudodifferential operator which satisfies 
\begin{equation}\label{rk}
    \|R_k\|_{L^2\to L^q}\lesssim \la^{-1+2(\frac12-\frac1q)},\,\,q\ge 2.
\end{equation}
Moreover, 
$$ R_k(x,y)=0,\,\,\, \text{if } \, x\notin B(x_j,{\delta_0}) \, \,
\text{or } \, \, y\notin B(x_j,3{\delta_0}/2).
$$

{{
Let $$H_p=\frac{\partial p}{\partial \xi}\frac{\partial }{\partial x}-\frac{\partial p}{\partial x}\frac{\partial }{\partial \xi}$$ denote the Hamilton vector field associated with the principal symbol $p(x,\xi)$ of $\tilde P$.
Let $\Phi_t=e^{tH_p}: {T^*M\setminus 0\to T^*M\setminus 0}$ denote the geodesic flow on the cotangent bundle generated by $H_p$.
For each $x_k$, let $\omega_k$ be the unit covector such that $\Phi_{-t}(x_k, \omega_k)=(y_0, \eta_0)$ for some $\eta_0$ and $t=d_{\tilde g}(x_k, y_0)$,
with $y_0$ as in \eqref{n2}. 
We define $a_k(x,\xi)\in C^\infty$ such that in the normal coordinate around $x_k$,}}
\begin{equation}\label{n6}
a_{k}(x,\xi)=0 \, \, 
\text{if } 
\bigl|\tfrac{\xi}{|\xi|_{\tilde g(x)}}-\omega_k\bigr|\ge 2\delta_1,\,\,\,\text{and}\,\,\,a_{k}(x,\xi)=1 \, \, 
\text{if } 
\bigl|\tfrac{\xi}{|\xi|_{\tilde g(x)}}-\omega_k\bigr|\le \delta_1.
\end{equation}
Here $|\xi|_{\tilde g(x)}=p(x,\xi)$, $\delta_1\ll1$ is a fixed small constant that will be chosen later. 
By the proof of Lemma~\ref{propogation} below, we may assume that $\partial^\alpha_x \partial^\gamma_{\xi} a_k=O(1)$ if $p(x,\xi)=1$, independent of $k$, with $\partial_x$ denoting
derivatives in the normal coordinate system about $x_k$.

We finally define the kernel of the microlocal cutoffs $A_{k,0}$ and $A_{k,1}$ as 
\begin{equation}\label{n7}
\begin{aligned}
A_k(x,y)&=A_{k,0}(x,y)+A_{k,1}(x,y)\\
     &=(2\pi)^{-n}\la^n\int e^{i\la\langle x-y,\xi \rangle}\psi_k(x)a_k(x,\xi)\beta((p(x,\xi))\tilde\psi_k(y)d\xi\\
     &\quad+ (2\pi)^{-n}\la^n\int e^{i\la\langle x-y,\xi \rangle}\psi_k(x)(1-a_k(x,\xi))\beta((p(x,\xi))\tilde\psi_k(y)d\xi.
\end{aligned}
\end{equation}
The above operators satisfy
\begin{equation}\label{n10}
\| A_{k,\ell}\|_{L^p(\tilde M)\to L^p(\tilde M)}=O(1), \,\, 1\le p\le \infty,\,\,\ell=0,1.
\end{equation}
This, combined with the support properties of $ A_{k,\ell}$  implies that
\begin{equation}\label{n11}
\| \sum_{k}A_{k,\ell}h\|_{L^p(\tilde M)}\lesssim \|h\|_{L^p(\tilde M)}, \,\, 1\le p\le \infty,\,\,\ell=0,1.
\end{equation}

By \eqref{n4}, \eqref{n5} and \eqref{n7}, to prove \eqref{n3}, it suffices to show 
\begin{equation}\label{n12}
        \|\sum_{\{j:C_0\le 2^j\le c_0\log\la\}}\sum_{k}A_{k,0} T_j(\tilde \chi_0 h)\|_{L^q(S)}\lesssim 
    \la^{\mu(q)-1}\|h\|_{L^2(\tilde M)},
\end{equation}
as well as 
\begin{equation}\label{n13}
        \|\sum_{\{j:C_0\le 2^j\le c_0\log\la\}}\sum_{k}A_{k,1} T_j(\tilde \chi_0 h)\|_{L^q(S)}\lesssim_N
    \la^{-N}\|h\|_{L^2(\tilde M)}.
\end{equation}
The other term involving $R_k$ is more straightforward to handle. More explicitly, by \eqref{rk}, the support property of $R_k$, and \eqref{a1},  we have 
\begin{equation}\label{n12a}
\begin{aligned}
       \|&\sum_{\{j:C_0\le 2^j\le c_0\log\la\}}\sum_{k}R_{k} T_j(\tilde \chi_0 h)\|_{L^q(S)}\\
       &\lesssim \la^{-1+2(\frac12-\frac1q)} \|\sum_{\{j:C_0\le 2^j\le c_0\log\la\}} T_j(\tilde \chi_0 h)\|_{L^2(\tilde M)}\\
       &\lesssim 
    \la^{-1+2(\frac12-\frac1q)} \sum_{\{j:C_0\le 2^j\le c_0\log\la\}}\la^{-1}2^{j}\|\tilde \chi_0 h\|_{L^2(\tilde M)}.
\end{aligned}
\end{equation}
Note that $-1+2(\frac12-\frac1q)<\mu(q)-\frac12$ for all $q\ge 2$. Therefore,  by choosing $c_0$ sufficiently small, the bound in \eqref{n12a} is better than the estimate in \eqref{n3}.

The main reason that \eqref{n13} holds is that, by \eqref{n6}, the microlocal support of the operator $A_{k,1}$ does not propagate to the support of 
$\tilde\chi_0$
  along the backward geodesic flow, which leads to the rapidly decaying term $O(\la^{-N})$. Similarly, the analog of \eqref{n13} holds for the operator $A_{k,0}$ if we replace $T_j$ with its adjoint $T_j^*$, as the microlocal support of the operator $A_{k,0}$ does not propagate to the support of 
$\tilde\chi_0$
  along the forward geodesic flow.  Therefore, one can replace $T_j$ in \eqref{n12} with $T_j-T_j^*$ by introducing a rapidly decaying term.
  
  On the other hand,  one can show that the half localized operator, involving the difference of the resolvent operator $(\Delta_{\tilde g}+(\la+i\delta)^2)^{-1}$ and its adjoint, satisfies the desired bound by a simple argument using the sharp spectral projection bound \eqref{3.7} along with Lemma~\ref{loc} (for a precise statement, see \eqref{n18}). This  allows us to obtain the estimate \eqref{n12} with $T_j$ replaced with $T_j-T_j^*$, which will conclude the proof combined with the other parts.

More explicitly, to prove \eqref{n13}, we shall require the following two lemmas.
\begin{lemma}\label{uniform}
Let $\psi_k$ and $\tilde \psi_k$ be defined as in \eqref{n4} and \eqref{n5}.
    Fix $x\in \supp \,\tilde\psi_k$, let $\omega_0(x)$ be the unit covector such that, 
    if $y_0$ is as in \eqref{n2},
    \begin{equation}\label{omega}
        \Phi_{-t_0}(x,\omega_0(x))=(y_0,\eta_0)\,\,\,\text{for some unit covector} \,\,\eta_0\,\,\, \text{and} \,\,\,t_0=d_{\tilde g}(x,y_0).
    \end{equation}
    If $\supp\,\psi_k \cap S\neq \emptyset$, we have
    $\omega_0(x)$ is uniformly continuous in $x$ over $\supp\,\tilde \psi_k$.
\end{lemma}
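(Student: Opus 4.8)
The plan is to produce an explicit formula showing that $x\mapsto\omega_0(x)$ is the restriction to $\supp\,\tilde\psi_k$ of a \emph{smooth} unit covector field on $\tilde M\setminus\{y_0\}$; uniform continuity on the compact set $\supp\,\tilde\psi_k$ is then immediate. The point on which everything hinges --- and the reason $(\tilde M,\tilde g)$ was built to be a complete, simply connected surface whose Gaussian curvature $\tilde K$ satisfies $-\tfrac32\le\tilde K\le-\tfrac12$ --- is that such a surface has no conjugate points, so by the Cartan--Hadamard theorem $\exp_{y_0}\colon T_{y_0}\tilde M\to\tilde M$ is a diffeomorphism. Granting this, for every $x$ there is a unique unit-speed geodesic from $y_0$ to $x$; it is automatically minimizing, its length is $t_0=d_{\tilde g}(x,y_0)$, and \eqref{omega} determines $\omega_0(x)$ (as well as $\eta_0$ and $t_0$) without ambiguity. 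Writing $v(x)=\exp_{y_0}^{-1}(x)$, so that $t_0(x)=|v(x)|_{\tilde g}$ and $\eta_0=\bigl(v(x)/|v(x)|\bigr)^{\flat}$, one has
$$\omega_0(x)=\Bigl[\,d(\exp_{y_0})_{v(x)}\bigl(v(x)/|v(x)|\bigr)\,\Bigr]^{\flat},$$
the metric dual of the endpoint velocity of $s\mapsto\exp_{y_0}(sv(x)/|v(x)|)$. Since $\exp_{y_0}^{-1}$ is smooth and $v(x)\neq0$ off $y_0$, this is visibly $C^\infty$ on $\tilde M\setminus\{y_0\}$.

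First I would check that $\supp\,\tilde\psi_k$ keeps away from $y_0$ precisely when $\supp\,\psi_k\cap S\neq\emptyset$: since $S\subset\{d_{\tilde g}(\cdot,y_0)\ge C_0/4\}$ and $\tilde\psi_k$ is supported in $B(x_k,\tfrac32\delta_0)$ with $\delta_0\ll1$ and $C_0$ large, every $x\in\supp\,\tilde\psi_k$ has $d_{\tilde g}(x,y_0)\ge C_0/4-2\delta_0\ge1$. Hence $\omega_0$ is continuous on the compact set $\supp\,\tilde\psi_k\subset\{d_{\tilde g}(\cdot,y_0)\ge1\}$, and therefore uniformly continuous there, which is the assertion of the lemma.

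For the intended application --- choosing one $\delta_0$ so that $|\omega_0(x)-\omega_k|<\delta_1$ on $\supp\,\tilde\psi_k$ for \emph{all} admissible $k$, even though $S$ has diameter $\sim\log\la$ --- one wants the modulus of continuity to be uniform in $k$ and $\la$, so I would upgrade the above to the bound $|\nabla_x\omega_0|\lesssim1$ on $\{d_{\tilde g}(\cdot,y_0)\ge1\}$ with an absolute constant. Decomposing a tangent perturbation of $x$ into its parts parallel and orthogonal to $\omega_0(x)^{\sharp}$: the parallel part merely slides $x$ along the geodesic from $y_0$ and, as $\nabla_{\dot\sigma}\dot\sigma=0$, does not change the endpoint velocity at all; an orthogonal perturbation of size $\e$ rotates $v(x)/|v(x)|$ by the angle $\e/|J_0(t_0)|$ and so changes the endpoint velocity by $\e\,|\dot J_0(t_0)|/|J_0(t_0)|$, where $J_0$ is the normal Jacobi field along the geodesic with $J_0(0)=0$, $|\dot J_0(0)|=1$. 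The exponential growth of $|J_0(t_0)|$ (which makes the rotation exponentially small) is thus exactly cancelled by that of $|\dot J_0(t_0)|$, and writing $J_0=fE$ one only needs that the Riccati quantity $f'/f$, which solves $(f'/f)'=-\tilde K-(f'/f)^2$ with $-\tilde K\in[\tfrac12,\tfrac32]$, is bounded by an absolute constant for $t\ge1$ --- a standard comparison argument using $-\tfrac32\le\tilde K\le-\tfrac12$.

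The main obstacle here is conceptual rather than computational: one must know that $\exp_{y_0}$ is a \emph{global} diffeomorphism (so that $\omega_0(x)$ is even well defined and single-valued) and that the Jacobi quotient $|\dot J_0|/|J_0|$ does not blow up over distances of order $\log\la$. Both of these can fail on a generic manifold, and are precisely the properties the negatively curved, conjugate-point-free background $\tilde M$ was engineered to provide; everything else is routine Jacobi field comparison on a pinched surface.
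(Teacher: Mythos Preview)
Your proposal is correct and takes essentially the same approach as the paper. Both arguments identify $\omega_0(x)=\nabla_x d_{\tilde g}(x,y_0)$ and bound its derivative by Jacobi field comparison on the pinched-curvature Cartan--Hadamard surface $\tilde M$; the paper packages this as an appeal to the Hessian comparison theorem (citing \cite{hesscompare}) against the space forms of curvature $-2$ and $-1/2$, while you unwind the same estimate via the Riccati equation for $f'/f$, which is exactly how that comparison theorem is proved. Your explicit remark that the literal statement (uniform continuity on a fixed compact $\supp\,\tilde\psi_k$) is trivial and that what is actually used is the $k$-uniform bound $|\nabla_x\omega_0|\lesssim 1$ is a helpful clarification the paper leaves implicit.
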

\begin{proof}
Note that the covector field $\omega_0(x)=\nabla_x d_{\tilde g}(x,y_0)$ is a $1$-form where $\nabla_x$ is the covariant derivative in $x$ (which is the ordinary differential when it acts on functions). Let $\gamma_{y_0,x}(t)$ be the unit speed geodesic emanating from $y_0$ with $\gamma_{y_0,x}(d_{\tilde g}(x,y_0))=x$. By the second variation formula \cite[(1.17)]{CheegerEbin}, for normalized vector fields $X$ and $Y$ perpendicular to $\gamma'_{y_0,x}(t)$, we have
\begin{equation*}
    \nabla_x  \omega_0(x)(X,Y)=\nabla_x^2 d_{\tilde g}(x,y_0)(X,Y)= \langle J_X'(t),Y\rangle|_{t=d_{\tilde g}(x,y_0)}
\end{equation*}
where $J_X(t)$ is the Jacobi field along $\gamma_{y_0,x}(t)$ such that
$$J_X(0)=0,\quad J_X(d_{\tilde g}(x,y_0))=X.$$
{Since $\tilde{M}$ has negative curvature, $J_X(t)$ is well defined by the above conditions.}

{{By the Hessian comparison theorem \cite[Theorem A]{hesscompare}, since the curvature $K$ of $\tilde M$ is bounded below by $-2$, $\nabla_x^2 d_{\tilde g}(x,y_0)$ is controlled by $\nabla^2_{\tilde x}\tilde{d}(\tilde{x},\tilde{y}_0)$ if $\tilde{d}(\tilde{x},\tilde{y}_0)$ is the distance function on the space form with constant curvature $-2$ and $ d_{\tilde g}(x,y_0)=\tilde{d}(\tilde{x},\tilde{y}_0)$. {More precisely, for any normalized vector fields $X$ on $\tilde{M}$ perpendicular to $\gamma_{y_0,x}$ and $\tilde{X}$ on the space form with constant curvature $-2$ perpendicular to the geodesic connecting $\tilde{x}$ and $\tilde{y}_0$, we have
\[\nabla_x^2 d_{\tilde g}(x,y_0)(X,X)\leq \nabla^2_{\tilde x}\tilde{d}(\tilde{x},\tilde{y}_0)(\tilde{X},\tilde{X}).\]
} By using the second variation formula again, the Hessian of the distance function $\tilde{d}(\tilde{x},\tilde{y}_0)$ on the space form can be computed directly with
\begin{equation*}
    \tilde{J}_{\tilde{X}}(t)= \frac{\sinh\sqrt{2} t}{\sinh \sqrt{2} \tilde{d}(\tilde{x},\tilde{y}_0)}\tilde{X},\quad \tilde{J}'_{\tilde{X}}( \tilde{d}(\tilde{x},\tilde{y}_0))= \sqrt{2} \,\frac{\cosh\sqrt{2} \tilde{d}(\tilde{x},\tilde{y}_0)}{\sinh \sqrt{2} \tilde{d}(\tilde{x},\tilde{y}_0)}\tilde{X}.
\end{equation*}
Since $\tilde{J}'_{\tilde{X}}( \tilde{d}(\tilde{x},\tilde{y}_0))$ is uniformly bounded for $\tilde{d}(\tilde{x},\tilde{y}_0)>1$, {the Hessian matrix $\nabla_x \omega_0(x)$ is uniformly bounded above. On the other hand, a similar comparison with the space form of curvature $-1/2$ or the Euclidean space shows the Hessian matrix $\nabla_x \omega_0(x)$ is uniformly bounded below. Therefore, we conclude}
$\nabla_x \omega_0(x)=\nabla_x^2 d_{\tilde g}( x, y_0)$ is uniformly bounded when $x\in \supp\,\tilde \psi_k$. By definition, if $\Gamma_{ij}^k$ are the Christoffel symbols, we have  $$\nabla_x^2f(\partial_i, \partial_j)=\frac{\partial^2f}{\partial x_i\partial x_j }-\sum_k\Gamma_{ij}^k \frac{\partial f}{\partial x_k}.$$ Since $\tilde M$ has bounded geometry, in the normal coordinates around $x_k$, {$\left|\frac{\partial\,  d_{\tilde g}( x, y_0)}{\partial x_k}\right|+|\Gamma_{ij}^k|\lesssim 1$} if $|x-x_k|\lesssim 1$.  Thus, the standard coordinate derivative of $\omega_0(x)$ is uniformly bounded. This completes the proof of the lemma.}}
\end{proof}

\begin{lemma}\label{propogation}
Let $C_0$ be defined as in \eqref{n2},
assume that $\supp\,\psi_k \cap S\neq \emptyset$ and that $\omega_k$ is as in \eqref{n6}.
For any $\delta_1>0$, we can fix $C_0$ large enough and choose $\delta_0$ in \eqref{n4} sufficiently small such that for any $x\in \supp \,\tilde\psi_k$, if in the normal coordinate around $x_k$, $\left|\frac{\xi}{|\xi|_{{\tilde{g}(x)}}}-\omega_k\right|>\delta_1/10$, then $(x(t),\xi(t))=\Phi_{-t}(x,\xi)$ satisfies
     \begin{equation}\label{xi0}
     d_{\tilde g}(x(t), y_0)\ge 1, \,\,\,\forall\, t\ge0,
 \end{equation}
 $y_0$ as in \eqref{n2}.
 Moreover, if we choose $\delta_1$ to be sufficiently small, then $\left|\frac{\xi}{|\xi|_{{\tilde{g}(x)}}}-\omega_k\right|<10\delta_1$ implies  
  \begin{equation}\label{xi00}
     d_{\tilde g}(x(t), y_0)\ge 1, \,\,\,\forall\, t\le0.
 \end{equation}
\end{lemma}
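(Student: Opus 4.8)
The plan is to deduce both \eqref{xi0} and \eqref{xi00} from the comparison (law of cosines) inequality on $\tilde M$. Since $\tilde M$ is simply connected with Gaussian curvature $K\le-\tfrac12$, it is a Cartan--Hadamard manifold in which any two points are joined by a unique minimizing geodesic, and it is globally $\mathrm{CAT}(-\tfrac12)$; hence, writing $\kappa=1/\sqrt2$, every geodesic triangle with vertices $p,q,r$, side lengths $a=d_{\tilde g}(p,q)$, $b=d_{\tilde g}(p,r)$, and angle $\gamma$ at $p$ satisfies
\begin{equation}\label{eq:catcos}
\cosh\!\bigl(\kappa\, d_{\tilde g}(q,r)\bigr)\ \ge\ \cosh(\kappa a)\cosh(\kappa b)-\sinh(\kappa a)\sinh(\kappa b)\cos\gamma.
\end{equation}
(Equivalently, \eqref{eq:catcos} follows by integrating along $[p,r]$ the Hessian comparison bound $\mathrm{Hess}\,d_{\tilde g}(\cdot,q)\ge\kappa\coth(\kappa\,d_{\tilde g}(\cdot,q))$ on directions orthogonal to $\nabla d_{\tilde g}(\cdot,q)$, exactly as in the proof of Lemma~\ref{uniform}.) Next I would translate the hypotheses into a statement about the geodesic through $(x,\xi)$: let $\gamma$ be the unit-speed geodesic with $\gamma(0)=x$ for which $x(t)=\gamma(-t)$ is the base point of $\Phi_{-t}(x,\xi)$ (the conclusions are invariant under positive reparametrization of the flow, so $\gamma$ may be taken unit-speed), and let $\theta_0\in[0,\pi]$ be the $\tilde g(x)$-angle at $x$ between $\gamma'(0)$ and the outward unit radial vector $\omega_0(x)^\sharp$, $\omega_0(x)=\nabla_x d_{\tilde g}(x,y_0)$. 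By Lemma~\ref{uniform} the covector field $\omega_0$ is Lipschitz, with a constant uniform in $k$, in the normal coordinates about $x_k$, and $\omega_0(x_k)=\omega_k$; hence $|\omega_0(x)-\omega_k|\lesssim\delta_0$ for $x\in\supp\tilde\psi_k\subset B(x_k,2\delta_0)$. Accounting also for the $O(\delta_0)$ discrepancy between $\tilde g(x)$ and the Euclidean structure of these coordinates (bounded geometry), the hypothesis $|\xi/|\xi|_{\tilde g(x)}-\omega_k|>\delta_1/10$ of \eqref{xi0} forces $\theta_0\ge c_1\delta_1$ for a uniform $c_1>0$ once $\delta_0\ll\delta_1$, while the hypothesis $|\xi/|\xi|_{\tilde g(x)}-\omega_k|<10\delta_1$ of \eqref{xi00} forces $\theta_0<\pi/2$ once $\delta_1\ll1$; moreover $d_{\tilde g}(x,y_0)\ge C_0/8$, since $\supp\psi_k\cap S\ne\emptyset$ and $S\subset\{d_{\tilde g}(\cdot,y_0)\ge C_0/4\}$ by \eqref{n2}.

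With this in place the two estimates are immediate. For \eqref{xi0}, fix $t\ge0$ and put $z=x(t)=\gamma(-t)$; then $d_{\tilde g}(x,z)=t$ (since $\tilde M$ is Cartan--Hadamard), and the triangle $x,z,y_0$ has angle $\theta_0$ at $x$, because $[x,z]$ and $[x,y_0]$ leave $x$ in the directions $-\gamma'(0)$ and $-\omega_0(x)^\sharp$. Applying \eqref{eq:catcos} with $a=t$, $b=d_{\tilde g}(x,y_0)$, $\gamma=\theta_0$, and using $\tanh(\kappa t),\tanh(\kappa b)\le1$ together with $1-\cos\theta_0\ge\tfrac2{\pi^2}\theta_0^2\ge\tfrac{2c_1^2}{\pi^2}\delta_1^2$, we get
\[
\cosh\!\bigl(\kappa\, d_{\tilde g}(z,y_0)\bigr)\ \ge\ \cosh(\kappa t)\cosh(\kappa b)\,(1-\cos\theta_0)\ \ge\ \tfrac{2c_1^2}{\pi^2}\,\delta_1^2\,\cosh\!\bigl(\tfrac{\kappa C_0}{8}\bigr),
\]
which exceeds $\cosh\kappa$ once $C_0$ is large enough in terms of $\delta_1$; hence $d_{\tilde g}(z,y_0)\ge1$, and \eqref{xi0} follows. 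For \eqref{xi00}, fix $t\le0$ and put $z=x(t)=\gamma(|t|)$; now $[x,z]$ and $[x,y_0]$ leave $x$ in the directions $\gamma'(0)$ and $-\omega_0(x)^\sharp$, so the triangle $x,z,y_0$ has angle $\pi-\theta_0>\pi/2$ at $x$ and $\cos(\pi-\theta_0)\le0$; then \eqref{eq:catcos} gives $\cosh(\kappa\,d_{\tilde g}(z,y_0))\ge\cosh(\kappa|t|)\cosh(\kappa\,d_{\tilde g}(x,y_0))\ge\cosh(\tfrac{\kappa C_0}{8})\ge\cosh\kappa$, so $d_{\tilde g}(z,y_0)\ge1$ once more.

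No circularity arises: one first fixes $\delta_1$ small (as dictated by \eqref{xi00} and by the harmonic-analysis arguments that invoke this lemma), then $C_0=C_0(\delta_1)$ large enough for the two displays above, and finally $\delta_0=\delta_0(\delta_1)$ small enough for the Lipschitz step. The same uniform Lipschitz bound on $x\mapsto\omega_0(x)$ from Lemma~\ref{uniform}, together with bounded geometry, also yields the uniform-in-$k$ estimates $\partial_x^\alpha\partial_\xi^\gamma a_k=O(1)$ on $\{p(x,\xi)=1\}$ for the symbols $a_k$ in \eqref{n6}, as asserted there. The only step requiring real care — and, I expect, the only obstacle — is this passage from the coordinate/microlocal condition on $\xi$ to the intrinsic angle $\theta_0$ via Lemma~\ref{uniform}, together with the consistent ordering of the three constants; the geometric content is entirely contained in \eqref{eq:catcos}.
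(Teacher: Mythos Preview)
Your argument is correct and rests on the same comparison-geometry input as the paper's proof. The paper establishes \eqref{xi0} by contradiction: assuming the backward geodesic reaches a point within distance $1$ of $y_0$, it invokes the Aleksandrov--Toponogov angle comparison (essentially the hyperbolic law of sines in the model of curvature $-\tfrac12$) to force $|\omega_1(x)-\omega_0(x)|\lesssim e^{-C_0/16}$, contradicting the angular lower bound. You instead apply the CAT$(-\tfrac12)$ law of cosines directly to lower-bound $d_{\tilde g}(z,y_0)$ from the angle $\theta_0$ at $x$; this is the contrapositive/dual formulation of the same comparison theorem and is arguably a bit cleaner. For \eqref{xi00} the paper reduces to \eqref{xi0} applied to $-\xi$ (observing that $|\xi/|\xi|_{\tilde g(x)}-\omega_k|<10\delta_1$ forces $|\xi/|\xi|_{\tilde g(x)}+\omega_k|>\delta_1/10$), which is slightly slicker than your direct obtuse-angle computation, but both work. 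One cosmetic point: your displayed step $\cosh(\kappa d)\ge\cosh(\kappa t)\cosh(\kappa b)(1-\cos\theta_0)$ via ``$\tanh\le1$'' is only literally valid when $\cos\theta_0\ge0$; when $\cos\theta_0<0$ you get the stronger bound $\cosh(\kappa d)\ge\cosh(\kappa t)\cosh(\kappa b)$ directly, so the conclusion is unaffected.
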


\begin{proof}
{{We shall first prove \eqref{xi0} by contradiction, and then give the proof of \eqref{xi00} by using \eqref{xi0}. 
Fix $x\in \supp \,\tilde\psi_k$, let $\omega_0(x)$ be defined as in \eqref{omega}.
Suppose there exists some point $y_1$ such that $d_{\tilde g}(y_1,y_0)<1$ and  $\Phi_{-t_1}(x, \omega_1(x))=(y_1,\eta_1)$ for some unit {covectors $\omega_1(x)$ and $\eta_1$} with $\left|\omega_1(x)-\omega_k\right|>\delta_1/10$ and  $t_1=d_{\tilde g}(x,y_1)$. 
By Lemma~\ref{uniform}, we can choose $\delta_0$ in the definition of $\supp\,\tilde \psi_k$ above sufficiently small such that 
$|\omega_0(x)-\omega_k|=|\omega_0(x)-\omega_0(x_k)|\le \delta_1/20$. This implies that 
$\left|\omega_1(x)-\omega_0(x)\right|>\delta_1/20$.}}

{Since $\supp\,\psi_k \cap S\neq \emptyset$} and  $x\in \supp\,\tilde \psi_k$, if $t_0, t_1$ are defined as above, we have $\frac {C_0}8\le t_0,t_1\le 8c_0\log\la$, and $|t_0-t_1|\le 1$  by the triangle inequality.
  If we denote $( y_2, \eta_2)=\Phi_{{-t_0}}(x,\omega_1(x))$, we have $d_{\tilde g}( y_2, y_0)<d_{\tilde g}( y_2, y_1)+d_{\tilde g}( y_1, y_0)<2$. In normal coordinates around $x_k$, we have $|\omega_0(x)|,|\omega_1(x)|\approx 1$. Since the curvature $K$ of $\tilde M$ is bounded {above by $-1/2$}, by using {the Aleksandrov--Toponogov triangle comparison theorem \cite[Theorem 4.1]{karcher}}, it is not hard to show
\begin{equation}\label{xi3}
   {|\omega_1(x)-\omega_0(x)|\lesssim \frac{\sinh(d_{\tilde g}( y_2,y_0){/\sqrt{2}})}{\sinh(t_1/\sqrt{2})}\lesssim e^{-t_1/\sqrt{2}}\lesssim e^{-\frac{1}{16} C_0}.}
\end{equation}
By choosing $C_0$ sufficiently large, this contradicts  the fact that $\left|\omega_1(x)-\omega_0(x)\right|>\delta_1/20$.

To prove \eqref{xi00}, note that by choosing {$\delta_1$} small enough, {$\left|\frac{\xi}{|\xi|_{{\tilde{g}(x)}}}-\omega_k\right|<10\delta_1$ implies that $\left|\frac{\xi}{|\xi|_{{\tilde{g}(x)}}}+\omega_k\right|>\delta_1/10$}. Thus, \eqref{xi00} follows directly from \eqref{xi0}.
\end{proof}

Now we shall give the proof of \eqref{n13}.  We may assume $\supp\, \tilde{\chi}_0$ has diameter less than 1 so Lemma~\ref{propogation} tells us when $x(t)$  intersects $\supp \,\tilde{\chi}_0$.
 By triangle inequality, it suffices to show 
\begin{equation}\label{n14}
        \| A_{k,1}T_j(\tilde \chi_0 h)\|_{L^q(S)}\lesssim_N \la^{-N}\|h\|_{L^2(\tilde M)}, \, \, C_0\le 2^j\le c_0\log\la.
\end{equation}
Note that the volume of the set $S$ is $O(\la^{Cc_0})$ for some constant $C$. To prove \eqref{n14}, it suffices to show 
the following pointwise bound
\begin{equation}\label{n15}
\left|\int_0^\infty \beta(2^{-j}t)e^{it\la-t\delta}   ( A_{k,1}\circ \cos (t\tilde P))(x, y)\tilde \chi_0( y)\,dt \right|\lesssim_N \la^{-N}.
\end{equation}

We shall give the proof of \eqref{n15} using the Hadamard parametrix, as this approach is more easily adaptable to the proof of Proposition~\ref{keyb}. Alternatively, one could also prove \eqref{n15} using kernel estimates for the spectral measure, as in the proof of Lemma~\ref{kernelhyper}.

Since  $\tilde M$ is simply connected with negative curvature, 
we can
use the Hadamard parametrix to express the solution $\cos t\tilde P$  in normal coordinates around $x_k$
  as follows:
\begin{equation}\label{ak13}
\cos t\tilde P( x,
 y)=
\sum_{\nu=0}^N
w_\nu(x, y)W_\nu(t, x, y)
+R_N(t, x, y),
\end{equation}
where $w_\nu \in C^\infty({\mathbb R}^{n}
\times {\mathbb R}^{n})$,
\begin{equation}\label{k14}
W_0(t, x, y)=(2\pi)^{-n}
\int_{{\mathbb R}^{n}} e^{id_{\tilde g}( x,
  y)\xi_1} \cos t|\xi| \, d\xi,
\end{equation}
while for $\nu=1,2,\dots$, $W_\nu(t, x, y)$
is a finite linear combination of Fourier integrals 
of the form
\begin{equation}\label{k15}
\int_{{\mathbb R}^{n}}
e^{id_{\tilde g}( x, y)\xi_1}
e^{\pm it|\xi|}
\alpha_\nu(|\xi|)\, d\xi,
\, \, \text{with } \, \,
\alpha_\nu(\tau)=0, \, \text{for } \, \tau\le 1
\, \, \text{and } \, \, \partial^j_\tau \alpha_\nu(\tau)
\lesssim \tau^{-\nu-j},
\end{equation}
and, if $N_0$ is given, then if $N$ is large enough,
\begin{equation}\label{k16}
|\partial_t^j R_N(t, x, y)|\le C\exp(Ct),
\quad
0\le j\le N_0,
\end{equation}
for a fixed constant $C$. 
And the coefficients 
$w_\nu( x, y)$ satisfy
\begin{equation}\label{k17}
0<w_0(\tilde x,\tilde y)\le 1,
\end{equation}
as well as
\begin{equation}\label{k19}
|\partial^\beta_x w_\nu( x, y)|
\le C\exp(Cr), \, \,
|\beta|, \nu\le N_0, \quad r=d_{\tilde g}( x,
 y),
\end{equation}
for some uniform constant $C$ (depending on 
$\tilde g$ and $N_0$). We also have the similar bound for the distance function
\begin{equation}\label{k19a}
|\partial^\beta_{x,y} d_{\tilde g}( x,
 y)|
\le C\exp(Cr), \, \,
|\beta|\le N_0, \quad r=d_{\tilde g}( x,
 y).
\end{equation}

The facts that we have just recited are well known.
One can see, for instance,  \cite{Berard} or \cite[\S 1.1, \S 3.6]{SoggeHangzhou} for 
background regarding the Hadamard parametrix,
and \cite{SoggeZelditchL4} for a discussion
of properties of $w_0$.

Let us focus on the $\nu=0$ term. The higher order terms can be treated similarly and satisfy better bounds, and the error term involving $R_N$ certainly satisfies desired bound by using  \eqref{n10},\eqref{k16}, and an integration by parts argument in the $t$ variable. The kernel of the main term in \eqref{20a} is 
\begin{equation}\label{mk}
    \begin{aligned}
        K( x, y)=(2\pi)^{-2n}\la^{2n}&\iiiint \beta(2^{-j}t)e^{it\la-t\delta} e^{i\la\langle  x- z,\xi \rangle} \psi_k(x)(1-a_k(x,\xi))\beta((p(x,\xi))\\ 
        &\,\,\cdot\tilde\psi_k(z)\tilde\chi_0( y)
        \cdot w_0( z, y) e^{i\la d_{\tilde g}( z,
 y)\eta_1} \cos (t\la|\eta|) dzd\xi d\eta dt.
    \end{aligned}
\end{equation}
We can replace $\cos (t\la|\eta|)$ by $e^{-it\la|\eta|}$ since the term involving 
$e^{it\la|\eta|}$ is rapidly decreasing through integration by part in the $t$-variable. A similar integration by parts argument in the $z, \eta$ variables also shows that we may assume ${\eta_1}/{p(x,\xi)}\in [1-\delta_2, 1+\delta_2]$ for some sufficiently small $\delta_2$.

{We claim that} we have for $z\in \supp\,\tilde \psi_k$
and $y\in \supp\,\tilde\chi_0$,
\begin{equation}\label{eq:k9a}
| \nabla_{ z}d_{\tilde g}( z,
 y)-\omega_k|\le {\frac{\delta_1}{10}}.
\end{equation}
 This is because
$
\Phi_{-t}(z,\nabla_{ z}d_{\tilde g}( z,
 y))=( y,\xi_0)
$
for some $\xi_0$ and $t=d_{\tilde g}( z,
 y)$, which by \eqref{xi0} implies $y\notin \supp\,\tilde \chi_0 $ if $\left|{\nabla_{ z}d_{\tilde g}( z,
 y)} -\omega_k\right|> \delta_1/10$. 

 Recall that ${\eta_1}/{p(x,\xi)}\in [1-\delta_2, 1+\delta_2]$. By choosing $\delta_2$ small enough, \eqref{eq:k9a} implies that 
for any $(x, \xi)\in \supp \,a_{k,1}(x,\xi)$, we have 
$$
| \eta_1\nabla_{ z}d_{\tilde g}( z,
 y)-\xi|\ge {\frac{\delta_1}{2}}.$$ Hence a simple 
 integration by parts argument in the $z$
variable yields that 
  the kernel in \eqref{mk} is $O(\la^{-N})$, which completes the proof of \eqref{n13}.

Now we give the proof of  \eqref{n12}. By \eqref{n11} and our previous results for the operators $T_j\tilde \chi_0$ when $2^j\le C_0$ and $2^j\ge c_0\log\la$, proving \eqref{n12} is equivalent to showing that 
\begin{equation}\label{n16}
        \|\sum_{k}A_{k,0}(\Delta_{\tilde g}+(\la+i\delta)^2)^{-1} (\tilde \chi_0 h)\|_{L^q(S)}\lesssim 
    \la^{\mu(q)-1}\|h\|_{L^2(\tilde M)}.
\end{equation}

To prove \eqref{n16}, it suffices to show 
\begin{equation}\label{n17}
    \|\sum_{k}A_{k,0} (\Delta_{\tilde g}+(\la-i\delta)^2)^{-1} (\tilde \chi_0 h)\|_{L^q(S)}\lesssim 
    \la^{\mu(q)-1}\|h\|_{L^2(\tilde M)},
\end{equation}
as well as 
\begin{equation}\label{n18}
    \|\sum_{k}A_{k,0} \big((\Delta_{\tilde g}+(\la+i\delta)^2)^{-1}-(\Delta_{\tilde g}+(\la-i\delta)^2)^{-1} \big) (\tilde \chi_0 h)\|_{L^q(S)}\lesssim 
    \la^{\mu(q)-1}\|h\|_{L^2(\tilde M)}.
\end{equation}

Note that if we define
$E_{\la,k}=\one_{[\la+k\delta, \la +(k+1)\delta)}( \tilde P)$, then
the symbol of the operator
$$
E_{\la,k} \big((\Delta_{\tilde g}+(\la+i\delta)^2)^{-1}-(\Delta_{\tilde g}+(\la-i\delta)^2)^{-1} \big)
$$
is $O\left((\la\delta)^{-1}(1+|k|)^{-2}\right )$. Thus \eqref{n18} can be proved using the same arguments as in the proof of \eqref{tjblarge}.

To prove \eqref{n17}, note that by taking the complex conjugate of both side of \eqref{resolvent}, we have 
\begin{equation}
    \left(\Delta_{\tilde g}+(\la-i\delta)^2\right)^{-1}=\frac i{(\la-i\delta)}\int_0^\infty e^{-it\la-t\delta}\cos (t\tilde P)\,dt.
\end{equation}
As in \eqref{tj}, if we define 
\begin{equation}\label{tj1}
\bar T_j f=\frac i{(\la-i\delta)}\int_0^\infty \beta(2^{-j}t)e^{-it\la-t\delta}\cos (t\tilde P)f\,dt,
\end{equation}
then the above arguments implies that the analog of \eqref{n17}, involving the operators $\bar T_j\tilde \chi_0$ for $2^j\le C_0$ and $2^j\ge c_0\log\la$, satisfies the desired bound. Thus, it suffices to show 
\begin{equation}\label{n19}
        \|\sum_{\{j:C_0\le 2^j\le c_0\log\la\}}\sum_{k}A_{k,0} \bar T_j(\tilde \chi_0 h)\|_{L^q(S)}\lesssim_N 
    \la^{-N}\|h\|_{L^2(\tilde M)}.
\end{equation}
By applying the Hadamard parametrix and using \eqref{xi00}, the proof of \eqref{n19}  follows similarly to that of \eqref{n13}. Hence, we omit the details here.
\end{proof}

\begin{proof}[Proof of Proposition~\ref{keyb}]
As in the proof of Proposition~\ref{keya}, it suffices to show the following equivalent version of \eqref{3.15a},
\begin{equation}\label{main}
    \left\|\chi_1\left(\Delta_g+(\la+i(\log\la)^{-1})^2\right)^{-1}\tilde\chi_1\right\|_{L^2\to L^q}\lesssim \la^{\mu(q)-1}.
\end{equation}
And if we fix $\beta \in C^\infty_0((1/4,4))$ with $\beta= 1$ in (1/2,2), it suffices to show 
\begin{equation}\label{p2.5}
    \left\|\chi_1\left(\Delta_g+(\la+i(\log\la)^{-1})^2\right)^{-1}\beta(P/\la)\tilde\chi_1\right\|_{L^2\to L^q}\lesssim \la^{\mu(q)-1},
\end{equation}
since the analogous estimate involving $(I-\beta(P/\la))$
follows from Sobolev estimates and does not require the $\chi_1$ and $\tilde\chi_1$ cutoff functions.

 We shall need the following lemma which is analogous to Lemma~\ref{propogation}:
\begin{lemma}
    There exist zero-order pseudodifferential operators $A_\pm$ {with compactly supported Schwartz kernel} such that 
{
\begin{equation}\label{eq:lem2.8}
\beta(P/\la)\tilde\chi_1=A_++ A_-+R,
\end{equation} 
}with
$ \|R\|_{L^2\to L^2}\lesssim\la^{-1}.
$
{In local coordinates, $A_{\pm}$ is of the form
\begin{equation}
    A_{\pm} u(x) = (2\pi)^{-n}\la^n\int\int e^{i\la\langle x-y,\xi \rangle} A_{\pm}(x,y,\xi)u(y) dyd\xi,\quad A_{\pm}(x,y,\xi) \in C_0^{\infty}(T^*M).
\end{equation}
}
For all $(x,y,\xi)\in \supp\, A_+(x, y,\xi)$, if $(x(t),\xi(t))=\Phi_t(x,\xi)$, we have 
\begin{equation}\label{6a}
   \dist( x(t), \supp\,\chi_1)\ge 1  \,\,\,\text{for}\,\,\,t \ge C,
\end{equation}
for some sufficiently large constant C. Similarly, for all $(x, y,\xi)\in \supp\, A_-(x,y,\xi)$,  we have 
\begin{equation}\label{7a}
   \dist( x(t), \supp\,\chi_1)\ge 1 \,\,\,\text{as}\,\,\,t\le -C.
\end{equation}
\end{lemma}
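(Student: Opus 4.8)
Our plan is to realize $\beta(P/\la)$ microlocally as a compactly supported semiclassical pseudodifferential operator and then to split its symbol according to whether the geodesic flow emanating from a point of the microsupport eventually leaves $\supp\,\chi_1$ in forward or in backward time. The difference from Lemma~\ref{propogation} is that there $\tilde M$ is simply connected and negatively curved, so one can work explicitly with Jacobi fields, whereas here $M$ carries a nontrivial trapped set; the geometric input will instead be the soft structure of the sets $\Gamma_\pm$ and $K$ together with the convexity of the geodesic flow at infinity, \eqref{eq:convex}. As a first step, exactly as in the discussion of the operator $B$ in \eqref{n5'} (see also Theorem~4.3.1 in \cite{SFIO2}), finite propagation speed lets us write $\beta(P/\la)\tilde\chi_1 = Q + R_0$ where $\|R_0\|_{L^2\to L^2}=O(\la^{-N})$ and $Q$ is a compactly supported semiclassical pseudodifferential operator of order $0$, whose Schwartz kernel is supported in a small neighborhood of the diagonal over $\supp\,\tilde\chi_1$ and whose principal symbol there is $\beta(p(x,\xi))$, with $p$ the principal symbol of $P$. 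Since $M$ has bounded geometry and every operator in sight is supported in a fixed compact part of $M_\infty$ modulo $O(\la^{-N})$, the usual semiclassical calculus applies even though $M$ is noncompact.

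Next I would set up the dynamical partition. Writing $(x(t),\xi(t))=\Phi_t(x,\xi)$, define the open sets
\begin{equation*}
V_\pm=\bigl\{(x,\xi)\in S^*M:\ \exists\,T\ \text{with}\ \dist(x(t),\supp\,\chi_1)\ge 1\ \text{for all}\ \pm t\ge T\bigr\}.
\end{equation*}
Using the convexity of the flow at infinity (a large compact set exists which a geodesic, once it has left, never re-enters; cf.\ \cite[Chapter 6]{DZBook} and \eqref{eq:convex}) one checks: (a) $V_\pm$ are open; (b) a point of $\Gamma_-$ has bounded forward orbit, which therefore accumulates on $K\subset M_{tr}\subset\{\chi_1\equiv 1\}$, so its forward orbit meets $\supp\,\chi_1$ at arbitrarily large times and $V_+\cap\Gamma_-=\emptyset$, and dually $V_-\cap\Gamma_+=\emptyset$; (c) the complement of $V_+$ in $S^*M$ is contained in $\Gamma_-$ and that of $V_-$ in $\Gamma_+$. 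Since $\Gamma_+\cap\Gamma_-=K$ and $\supp\,\tilde\chi_1\cap\pi(K)=\emptyset$, (c) gives $S^*M|_W\subset V_+\cup V_-$ for a compact neighborhood $W$ of $\supp\,\tilde\chi_1$ with $W\cap\pi(K)=\emptyset$. Choose a degree-zero homogeneous partition of unity $1=b_+(x,\xi)+b_-(x,\xi)$ on a conic neighborhood of $S^*M|_W$ with $\supp\,b_\pm\subset V_\pm$, extended to symbols of order $0$. By (b), the (compact) set $\supp\,b_+$, viewed inside $S^*M$, is disjoint from $\Gamma_-$, so \eqref{eq:convex} applied with this set and with $S'$ the closed unit neighborhood of $\supp\,\chi_1$ produces a single constant $C$ such that $\dist(x(t),\supp\,\chi_1)\ge 1$ whenever $(x,\xi)\in\supp\,b_+$ and $t\ge C$; symmetrically for $b_-$ and $t\le -C$ (the passage between $S^*M$ and the slightly off-shell support of $b_\pm\cdot(\text{symbol of }Q)$ costs only a bounded time reparametrization, since $p$ stays in $\supp\,\beta$).

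Finally, let $A_\pm$ be the operators obtained by multiplying the full symbol of $Q$ by $b_\pm$; in local coordinates these have the stated form with amplitudes $A_\pm(x,y,\xi)\in C_0^\infty(T^*M)$, the $y$-support inherited from the kernel of $Q$ and the $\xi$-support from $\beta(p)$ and $b_\pm$. Since $b_++b_-\equiv 1$ on the microsupport of $Q$, we get $A_++A_-=Q+R_1$ with $R_1$ of order $-1$, hence $\|R_1\|_{L^2\to L^2}=O(\la^{-1})$; combining with $R_0$ yields \eqref{eq:lem2.8} with $\|R\|_{L^2\to L^2}\lesssim\la^{-1}$. The propagation properties \eqref{6a} and \eqref{7a} are precisely the uniform-escape statements just established (enlarging $C$ if needed, and using homogeneity of the flow in $\xi$ so that the conclusion on $S^*M$ carries over to all of $\supp\,A_\pm$).

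The main obstacle is the dynamical step: one must verify that over $\supp\,\tilde\chi_1$ every unit covector leaves $\supp\,\chi_1$ in at least one time direction, and that this happens with a time that is \emph{uniform} over the covector. This is exactly where the hypotheses enter — that $\chi_1\equiv 1$ on $M_{tr}$, that $M_{tr}$ contains a neighborhood of $\pi(K)$, and that $\tilde\chi_1$ is supported away from $\pi(K)$ — in combination with the closedness of $\Gamma_\pm$, the compactness of $K=\Gamma_+\cap\Gamma_-$, and the convexity estimate \eqref{eq:convex}, which upgrades pointwise escape to escape in uniform time. The pseudodifferential bookkeeping in the first and last steps is routine.
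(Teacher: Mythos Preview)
Your proof is correct and follows essentially the same approach as the paper's: both write $\beta(P/\la)\tilde\chi_1$ as a compactly supported semiclassical pseudodifferential operator plus an $O(\la^{-1})$ remainder via finite propagation speed and the parametrix for $\cos tP$, then split the principal symbol using a partition of unity subordinate to the open cover of $S^*M|_{\supp\,\tilde\chi_1}$ by the complements of $\Gamma_\pm$ (which works precisely because $\tilde\chi_1$ is supported away from $\pi(K)=\pi(\Gamma_+\cap\Gamma_-)$), and finally invoke \eqref{eq:convex} for the uniform escape time. Your intermediate sets $V_\pm$ are a slight detour---the paper simply takes the cover $(U\setminus\Gamma_-)\cup(U\setminus\Gamma_+)$ directly---but since you end up showing $\supp\,b_+\cap\Gamma_-=\emptyset$ and applying \eqref{eq:convex} anyway, the arguments coincide.
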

\begin{proof}
{{
As in \eqref{n5'}, if we extend $\beta$ to be an even function, then we can write $\beta(P/\la)=B+C$ where $ \|C\|_{L^2\to L^2}\lesssim_N \la^{-N}$, and $B$ is a pseudodifferential operator with principal symbol $\beta(p(x,\xi))$,
with $p(x,\xi)$ here now being the principal symbol of $P$. 

Next, choose $\psi\in C_0^\infty{(M)}$ with $\psi=1$ in a neighborhood of the support of $\tilde\chi_1$  and $\psi=0$ on $M_{tr}$. Without loss of generality, we may assume both $\psi$ and $\tilde \chi_1$ are supported in a sufficiently small neighborhood of some fixed point $y_0$. Then, in normal coordinates around $y_0$, if $B(x,y)$ is the {Schwartz} kernel of $B$, 
we have $B(x,y)\tilde\chi_1(y)=\psi(x)B(x,y)\tilde \chi_1(y)+O(\la^{-N})$. {Since $B$ has principal symbol $\beta(p(x,\xi))$},
\begin{equation}\label{2.95a}
    B(x,y)\tilde\chi_1(y)=(2\pi)^{-n}\la^n\int e^{i\la\langle x-y,\xi \rangle} \psi(x){\beta(p(x,\xi))}\tilde\chi_1(y)d\xi +R(x,y),
\end{equation}
where $R$ is a lower order {pseudodifferential} operator which satisfies $ \|R\|_{L^2\to L^2}=O(\la^{-1})
$.
}}
{Let $S=\{(x,\xi)\in S^*M: x\in \supp\,\psi \}$. 
Since $\psi(x)=0$ on $M_{tr}$ and $\Gamma_+\cap \Gamma_- \subset M_{tr}$, the two sets $\Gamma_+\cap S$ and $\Gamma_-\cap S$ are disjoint. Since $\Gamma_\pm$ are closed and $S$ is compact, 
there exists $\phi_{\pm}\in C_0^{\infty}(S^*M)$ subordinate to the open cover $S\subset (U\setminus \Gamma_-)\cup (U\setminus \Gamma_+)$ where $U$ is a small neighbourhood of $S$, such that
\begin{equation}
    \phi_+(x,\xi) +\phi_-(x,\xi) =1,\quad (x,\xi)\in S,
\end{equation}
with $\supp\,\phi_+ \cap \Gamma_-=\emptyset$ 
 and $\supp\,\phi_- \cap \Gamma_+=\emptyset$. If we define the operators $A_\pm$ by 
\begin{equation}\label{10}
   A_\pm f(x)=(2\pi)^{-n}\la^n\int e^{i\la\langle x-y,\xi \rangle} \phi_{\pm}(x,\xi/|\xi|_g)\psi(x)\beta(p(x,\xi))\tilde\chi_1(y) f(y)d\xi dy,
\end{equation}
then we obtain \eqref{eq:lem2.8}. Moreover, by \eqref{eq:convex}, $(x,\xi)\in \supp \, \phi_{\pm}$ satisfies \eqref{6a} and \eqref{7a} for sufficiently large $C$, respectively.
}
\end{proof}
For later use, it is straightforward to check that the $A_\pm$ operators  satisfy 
\begin{equation}\label{bbound}
    \|A_\pm\|_{L^p\to L^p}=O(1),\,\,\,\forall\,\,\,1\le p\le \infty.
\end{equation}

To prove \eqref{p2.5} it suffices to show 
\begin{equation}\label{01}
    \left\|\chi_1\left(\Delta_g+(\la+i(\log\la)^{-1})^2\right)^{-1}\circ A\right\|_{L^2\to L^q}\lesssim \la^{\mu(q)-1}, \quad A=A_+, A_-.
\end{equation}
{{
The other term involving $R$ is more straightforward to handle and does not require the $\chi_1$ cut-off function on the left. More explicitly, by \eqref{3.15} along with the fact that $\|R\|_{L^2\to L^2}\lesssim\la^{-1}$, we have
\begin{equation}\label{n12b}
\begin{aligned}
       \left\|\left(\Delta_g+(\la+i(\log\la)^{-1})^2\right)^{-1} R \,(h)\right\|_{ L^q(M)}&\lesssim \la^{\mu(q)-1}(\log\la)^{\frac12}\|R( h)\|_{L^2( M)}\\
       &\lesssim 
   \la^{\mu(q)-2}(\log\la)^{\frac12} \| h\|_{L^2( M)},
\end{aligned}
\end{equation}
which is better than the required estimate in \eqref{p2.5}.}}

Let us first prove \eqref{01} for $A=A_+$.  The main strategy in the proof
is similar to what was used in the  proof of Proposition~\ref{keya}. If we define $T_j$ as in \eqref{tj}, it is natural to separately consider
the contribution of the terms with
$2^j\lesssim  1$, $1\lesssim  2^j\lesssim \log \la$ and $\log\la \lesssim 2^j$.

\noindent(i) $2^j\le 10C$ for $C$ as in \eqref{7a}.

One can directly apply the arguments from case (i) in the proof of Proposition~\ref{keya} to handle this case. There is no need to make use of $\chi_1$ and $A$ operator here.

\noindent(ii) $2^j\ge c_0\log\la$ for some small enough $c_0$.

Let
$E_{\la,k}=\one_{[\la+k/\log\la, \la +(k+1)/\log\la)}( P).$ 
Then by integration by parts in $t$-variable, the symbol of 
$$S_k=\frac1{i(\la+i(\log\la)^{-1})}E_{\la,k}\int_0^\infty e^{it\la-t/\log\la}\cos (tP)\,\sum_{2^j\ge c_0\log\la}\beta(2^{-j}t)\,dt
$$
is $O\left(\la^{-1}\log\la (1+|k|)^{-N}\right )$. Thus, by the sharp spectral projection bound in \eqref{i.7} and \eqref{3.19} with $\delta=(\log\la)^{-1}$
\begin{align*}
\|&\one_{[\la/2,2\la]}( P) \, 
 \sum_{|k|\lesssim \la\log\la}(
 S_k\circ A h)\|_{L^q( M)}
\\
&\le \sum_{|k|\lesssim \la\log\la}
\|\one_{[\la/2,2\la]}( P) \,(S_k\circ A h)\|_{L^q( M)}
\\
&\le  \la^{\mu(q)}(\log\la)^{-1/2}  \sum_{|k|\lesssim \la\log\la}
\| \one_{[\la/2,2\la]}( P) \, (S_k\circ A h)\|_{L^2( M)}
\\
&\lesssim \la^{\mu(q)}(\log\la)^{-1/2}\sum_{|k|\lesssim \la\log\la} (1+|k|)^{-N} \la^{-1}\log\la  \|\one_{[\la/2,2\la]}( P) E_{\la,k} \circ (A h)\|_{L^2( M)}
\\
&\lesssim \la^{\mu(q)-1}  \|h\|_{L^2( M)},
\end{align*}
 using \eqref{3.19}, \eqref{10} and \eqref{bbound} in the last step.
The case when the spectrum is outside {$[\la/2,2\la]$} can be handled using Sobolev estimates and satisfies better bounds.

\noindent(iii) $10C \le 2^j\le c_0\log\la$ for $C$ as in \eqref{7a}.

This is the case where we require compact cutoff functions on both sides. 
By duality, it suffices to show that
 the operator
\begin{equation}\label{20}
  T= \sum_{\{j: 10C\le 2^j\le c_0\log\la\}}\frac i{(\la-i(\log\la)^{-1})}\int_0^\infty \beta(2^{-j}t)e^{-it\la-t/\log\la}   A\circ \cos (tP)\chi_1\,dt
\end{equation}
satisfies the same $L^{q'}\to L^2$ mapping bound as in  \eqref{01}.

To proceed, since $(M,g)$ has nonpositive sectional curvature, we can use the 
Cartan-Hadamard theorem to lift the calculation up to the universal
cover of $(M,g)$ using the formula
(see e.g., \cite[(3.6.4)]{SoggeHangzhou})
\begin{equation}\label{k.3}
    (\cos t\sqrt{-\Delta_{g}})(x,y)
=\sum_{\alpha\in \Gamma}
(\cos t\sqrt{-\Delta_{\tilde g}})(\tilde x,\alpha(\tilde y)).
\end{equation}
Here $(\Rn,\tilde g)$ is the universal cover of 
$(M,g)$, with $\tilde g$ now being the Riemannian metric
on $\Rn$ obtained by pulling back the metric $g$
via the covering map.   Also, $\Gamma: \Rn\to \Rn$ are the 
deck transformations, and $\tilde x, \tilde y \in D$ with $D\simeq M$ being a Dirichlet fundamental domain.

Note that by finite propagation speed,  $\cos (t\sqrt{-\Delta_{\tilde g}})(\tilde x,\alpha(\tilde y))=0$ if $d_g(\tilde x,\alpha(\tilde y))>|t|$.
Thus, for each fixed $\tilde x$, by using a simple volume counting argument using the fact that the injectivity radius is positive along with the bounded geometry of $(M,g)$,  the number of deck transformations $\alpha$ such that $d_g(\tilde x,\alpha(\tilde y))\lesssim c_0\log\la$ is $O(\la^{C_Mc_0})$. The proof of this, along with additional properties of manifolds with bounded geometry, will be provided in the next section (see \eqref{b.0}–\eqref{b.1} and the discussion following \eqref{k.7}).

Therefore, it suffices to show that for each fixed $\alpha$, we have
\begin{equation}\label{20a}
\int_0^\infty \beta(2^{-j}t)e^{-it\la-t/\log\la}   (A\circ \cos (t\sqrt{-\Delta_{\tilde g}}))(\tilde x,\alpha(\tilde y))\chi_1(\tilde y)\,dt \lesssim_N \la^{-N}.
\end{equation}
Here, we slightly abuse the notation by identifying $\chi_1$ with a compactly supported function on the fundamental domain $D$, and $A$ here denotes the lift of the operator on $(M,g)$
to $(\Rn,\tilde g)$ via the covering map. By applying the Hadamard parametrix and using \eqref{6a}, \eqref{20a} follows from the same arguments as in the proof of \eqref{n15}. We omit the details here.

This finishes the proof of \eqref{01} if $A=A_+$. 

Similarly, if  $A=A_-$, we can use the arguments in the proof of Proposition~\ref{keya} to show that 
\begin{equation}\label{01a}
    \left\|\chi_1\left(\Delta_g+(\la-i(\log\la)^{-1})^2\right)^{-1}\circ A\right\|_{L^2\to L^q}\lesssim \la^{\mu(q)-1},
\end{equation}
as well as 
\begin{multline}\label{b-}
\left\|\chi_1\left(\left(\Delta_g+(\la+i(\log\la)^{-1})^2\right)^{-1}-\left(\Delta_g+(\la-i(\log\la)^{-1})^2\right)^{-1}\right)\circ A\right\|_{L^2\to L^q}
\\
\lesssim \la^{\mu(q)-1}.
\end{multline}
These two inequalities yield \eqref{01} with $A=A_-$.

By repeating the above arguments, \eqref{01a} is a consequence of 
\begin{equation}\label{20b}
\int_0^\infty \beta(2^{-j}t)e^{it\la-t/\log\la}   (A\circ \cos (t\sqrt{-\Delta_{\tilde g}}))(\tilde x,\alpha(\tilde y))\chi_1(\tilde y)\,dt \lesssim_N \la^{-N}.
\end{equation}
This follows from the same arguments as in the proof of \eqref{n15},  utilizing Hadamard parametrix and \eqref{7a}.

On the other hand,
if we define
$E_{\la,k}=\one_{[\la+k/\log\la, \la +(k+1)/\log\la)}( P)$, then
the symbol of the operator
$$
E_{\la,k}\left(\left(\Delta_g+(\la+i(\log\la)^{-1})^2\right)^{-1}-\left(\Delta_g+(\la-i(\log\la)^{-1})^2\right)^{-1}\right)
$$
is $O\left(\la^{-1}\log\la (1+|k|)^{-2}\right )$. Thus, \eqref{b-} can be proved using the same arguments as in case (ii) of the proof of Proposition~\ref{keyb}.
\end{proof}

%

\newsection{Manifolds of bounded geometry}

As we mentioned before, using the assumption of bounded geometry, we shall be able to  modify the arguments that were
used in \cite{BHSsp}, \cite{SBLOg}, \cite{HSp} and \cite{HSst} to obtain \eqref{i.2}, \eqref{i.6} and \eqref{i.7} in the special
case where $(M,g)$ was a {\em compact} Riemannian manifold.  The basic facts that will allow us to carry out the
local harmonic analysis for general manifolds of bounded geometry can be found, for instance, in 
Chapter 2 of Eldering~\cite{normhyp}.  In addition to extending the local harmonic analysis that was used in these
earlier works, we shall also need to show that the global kernel estimates in \cite{BSTop} and the aforementioned earlier
works hold for manifolds of bounded geometry and nonpositive curvature.  As we shall see in the end of the section,
like in the earlier works, we can do this by lifting the calculations up to the universal cover and exploiting the fact that
$\Inj(M)>0$ if $M$ is of bounded geometry.  After possibly multiplying the metric by a constant, we may also assume
that $\Inj(M)> 1$, as we shall do throughout this section.

Let us quickly review facts that we shall require for our arguments.  First, there is a $\delta=\delta(M)>0$ so that the coordinate
charts given by the exponential map are defined on all geodesic balls $B(x,2\delta)$, $x\in M$, of radius $2\delta>0$.
Furthermore, in the resulting normal coordinates, the Riemannian distance, $d_g(x_1,x_2)$, is comparable to
$|\exp^{-1}_x(x_1)-\exp^{-1}_x(x_2)|$, independent of $x\in M$.  
Additionally, derivatives of the transition maps from these coordinates are also uniformly bounded.
(See Proposition 2.5 and Lemma 2.6 in  \cite{normhyp}.)

Furthermore, there is a uniformly locally finite cover by geodesic balls.  
By this we mean that there is a $\delta(M)>0$ so that whenever $\delta\in (0,\delta(M)]$ there is a countable
covering by geodesic balls
\begin{equation}\label{b.0}
M=\bigcup_{j}B(x_j,\delta) \quad \text{with } 
d_g(x_j,x_k)\ge \delta \, \, \text{if } \, j\ne k.
\end{equation}
Furthermore, assuming that $\delta(M)$ is small enough, for $\delta$ as above, we can
assume that the covering also satisfies
\begin{equation}\label{b.00}
\text{Card} \bigl\{j: \, B(x_j,2\delta)\cap B(x,2\delta)\ne \emptyset \bigr\}\le C_0, \, \, 
\, \forall \, \, x\in M,
\end{equation}
 for a uniform constant $C_0=C_0(M)<\infty$. 
(See \cite[Lemma 2.16]{Borthwick}.)

From this one also sees that we can
also choose $\delta=\delta(M)>0$ small enough so that there is a $C^\infty$ partition of unity $\{\psi_i\}$,
\begin{equation}\label{b.1}
1=\sum_j \psi_j(x), \quad \supp \psi_j\subset B(x_j,\delta),
\end{equation}
with uniform control of each derivative all of the $\{\psi_j\}$ in the normal coordinates described above.
(See Lemma 2.17 and 
Definition 2.9 in \cite{normhyp}.)

Using our assumption of bounded geometry, as we shall describe shortly, we can also construct a microlocal partition of unity involving 
pseudodifferential operators supported in the $\delta$-balls in the above covering.  It will be convenient, as in
the compact manifold case, to use such microlocal cutoffs for the local harmonic analysis that we shall
require in the proof of Theorems~\ref{bgst} and \ref{bgsp}.  These operators will, in effect, give us a
second microlocalization needed to apply bilinear harmonic analysis.

\noindent{\bf 3.1. Log-scale spectral projection estimates}

Let us first show how we can adapt the proofs for the compact manifold case treated
in \cite{HSp} to prove Theorem~\ref{bgsp} since the second microlocalization and
the resulting arguments is a bit more straightforward than what is needed for the
Strichartz estimates in Theorem~\ref{bgst}.

Before describing these pseudodifferential cutoffs, let us introduce another local operator which we shall require.  To do so, let us
fix, following \cite{HSp},  $\rho\in {\mathcal S}(\R)$ satisfying
\begin{multline}\label{b.3}
\rho(0)=1, \quad \Hat \rho(t)=0, \, \, t\notin \delta_1\cdot [1-\delta_2,1+\delta_2]=[\delta_1-\delta_1\delta_2, \, 
\delta_1+\delta_1\delta_2], 
\\
\text{with } \, 0
<\delta_1, \delta_2 <\tfrac12 \min(\Inj(M),1)
\end{multline}
 as above, with $\delta_1,\delta_2$
  to be specified later when in order to apply 
bilinear oscillatory integral results from \cite{LeeBilinear} and \cite{TaoVargasVega}, just as was done
in \cite{HSp}.    We then define the ``local'' operators
\begin{equation}\label{b.4}
\sigma_\la =\rho(\la-P)+\rho(\la+P).
\end{equation}
We call these local since their kernels satisfy
\begin{equation}\label{b.5}
\sigma_\la(x,y)=0 \quad \text{if } \, \, d_g(x,y)>r, \, \, 
r= \delta_1(1+\delta_2)<\delta/2.
\end{equation}
This follows from the fact that, since $\Hat \rho$ has support as in \eqref{b.3} we have, by Euler's formula,
\begin{equation}\label{b.5'}
\sigma_\la =\pi^{-1}\int_0^{\delta_1(1+\delta_2)} \Hat \rho(t) \, e^{i\la t}\,  \cos(tP)\, dt.
\end{equation}
Finally, by finite propagation speed, $(\cos (tP))(x,y)=0$ if $d_g(x,y)>|t|$, which, along with the preceding
identity, yields \eqref{b.5}.

We also note that
by \eqref{ii.3} and orthogonality we have
\begin{equation}\label{b.6}
\| \sigma_\la\|_{L^2\to L^q}=O(\la^{\mu(q)}),
\end{equation}
with $\mu(q)$ as in \eqref{ii.2}.
We shall also consider the ``global'' smoothed out spectral projection operators
\begin{equation}\label{b.7}
\rho_\la =\rho(T(\la-P))
, \quad T=c_0\log\la,
\end{equation}
where $c_0>0$ shall be fixed later.  We then conclude that, in order to prove
Theorem~\ref{bgsp} it suffices to show that if all of the sectional curvatures
of $M$ are nonpositive then we have
\begin{equation}\label{b.8.0}
\| \rho_\la \|_{L^2\to L^q}\lesssim
\begin{cases} \la^{\mu(q)}(\log\la)^{-1/2}, \quad \text{if } \, \, q>q_c
\\
\bigl(\la(\log\la)^{-1}\bigr)^{\mu(q)}, \quad \text{if } \, \, q\in (2,q_c],
\end{cases}
\end{equation}
while, if all the sectional curvatures are all pinched below $-\kappa^2_0$ with $\kappa_0>0$, we have the stronger estimates
\begin{equation}\label{b.9.0}
\| \rho_\la \|_{L^2\to L^q}\le C_q \, \la^{\mu(q)}(\log\la)^{-1/2}, \, \, q\in (2,\infty].
\end{equation}
For later use, we note that
\begin{equation}\label{b.10}
\|(I-\sigma_\lambda)\circ \rho_\lambda\|_{L^2\to L^q}
\le C \lambda^{\mu(q)} (\log \lambda)^{-1}, \qquad q\in (2,\infty].
\end{equation}
Indeed, the symbol of the operator satisfies
$$
\bigl|((I-\sigma_\lambda)\circ \rho_\lambda ) (\tau)\bigr|
\lesssim_N \frac1T (1+|\tau-\lambda|)^{-N}.
$$
Therefore, \eqref{b.10} follows from \eqref{ii.3} together with a simple orthogonality argument.

As in earlier works, the task of obtaining the bounds in \eqref{b.8.0} and \eqref{b.9.0}
naturally splits into three cases: $q=q_c$, $q\in (q_c,\infty]$ and $q\in (2,q_c)$.  Handling
the critical exponent is the most difficult.  So, we shall first prove the special case
of these to bounds for this exponent:
\begin{multline}\label{b.8.1}
\|\rho_\la\|_{L^2\to L^q_c}\lesssim \bigl(\la(\log\la)^{-1}\bigr)^{\mu(q_c)},
\\
\text{if all of the sectional curvatures of } \, M \, \, \text{are nonpositive},
\end{multline}
and the stronger results
\begin{multline}\label{b.9.1}
\|\rho_\la\|_{L^2\to L^q_c}\lesssim \la^{\mu(q_c)}(\log\la)^{-1/2},
\\
\text{if all of the sectional curvatures of } \, M \, \, \text{are }  \le -\kappa_0^2, \, \,
\, \text{some } \, \kappa_0>0.
\end{multline}


Let us now describe the microlocal operators which will  be utilized to give us our
very useful second microlocalization.
 We shall use the fact that for each fixed $j$, we can write
\begin{equation}\label{b.12}
\psi_j(x) (\sigma_\la h)(x)=\sum_{\ell=1}^K \bigl(A_{j,\ell} \circ\sigma_\la\bigr)(h)(x)+R_jh(x),
\end{equation}
where the  $A_{j,l}$ are pseudodifferential operators in a bounded subset of $S^0_{1,0}$
and the kernels of the above operators satisfy
\begin{equation}\label{b.13}
A_{j,\ell}(x,y), \, \, R_j(x,y)=0, \quad \text{if } \, x\notin B(x_j,\delta) \, \,
\text{or } \, \, y\notin B(x_j,3\delta/2).
\end{equation}
Furthermore, in the normal coordinate system about $x_j$ described above 
we may assume that
\begin{equation}\label{b.14}
A_{j,\ell}(x,y)=(2\pi)^{-n} \int_{\Rn}e^{i\langle x-y,\xi\rangle} a^\la_{j,\ell}(x,y,\xi) \, d\xi
\end{equation}
 with $a^\la_{j,\ell}=0$ 
if $\xi/|\xi|$ is outside of a $O(K^{-1/(n-1)})$ neighborhood of some
$\xi_{j,\ell}\in \mathbb{S}^{n-1}$ and
$a^\la_{j,\ell}=0$ if $|\xi|\notin [c_1\la, \la/c_1]$, with
$c_1\in (0,1)$ independent of $j$.
  So, if $K$ is large enough, we may assume that
\begin{equation}\label{aas}
a^\la_{j,\ell}(x,y,\xi)=0 \, \, 
\text{when } \, x\notin B(x_j,\delta), \,
y \notin B(x_j,2\delta), \, \, \text{or } \, 
\bigl|\tfrac{\xi_{j,\ell}}{|\xi_{j,\ell|}}-\tfrac{\xi}{|\xi|}\bigr|\ge \delta,
\end{equation}
 and also that this symbol satisfies the natural size estimates corresponding to
 these support properties
$$\partial^{\alpha_1}_{x,y}\partial^{\alpha_2}_\xi a^\la_{j,\ell} =O_\delta(\la^{-|\alpha_2|}).$$
In addition to \eqref{b.13}, we may assume by fixing $c_1>0$ small enough that we have the uniform
bounds
\begin{equation}\label{b.15}
R_j(x,y)=O(\la^{-N}), \quad N=1,2,\dots .
\end{equation}
The above dyadic pseudodifferential operators satisfy
\begin{equation}\label{b.16}
\| A_{j,\ell}\|_{L^p(M)\to L^p(M)}=O(1), \quad 1\le p\le \infty.
\end{equation}

One constructs the above microlocal cutoffs $A_{j,\ell}$ using standard arguments
from the theory of pseudodifferential operators.  The resulting symbols can have
dyadic support $|\xi|\approx \la$, just as in the case of compact manifolds treated
in \cite{HSp}, since the left side of \eqref{b.12} involves $\sigma_\la = \sigma(\la-P)$.
Furthermore, using the fact that we are assuming that $M$ is of bounded geometry,
by the above discussion, the implicit constants in the above description of this
second microlocalization
 can be chosen to be independent 
of $j$ if $\delta\le \delta(M)$  and $K$ in \eqref{b.12}
are fixed.

We also note that, due to our assumptions,
we can assume that the symbols $a^\la_{j,\ell}$ vanish
outside of a small conic neighborhood of $(x_j,\xi_{j,\ell})$ 
by choosing $\delta\le 
\delta(M)$ to be small and $K$ to be large. As in the 
compact manifold case, this will be useful when we
need to use our local harmonic analysis.

One consequence of this, \eqref{b.00}, \eqref{b.13} and \eqref{b.16} is that if we fix
$\ell_0\in \{1,\dots,K\}$, then
these dyadic operators satisfy
\begin{equation}\label{b.17}
A=A_{\ell_0} = \sum_{j}A_{j,\ell_0}\in S^0_{1,0}
\quad \text{and } \, 
\|A\|_{L^p(M)\to L^p(M)}=O(1), \, \, 1\le p\le \infty.
\end{equation}
These microlocal cutoffs will play the role of the ``$B$'' operators 
that were used for compact manifolds in 
in \cite{BHSsp}, \cite{SBLOg} and \cite{HSp}.
Note also that by \eqref{b.00} and \eqref{b.13} we also have
\begin{equation}\label{b.18}
\|Rh\|_{L^q(M)} \le C_{p,q,M} \, \|h\|_{L^p(M)}, \, \, 1\le p\le q\le \infty, \, \,
\text{if }, \, \, R=\sum_j R_j.
\end{equation}

Note that, in view of \eqref{b.1}, \eqref{b.10}, \eqref{b.12}, \eqref{b.17} and \eqref{b.18}, in order
to prove \eqref{b.8.1} and \eqref{b.9.1}, it suffices to
prove that if all the sectional curvatures of
$M$ are nonpositive
\begin{equation}\label{b.20}
\| A \sigma_\la \rho_\la \|_{L^2\to L^q_c}\lesssim
\bigl(\la(\log\la)^{-1}\bigr)^{\mu(q_c)}, 
\end{equation}
while, if all the sectional curvatures are all pinched below $-\kappa^2_0$ with $\kappa_0>0$, we have
\begin{equation}\label{b.21}
\| A\sigma_\la \rho_\la \|_{L^2\to L^q_c}\lesssim \, \la^{\mu(q_c)}(\log\la)^{-1/2}.
\end{equation}

Using \eqref{b.00} and \eqref{b.13}, we have for $q\ge 2$
$$|A\sigma_\la \rho_\la f(x)|\le C\|A_{j,\ell_0}(\sigma_\la \rho_\la f)(x)\|_{\ell^q_j}.$$
Consequently, if we consider the vector-valued operators
\begin{equation}\label{b.211}
\A h=(A_{1,\ell_0}h, \, A_{2,\ell_0}h,\dots)
\end{equation}
we have
\begin{equation}\label{b.22}
\|A\sigma_\la \rho_\la f\|_{L^q(M)}\lesssim
\| \A(\sigma_\la \rho_\la f)\|_{L^q_x \ell^q_j(M\times \N)}, \quad q\in [2,\infty].
\end{equation}
Note for later use that by \eqref{b.00}, \eqref{b.16} and \eqref{b.17} we also have 
\begin{equation}\label{b.23}
\| \A h\|_{L^p_x\ell^p_j}\le C\|h\|_{L^p(M)}, \quad 1\le p\le \infty.
\end{equation}

In view of \eqref{b.22}, in order to prove \eqref{b.20} and \eqref{b.21}, it suffices to show that when
all of the sectional curvatures of $M$ are nonpositive
we have
\begin{equation}\label{b.24}
\| \A \sigma_\la \rho_\la f\|_{L^{q_c}_x\ell^{q_c}_j} \lesssim 
\bigl(\la(\log\la)^{-1}\bigr)^{\mu(q_c)} \|f\|_{L^2(M)},
\end{equation}
while, if all the sectional curvatures are all pinched below $-\kappa^2_0$ with $\kappa_0>0$, we have
\begin{equation}\label{b.25}
\| \A\sigma_\la \rho_\la f \|_{L^{q_c}_x\ell^{q_c}_j}  \lesssim  \la^{\mu(q_c)}(\log\la)^{-1/2} \|f\|_{L^2(M)}.
\end{equation}

The operators $\A \sigma_\la \rho_\la$ play the role of the $\tilde \rho_\la$ operators in \cite{SBLOg} and \cite{HSp}.
We are introducing this vector-valued approach to easily allow us to only have to carry out the local bilinear harmonic analysis
in individual coordinate patches coming from the geodesic normal coordinates in the balls $B(x_j,2\delta)$ mentioned
before.  In the compact case treated by two of us and coauthors, this was not necessary since $M$ could be covered
by finitely many balls of sufficiently small radius on which the bilinear analysis could be carried out.

In proving these two estimates we may, of course, assume,
as we shall throughout this section, that
\begin{equation}\label{b.norm}
\|f\|_{L^2(M)}=1.
\end{equation}
Then, similar to the compact manifold case, let us
define vector-valued sets
\begin{equation}\label{b.26}
\begin{split}
A_+&= \{(x,j): \, |(\A \sigma_\la \rho_\la f)
(x,j)|\ge \la^{\frac{n-1}4+\frac18}\}
\\
A_-&=\{(x,j): \, |(\A \sigma_\la \rho_\la f)
(x,j)|< \la^{\frac{n-1}4+\frac18}\}.
\end{split}
\end{equation}
Recall here that
\begin{equation}\label{b.27}
(\A \sigma_\la\rho_\la f)(x,j)=A_{j,\ell_0}
\sigma_\la \rho_\la f(x).
\end{equation}

In order to prove \eqref{b.24} and \eqref{b.25},
it suffices to show that
we have the following two results.  First, for
all complete manifolds of bounded 
geometry and  nonpositive 
sectional curvatures, we have for $\la \gg 1$
the large height estimates
\begin{multline}\label{b.31}
\|\A \sigma_\la\rho_\la f\|_{L^{q_c}_x
\ell_j^{q_c}(A_+)}
\lesssim \la^{\mu(q_c)}T^{-1/2}, 
\\ \text{if } T=c_0\log\la,
\, \, 
\text{with }c_0=c_0(M)>0 \, 
\, \text{sufficiently small.}
\end{multline}
The remaining estimate, for small heights, 
which would yield the above
desired bounds for $q=q_c$ then would be the following
for $T$ as above
\begin{multline}\label{b.32}
\|\A \sigma_\la\rho_\la f\|_{L^{q_c}_x
\ell_j^{q_c}(A_-)}
\\
\lesssim
\begin{cases}
\bigl(\la T^{-1}\bigr)^{\mu(q_c)}, 
\, \, \text{if all the sectional curvatures of } 
M \, \text{are nonpositive}
\\
\la^{\mu(q_c)}T^{-1/2}, 
\, \, \text{if all the sectional curvatures of } 
M \, \text{are } \le -\kappa_0^2, \,
\text{some }\kappa_0>0,
\end{cases}
\end{multline}
with $T$ in \eqref{b.7} as in the preceding inequality.

In order to prove \eqref{b.31} and also the estimates
in Theorem~\ref{bgsp} for $q>q_c$ we shall require
the following lemma.

\begin{lemma}\label{Gl}
Let $\Psi=|\rho|^2$ and fix 
$$a\in C^\infty_0((-1,1)) \, \, 
\text{satisfying } \, \, a(t)=1, \, \, 
|t|\le 1/2.$$
Then, if $G_\la=G_{\la,T}$ is defined by
\begin{equation}\label{b.33}
G_\la=G_\la(P)
=\frac1{2\pi}\int_{-\infty}^\infty
(1-a(t))
\, T^{-1}\Hat \Psi(t/T) \,
e^{it\la} \, e^{-itP} \, dt, 
\end{equation}
we have for $c_0=c_0(M)>0$ sufficiently small
and $\la\gg 1$
\begin{equation}\label{b.34}
\|G_\la\|_{L^1(M)\to L^\infty(M)}= 
O(\la^{\frac{n-1}2}\exp(C_MT)),
\, \, 1\le T\le c_0\log\la,
\end{equation}
assuming that $M$ is of bounded geometry and that
all of its sectional curvatures are nonpositive.
\end{lemma}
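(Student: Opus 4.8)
\emph{Proof plan.} The bound \eqref{b.34} is the logarithmic-time analogue of B\'erard's wave-kernel estimate, and the plan is to prove it by adapting the arguments of \cite{BHSsp} and \cite{HSst}: reduce to the half-wave kernel, lift to the universal cover, insert the Hadamard parametrix there, and sum over the finitely many deck transformations that survive finite propagation speed. First I would record the shape of the multiplier. With $\Psi=|\rho|^2$ and \eqref{b.3}, the function $g(t)=(1-a(t))\,T^{-1}\widehat{\Psi}(t/T)$ is smooth, \emph{vanishes identically for $|t|\le\tfrac12$} (since $a\equiv1$ there), is supported in $\{\tfrac12\le|t|\le T/2\}$ (because $2\delta_1\delta_2<\tfrac12$), and satisfies $\|g^{(k)}\|_{L^1}=O_k(1)$ uniformly in $T$; moreover $G_\la=\rho_\la^*\rho_\la-\tfrac1{2\pi}\int a(t)T^{-1}\widehat\Psi(t/T)e^{it\la}e^{-itP}\,dt$, so $G_\la$ is the long-time tail of $\rho_\la^*\rho_\la$. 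Since $P\ge0$, the symbol of $G_\la$, namely $\tau\mapsto\tfrac1{2\pi}\int g(t)e^{it\la}e^{-it\tau}\,dt$, differs on $[0,\infty)$ from its even analogue $\tfrac1{2\pi}\int g(t)e^{it\la}\cos t\tau\,dt$ only by $\tfrac1{2\pi}\int g(t)e^{it\la}e^{it\tau}\,dt$, whose phase has no critical point for $\tau\ge0$ once $\la\gg1$; a routine integration by parts in $t$ shows this difference yields an operator whose kernel is $O_N(\la^{-N})$ even after the deck-transformation sum below. Thus it suffices to bound the kernel of $\tfrac1{2\pi}\int g(t)e^{it\la}\cos tP\,dt$, which has finite propagation speed, so by \eqref{k.3} its kernel at $(x,y)$ equals $\sum_{\alpha\in\Gamma}\widetilde K_\la(\tilde x,\alpha\tilde y)$, with $\widetilde K_\la$ the analogous kernel on the Cartan--Hadamard universal cover $(\Rn,\tilde g)$ of $(M,g)$ (which is $\Rn$ since the curvature is nonpositive), and $\supp g\subset\{|t|\le T/2\}$ forces $\widetilde K_\la(\tilde x,\alpha\tilde y)=0$ unless $d_{\tilde g}(\tilde x,\alpha\tilde y)\le T/2$.

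The next step is to count the surviving $\alpha$ and to bound $\widetilde K_\la$ termwise. Because $\Inj(M)>1$, the orbit $\{\alpha\tilde y\}_{\alpha\in\Gamma}$ is $2$-separated in $(\Rn,\tilde g)$; and since $M$ has bounded geometry with curvature $\le0$, geodesic balls of radius $R$ in $(\Rn,\tilde g)$ have volume $O(\exp(C_MR))$, so $\#\{\alpha:\ d_{\tilde g}(\tilde x,\alpha\tilde y)\le T/2\}=O(\exp(C_MT))$; this volume count is part of the bounded-geometry bookkeeping developed around \eqref{b.0}--\eqref{b.1}. For each surviving $\alpha$ I would insert the Hadamard parametrix $\cos t\sqrt{-\Delta_{\tilde g}}(\tilde x,z)=\sum_{\nu=0}^N w_\nu(\tilde x,z)W_\nu(t,r)+R_N(t,\tilde x,z)$, $r=d_{\tilde g}(\tilde x,z)$, valid globally on the simply connected nonpositively curved $\tilde M$ (see \cite{Berard}, \cite[\S1.1, \S3.6]{SoggeHangzhou}, and \eqref{ak13}--\eqref{k19a}), with $|\partial^\beta_{x,y}w_\nu|,\,|\partial^\beta_{x,y}r|\le Ce^{Cr}$ and, for $N=N(N_0)$ large, $|\partial_t^j R_N|\le Ce^{Ct}$ for $j\le N_0$. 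The remainder contributes $O(\la^{-N_0}e^{CT})$ after $N_0$ integrations by parts in $t$. For the main term $w_0(\tilde x,z)\int g(t)e^{it\la}W_0(t,r)\,dt$ one inserts $W_0(t,r)=(2\pi)^{-n}\int_{\Rn}e^{ir\xi_1}\cos(t|\xi|)\,d\xi$ and performs the $t$-integral first: since $g$ vanishes near $t=0$, $n(\sigma):=\int g(t)e^{it\sigma}\,dt$ has vanishing moments of all orders, which annihilates the would-be $O(\la^{n-1})$ on-diagonal contribution. Passing to polar coordinates, using the stationary-phase asymptotics of the Fourier transform of the surface measure of $S^{n-1}$, and noting $\int n(u)e^{\mp iru}\,du=2\pi g(\pm r)$, one is left with the light-cone contribution, of size $\lesssim\la^{\frac{n-1}2}T^{-1}$ uniformly for $\tfrac12\le r\le T/2$, and $O_N(\la^{-N})$ for $r<\tfrac12$ (no light cone). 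The terms with $\nu\ge1$ are handled the same way, with an extra gain from $\alpha_\nu(\tau)=O(\tau^{-\nu})$ in \eqref{k15}. Together with $|w_\nu|\le Ce^{Cr}\le Ce^{CT/2}$, each term is $\lesssim\la^{\frac{n-1}2}\exp(C_MT)$ uniformly in $z$, and summing the $O(\exp(C_MT))$ of them gives \eqref{b.34}.

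The main obstacle is the termwise kernel estimate: one has to see that the cancellation coming from $g\equiv0$ near $t=0$ (equivalently, from having removed the short-time part of $\rho_\la^*\rho_\la$) kills the large $\la^{n-1}$ on-diagonal size that a unit-width spectral cluster would have, leaving only the sharp $\la^{(n-1)/2}$ produced by the light cone, and that this holds uniformly over the logarithmically long window $|t|\le T/2$. Everything else---the number of deck transformations, the $e^{Cr}$ growth of the Hadamard coefficients and of $d_{\tilde g}$, and the Hadamard remainder---only costs factors $\exp(C_MT)$, exactly the loss allowed in \eqref{b.34}; since $T\le c_0\log\la$ this amounts to a small power $\la^{C_Mc_0}$ of $\la$, which is absorbed downstream.
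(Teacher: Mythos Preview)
Your proposal is correct and follows essentially the same route as the paper: reduce $e^{-itP}$ to $\cos tP$ (the paper invokes \eqref{ii.3} rather than integration by parts, but the effect is the same), lift to the universal cover via \eqref{k.3}, count the $O(\exp(C_MT))$ surviving deck transformations using the injectivity-radius separation \eqref{k.7} and a volume bound, and bound each summand $K_\alpha=O(\la^{(n-1)/2})$ via the Hadamard parametrix and stationary phase. The paper's own proof simply cites B\'erard and \cite[\S3.6]{SoggeHangzhou} for this last step, whereas you spell out the light-cone mechanism; your ``vanishing moments'' phrasing for $n(\sigma)$ is a slightly unusual way to say that the final answer involves $g(\pm r)$ and hence vanishes for $r<\tfrac12$, but the computation is sound.
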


Note that if $L_\la=L_{\la,T}$ is given by
\begin{equation}\label{b.33.2}
L_\la =L_\la(P)=\frac1{2\pi}\int_{-\infty}^\infty
a(t) \, T^{-1}\Hat \Psi(t/T) \, e^{it\la}
e^{-itP} \, dt,
\end{equation}
then
\begin{equation}\label{b.33.3}
G_\la+L_\la =\Psi(T(\la-P))=\rho_\la\rho^*_\la.
\end{equation}
Furthermore, it is simple to use \eqref{ii.3}
and a simple orthogonality argument to see
that if $q_c'$ is the dual exponent for $q_c$ then
\begin{equation}\label{b.33.4}
\|L_\la\|_{L^{q_c'}(M)\to L^{q_c}(M)}=O(T^{-1}
\la^{2\mu(q_c)})=O(T^{-1}\la^{2/q_c}).
\end{equation}

We shall postpone the proof of this lemma until the 
end of this section.   Let us now see how we can use it
along with the local estimate \eqref{b.6} to prove
the large height estimates.  As two of us did for compact
manifolds, we shall rely on a variant of an argument of 
Bourgain~\cite{BourgainBesicovitch}, along
with
\eqref{b.34} and \eqref{b.33.4}.

\begin{proof}[Proof of \eqref{b.31}]
This just follows from the proof of (2.18) in \cite{HSp}; however, we shall give the argument here for the sake
of completeness.  We shall be assuming
here that, as in \eqref{b.31}, $T=c_0\log\la$, with
$c_0>0$ to be specified in a moment.

We first note that, by \eqref{b.10}, \eqref{b.211} and \eqref{b.norm}
\begin{equation}\label{b.35}
\|\A \sigma_\la \rho_\la f\|_{L^{q_c}_x\ell^{q_c}_j(A_+)}
\le \|\A \rho_\la f\|_{L^{q_c}_x\ell^{q_c}_j(A_+)} + C\la^{1/q_c}/\log\la,
\end{equation}
since, by \eqref{ii.2},
$$\mu(q_c)=1/q_c.$$
As a result, we would obtain \eqref{b.31} if we could show that
\begin{equation}\label{b.36}
\|\A \rho_\la f\|_{L^{q_c}_x\ell^{q_c}_j(A_+)}
\le C\la^{1/q_c}(\log\la)^{-1/2}+\tfrac12 \|\A\sigma_\la \rho_\la f\|_{L^{q_c}_x\ell^{q_c}_j(A_+)}.
\end{equation}

To prove this, similar to what was done in \cite{HSp},
choose $g=g(x,j)$ vanishing outside $A_+$ so that
\begin{equation}\label{b.41}
\|g\|_{L^{q_c'}_x\ell^{q_c'}_j(A_+)}=1
\, \, 
\text{and } \, 
\|\A \rho_\la f\|_{L^{q_c}_x\ell^{q_c}_j(A_+)}
=\sum_j \int 
(\A \rho_\la f)(x,j) \,
\overline{\1_{A_+}(x,j) \cdot g(x,j)} \, dx.
\end{equation}

Then, similar to (3.4) in \cite{HSp},
using \eqref{b.norm} and \eqref{b.33.3} we find that
\begin{align*}
\|\A \rho_\la f\|^2_{L^{q_c}_x\ell^{q_c}_j(A_+)}
&=\Bigl(\, \int_M
f(x)\cdot \overline{\bigl(\rho_\la^*\A^*(
\1_{A_+}\cdot g)\bigr)(x)} \, dx\, \Bigr)^2
\\
&\le
\int_M \bigl| \bigl(\rho_\la^*\A^*(
\one_{A_+}\cdot g)\bigr)(x) \bigr|^2 \, dx
\\
&=\sum_j
\int_M \bigl(\bigl( \A\circ \Psi(T(\la-P))\circ \A^*
\bigr)(\1\cdot g)\bigr)(x,j) 
\cdot 
\overline{(\1_{A_+}\cdot g)(x,j)}\, dx
\\
&=\sum_j \int_M \bigl(\bigl( \A\circ L_\la\circ \A^*
\bigr)(\1\cdot g)\bigr)(x,j) 
\cdot 
\overline{(\1_{A_+}\cdot g)(x,j)}\, dx
\\
&+\sum_j \int_M \bigl(\bigl( \A\circ G_\la\circ \A^*
\bigr)(\1\cdot g)\bigr)(x,j) 
\cdot 
\overline{(\1_{A_+}\cdot g)(x,j)}\, dx
\\
&\qquad= I+II.
\end{align*}

By \eqref{b.23}, \eqref{b.33.4} and \eqref{b.41} and
H\"older's inequality, we have
\begin{align*}
|I|&\le
\|(\A L_\la \A^*)(\1_{A_+}\cdot g)
\|_{L^{q_c}_x\ell_j^{q_c}}
\cdot \|\1_{A_+}\cdot g
\|_{L^{q_c'}_x\ell^{q_c'}_j}
\\
&\lesssim \|L_\la \A^*(\1_{A_+}\cdot g)
\|_{L^{q_c}_x}\cdot 1
\\
&\lesssim T^{-1}\la^{2/q_c}\|\A^*(\1_{A_+}\cdot
g)\|_{L^{q_c'}_x}
\\
&\lesssim T^{-1}\la^{2/q_c}\|\1_{A_+}\cdot g
\|_{L^{q_c'}_x\ell^{q_c'}_j}=T^{-1}\la^{2/q_c}.
\end{align*}

To estimate $II$, note that, by \eqref{b.23},
$\|\A^*\|_{L^1_x\ell^1_j\to L^1_x}, 
\, \|\A\|_{L^\infty_x\to L^\infty_x\ell_j^\infty}
=O(1)$.
Also, if $c_0>0$ is chosen small enough, then,
by \eqref{b.34} we have
$$\|G_\la\|_{L^1(M)\to L^\infty(M)}=O(\la^{\frac{n-1}2
+\frac18}).$$
 Therefore, we have, by the above argument,
\begin{align*}
|II|&
\le
\|(\A G_\la \A^*)(\1_{A_+}\cdot g)\|_{L^\infty_x
\ell^\infty_j}\|\1_{A_+}\cdot g
\|_{L^1_x\ell_j^1}
\\
&
\lesssim \|( G_\la \A^*)(\1_{A_+}\cdot g)\|_{L^\infty_x}\|\1_{A_+}\cdot g
\|_{L^1_x\ell_j^1}
\\
&\le C
\la^{\frac{n-1}2+\frac18}\|\A^*(\1_{A_+}\cdot g)
\|_{L^1_x}\|\1_{A_+}\cdot g\|_{L^1_x\ell^1_j}
\\
&\le C'\la^{\frac{n-1}2+\frac18} \|\1_{A_+}\cdot g\|_{L^1_x\ell^1_j}^2
\\
&\le C' \la^{\frac{n-1}2+\frac18}
\| g\|^2_{L^{q_c'}_x\ell^{q_c'}_j(A_+)}
\cdot \|\1_{A_+}\|^2_{L^{q_c}_x\ell^{q_c}_j}
\\
&=C' \la^{\frac{n-1}2+\frac18} \|\1_{A_+}\|^2_{L^{q_c}_x\ell^{q_c}_j}.
\end{align*}
But, by the definition of $A_+$ in \eqref{b.26}
$$\|\1_{A_+}\|^2_{L^{q_c}_x \ell^{q_c}_j}
\le \bigl(\la^{\frac{n-1}4+\frac18}\bigr)^{-2}
\|\A \sigma_\la \rho_\la f\|^2_{L^{q_c}_x\ell^{q_c}_j
(A_+)}.$$
So, assuming, as we may that $\l\gg 1$ is large,
we have
$$|II|\le C\la^{-1/8}
\|\A\sigma_\la \rho_\la f\|^2_{L^{q_c}_x\ell^{q_c}_j
(A_+)}
\le \tfrac12 \|\A\sigma_\la \rho_\la f\|^2_{L^{q_c}_x\ell^{q_c}_j
(A_+)}.$$

The estimates for $I$ and $II$ yield \eqref{b.31}.
\end{proof}

Let us also see how we can use Lemma~\ref{Gl} to
obtain the bounds in Theorem~\ref{bgsp} for 
$q>q_c$, which extend the results for compact
manifolds of Hassell and Tacy~\cite{HassellTacy}.

\begin{proof}[Proof of $q>q_c$ bounds]
Let us now prove the estimates in \eqref{i.6} for
$q>q_c$.  For a given such $q$, it suffices to show
that
$$\|\rho_\la \|_{L^2\to L^q}\lesssim T^{-1/2}\la^{\mu(q)},
$$
with $T=c_q\log\la$, $c_q=c_q(M)>0$ sufficiently small.
This in turn is equivalent to showing that
\begin{equation}\label{b.42}
\|\Psi(T(\la-P))\|_{L^{q'}\to L^q}\lesssim T^{-1}
\la^{2\mu(q)}, \, \, q>q_c,
\end{equation}
with $\Psi=|\rho|^2$, as above.

If, as in \eqref{b.33.3}, $\Psi(T(\la-P))=L_\la + G_\la$,
it is straightforward to check that \eqref{ii.3} yields
$$\|L_\la \|_{L^{q'}\to L^{q}}\lesssim T^{-1}\la^{2\mu(q)}.$$
Furthermore, by \eqref{b.34} and orthogonality, we have
$$\|G_\la\|_{L^2\to L^2}=O(1).$$
If we interpolate between this estimate and \eqref{b.34}
we obtain for $T\lesssim c_0 \log\la$ as above
$$\|G_\la\|_{L^{q'}\to L^q}=O(\la^{\frac{(n-1)(q-2)}{2q}}
\exp(C_MT)), \quad q>2.$$
Once checks that 
$\tfrac{ (n-1)(q-2)}{2q}
<
2\mu(q)
=2n(\tfrac12-\tfrac1q )-1$ 
if 
$q>q_c=\tfrac{2(n+1)}{n-1}$.  As a result, for such
an exponent, we have, for such $q$,
$\|G_\la\|_{L^{q'}\to L^{q}}=O(\la^{2\mu(q)-\e_q})$, some
$\e_q>0$, if, as we may assume $\la\gg 1$ and
$T=c_q\log\la$ with $c_q>0$ sufficiently small.

Since this and the above bound for $L_\la$ yields
\eqref{b.42}, the proof of the spectral projection
estimates in Theorem~\ref{bgsp} for $q>q_c$ is complete.
\end{proof}

Next, we note that we would complete the proof of the spectral projection
estimates in Theorem~\ref{bgsp} for $q=q_c$ if we could prove the low height
estimates \eqref{b.32} which, unlike \eqref{b.31}, differ depending on the 
curvature assumptions.  For this we shall need to use local bilinear harmonic
analysis which is a variable coefficient variant of that in 
Tao, Vargas and Vega~\cite{TaoVargasVega}
and relies on bilinear oscillatory integral estimates of Lee~\cite{LeeBilinear}.
Since the microlocal cutoffs in \eqref{b.17} arise from
the partition of unity in \eqref{b.12} corresponding to the balls $\{B(x_j,\delta)\}$
whose doubles have finite overlap, we shall be able carry out this analysis
in each ball $B(x_j,2\delta)$ using geodesic normal coordinates about the center.
Since, as we pointed out earlier, our assumption of bounded geometry ensures
bounded transition maps and uniform bounds on derivatives of the metric, we shall
be able to localize to individual balls.   As a result, we just need to repeat the arguments
in the earlier work of two of use \cite{HSp} for compact manifolds, which also reduced
to bilinear analysis in a fixed coordinate chart.

Just as in the earlier works for compact manifolds, \cite{BHSsp}, \cite{SBLOg}, \cite{HSp},
to prove \eqref{b.32}, besides \eqref{b.12}, we shall need to use a second microlocalization, which
involves localizing in $\theta\ge\la^{-1/8}$ neighborhoods of geodesics in a fixed coordinate chart.  To describe
this, let us fix $j$ in \eqref{b.12}, as well as $\ell_0\in \{1,\dots,K\}$ and consider the resulting
pseudodifferential cutoff, $A_{j,\ell_0}$, which is a summand in \eqref{b.17}.  Its symbol then satisfies
the conditions in \eqref{aas}.  The resulting geodesic normal coordinates on $B(x_j,2\delta)$
vanish at $x_j$.  We then have that the metric $g_{jk}$ satisfies $g_{jk}(y)=\delta^k_j+O((d_g(x_j,y))^2)$.  We may also
assume that $\xi_{j,\ell_0}=(0,\dots,0,1)$.  Since we are fixing
$j$ and $\ell_0$ for now, analogous to \cite{HSp}, let us simplify the notation a bit by letting
\begin{equation}\label{b.43}
\tilde \sigma_\la =A_{j,\ell_0}\sigma_\la,
\end{equation}
which is analogous to (2.10) in \cite{HSp}.

For dyadic $\theta\ge \la^{-1/8}$, the additional microlocal cutoffs that we require correspond to
$\theta$-nets of geodesics,
$\{\gamma_\nu\}$, 
in $S^*M$ passing through points $(y,\eta)$ near $(0,(0,\dots,0,1))$.   To define them, fix a function
$b\in C^\infty_0({\mathbb R}^{2(n-1)})$ supported in $\{z: \, |z_i|\le 1, \, 1\le i\le 2(n-1)\}$ satisfying
\begin{equation}\label{b.44}
\sum_{k\in {\mathbb Z}^{2(n-1)}} b(z-k)\equiv 1.
\end{equation}
To use this, let
$$\Pi =\{ y: \, y_n=0\}$$
be the points in $\Omega=B(x_j,2\delta)$ whose last coordinate vanishes.  Also let
$y'=(y_1,\dots, y_{n-1})$ and $\eta'=(\eta_1,\dots, \eta_{n-1})$ denote the first
$(n-1)$ coordinates of $y$ and $\eta$, respectively, with $(y,\eta)\in S^*\Omega$. 
 We shall always have $\theta\in [\la^{-1/8},1]$, and,
$\la^{-1/8}$, here, of course, is related to the height decomposition \eqref{b.26}.

To construct the cutoffs associated to the $\theta$-net of geodesics in $S^*M$ that we require, let us first
set
\begin{equation}\label{map0}
b^\theta_k(y',\eta')=b(\theta^{-1}(y',\eta')-k)\in C^\infty_0({\mathbb R}^{2(n-1)}),
\end{equation}
so that $\sum_{k\in {\mathbb Z}^{2(n-1)}} b^\theta_k(y',\eta')=1$.  We then note that the map
\begin{equation}\label{map1}
(t,x',\eta)\to \Phi_t(x',0,\eta)\in S^*\Omega, \quad (x',0,\eta)\in S^*\Omega
\end{equation}
is a diffeomorphism from a neighborhood of $x'=0$, $t=0$ and $(0,\dots,0,1)$ to a neighborhood of $(0,(0,\dots,0,1))\in S^*\Omega$.  

Next, write the inverse of \eqref{map1} as
$$S^*\Omega \ni (x,\omega)\to (\tau(x,\omega),\Psi(x,\omega), \Theta(x,\omega))\in \R \times \{y'\in \R^{n-1}\}\times S^*_{(\Psi(x,\omega),0)}M.$$
Thus, the unit speed geodesic passing through $(x,\omega)\in S^*\Omega$ arrives at the hyperplane  $\Pi$ where $y_n=0$ at $(\Psi(x,\omega),0)\in \Pi$,
has covector $\Theta(x,\omega)\in S^*_{(\Psi(x,\omega),0)}\Omega$ there, and $\tau(x,\omega)=d_g(x,(\Psi(x,\omega),0))$ is the geodesic distance
between $x$ and the point $(\Psi(x,\omega),0)$ on this hyperplane.  We shall also let $\tilde \Theta(x,\omega)$ denote the first $(n-1)$-coordinates
of the covector $\Theta(x,\omega)$, meaning that $\tilde\Theta(x,\omega)=\eta'$ if $\Theta(x,\omega)=(\eta',\eta_n)\in S^*_{(\Psi(x,\omega),0)}\Omega$.

We can now define the microlocal cutoffs that we shall use.  For $(x,\xi)\in T^*\Omega$ in a conic neighborhood of $(0,(0,\dots,0,1))$, if
$b^\theta_k$ is as in \eqref{map0}, we define
\begin{equation}\label{b.45}
q^\theta_k(x,\xi)=b^\theta_k(\Psi(x,\xi/p(x,\xi)), \tilde \Theta(x,\xi/p(x,\xi))),
\end{equation}
with $p(x,\xi)$ being the principal symbol of $P=\sqrt{-\Delta_g}$.

Note that
\begin{equation}\label{b.46}
q^\theta_k(\Phi_s(x,\xi))=q^\theta_k(x,\xi).
\end{equation}
Furthermore, 
 if $(y_\nu',\eta'_\nu)=\theta k=\nu$ and $\gamma_\nu$ is the geodesic
in $S^*\Omega$ passing through $(y'_\nu,0,\eta_\nu)\in S^*\Omega$ with 
$\eta_\nu\in S^*_{(y',0)}\Omega$ having $\eta_\nu'$ as its first $(n-1)$ coordinates
and $\eta_n>0$ then
\begin{equation}\label{b.48}
q^\theta_k(x,\xi)=0 \, \, 
\text{if } \, \text{dist }((x,\xi),\gamma_\nu)\ge C_0\theta, \, \, \nu=\theta k
\end{equation}
for a uniform constant $C_0$.  Also, $q_k^\theta$ satisfies the estimates
\begin{equation}\label{b.49}
|\partial_x^\sigma \partial_\xi^\gamma q^\theta_k(x,\xi)|\lesssim \theta^{-|\sigma|-|\gamma|},
\quad \text{if } \, \, p(x,\xi)=1,
\end{equation}
related to this support property.

Next, fix $\tilde \psi\in C^\infty_0$ supported in $|x|<3\delta/2$ which equals one when
$|x|\le 5\delta/4$.  Additionally, fix $\tilde{\tilde \psi} \in C^\infty_0$ supported in 
$|x|<2\delta$ which equals one when $|x|\le 3\delta/2$.
Also, fix 
$\tilde \beta \in C^\infty_0((0,\infty))$ so that $\tilde \beta(p(x,\xi)/\la)$ equals one in a neighborhood
of the $\xi$ support of $a^\la_{j,\ell_0}$.  We then define the compound symbols $Q^\theta_\nu
=Q^\theta_{j,\ell_0,\nu}$ and associated operators by
\begin{multline}\label{b.50}
Q^\theta_\nu(x,y,\xi)=\tilde \psi(x) \tilde{\tilde \psi}(y) q^\theta_k(x,\xi) \, 
\tilde \beta(p(x,\xi)/\la), \, \,
\nu=\theta k\in \theta\cdot {\mathbb Z}^{2(n-1)}, \, \, \text{and}
\\
Q^\theta_\nu h(x) = (2\pi)^{-n}
\iint e^{i(x-y)\cdot \xi}   Q^\theta_\nu(x,y,\xi)\,  h(y) \, d\xi dy.
\end{multline}
It follows that these dyadic pseudodifferential operators belong to a bounded subset
of $S^0_{7/8,1/8}$ due to our assumption that $\theta\in [\la^{-1/8},1]$.  We have constructed these
operators so that for small enough $\delta_0>0$ we have
\begin{multline}\label{b.51}
Q^\theta_\nu(x,y,\xi)=Q^\theta_\nu(z,y,\eta), \, (z,\eta)=\Phi_t(x,\xi), 
\\ \text{if } \, \text{dist }((x,\xi), \, \text{supp }A_{j,\ell_0})\le \delta_0
\, \text{and } \, \, |t|\le 2\delta_0.
%
\end{multline}

The compound symbol involves the cutoff $\tilde{\tilde \psi}(y)$ which equals one on a neighborhood 
of the $x$-support of $A_{j,\ell_0}$ as well as the support of $\tilde \psi$.
We use cutoffs in both variables
since $M$ is not assumed to be compact and we want to avoid
issues at infinity.   This symbol in \eqref{b.51} vanishes when either $x$ or $y$ is outside the $2\delta$-ball about
the origin in our coordinates for $\Omega$.  By \eqref{b.5'} \eqref{b.13} and \eqref{b.43}, we can fix $\delta_1$ in \eqref{b.3} small enough so that
we also have, analogous to (2.41) in \cite{HSp},
\begin{multline}\label{b.52}
\tilde \sigma_\la = \sum_\nu \tilde \sigma_\la Q^{\theta_0}_\nu + R, \quad R=R_{\la,j,\ell_0},  \, \, \theta_0=\la^{-1/8},
\, \,  \tilde \sigma_\la =A_{j,\ell_0}\sigma_\la, 
\\
\text{where } \, \, R(x,y)=O(\la^{-N}), \forall \, N \, \, 
\text{and } R(x,y)=0, \, \, \text{if } x\notin B(x_j,2\delta) \, \,
\text{or } y\notin B(x_j,2\delta),
\end{multline}
with bounds for the remainder kernel independent of $j$.

Let us now point out straightforward but useful properties of our operators.  First, by \eqref{b.13}, \eqref{b.52}
and the support properties of $\tilde \psi$, $\tilde{\tilde \psi}$, we have
\begin{multline}\label{b.54}
\tilde \sigma_\la Q^{\theta_0}_\nu h=\1_{B(x_j,2\delta)} \cdot \tilde \sigma_\la Q^{\theta_0}_\nu\bigl(
\1_{B(x_j,2\delta)} \cdot h\bigr), \, \, Q^{\theta_0}_\nu =Q^{\theta_0}_{j,\ell_0,\nu}
\\
\text{and } \, \, Rh=\1_{B(x_j,2\delta)}\cdot R \bigl(
\1_{B(x_j,2\delta)} \cdot h\bigr), \, \, R=R_{\la,j,\ell_0}.
\end{multline}

Also, we have the uniform bounds
\begin{equation}\label{b.55}
\begin{split}
\|Q^{\theta_0}_\nu h\|_{\ell^q_\nu L^q(M)}\lesssim \|h\|_{L^q(M)}, \, \, 2\le q\le \infty
\\
\bigl\| \sum_{\nu'}(Q^{\theta_0}_\nu)^*H(\nu', \, \cdot \, )\|_{L^p(M)}\lesssim 
\|H\|_{\ell^p_{\nu'}L^p(M)}, \, \, 1\le p\le 2.
\end{split}
\end{equation}
The second estimate follows via duality from the first.  The first one is (2.33) in Lemma~2.2 of \cite{HSp}.  By interpolation,
one just needs to verify that the estimate holds for the two endpoints, $p=2$ and $p=\infty$.  The former follows via an almost
orthogonality argument, and the latter from the fact that, if $Q^{\theta_0}_\nu(x,y)$ denotes the Schwartz kernel of the operator $Q^{\theta_0}_\nu$, then  we have the uniform bounds
$$\sup_{x\in B(x_j,2\delta)}\int_{B(x_j,2\delta)}|Q^{\theta_0}_\nu(x,y)| \, dy\le C.$$
See \cite{HSp} for more details.

Note that if we use \eqref{b.55} along with \eqref{b.50} and the finite overlap of the balls $\{B(x_j,2\delta)\}$ we obtain
for our fixed $\ell_0=1,\dots,K$
\begin{equation}\label{b.555}
\begin{split}
\bigl(\sum_{j,\nu}\|Q^{\theta_0}_{j,\ell_0,\nu} h\|_{ L^q(M)}^q\bigr)^{1/q}\lesssim \|h\|_{L^q(M)}, \, \, 2\le q\le \infty
\\
\bigl\| \sum_{j',\nu'}(Q^{\theta_0}_{j',\ell_0,\nu'})^*H(\nu',j', \, \cdot \, )\|_{L^p(M)}\lesssim 
\|H\|_{\ell^p_{\nu'}L^p(M)}, \, \, 1\le p\le 2.
\end{split}
\end{equation}

In addition to this inequality and \eqref{b.10} we shall require another that follows almost directly from a result
in \cite{HSp}.  Specifically, we require the following commutator bounds
\begin{equation}\label{comm}
\bigl\| (A_{j,\ell_0}\sigma_\la Q^{\theta_0}_{j,\ell_0,\nu}-A_{j,\ell_0}Q^{\theta_0}_{j,\ell_0,\nu}\sigma_\la)h\|_{L^q(M)}
\le C_q \la^{\mu(q)-1/4}\|h\|_{L^2(B(x_j,2\delta))},
\end{equation}
with $\mu(q)$ is as in \eqref{ii.2},
assuming that $\delta$, as well as $\delta_1$ in \eqref{b.3} are fixed small enough.

To see this, let $\tilde A_{j,\ell_0}$ be a 0-order pseudo-differential operator with symbol $\tilde a_{j,\ell_0}^\la(x,y,\xi)$ supported in $|\xi|\approx \la$ and which equals one in 
the support of the  symbol $ a_{j,\ell_0}^\la(x,y,\xi)$ of the $ A_{j,\ell_0}$ operator, then it is not hard to see that
\begin{equation}\label{i.5a}
\| A_{j,\ell_0}-\tilde A_{j,\ell_0}  A_{j,\ell_0}\|_{L^p_x\to L^p_x}=O(\la^{-N})
\, \, \forall \, N\, \, \text{if } \,
\, 1\le p\le \infty.
\end{equation}
And by using the fact that 
 the kernel $\tilde A_{j,\ell_0} (x,y)$ is $O(\la^{n}(1+\la|x-y|)^{-N})$ and Young's inequality, we also have 
\begin{equation}\label{i.5b}
\| \tilde A_{j,\ell_0} \|_{L^2_x\to L^p_x}=O(\la^{n(\frac12-\frac1p)})\,\,\, \text{if } \,
\, 2\le p\le \infty.
\end{equation}
Thus to prove \eqref{comm} it suffices to show
 \begin{equation}\label{comma}
\bigl\| (A_{j,\ell_0}\sigma_\la Q^{\theta_0}_{j,\ell_0,\nu}-A_{j,\ell_0}Q^{\theta_0}_{j,\ell_0,\nu}\sigma_\la)h\|_{L^2(M)}
\le C_q \la^{-3/4}\|h\|_{L^2(B(x_j,2\delta))}
\end{equation}
since $n(\frac12-\frac1p)-\frac34\le \mu(q)-\frac14$ for $q\le q_c$.
 This follows from the 
proof of (2.39) in \cite{HSp} since, by \eqref{aas}, $A_{j,\ell_0}f$ vanishes outside
$B(x_j,2\delta)$ and the two operators in \eqref{comma} vanish when acting on functions vanishing on
$B(x_j,2\delta)$.  This, just as in \cite{HSp}, allows one to prove \eqref{comma},
exactly as in \cite{HSp},  by just working in a coordinate chart
($B(x_j,2\delta)$ here) and, to obtain the inequality
using \eqref{b.46}, \eqref{b.51} and  Egorov's theorem related to the properties of the half wave operator $e^{itP}$ in this local coordinate.

Next, as in \cite{BHSsp} and \cite{HSp}, we note that we can write for $\theta_0$ and $\tilde \sigma_\la$ as in \eqref{b.43}
\begin{equation}\label{b.56}
\bigl(\tilde \sigma_\la h\bigr)^2 =
\sum_{\nu,\nu'}
\bigl(\tilde \sigma_\la Q^{\theta_0}_\nu h\bigr)\cdot 
\bigl(\tilde \sigma_\la Q^{\theta_0}_{\nu'}h\bigr) +O(\la^{-N}\|h\|^2_{L^2(B(x_j,2\delta))}), \, \, \forall \, N.
\end{equation}

Note that the $\nu=\theta_0\cdot {\mathbb Z}^{2(n-1)}$ index a $\la^{-1/8}$-separated lattice in ${\mathbb R}^{2(n-1)}$.   As
in earlier works, to be able to apply bilinear oscillatory integral results, we need to organize the pairs $(\nu,\nu')$ in
the above sum.  As in \cite{TaoVargasVega}, we first consider dyadic cubes $\tau^\theta_\mu$ in 
${\mathbb R}^{2(n-1)}$ of sidelength $\theta=2^k\theta_0=2^k\la^{-1/8}$, with $\tau^\theta_\mu$ denoting translation of the cube
$[0,\theta)^{2(n-1)}$ by $\mu=\theta\cdot {\mathbb Z}^{2(n-1)}$.  We then say two such cubes are {\em close} if they are not
adjacent but have adjacent parents of sidelength $2\theta$.  In this case we write $\tau^\theta_\mu \sim \tau^\theta_{\mu'}$.  Note that
close cubes satisfy $\text{dist }(\tau^\theta_\mu,\tau^\theta_{\mu})\approx \theta$ and also that each fixed cube has
$O(1)$ cubes that are ``close'' to it.  Moreover, as was noted in \cite{TaoVargasVega}, any distinct points
$\nu,\nu'\in {\mathbb R}^{2(n-1)}$ must lie in a unique pair of cubes in this Whitney decomposition of ${\mathbb R}^{2(n-1)}$.
Consequently, there must be a unique triple $(\theta=2^k\theta_0, \mu,\mu')$ such that
$(\mu,\mu')\in \tau^\theta_\mu\times \tau^\theta_{\mu'}$ and $\tau^\theta_{\mu}\sim \tau^\theta_{\mu'}$.

We also note that if, as we shall, we fix the $\delta$ occurring in the construction of the $\{A_{j,\ell}\}$ to be small enough
then we only need to consider $\theta=2^k\theta_0\ll 1$ when dealing with the bilinear sum in \eqref{b.56}.

Based on these observations, we can organize the sum in \eqref{b.56} as follows
\begin{multline}\label{b.57}
\sum_{\{k\in {\mathbb N}: \, k\ge 10 \, \, \text{and } \, 
\theta=2^k\theta_0\ll 1\}}
\sum_{\{(\mu, \mu'): \, \tau^\theta_\mu
\sim \tau^\theta_{ \mu'}\}}
\sum_{\{(\nu, \nu')\in
\tau^\theta_\mu\times \tau^\theta_{ \mu'}\}}
\bigl(\tilde \sigma_\la
Q^{\theta_0}_\nu h\bigr) 
\cdot \bigl(\tilde \sigma_\la
Q^{\theta_0}_{ \nu'} h\bigr)
\\
+\sum_{(\nu, \nu')\in \Xi_{\theta_0}} 
\bigl( \tilde \sigma_\la Q^{\theta_0}_\nu h\bigr) 
\cdot \bigl( \tilde \sigma_\la
Q^{\theta_0}_{ \nu'} 
h\bigr)
,
\end{multline}
where $\Xi_0$ indexes the remaining pairs such that $|\nu-\nu'|\lesssim \theta_0=\la^{-1/8}$, including the diagonal ones
where $\nu=\nu'$.

Let us then set  for our fixed $(j,\ell_0)$ and  $\tilde \sigma_\la =A_{j,\ell_0}\sigma_\la$
\begin{equation}\label{b.58}
\diag_{j,\ell_0}(h)=\diag(h)=\sum_{(\nu, \nu')\in \Xi_{\theta_0}} 
\bigl( \tilde \sigma_\la Q^{\theta_0}_\nu h\bigr) 
\cdot \bigl( \tilde \sigma_\la
Q^{\theta_0}_{ \nu'} 
h\bigr)
\end{equation}
and
\begin{equation}\label{b.59}
\far_{j,\ell_0}(h)=
\far(h)=\sum_{(\nu, \nu')\notin \Xi_{\theta_0}} 
\bigl( \tilde \sigma_\la Q^{\theta_0}_\nu h\bigr) 
\cdot \bigl( \tilde \sigma_\la
Q^{\theta_0}_{ \nu'} 
h\bigr)+O(\la^{-N}\|h\|^2_{L^2(B(x_j,2\delta)}),
\end{equation}
with the last term being the error term in \eqref{b.56}.
Due to this splitting we have
the analog of (5.5) in \cite{HSp}
\begin{equation}\label{b.60}
(\tilde \sigma_\la h)^2 = \diag(h) +\far(h).
\end{equation}

We shall use this decomposition when $n\ge3$, since then $q_c\le 4$, which allows us to use bilinear ideas
from \cite{TaoVargasVega}, exploiting the fact that $q_c/2\in [1,2]$.
When the dimension $n$ of $M$ equals 2, though, the critical exponent $q_c=6$, which, as in \cite{BHSsp} and \cite{HSp},
 requires a slight modification of the above splitting.

Specifically, for $n=2$, we first, as in these two earlier works, set
\begin{equation}\label{Tnu}
   T_\nu h=\sum_{\nu': \,(\nu, \nu')\in
\Xi_{\theta_0}}( \tilde\sigma_\la Q^{\theta_0}_\nu h )(\tilde\sigma_\la Q^{\theta_0}_{\nu'} h), 
\end{equation}
and write
\begin{equation}\label{b255}
    ( \diag(h))^{2} =\big(\sum_\nu T_\nu h\big)^2
    = \sum_{\nu_1, \nu_2}  T_{\nu_1} hT_{\nu_2} h.
\end{equation}
If, as above, we fix $\delta$ small enough then the 
sum in \eqref{b255} can be organized as
\begin{equation}\label{organize}
\begin{aligned}
    &\big( \sum_{\{k\in {\mathbb N}: \, k\ge 20 \, \, \text{and } \, 
\theta=2^k\theta_0\ll 1\}} \,  \, 
\sum_{\{(\mu_1, \mu_2): \, \tau^\theta_{\mu_1}
\sim \tau^\theta_{ \mu_2}\}}
\sum_{\{(\nu_1,\nu_2)\in
\tau^\theta_{\mu_1}\times \tau^\theta_{\mu_2}\}}
+\sum_{(\nu_1,  \nu_2)\in \overline\Xi_{\theta_0}}
\big)T_{\nu_1} hT_{\nu_2} h,  \\
&\qquad \qquad ={\overline\Upsilon^{\text{far}}}(h)+{\overline\Upsilon^{\text{diag}}}(h).
\end{aligned}
\end{equation}
Here $\overline\Xi_{\theta_0}$ indexes the  near diagonal pairs. This is another Whitney decomposition similar to
the one in  \eqref{b.57}, but the diagonal set $\overline\Xi_{\theta_0}$ is much larger than the set $\Xi_{\theta_0}$ in \eqref{b.57}. More explicitly, when $n=2$, it is not hard to check that  $|\nu-\nu'|\le 2^{11}\theta_0$ if $(\nu,\nu')\in \Xi_{\theta_0}$
while  $|\nu_1-\nu_2|\le 2^{21}\theta_0$ if $(\nu_1,\nu_2)\in \overline\Xi_{\theta_0}$. As noted in \cite{HSp}, this helps to simplify the calculations needed for ${\overline\Upsilon^{\text{far}}}(h)$.
Note that for our fixed $(j,\ell_0)$,
${\overline\Upsilon^{\text{diag}}}(h)={\overline\Upsilon_{j,\ell_0}^{\text{diag}}}(h)$,  ${\overline\Upsilon^{\text{far}}}(h)={\overline\Upsilon_{j,\ell_0}^{\text{far}}}(h)$, 
and $\far_{j,\ell_0}(h)=\far(h)$ as in \eqref{b.59}, we then have
\begin{equation}\label{b.64}
(\tilde \sigma_\la h)^4\le 2(\diag h)^2+2(\far h)^2=2{\overline\Upsilon^{\text{diag}}}(h)+2{\overline\Upsilon^{\text{far}}}(h)+2(\far(h))^2, \, \, \text{if }n=2.
\end{equation}


We have organized the sums expanding the left side of \eqref{b.56} exactly as in \cite{HSp}.  In view of \eqref{b.54} each of the summands in the above decompositions
is localized to our coordinate chart $\Omega=B(x_j,2\delta)$ on which we are using geodesic normal coordinates about the center.  Since our bounded geometry assumptions
ensures we have uniform control of the metric and its derivatives, for $\delta>0$ fixed small enough, we can simply repeat the proof of Lemma 5.1 in \cite{HSp} to obtain the following 
variable coefficient variant of Lemma~6.1 in Tao, Vargas and Vega~\cite{TaoVargasVega}.

\begin{lemma}\label{lemma1}  Let $\theta_0=\la^{-1/8}$ with $\la \gg 1$.
If $n\ge3$ there is a uniform constant $C=C_M$ independent of $(j,\ell_0)$  so that
if, as in \eqref{b.50}, $Q^{\theta_0}_\nu=Q^{\theta_0}_{j,\ell_0,\nu}$
\begin{multline}\label{b.65}
\bigl\| \diag_{j,\ell_0}(h) \bigr\|_{L^{q_c/2}(M)}
\le C\bigl( \, \sum_\nu \bigl\| A_{j,\ell_0}\sigma_\la Q^{\theta_0}_{j,\ell_0,\nu} h\bigr\|^{q_c}_{L^{q_c}(B(x_j,2\delta))}\, \bigr)^{2/q_c}
\\+O(\la^{\frac2{q_c}-}\|h\|^2_{L^2(B(x_j,2\delta))}).
\end{multline}
Also, for all $n\ge2$, if $q\in (2,\tfrac{2(n+2)}n]$ and $\mu(q)$ as in \eqref{ii.2},  there is a uniform constant
$C_q=C(q,M)$ so that
\begin{multline}\label{b.66}
\bigl\| \diag_{j,\ell_0}(h) \bigr\|_{L^{q/2}(M)}
\le C_q \bigl( \, \sum_\nu \bigl\| A_{j,\ell_0}\sigma_\la Q^{\theta_0}_{j,\ell_0,\nu} h\bigr\|^{q}_{L^{q}(B(x_j,2\delta))}\, \bigr)^{2/q}
\\+O(\la^{2\mu(q)-}\|h\|^2_{L^2(B(x_j,2\delta))}).
\end{multline}
Additionally, for $n=2$ there is a uniform constant $C=C(M)$ so that
\begin{multline}\label{b.67}
\bigl\| \diagbar_{j,\ell_0}(h) \bigr\|_{L^{3/2}(M)}
\\
\le C\bigl( \, \sum_\nu \bigl\| A_{j,\ell_0}\sigma_\la Q^{\theta_0}_{j,\ell_0,\nu} h\|^6_{L^6(B(x_j,2\delta))}\bigr)^{2/3}
+O(\la^{\frac23-}\|h\|^4_{L^2(B(x_j,2\delta))}).
\end{multline}
\end{lemma}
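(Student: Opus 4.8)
The plan is to recognize that each of \eqref{b.65}, \eqref{b.66} and \eqref{b.67} is a statement about the single coordinate chart $\Omega=B(x_j,2\delta)$ on which $A_{j,\ell_0}\sigma_\la$ and the cutoffs $Q^{\theta_0}_{j,\ell_0,\nu}$ live, and that once one passes to geodesic normal coordinates about $x_j$ the argument of \cite[Lemma 5.1]{HSp} applies essentially verbatim, the only new point being uniformity of the constants in $(j,\ell_0)$. First I would fix $j$ and $\ell_0$, arrange as above that $\xi_{j,\ell_0}=(0,\dots,0,1)$, and use the localization \eqref{b.54} to replace every $L^p(M)$ norm in \eqref{b.65}--\eqref{b.67} by the corresponding $L^p(\Omega)$ norm computed in these coordinates. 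Since $M$ is of bounded geometry, in these coordinates the metric, its inverse, and all their derivatives are bounded uniformly in $j$, the Riemannian distance $d_g$ is two-sidedly comparable to the Euclidean distance, and the transition maps have uniformly bounded derivatives; consequently the amplitudes in \eqref{aas} and \eqref{b.48}--\eqref{b.50} obey their stated symbol bounds with constants independent of $(j,\ell_0)$, and the kernel of $A_{j,\ell_0}\sigma_\la$ is a microlocalized Fourier integral operator of the same shape (phase built from $d_g(x,y)$, amplitude of size $\la^{(n-1)/2}$ concentrated on a $\la^{-1/8}$-tube about the geodesic through $(x_j,\xi_{j,\ell_0})$) as in the compact case.

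With this reduction in place I would run the Tao--Vargas--Vega / Lee bilinear scheme exactly as in \cite{HSp}. Expanding $(\tilde\sigma_\la h)^2$ as in \eqref{b.56} and organizing the pairs $(\nu,\nu')$ by the Whitney decomposition \eqref{b.57}, the ``far'' pairs lying in close cubes $\tau^\theta_\mu\sim\tau^\theta_{\mu'}$ of sidelength $\theta=2^k\theta_0$ are estimated by Lee's variable-coefficient bilinear oscillatory integral inequality \cite{LeeBilinear}, whose transversality/curvature hypotheses on the phase hold uniformly precisely because the metric and its derivatives are uniformly controlled; summing the dyadic-in-$\theta$ gains over $k$ yields the main terms on the right of \eqref{b.65} and \eqref{b.66}, with the remainder $O(\la^{2\mu(q)-}\|h\|^2_{L^2(B(x_j,2\delta))})$ absorbing the $O(\la^{-N})$ error in \eqref{b.56} and the loss from the $\theta$-sum. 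The near-diagonal pairs indexed by $\Xi_{\theta_0}$, which make up $\diag_{j,\ell_0}(h)$, are retained on the left and bounded via an almost-orthogonality estimate together with the fixed-parameter $L^2\to L^q$ bound \eqref{b.6} for $\tilde\sigma_\la=A_{j,\ell_0}\sigma_\la$; the finite-overlap property \eqref{b.00} of the cover is what allows passage from these per-chart estimates to global statements.

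For \eqref{b.67}, where $n=2$ and $q_c=6$, one performs one more iteration exactly as in \cite{BHSsp} and \cite{HSp}: write $\diagbar_{j,\ell_0}(h)$ from $(\diag(h))^2=(\sum_\nu T_\nu h)^2$ via \eqref{Tnu}--\eqref{b255}, carry out the second Whitney decomposition \eqref{organize} of the pairs $(\nu_1,\nu_2)$, apply the bilinear estimate to the far part and orthogonality to the enlarged diagonal $\diagbar_{j,\ell_0}(h)$, and again use the uniform symbol bounds so that all constants are independent of $(j,\ell_0)$; this produces \eqref{b.67} with remainder $O(\la^{2/3-}\|h\|^4_{L^2(B(x_j,2\delta))})$.

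The main obstacle --- in fact essentially the only point requiring care beyond citing \cite{HSp} --- is verifying this uniformity of all constants in $j$ and $\ell_0$. It is not automatic: one must trace it through the construction of the microlocal cutoffs $A_{j,\ell_0}$ and $Q^{\theta_0}_{j,\ell_0,\nu}$ and through Lee's bilinear estimate, and at each step it reduces to the observation that the quantities entering the bounds are finite expressions in a fixed, bounded number of derivatives of the metric in normal coordinates --- which is exactly what uniform bounded geometry provides (positive injectivity radius and uniform bounds on the curvature tensor and its covariant derivatives, hence on $g_{jk}$ and its derivatives in normal coordinates, by the facts recalled at the start of this section). Granting this bookkeeping, the three inequalities follow from \cite[Lemma 5.1]{HSp}, and I would present the proof in that order: (i) pass to normal coordinates on $\Omega$ and record the uniform symbol estimates; (ii) quote the far/diagonal split and Lee's bilinear estimate; (iii) note the extra iteration for $n=2$ goes through unchanged; (iv) conclude uniformity from the bounded-geometry bounds.
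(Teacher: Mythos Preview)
Your overall conclusion is right and matches the paper: the lemma reduces to a single coordinate chart $\Omega=B(x_j,2\delta)$, and bounded geometry provides the uniform symbol and metric bounds needed to make the argument of \cite[Lemma~5.1]{HSp} go through with constants independent of $(j,\ell_0)$. The paper's own proof is nothing more than this observation.

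However, your description of \emph{what} that argument consists of is tangled. Lemma~\ref{lemma1} is solely about the near-diagonal sum $\diag_{j,\ell_0}(h)=\sum_{(\nu,\nu')\in\Xi_{\theta_0}}(\tilde\sigma_\la Q^{\theta_0}_\nu h)(\tilde\sigma_\la Q^{\theta_0}_{\nu'}h)$, and its proof is a variable-coefficient almost-orthogonality statement in the spirit of \cite[Lemma~6.1]{TaoVargasVega}: the products indexed by $\Xi_{\theta_0}$ are microlocally separated, which after a stationary-phase/FIO analysis of $\tilde\sigma_\la Q^{\theta_0}_\nu$ yields $\|\sum\|_{q_c/2}\lesssim(\sum\|\cdot\|_{q_c}^{q_c})^{2/q_c}$ plus a rapidly decaying remainder. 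Lee's bilinear oscillatory integral theorem \cite{LeeBilinear} plays no role here; it enters only in Lemma~\ref{lemma2}, where one bounds the \emph{far} part $\far_{j,\ell_0}(h)$. In particular, the sentence ``summing the dyadic-in-$\theta$ gains over $k$ yields the main terms on the right of \eqref{b.65} and \eqref{b.66}'' is incorrect: there is no $\theta$-sum in the proof of \eqref{b.65}--\eqref{b.66}, and the main term comes directly from the orthogonality of the $\Xi_{\theta_0}$ pieces, not from bilinear gains. Likewise for \eqref{b.67}: the second Whitney layer and $\farbar$ are handled in Lemma~\ref{lemma2}; what remains for \eqref{b.67} is again almost-orthogonality applied to the enlarged diagonal set $\overline\Xi_{\theta_0}$. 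If you separate these two ingredients cleanly, your write-up will be correct.
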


In the above and what follows $O(\la^{\mu-})$ denotes $O(\la^{\mu-\e_0})$ for some $\e_0>0$.

If we  fix $\delta$ as well as 
$\delta_1,\delta_2$ in \eqref{b.3} small enough, then we can use Lee's \cite{LeeBilinear} bilinear oscillatory integral theorem 
and repeat the proof of Lemma 5.2 in \cite{HSp}
to obtain the following.

\begin{lemma}\label{lemma2}
Let $n\ge2$ and $\far(h)=\far_{j,\ell_0}(h)$ be as above with $\theta_0=\la^{-1/8}$.  Then for all $\e>0$ there is a $C_\e=C(\e,M)$ so that
\begin{equation}\label{b.68}
\int_M |\far_{j,\ell_0}(h)|^{q/2} \, dx \le C_\e \la^{1+\e} \, \bigl(\la^{7/8}\bigr)^{\frac{n-1}2(q-q_c)}
\|h\|^q_{L^2(B(x_j,2\delta))}, \quad q=\tfrac{2(n+2)}n.
\end{equation}
Similarly, for all $n\ge2$, there is a constant $C_q=C(q,M)$ so that
\begin{equation}\label{b.69}
\int_M |\far_{j,\ell_0}(h)|^{q/2} \, dx \le C_q \, \la^{q\cdot \mu(q)-}\|h\|^q_{L^2(B(x_j,2\delta))}, \quad 2<q<\tfrac{2(n+2)}n,
\end{equation}
and, if $n=2$ and $\farbar(h)=\farbar_{j,\ell_0}(h)$ as in \eqref{organize},
\begin{equation}\label{b.70}
\int_M |\farbar_{j,\ell_0}(h)| \, dx \le C_\e \la^{1+\e}\la^{-7/8}\|h\|^4_{L^2(B(x_j,2\delta))}, \, \, \forall \, \e>0,
\end{equation}
with $C_\e=C(\e,M)$.
\end{lemma}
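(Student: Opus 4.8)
The plan is to reduce the whole estimate to a single coordinate chart and then invoke the bilinear oscillatory integral machinery that was used for compact manifolds in \cite{HSp}, which itself rests on Lee's bilinear theorem \cite{LeeBilinear}. By \eqref{b.54}, every summand appearing in $\far_{j,\ell_0}(h)$ and $\farbar_{j,\ell_0}(h)$ is supported in $\Omega=B(x_j,2\delta)$, on which we would use the geodesic normal coordinates centered at $x_j$. Because $M$ is of bounded geometry, the metric $g_{k\ell}$ and all of its derivatives are controlled in these coordinates uniformly in $j$, and the half-wave parametrix for $\tilde\sigma_\la=A_{j,\ell_0}\sigma_\la$ has symbol and phase bounds that are likewise uniform in $(j,\ell_0)$; so the kernel of $\tilde\sigma_\la Q^{\theta_0}_\nu$ is, modulo $O(\la^{-N})$ errors, that of a $\la$-oscillatory integral operator microlocally concentrated in a $\la^{-1/8}$-tube about the geodesic $\gamma_\nu$, with all bounds independent of $(j,\ell_0,\nu)$.

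The first step would be to re-sum the Whitney decomposition in \eqref{b.57}: for each dyadic $\theta=2^k\theta_0$ with $k\ge10$ and $\theta\ll1$, set $F^\theta_\mu=\tilde\sigma_\la\bigl(\sum_{\nu\in\tau^\theta_\mu}Q^{\theta_0}_\nu\bigr)h$, so that the inner double sum over $(\nu,\nu')\in\tau^\theta_\mu\times\tau^\theta_{\mu'}$ collapses to the single bilinear product $F^\theta_\mu\cdot F^\theta_{\mu'}$. Since close cubes satisfy $\dist(\tau^\theta_\mu,\tau^\theta_{\mu'})\approx\theta$, the functions $F^\theta_\mu$ and $F^\theta_{\mu'}$ are associated with geodesic tubes whose directions are $\theta$-separated, which is precisely the transversality hypothesis of Lee's estimate \cite{LeeBilinear}; applying that estimate in the chart $\Omega$ at scale $\theta$ --- exactly as in the proof of Lemma~5.2 of \cite{HSp} --- would bound $\|F^\theta_\mu F^\theta_{\mu'}\|_{L^{q/2}(M)}$ by a power of $\la$ times a favorable power of $\theta$ times $\|Q^\theta_\mu h\|_{L^2(\Omega)}\,\|Q^\theta_{\mu'}h\|_{L^2(\Omega)}$. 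I would then sum over the close pairs $(\mu,\mu')$ using Cauchy--Schwarz and the almost orthogonality of the $\{Q^\theta_\mu\}$, and finally sum over the Whitney scales $k$; the gain in $\theta$ makes this geometric series converge and, tracking the powers of $\la$ and $\theta_0=\la^{-1/8}$, yields the factor $(\la^{7/8})^{\frac{n-1}2(q-q_c)}$ of \eqref{b.68} at the endpoint $q=\tfrac{2(n+2)}n$ and the clean $\la^{q\mu(q)-}$ of \eqref{b.69} in the range $2<q<\tfrac{2(n+2)}n$. For the two-dimensional estimate \eqref{b.70} I would run the same scheme starting instead from the second Whitney decomposition \eqref{organize}: treat each $T_\nu h$ from \eqref{Tnu} as a localized object, group the pairs $(\nu_1,\nu_2)$ with $\tau^\theta_{\mu_1}\sim\tau^\theta_{\mu_2}$, and apply the bilinear estimate to $T_{\nu_1}h\cdot T_{\nu_2}h$, repeating verbatim the corresponding step of \cite{HSp}.

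The hard part --- indeed the only point not already contained in \cite{HSp} --- will be verifying that every constant in the argument is uniform in $(j,\ell_0)$, and this is exactly where the full strength of the bounded geometry hypothesis is needed: the uniform bounds on the curvature tensor and all of its covariant derivatives give uniform control of the geodesic normal charts, of the transition maps, and hence of the symbols $a^\la_{j,\ell_0}$, the compound symbols $Q^{\theta_0}_{j,\ell_0,\nu}$, and the half-wave parametrix for $\tilde\sigma_\la$, while the positivity of $\Inj(M)$ together with the finite-overlap property \eqref{b.00} lets one pass from the fixed-chart bounds to global ones and, eventually, to sum over $j$. Once this uniformity is in hand the non-compactness of $M$ plays no further role --- the cutoffs $\tilde\psi$ and $\tilde{\tilde \psi}$ in \eqref{b.50} confine every kernel to $\Omega$ --- and the remaining computation is line for line that of the compact case treated in \cite{HSp}.
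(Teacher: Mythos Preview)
Your proposal is correct and follows exactly the approach the paper takes: the paper does not give an independent proof but simply states that, with $\delta,\delta_1,\delta_2$ fixed small enough, one can use Lee's bilinear oscillatory integral theorem and repeat the proof of Lemma~5.2 in \cite{HSp}. Your sketch is precisely that proof, and you have correctly identified that the only new ingredient beyond \cite{HSp} is the uniformity in $(j,\ell_0)$ afforded by the bounded geometry hypothesis.
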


We now have collected the main ingredients that we need to prove the critical low height estimates.

\begin{proof}[Proof of \eqref{b.32}]
Let us assume that $n\ge3$.  A main step in the proof of the $A_-$ estimates then is to obtain the analog of (2.44) in \cite{HSp}.
We shall do so largely by repeating its proof, which we do so for the sake of completeness in order to note the small changes needed
to take into account that, unlike (2.44) in \cite{HSp}, \eqref{b.32} here is a vector valued inequality.  As noted before, we have taken
this framework to help us exploit our assumption of bounded geometry, and,  in particular, the fact that the doubles of the balls 
in our covering of $M$, $\{B(x_j,2\delta)\}$,
have uniformly finite overlap.

We first note that if $q=\tfrac{2(n+2)}n<q_c$, then by \eqref{b.27} and  \eqref{b.60} for our fixed $\ell_0$ we have
\begin{align*}
\bigl|&\bigl(\A\sigma_\la (\rho_\la f)(x,j)\bigr)^2\bigr|^{q_c/2}=\bigl|\bigl(A_{j,\ell_0}(\sigma_\la\rho_\la f)(x) \bigr)^2\bigr|^{q_c/2}
\\
&=|A_{j,\ell_0}(\sigma_\la \rho_\la f)(x) \cdot A_{j,\ell_0}(\sigma_\la \rho_\la f)(x)|^{\frac{q_c-q}2}
\bigl| \diag_{j,\ell_0}(\rho_\la f)(x) +\far_{j,\ell_0}(\rho_\la f)(x)\bigr|^{q/2}
\\
&\le |A_{j,\ell_0}(\sigma_\la \rho_\la f)(x) \cdot A_{j,\ell_0}(\sigma_\la \rho_\la f)(x)|^{\frac{q_c-q}2}
 2^{q/2}  \bigl( 
|\diag_{j,\ell_0}(\rho_\la f)(x)|^{q/2}
+|\far_{j,\ell_0}(\rho_\la f)(x)|^{q/2} \bigr).
\end{align*}
Also, if $A_-$ is as in \eqref{b.26},
$$\| \A(\sigma_\la \rho_\la f)\|^{q_c}_{L^{q_c}_x \ell_j^{q_c}(A_-)}
=\int_M \sum_j \1_{A_-}(x,j) \, |A_{j,\ell_0}(\sigma_\la \rho_\la f)(x)|^{q_c} \, dx.
$$
Thus,
\begin{align*}
\| &\A(\sigma_\la \rho_\la f)\|^{q_c}_{L^{q_c}_x \ell_j^{q_c}(A_-)}
=\sum_j \int \1_{A_-}(x,j) \, |A_{j,\ell_0}(\sigma_\la \rho_\la f)(x)
\cdot
A_{j,\ell_0}(\sigma_\la \rho_\la f)(x)
|^{q_c/2} \, dx
\\
&\le C \sum_j \int 
\bigl[ \1_{A_-}(x,j) \, |A_{j,\ell_0}(\sigma_\la \rho_\la f)(x) \cdot A_{j,\ell_0}(\sigma_\la \rho_\la f)(x)|^{\frac{q_c-q}2}
\bigr] \, |\diag_{j,\ell_0}(\rho_\la f)(x)|^{q/2} \, dx
\\
&\, \, +C\sum_j  \int 
\bigl[ \1_{A_-}(x,j) \, |A_{j,\ell_0}(\sigma_\la \rho_\la f)(x) \cdot A_{j,\ell_0}(\sigma_\la \rho_\la f)(x)|^{\frac{q_c-q}2}
\bigr] \, |\far_{j,\ell_0}(\rho_\la f)(x)|^{q/2} \, dx
\\
&\qquad \qquad =C(I+II).
\end{align*}

To handle $II$ we recall that by \eqref{b.26}, \eqref{b.27} and \eqref{b.54}
$$|\1_{A_-}(x,j) A_{j,\ell_0}\sigma_\la \rho_\la f(x)|\le \la^{\frac{n-1}4+\frac18}.$$
Thus, by \eqref{b.68},
\begin{align*}
CII&\lesssim \la^{(\frac{n-1}4+\frac18)(q_c-q)}\cdot \la^{1+\e} \, (\la^{\frac78})^{\frac{n-1}2(q-q_c)}\sum_j \|\rho_\la f\|^q_{L^2(B(x_j,2\delta)}
\\
&\lesssim \la^{1-\delta_n+\e} \|\rho_\la f\|_{L^2(M)}^q
\le \la^{1-\delta_n+\e}\|f\|_{L^2(M)}^q=\la^{1-\delta_n+\e},
\end{align*}
using also, in the second inequality, the bounded overlap of the $\{B(x_j,2\delta)\}$.  Also, a simple calculation shows that $\delta_n>0$.

To control $CI$, as in \cite{BHSsp} and \cite{HSp}, we use H\"older's inequality and Young's inequality along with \eqref{b.65}  to get
\begin{align*}
CI&\le \|\A(\sigma_\la \rho_\la f)\cdot \A(\sigma_\la\rho_\la f)\|_{L^{q_c/2}_x\ell_j^{q_c/2}(A_-)}^{\frac{q_c-q}2} \cdot C\| \diag_{j,\ell_0}(\rho_\la f)\|^{q/2}_{L^{q_c/2}_x\ell_j^{q_c/2}}
\\
&\le \tfrac{q_c-q}{q_c}\|\A(\sigma_\la \rho_\la f)\cdot \A(\sigma_\la\rho_\la f)\|_{L^{q_c/2}_x\ell_j^{q_c/2}(A_-)}^{q_c/2} +\tfrac{q}{q_c} C 
\| \diag_{j,\ell_0}(\rho_\la f)\|^{q_c/2}_{L^{q_c/2}_x\ell_j^{q_c/2}} 
\\
&\le \tfrac{q_c-q}{q_c}\|\A(\sigma_\la \rho_\la f)\|_{L^{q_c}_x\ell_j^{q_c}(A_-)}^{q_c}
\\
&\qquad +C' \tfrac{q}{q_c}
\bigl[\sum_{j,\nu} \|A_{j,\ell_0}\sigma_\la Q^{\theta_0}_{j,\ell_0,\nu}\rho_\la f\|^{q_c}_{L^{q_c}(B(x_j,2\delta))}
+\la^{1-}\bigl(\sum_j \|\rho_\la f\|_{L^2(B(x_j,2\delta))}\bigr)^{q_c/2} \bigr]
\\
&\le \tfrac{q_c-q}{q_c}\|\A(\sigma_\la \rho_\la f)\|_{L^{q_c}_x\ell_j^{q_c}(A_-)}^{q_c} 
+C'' \sum_{j,\nu} \|A_{j,\ell_0}\sigma_\la Q^{\theta_0}_{j,\ell_0,\nu}\rho_\la f\|^{q_c}_{L^{q_c}(B(x_j,2\delta))} +\la^{1-},
\end{align*}
again using the finite overlap of the $\{B(x_j,2\delta)\}$.

Since $\tfrac{q_c-q}{q_c}<1$ we can use the bounds for $CI$ and $CII$ to obtain the key inequality
\begin{equation*}
\|\A(\sigma_\la \rho_\la f)\|_{L^{q_c}_x\ell^{q_c}_j(A_-)}
\lesssim \bigl( \, \sum_{j,\nu}\| A_{j,\ell_0}\sigma_\la Q^{\theta_0}_{j,\ell_0,\nu}\rho_\la f\|_{L^{q_c}(M)}^{q_c}\bigr)^{1/q_c}+\la^{\frac1{q_c}-},
\end{equation*}
which is the analog of the estimate (2.44)  in Proposition~2.3 in \cite{HSp} for $n\ge3$.  One can similarly use \eqref{b.67} and \eqref{b.70} and
 modify the arguments in \cite{HSp}
to handle the two-dimensional case.  Similarly, if one also uses \eqref{b.69}  one can obtain an analog of the preceding
estimate for  subcritical exponents, $2<q<q_c$, which is more straightforward and does not require the norm in the left
to be taken over $A_-$.

Thus, just as the preceding inequality followed from straightforward modifications of the arguments in \cite{HSp}, so do 
the other estimates in the following result coming from variable coefficient variants of the bilinear harmonic
analysis in \cite{TaoVargasVega}.

\begin{proposition}\label{keyprop}
Fix a complete $n\ge2$ Riemannian manifold $(M,g)$ of bounded geometry and assume that \eqref{b.norm} is valid.
Then $\la\gg 1$ and $\theta_0=\la^{-1/8}$
\begin{equation}\label{key}
\|\A(\sigma_\la \rho_\la f)\|_{L^{q_c}_x\ell^{q_c}_j(A_-)}
\lesssim \bigl( \, \sum_{j,\nu}\| A_{j,\ell_0}\sigma_\la Q^{\theta_0}_{j,\ell_0,\nu}\rho_\la f\|_{L^{q_c}(M)}^{q_c}\bigr)^{1/q_c}+\la^{\frac1{q_c}-},
\end{equation}
assuming that $\delta$ and $\delta_1$ above are small enough.  Additionally, for $ 2<q\le \tfrac{2(n+2)}{n}$,
\begin{equation}\label{key2}
\bigl(\sum_{j} \|    A_{j,\ell_0}  (\sigma_\la \rho_\la f)\|_{L^{q}_x(M)}^q\bigr)^{1/q}
\lesssim \bigl( \, \sum_{j,\nu}\| A_{j,\ell_0}\sigma_\la Q^{\theta_0}_{j,\ell_0,\nu}\rho_\la f\|_{L^{q}(M)}^{q}\bigr)^{1/q}+\la^{\mu(q)-}.
\end{equation}
\end{proposition}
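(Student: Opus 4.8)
The plan is to follow the scheme already executed above for the case $n\ge 3$, $q=q_c$, and adapt it to the remaining cases by the same modifications of the arguments in \cite{HSp}. Recall the structure of that argument: one expands $(A_{j,\ell_0}\sigma_\la\rho_\la f)^2=\diag_{j,\ell_0}(\rho_\la f)+\far_{j,\ell_0}(\rho_\la f)$ using \eqref{b.60}, writes $|A_{j,\ell_0}\sigma_\la\rho_\la f|^{q_c}=|A_{j,\ell_0}\sigma_\la\rho_\la f|^{q_c-q}\,|(A_{j,\ell_0}\sigma_\la\rho_\la f)^2|^{q/2}$ with $q=\tfrac{2(n+2)}{n}\in(2,q_c)$, and splits $\sum_j\int_{A_-}$ accordingly into a diagonal piece and a far piece. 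On $A_-$ one has $|A_{j,\ell_0}\sigma_\la\rho_\la f|\le\la^{\frac{n-1}4+\frac18}$ by \eqref{b.26}, so the far piece is controlled by this bound raised to the power $q_c-q$ times Lemma~\ref{lemma2}, and summing in $j$ via the finite overlap of $\{B(x_j,2\delta)\}$ together with $\|\rho_\la f\|_2\le1$ gives $O(\la^{1-\delta_n})$ for some $\delta_n>0$; the diagonal piece is handled by H\"older in $L^{q_c/2}_x\ell^{q_c/2}_j$, then Young's inequality and Lemma~\ref{lemma1}, which reproduces a term $\tfrac{q_c-q}{q_c}\|\A(\sigma_\la\rho_\la f)\|_{L^{q_c}_x\ell^{q_c}_j(A_-)}^{q_c}$ with coefficient $<1$ that gets absorbed, leaving the asserted $(j,\nu)$-sum and a $\la^{1/q_c-}$ error.

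First I would dispose of the subcritical range. For $2<q\le\tfrac{2(n+2)}n$ the argument is strictly easier: apply \eqref{b.66} and \eqref{b.69} directly to $(A_{j,\ell_0}\sigma_\la\rho_\la f)^2=\diag+\far$ raised to the power $q/2$, sum over $j$ using the finite-overlap property, and use $\|\rho_\la f\|_2\le1$; there is no need to restrict to $A_-$ and nothing to absorb, so \eqref{key2} drops out. (For $\tfrac{2(n+2)}n<q<q_c$ one combines this with the endpoint case in the standard way, exactly as in \cite{HSp}.) Then I would turn to the genuinely different case, $n=2$ and $q_c=6$, where the quadratic expansion is insufficient and one uses the quartic bound \eqref{b.64}, $(\tilde\sigma_\la h)^4\le 2\diagbar(h)+2\farbar(h)+2(\far(h))^2$. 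Here $|A_{j,\ell_0}\sigma_\la\rho_\la f|^6=|A_{j,\ell_0}\sigma_\la\rho_\la f|^2\,|(A_{j,\ell_0}\sigma_\la\rho_\la f)^4|$, and on $A_-$ the leading factor is at most $(\la^{\frac{n-1}4+\frac18})^2$; the $\farbar$ term is then controlled by \eqref{b.70}, the $(\far(h))^2$ term by \eqref{b.68}, and the $\diagbar$ term by H\"older/Young and \eqref{b.67}, which again produces an absorbable fraction of $\|\A(\sigma_\la\rho_\la f)\|_{L^6_x\ell^6_j(A_-)}^6$ together with the target $(j,\nu)$-sum. Throughout I would use \eqref{b.555} (i.e. \eqref{b.55} plus finite overlap) to pass between the $\ell^{q_c}_j$-sums and the $(j,\nu)$-sums.

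The hard part will be the $\la$-power bookkeeping in the $n=2$ quartic case: one must verify that the power of $\la$ lost in the $\farbar$ and $(\far)^2$ terms, after being multiplied by the $A_-$-gain and summed over the $O(1)$-overlapping balls $B(x_j,2\delta)$, still beats $\la$ by a fixed positive power, and that the reproduced left-hand side appears with a coefficient strictly below $1$ so that the absorption is legitimate. The vector-valued $\ell^{q_c}_j$ structure is the only real novelty compared with \cite{HSp}, but since the cover $\{B(x_j,2\delta)\}$ constructed in this section has uniformly finite overlap, the passage from a single coordinate chart to the global sum is lossless, and no analytic input beyond Lemmas~\ref{lemma1} and \ref{lemma2} and the orthogonality/kernel bounds \eqref{b.55} is required.
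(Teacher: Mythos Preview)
Your proposal is correct and follows exactly the approach the paper takes: the paper proves the $n\ge 3$, $q=q_c$ case in detail just above the proposition and then states that the $n=2$ case follows by using \eqref{b.67} and \eqref{b.70} in place of \eqref{b.65} and \eqref{b.68}, while the subcritical estimate \eqref{key2} follows from \eqref{b.66} and \eqref{b.69} without needing to restrict to $A_-$ or absorb anything. Your sketch fills in precisely these details, and your observation that the vector-valued $\ell^{q_c}_j$ structure is handled losslessly via the finite-overlap property of the cover $\{B(x_j,2\delta)\}$ is exactly the point the paper is making.
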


Due to \eqref{key}, in order to prove \eqref{b.32} and finish the proof of the $q_c$-estimates
in Theorem~\ref{bgsp}, it suffices to show that if, as above, we take $T=c_0\log\la$ as in \eqref{b.31}, then
\begin{multline}\label{b.75}
\bigl( \, \sum_{j,\nu}\| A_{j,\ell_0}\sigma_\la Q^{\theta_0}_{j,\ell_0,\nu}\rho_\la f\|_{L^{q_c}(M)}^{q_c}\bigr)^{1/q_c}
\lesssim \la^{\mu(q_c)}T^{-1/2}, 
\\
\text{if all the sectional curvatures of } 
M \, \text{are } \le -\kappa_0^2, \,
\text{some }\kappa_0>0,
\end{multline}
and
\begin{multline}\label{b.76}
\bigl( \, \sum_{j,\nu}\| A_{j,\ell_0}\sigma_\la Q^{\theta_0}_{j,\ell_0,\nu}\rho_\la f\|_{L^{q_c}(M)}^{q_c}\bigr)^{1/q_c}
\lesssim 
\bigl(\la T^{-1}\bigr)^{\mu(q_c)}, 
\\
 \text{if all the sectional curvatures of } 
M \, \text{are nonpositive}.
\end{multline}

To prove these two estimates we shall argue as in the proof of (2.56) in \cite{HSp} and use \eqref{b.6}, \eqref{b.16}
 and \eqref{comm} to obtain
 \begin{align*}
 \sum_{j,\nu} &\| A_{j,\ell_0}\sigma_\la Q^{\theta_0}_{j,\ell_0,\nu}\rho_\la f\|_{L^{q_c}}^{q_c}
 =\sum_{j,\nu} \| A_{j,\ell_0}\sigma_\la Q^{\theta_0}_{j,\ell_0,\nu}\rho_\la f\|_{L^{q_c}}^2 \cdot
  \| A_{j,\ell_0}\sigma_\la Q^{\theta_0}_{j,\ell_0,\nu}\rho_\la f\|_{L^{q_c}}^{q_c-2}
  \\
  &\le \sum_{j,\nu}  \| A_{j,\ell_0}\sigma_\la Q^{\theta_0}_{j,\ell_0,\nu}\rho_\la f\|_{L^{q_c}}^2 \cdot
  \| A_{j,\ell_0} Q^{\theta_0}_{j,\ell_0,\nu}  \sigma_\la  \rho_\la f\|_{L^{q_c}}^{q_c-2}
  \\
  &\quad +\sum_{j,\nu} \| A_{j,\ell_0}\sigma_\la Q^{\theta_0}_{j,\ell_0,\nu}\rho_\la f\|_{L^{q_c}}^2 \cdot
  \| \, (A_{j,\ell_0}\sigma_\la Q^{\theta_0}_{j,\ell_0,\nu}-A_{j,\ell_0}Q^{\theta_0}_{j,\ell_0,\nu}\sigma_\la)\rho_\la f\|^{q_c-2}_{L^{q_c}}
  \\
  &\lesssim \sum_{j,\nu} \| A_{j,\ell_0}\sigma_\la Q^{\theta_0}_{j,\ell_0,\nu}\rho_\la f\|_{L^{q_c}}^2 \cdot
  \| A_{j,\ell_0} Q^{\theta_0}_{j,\ell_0,\nu}  \sigma_\la  \rho_\la f\|_{L^{q_c}}^{q_c-2}
  \\
  &\quad +\sum_{j,\nu} \la^{2/q_c} \|Q^{\theta_0}_{j,\ell_0,\nu}\rho_\la f\|_{L^2}^2 \cdot \la^{(\frac1{q_c}-\frac14) (q_c-2)} \| \rho_\la f\|_{L^2}^{q_c-2}
  \\&\lesssim \sum_{j,\nu} \| A_{j,\ell_0}\sigma_\la Q^{\theta_0}_{j,\ell_0,\nu}\rho_\la f\|_{L^{q_c}}^2 \cdot
  \| A_{j,\ell_0} Q^{\theta_0}_{j,\ell_0,\nu}  \sigma_\la  \rho_\la f\|_{L^{q_c}}^{q_c-2} +\la^{1-\frac14 (q_c-2)}
  \\
  &\le C\bigl(\sum_{j,\nu} \| A_{j,\ell_0}\sigma_\la Q^{\theta_0}_{j,\ell_0,\nu}\rho_\la f\|_{L^{q_c}}^{q_c}\bigr)^{\frac2{q_c}}
  \bigl(\sum_{j,\nu} \| A_{j,\ell_0} Q^{\theta_0}_{j,\ell_0,\nu}  \sigma_\la  \rho_\la f\|_{L^{q_c}}^{q_c} \bigr)^{\frac{q_c-2}{q_c}}+C\la^{1-\frac14(q_c-2)}.
  \end{align*}
   By Young's inequality, the second to last term is bounded for any $\kappa>0$ by
 $$ C\Bigl[\,  \frac2{q_c}\kappa^{\frac{q_c}2}
  \sum_{j,\nu} \|A_{j,\ell_0}\sigma_\la Q^{\theta_0}_{j,\ell_0,\nu}\rho_\la f\|_{L^{q_c}}^{q_c}+
  \frac{q_c-2}{q_c} \kappa^{-\frac{q_c}{q_c-2}}
  \sum_{j,\nu} \| A_{j,\ell_0} Q^{\theta_0}_{j,\ell_0,\nu}\sigma_\la \rho_\la f\|_{L^{q_c}}^{q_c} \, \Bigr].
  $$
 
 If $\kappa$ is small enough the first term here is smaller than half of the left side of the preceding inequality.
 So, by an absorbing argument and the fact that $q_c>2$, we conclude that
 $$ \sum_{j,\nu} \| A_{j,\ell_0}\sigma_\la Q^{\theta_0}_{j,\ell_0,\nu}\rho_\la f\|_{L^{q_c}}^{q_c}
 \lesssim \sum_{j,\nu} \| A_{j,\ell_0} Q^{\theta_0}_{j,\ell_0,\nu}\sigma_\la \rho_\la f\|_{L^{q_c}}^{q_c}
 +\la^{1-}.
 $$
 If we next use \eqref{b.16}, \eqref{b.55}, followed by \eqref{b.10} we find that we can
 control the first term in the right as follows
 \begin{align*}
 \sum_{j,\nu} \| A_{j,\ell_0}\sigma_\la Q^{\theta_0}_{j,\ell_0,\nu}\rho_\la f\|_{L^{q_c}}^{q_c}
& \lesssim 
 \sum_{j,\nu} \|  Q^{\theta_0}_{j,\ell_0,\nu}  \sigma_\la   \rho_\la f\|_{L^{q_c}}^{q_c}
 \\
 &\lesssim
 \sum_{j,\nu}  \Bigl[ \, \|  Q^{\theta_0}_{j,\ell_0,\nu} \rho_\la f\|_{L^{q_c}}^{q_c}
 +
 \|  Q^{\theta_0}_{j,\ell_0,\nu} (I-\sigma_\la) \rho_\la f\|_{L^{q_c}}^{q_c}
 \Bigr]
 \\
 &\lesssim  \sum_{j,\nu}  \|  Q^{\theta_0}_{j,\ell_0,\nu} \rho_\la f\|_{L^{q_c}}^{q_c} + \|(I-\sigma_\la)\rho_\la f\|_{L^{q_c}}^{q_c}
 \\  &\lesssim  \sum_{j,\nu}  \|  Q^{\theta_0}_{j,\ell_0,\nu} \rho_\la f\|_{L^{q_c}}^{q_c} + \la \cdot (\log\la)^{-q_c}.
\end{align*}

If we combine \eqref{key} and the preceding two inequalities we conclude that we would obtain \eqref{b.31} and consequently
finish the proof of the estimates in Theorem~\ref{bgsp} if, for $T$ as in \eqref{b.7}, we could show that
\begin{equation}\label{b.77}
Uf(x,j,\nu)  =(Q^{\theta_0}_{j,\ell_0,\nu} \rho_\la f)(x),
\end{equation}
satisfies
$$
\| Uf \|_{\ell_j^{q_c}\ell_\nu^{q_c}L^{q_c}_x} \lesssim \la^{\frac1{q_c}} T^{-1/2} \|f\|_{L^2(M)}
$$
if all the sectional curvatures of $(M,g)$ are $\le-\kappa_0^2$ for some $\kappa_0>0$ as well
as
$$
\| Uf \|_{\ell_j^{q_c}\ell_\nu^{q_c}L^{q_c}_x} \lesssim \bigl(\la T^{-1}\bigr)^{\frac1{q_c}}  \|f\|_{L^2(M)} $$
if  all the sectional curvatures of  $(M,g)$ are nonpositive.  Equivalently, this would be a consequence
of the following
\begin{multline}\label{b.78}
\| UU^* \|_{\ell_{j'}^{q_c'}\ell_{\nu'}^{q_c'} L^{q_c'}_x \to \ell^{q_c}_j \ell^{q_c}_\nu L^{q_c}_x}
\\
\lesssim
\begin{cases}
\la^{2/q_c}T^{-1}, 
\, \, \text{if all the sectional curvatures of } 
M \, \text{are } \le -\kappa_0^2, \,
\text{some }\kappa_0>0,
\\ \bigl(\la T^{-1}\bigr)^{2/q_c}
\, \, \text{if all the sectional curvatures of } 
M \, \text{are nonpositive}.
\end{cases}
\end{multline}

To prove the large height $A_+$ estimates \eqref{b.31} we split $\rho_\la\rho_\la^*=L_\la+G_\la$ as in 
\eqref{b.33.3}.  To prove \eqref{b.78}, we require an additional dyadic decomposition, as well as taking into account
the second microlocal decomposition afforded by the $\{Q^{\theta_0}_{j,\ell,\nu}\}$.  To obtain this dyadic decomposition, we
fix a Littlewood-Paley bump function $\beta\in C^\infty_0((1/2,2))$ satisfying $\sum_{k=-\infty}^\infty \beta(s/2^k)=1$,
$s>0$.  If we let $\beta_0(t)=1-\sum_{k=1}^\infty \beta(|t|/2^k)$, then $\beta_0\in C^\infty_0(\R)$ equals one near the 
origin, and so plays the role of $a(t)$ in \eqref{b.33.2}.  So, analogous to \eqref{b.33.2}, we set
$$L_{\la,T}=\frac1{2\pi}\int_{-\infty}^\infty \beta_0(t)
\, T^{-1}\Hat \Psi(t/T) \, e^{it\la} \, e^{-itP} \, dt,
$$
with, as in Lemma~\ref{Gl}, $\Psi =|\rho|^2$.

If then $G_\la=G_{\la,T}$ is as in \eqref{b.33} with $a=\beta_0$, we use the dyadic decomposition given by
\begin{equation}\label{b.79}
G_{\la,T,N}=\frac1{2\pi} \int_{-\infty}^\infty \beta(|t|/N) \, T^{-1}\Hat \Psi (t/T) \, e^{it\la} \, e^{-itP} \, dt,
\end{equation}
so that, if we consider the resulting dyadic sum, we have
\begin{equation}\label{b.80}
G_\la =\sum_{1\le 2^k=N\lesssim T} G_{\la,T,N}.
\end{equation} 
Then, if we set,
\begin{equation}\label{b.81}
\bigl(W_N F\bigr)(x, j ,\nu)=\sum_{j',\nu'} Q^{\theta_0}_{j,\ell_0,\nu}
\circ G_{\la,T,N} \circ
\bigl(Q^{\theta_0}_{j',\ell_0,\nu'}\bigr)^* F(x,j',\nu')
\end{equation}
by \eqref{b.33.3}, \eqref{b.77} and \eqref{b.80} we have
\begin{multline}\label{b.82}
\bigl(U U^*F)(x,j,\nu)=\sum_{j',\nu'}
\bigl( \, Q^{\theta_0}_{j,\ell_0,\nu}\circ L_{\la,T}\circ (Q^{\theta_0}_{j',\ell_0,\nu'})^*\bigr)F(x ,j',\nu')
\\
+\sum_{1\le N=2^k\lesssim T}\bigl(W_NF\bigr)(x, j, \nu).
\end{multline}

The operator $L_{\la,T}$ satisfies the bounds in \eqref{b.33.4}.  If we use this along with the first inequality
in \eqref{b.555} for $q=q_c$ followed by this bound and then the second inequality for $p=q_c'$ in 
\eqref{b.55} we obtain
$$
\Bigl\|
\sum_{j',\nu'}
\bigl( \, Q^{\theta_0}_{j,\ell_0,\nu}\circ L_{\la,T}\circ (Q^{\theta_0}_{j',\ell_0,\nu'})^*\bigr)F(\, \cdot\, ,j',\nu')
\Bigr\|_{\ell^{q_c}_j\ell^{q_c}_\nu L^{q_c}_x} \le C\la^{2/q_c}T^{-1}
\|F\|_{\ell^{q_c'}_{j'}\ell^{q_c'}_{\nu'}L^{q_c'}_x},
$$
which agrees with the bounds in \eqref{b.78} for strictly negative sectional curvatures and is better than the bounds
posited for nonpositive curvature.

To obtain the desired bounds for the last term in \eqref{b.82}, we shall require the following result which plays here
the role of Lemma~\ref{Gl}.

\begin{lemma}\label{G2}  Let $G_{\la,T,N}$ be as in \eqref{b.79}.  Then for $\la\gg 1$ and $\theta_0=\la^{-1/8}$ we have the uniform bounds
\begin{equation}\label{b.83}
\bigl\| Q^{\theta_0}_{j,\ell_0,\nu} G_{\la,T,N}
\bigr\|_{L^1(M)\to L^\infty(M)} \le C_M T^{-1} \la^{\frac{n-1}2}
N^{1-\frac{n-1}2}, \, \, N\ge 1, 
\end{equation}
if $(M,g)$ is a complete manifold of bounded geometry all of whose sectional curvatures are nonpositive and
$T=c_0\log\la$ is fixed with $c_0=c_0(M)>0$ sufficiently small.  Moreover, if we assume that all of the sectional curvatures
are $\le -\kappa_0^2$, some $\kappa_0>0$, then we have the uniform bounds
\begin{equation}\label{b.84}
\bigl\| Q^{\theta_0}_{j,\ell_0,\nu} G_{\la,T,N}
\bigr\|_{L^1(M)\to L^\infty(M)}\le C_M T^{-1} \la^{\frac{n-1}2}
N^{-m}, \, \, \forall \, m\in {\mathbb N}.
\end{equation}
\end{lemma}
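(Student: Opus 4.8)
The plan is to follow the argument for Lemma~\ref{Gl} that is given below, keeping track of the dyadic time cutoff $\beta(|t|/N)$ and of the tube operator $Q^{\theta_0}_{j,\ell_0,\nu}$. Since the $L^1(M)\to L^\infty(M)$ operator norm equals the supremum of the absolute value of the Schwartz kernel, it suffices to bound $|(Q^{\theta_0}_{j,\ell_0,\nu}G_{\la,T,N})(x,y)|$ pointwise, uniformly in $j,\ell_0,\nu$ and in dyadic $N$ with $1\le N\lesssim T=c_0\log\la$. First I would localize in frequency: by \eqref{b.50} the symbol of $Q^{\theta_0}_{j,\ell_0,\nu}$ carries the factor $\tilde\beta(p(x,\xi)/\la)$, supported where $|\xi|\approx\la$, so modulo an $O(\la^{-\infty})$ error we may insert to the right of $G_{\la,T,N}$ a spectral multiplier $\tilde{\tilde\beta}(P/\la)$ equal to one on $\supp\tilde\beta$. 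Since $\beta(|t|/N)\hat\Psi(t/T)$ is supported where $|t|\approx N$, I would then, exactly as in the proof of Lemma~\ref{Gl} (cf.\ also \cite{HSp}, \cite{SBLOg}), lift the computation to the universal cover $(\Rn,\tilde g)$ of $(M,g)$ via the Cartan--Hadamard theorem and the analogue of \eqref{k.3}, expressing the kernel as a sum over the deck transformations $\alpha\in\Gamma$ of contributions built from the $\la$-band-limited wave kernel on $(\Rn,\tilde g)$; by finite propagation speed for $\cos(t\sqrt{-\Delta_{\tilde g}})$ and a routine nonstationary phase bound for the band-limited half-wave kernels $e^{\pm it\sqrt{-\Delta_{\tilde g}}}\tilde{\tilde\beta}(\,\cdot\,/\la)$, which are $O(\la^{-\infty})$ whenever $d_{\tilde g}(\tilde x,\tilde z)\le|t|/2$, the $\alpha$-th summand is $O(\la^{-\infty})$ unless $d_{\tilde g}(\tilde x,\alpha\tilde y)\approx N$.

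The crucial geometric step is to count the contributing $\alpha$. Because $Q^{\theta_0}_{j,\ell_0,\nu}$ microlocalizes the $x$-variable to a $\theta_0$-neighborhood of a geodesic in direction $\omega_\nu$, and the wavefront of $\cos(t\sqrt{-\Delta_{\tilde g}})(\,\cdot\,,\alpha\tilde y)$ at $\tilde x$ points along the geodesic from $\alpha\tilde y$ to $\tilde x$, a summand can be nonnegligible only when this direction lies within $O(\theta_0)$ of $\omega_\nu$ \emph{and} $d_{\tilde g}(\tilde x,\alpha\tilde y)\approx N$. Thus the relevant $\alpha\tilde y$ lie in a tube about the backward geodesic ray from $\tilde x$, of length $O(N)$ and --- by Rauch/Jacobi-field comparison, using that the sectional curvature of $(M,g)$ is bounded --- of radius $O(\theta_0 e^{C_M N})=O(\la^{\,C_Mc_0-1/8})$. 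Choosing $c_0=c_0(M)>0$ small makes this radius less than $\Inj(M)$, so since the $\alpha\tilde y$ are $\Inj(M)$-separated there are only $O(N)$ contributing $\alpha$, strung along the ray at mutual distances $\gtrsim\Inj(M)$ with all $d_{\tilde g}(\tilde x,\alpha\tilde y)\approx N$.

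For each contributing $\alpha$ I would insert the Hadamard parametrix \eqref{ak13}--\eqref{k19a} on $(\Rn,\tilde g)$ (taking the number of terms large) and compose with $Q^{\theta_0}_{j,\ell_0,\nu}$. The leading term gives an oscillatory integral in $(t,\xi)$ with phase $t(\la-|\xi|)+d_{\tilde g}(\tilde x,\alpha\tilde y)\,\xi_1$ and amplitude built from $T^{-1}\hat\Psi(t/T)\beta(|t|/N)$, the tube cutoff (restricting $\xi/|\xi|$ to a $\theta_0$-cap about $\omega_\nu$), and $w_0$. Integrating by parts in $t$ against $e^{it\la}$ localizes $|\,|\xi|-\la\,|\lesssim N^{-1}$ at the cost of the harmless factor $T^{-1}N$, the remaining radial $\xi$-integral then contributes $N^{-1}\la^{n-1}$, and stationary phase in the angular variables of $\xi$ --- whose critical point is where $\xi$ points in the direction of $\tilde x-\alpha\tilde y$, which lies in the cap precisely in the matched case (the unmatched case being $O(\la^{-\infty})$), and where one uses $\bigl(d_{\tilde g}(\tilde x,\alpha\tilde y)\la\bigr)^{-1/2}\le\theta_0=\la^{-1/8}$ --- contributes the Euclidean dispersive factor $\bigl(d_{\tilde g}(\tilde x,\alpha\tilde y)\la\bigr)^{-(n-1)/2}$. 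Using $w_0\le1$ from \eqref{k17}, the $\alpha$-th contribution is $O\bigl(T^{-1}\la^{(n-1)/2}d_{\tilde g}(\tilde x,\alpha\tilde y)^{-(n-1)/2}\bigr)$, and summing over the $O(N)$ contributing $\alpha$, whose distances are $\Inj(M)$-spaced and comparable to $N$, gives $\sum_\alpha d_{\tilde g}(\tilde x,\alpha\tilde y)^{-(n-1)/2}\approx N^{1-(n-1)/2}$, which is \eqref{b.83}. The lower-order Hadamard terms gain extra negative powers of $\la$ by \eqref{k15} and are negligible, while the remainder term is $O(\la^{-M})$ for every $M$ after enough integrations by parts in $t$, using \eqref{k16} and $N\le c_0\log\la$. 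Finally, when all sectional curvatures of $M$ are $\le-\kappa_0^2$, so are those of $(\Rn,\tilde g)$, whence its volume density at distance $r$ is $\gtrsim\bigl(\tfrac{\sinh(\kappa_0 r)}{\kappa_0 r}\bigr)^{n-1}\gtrsim e^{(n-1)\kappa_0 r}r^{-(n-1)}$; hence $w_0(\tilde x,\alpha\tilde y)$, comparable to the reciprocal square root of this density, is $\lesssim e^{-cN}$ for the contributing $\alpha$ (some $c=c(n,\kappa_0)>0$), and the sum over $\alpha$ is dominated by $e^{-cN}$, giving \eqref{b.84}.

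\textbf{The main obstacle} is carrying out all of the above \emph{uniformly} in $j,\ell_0,\nu$ and $N$: this is exactly where the bounded-geometry hypothesis is used --- for uniform control of the metric and of the Hadamard coefficients and distance function in the normal coordinates about $x_j$ (cf.\ \eqref{k19}--\eqref{k19a}), for the Rauch-comparison estimate on the tube radius (which dictates how small $c_0$ in $T=c_0\log\la$ must be), and for the count of contributing deck transformations. The one genuinely new point relative to the proof of Lemma~\ref{Gl} is arranging the $t$-integration by parts together with the angular stationary phase so that the $\theta_0$-truncation does not destroy the dispersive gain, which is possible because $d_{\tilde g}(\tilde x,\alpha\tilde y)\,\la\gtrsim\la\gg\la^{1/4}$.
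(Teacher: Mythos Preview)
Your proposal is correct and follows essentially the same strategy as the paper's proof. Both arguments lift to the universal cover via \eqref{k.3'}, use the Hadamard parametrix for the per-summand estimate, invoke the microlocal constraint from $Q^{\theta_0}_{j,\ell_0,\nu}$ to restrict the contributing deck transformations $\alpha$ to those with $\alpha(\tilde y)$ lying in a tube around the lifted geodesic $\tilde\gamma_{j,\ell_0,\nu}$ (hence $O(N)$ of them), and obtain the pinched-negative-curvature improvement \eqref{b.84} from the exponential decay of the leading Hadamard coefficient $w_0$. The paper states the key kernel bounds \eqref{k.10}--\eqref{k.12} and defers the details to \cite{BSTop}, \cite{BHSsp}, \cite{HSp}; you have instead sketched those details explicitly (the angular stationary phase, the Rauch-comparison tube bound, and the Bishop--G\"unther density estimate), which is a faithful expansion of what those references contain. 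One cosmetic difference: the paper phrases the tube as having \emph{fixed} radius $C_0$, whereas your tube has radius $O(\theta_0 e^{C_MN})=O(\la^{C_Mc_0-1/8})$; your version is in fact the tighter statement, and for $c_0$ small it is contained in the paper's $C_0$-tube, so both yield the same $O(N)$ count.
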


Like those in Lemma~\ref{Gl}, these two bounds follow from kernel estimates which we shall obtain at the end of this section.

Let us now see how we can use this lemma to see that the last term in the right side of \eqref{b.82} satisfies the
bounds in \eqref{b.78}.

We first notice that, by \eqref{b.79}, the operators in \eqref{b.79} have $O(T^{-1}N)$  $L^2(M)\to L^2(M)$ operator norms.
Thus, by \eqref{b.555} for $q=p=2$ (almost orthogonality), we have, by \eqref{b.81}
\begin{equation}\label{b.85}
\bigl\|
W_N 
\bigr\|_{\ell^2_{j'}\ell^2_{\nu'}L^2_x\to \ell^2_j\ell^2_\nu L^2_x}=O(T^{-1}N).
\end{equation}

If we use the second inequality in \eqref{b.555} for $p=1$ along with \eqref{b.81}
and \eqref{b.83}, we also obtain that for $T$ as above and $N=2^k\ge1$
\begin{equation}\label{b.86}
\|W_N\|_{\ell^1_{j'}\ell^1_{\nu'}L^1_x \to \ell^\infty_j \ell^\infty_\nu L^\infty_x}=
O(T^{-1}\la^{\frac{n-1}2}N^{1-\frac{n-1}2})
\end{equation}
if all of the sectional curvatures of $M$ are nonpositive, as well as
\begin{equation}\label{b.87}
\|W_N\|_{\ell^1_{j'}\ell^1_{\nu'}L^1_x \to \ell^\infty_j \ell^\infty_\nu L^\infty_x}=
O(T^{-1}\la^{\frac{n-1}2}N^{-m}), \, \,  \forall \, m\in {\mathbb N},
\end{equation}
if all of the principal curvatures are pinched below zero as in \eqref{b.84}.

If we interpolate between \eqref{b.85} and \eqref{b.87} we obtain
\begin{equation}\label{b.88} \|W_N\|_{\ell^{q_c'}_{j'}\ell^{q_c'}_{\nu'}L^{q_c'}_x \to \ell^{q_c}_j \ell^{q_c}_\nu L^{q_c}_x}=
O(T^{-1}\la^{2/q_c}N^{1-m}), \, \,  \forall \, m\in {\mathbb N},
\end{equation}
if all of the sectional curvatures of $M$ are $\le -\kappa_0^2$, some $\kappa_0>0$.  As a result, we can estimate the
last term in \eqref{b.82} as follows
\begin{multline}\label{b.89}
\Bigl\|
\sum_{1\le N=2^k\lesssim T}W_NF\Bigr\|_{\ell_j^{q_c}\ell^{q_c}_\nu L^{q_c}_x}\lesssim
T^{-1}\la^{2/q_c}\sum_{1=N\lesssim T}N^{-1}
\|F\|_{\ell^{q_c'}_{j'}\ell^{q_c'}_{\nu'}L^{q_c'}_x}
\\
\lesssim T^{-1}\la^{2/q_c}
\|F\|_{\ell^{q_c'}_{j'}\ell^{q_c'}_{\nu'}L^{q_c'}_x},
\end{multline}
and so this term also satisfies the bounds in \eqref{b.78}.

If we merely assume that all of the sectional curvatures are nonpositive, then \eqref{b.85}, \eqref{b.86} and interpolation yield
\begin{equation*}
\|W_N\|_{\ell^{q_c'}_{j'}\ell^{q_c'}_{\nu'}L^{q_c'}_x \to \ell^{q_c}_j \ell^{q_c}_\nu L^{q_c}_x}=
O(T^{-1}\la^{2/q_c}N^{1-\frac{n-1}{n+1}}), \, \,  \forall \, m\in {\mathbb N},
\end{equation*}
Since $\tfrac{n-1}{n+1}=\tfrac2{q_c}$, we therefore obtain
\begin{equation}\label{b.90}
\Bigl\|
\sum_{1\le N=2^k\lesssim T}W_NF\Bigr\|_{\ell_j^{q_c}\ell^{q_c}_\nu L^{q_c}_x} \lesssim
(\la T^{-1})^{2/q_c}\|F\|_{\ell^{q_c'}_{j'}\ell^{q_c'}_{\nu'}L^{q_c'}_x},
\end{equation}
as desired under this curvature assumption.

Inequalities \eqref{b.89}, \eqref{b.90} along with the earlier bounds for the first term in \eqref{b.82} yield \eqref{b.78}.
As a result, except for needing to prove Lemmas \ref{Gl} and \ref{G2}, the proof of the bounds in
Theorem~\ref{bgsp} for $q=q_c$ is complete.

Since we also earlier obtained the bounds for $q\in (q_c,\infty]$, it only remains to obtain the bounds for
$q\in (2,q_c)$.  If the curvatures are assumed to be nonpositive, then the bounds in \eqref{i.6} for these exponents
just follow from interpolating between the bounds for $q=q_c$ and the trivial $L^2$-estimate.  So, to complete the proof
of the Theorem, by \eqref{b.10} it suffices to show that for $T$ as above we have
$$\| \sigma_\la\rho_\la\|_{L^2(M)\to L^q(M)}=O(T^{-1/2}\la^{\mu(q)}), \, \, q\in (2,q_c)$$
when all the sectional curvatures of $M$ are pinched below zero and $T$ is as above.  By interpolating with the
$q=q_c$ estimate that we just obtained, it suffices to show that, under these assumptions, we have
\begin{equation}\label{b.91} 
\|\sigma_\la \rho_\la \|_{L^2(M)\to L^q(M)}=O(T^{-1/2} \la^{\mu(q)}), \, \, q\in (2,\tfrac{2(n+2)}n].
\end{equation}
This just follows from the above arguments which gave us the bounds in \eqref{b.32} for $q=q_c$ under this curvature assumption
 if we use \eqref{key2} in place of \eqref{key}.   The argument is  a bit simpler since the norms in the left side of 
 \eqref{key2} are over $M$.   So, we do not need for this case to split $M=A_-\cup A_+$ to handle the exponents in \eqref{b.91}.
\end{proof}


\bigskip

\bigskip

\noindent{\bf 3.2.  Log-scale Strichartz estimates}

In this section, let us see how we can follow the ideas in the previous section to adapt the proofs for the compact manifold case treated in \cite{blair2023strichartz,HSst}
to prove Theorem~\ref{bgst}.

To align with the numerology in the previous section on the spectral projection estimates, throughout this section,
we shall always assume the manifold is $(n-1)$-dimensional. 
Additionally, we will repeatedly use symbols such as $\sigma_\la$ and $Q_\nu^\theta$; however, it is important to note that they represent different operators in this section.

To start, let us fix
\begin{equation}\label{s.1}
\eta\in C^\infty_0((-1,1)) \quad
\text{with } \, \, \eta(t)=1, \, \, \, |t| \le 1/2.
\end{equation}
We shall consider the dyadic time-localized dilated Schr\"odinger operators
\begin{equation}\label{s.2} 
S_\la =\eta(t/T) e^{-it\la^{-1}\Delta_g} \beta(P/\la),
\end{equation}
where $\beta\in C^\infty_0((1/2,2))$ as in \eqref{i.2} and $T=c_0\log\la$ for some small constant $c_0$ we shall specify later. By changing scale in time, to prove Theorem~\ref{bgst}
 it suffices to show that if all of the sectional curvatures of $M$ are nonpositive then for $(p,q)$ satisfying \eqref{i.3} we have
\begin{equation}\label{s.3}
\| S_\la f\|_{L^p_tL^q_x(M\times [0,T])}\le C\la^{\frac1{p}}  \, \|f\|_{L^2(M)}, \quad
\text{if } \, \, T=c_0\log\la.
\end{equation}
Note that if we replace $[0, T]$ by $[0,1]$, then by using the analog of \eqref{i.2} for intervals $[0,\la^{-1}]$ along with a rescaling argument, we have for any complete manifold of bounded geometry
\begin{equation}\label{s.4}
\| S_\la f\|_{L^p_tL^q_x(M\times [0,1])}\le C\la^{\frac1{p}}  \, \|f\|_{L^2(M)}.
\end{equation}

Now we shall introduce the auxiliary operators that allow us to use bilinear
techniques. Let $\rho\in {\mathcal S}(\R)$ satisfying \eqref{b.3}, we define the local operators  
\begin{equation}\label{s.5}
\sigma_\la = \left(\rho\bigl(\la^{1/2}|D_t|^{1/2}-P\bigr)+\rho\bigl(\la^{1/2}|D_t|^{1/2}+P\bigr)\right) \, \tilde \beta(D_t/\la),
\end{equation}
where
\begin{equation}\label{s.6}
\tilde \beta\in C^\infty_0((1/8,8)) \quad \text{satisfies } \, \,
\tilde \beta=1 \, \, 
\text{on } \, \, [1/6,6].
\end{equation}
Note that by  by Euler’s formula,
\begin{equation}\label{s.7}
 \sigma_\la(x,t;y,s)
=\frac{1}{2\pi^2}
\iint e^{i(t-s)\tau}e^{ir\la^{1/2}\tau^{1/2}}
\tilde \beta(\tau/\la) \, \Hat \rho(r)
\, \cos(rP)(x,y) \, dr d\tau.
\end{equation}  
Thus by the support properties of $\Hat \rho$ as in \eqref{b.3} and finite propagation speed, we have 
\begin{equation}\label{s.8}
\sigma_\la(x,t;y,s)=0 \quad \text{if } \, \, d_g(x,y)>r, \, \, 
r= \delta_1(1+\delta_2)<1.
\end{equation}

For admissible pairs $(p,q)$ as in \eqref{i.3}, the local
operators  satisfy
\begin{equation}\label{s.9}
\|(I-\sigma_\la)\circ S_\la f\|_{L^p_tL^q_x(M\times [0,T])}\le CT^{\frac1{p}-\frac12}\la^{\frac1{p}}\|f\|_2,
\end{equation}
This is a straightforward generalization of Lemma 2.2 in \cite{blair2023strichartz} to all complete manifold of bounded geometry and all
 pairs $(p, q)$ satisfying \eqref{i.3}. The proof relies on the local dyadic Strichartz estimates \eqref{s.4} along with the spectral theorem and functional calculus for multiplier operators. We skip the details here and refer to \cite{blair2023strichartz}
for more details.

For each fixed $j$, if we use the microlocal pseudodifferential operators $A_{j,\ell}$ defined in \eqref{b.14}, we can write
\begin{equation}\label{s.11}
\psi_j(x) (\sigma_\la F)(t,x)=\sum_{\ell=1}^K \bigl(A_{j,\ell} \circ\sigma_\la\bigr)(F)(t,x)+R_jF(t,x).
\end{equation}
where as in \eqref{b.13}
\begin{equation}\label{s.12}
A_{j,\ell}(x,y), \, \, R_j(x, t;y, s)=0, \quad \text{if } \, x\notin B(x_j,\delta) \, \,
\text{or } \, \, y\notin B(x_j,3\delta/2).
\end{equation}
As before, by fixing $c_1>0$ small enough in the symbol of $A_{j,\ell}$ operators,
we have the uniform
bounds
\begin{equation}\label{s.13}
R_j(x, t; y, s)=O(\la^{-N}), \,\, N=1,2,...
\end{equation}

As in the previous section, the microlocal operators $A_{j,\ell}$ will be useful in the local harmonic analysis arguments we shall describe later.
Note also that by \eqref{b.00}, \eqref{s.12} and \eqref{s.13} we also have
\begin{equation}\label{s.14}
\|RF\|_{L^p_tL^q_x(M\times [0,T])}\le C_{p,q,M} \, \|F\|_{L^2_{t,x}(M\times\R)}, \, \, 1\le p\le q\le \infty, \, \,
\text{if }, \, \, R=\sum_j R_j.
\end{equation}

Note that for fixed $\ell_0$, if we let $A= \sum_{j}A_{j,\ell_0}$ as in \eqref{b.17}, in view of \eqref{b.1}, \eqref{s.9}, \eqref{s.11} and \eqref{s.14}, in order
to prove \eqref{s.3}, it suffices to
prove that if all the sectional curvatures of
$M$ are nonpositive
\begin{equation}\label{s.15}
\| A \sigma_\la S_\la f\|_{L^p_tL^q_x(M\times [0,T])}\le C\la^{\frac1{p}}\|f\|_2.
\end{equation}
And if we consider the vector-valued operators
\begin{equation}\label{s.16}
\A H(x,t)=(A_{1,\ell_0}H(x,t), \, A_{2,\ell_0}H(x,t),\dots)
\end{equation}
and argue as in \eqref{b.211}-\eqref{b.23}, \eqref{s.15} would be a consequence of 
\begin{equation}\label{s.17}
\| \A \sigma_\la S_\la f\|_{L^p_tL^q_x\ell^q_j(\N\times M\times [0,T])}\le C\la^{\frac1{p}}\|f\|_2.
\end{equation}
The operators $\A \sigma_\la S_\la$ play the role of the $\tilde S_\la$ operators in \cite{blair2023strichartz} and \cite{HSst}. As in the previous section,  
 the vector-valued approach will allow us to only have to carry out the local bilinear harmonic analysis
in individual coordinate patches coming from the geodesic normal coordinates in the balls $B(x_j,2\delta)$.

Now let's set up the height decomposition that we shall use, throughout this section, we assume
\begin{equation}\label{s.norm}
\|f\|_{L^2(M)}=1.
\end{equation}
Let us
define vector-valued sets
\begin{equation}\label{s.18}
\begin{split}
A_+&= \{(x, t, j)\in M\times [0,T]\times \N: \, |(\A \sigma_\la S_\la f)
(x,t,j)|\ge \la^{\frac{n-1}4+\e_1}\}
\\
A_-&=\{(x, t, j)\in M\times [0,T]\times \N: \, |(\A \sigma_\la S_\la f)
(x, t, j)|< \la^{\frac{n-1}4+\e_1}\}.
\end{split}
\end{equation}
Recall here that
\begin{equation}\label{s.18a}
(\A \sigma_\la S_\la f)(x, t, j)=A_{j,\ell_0}
\sigma_\la S_\la f(x, t).
\end{equation}

Due to the numerology of the powers of $\la$ arising,
the splitting occurs at height $\la^{\frac{n-1}4+\e_1}$, with $\frac{n-1}{4}$ same as the previous section. Here $\e_1>0$ is a small constant that may depend on the dimension $n-1$. As we shall see later, we can take $\e_1=\frac 1{100}$ for $n-1\ge 3$ while  for $n-1=2$, the choice of $\e_1$ depends on the exponent $q$ for {\em {admissible}} pairs $(p,q)$, with $\e_1\rightarrow 0$ as $q\rightarrow \infty$.

In order to prove \eqref{s.17} on the set $A_+$, we shall require the following lemma
\begin{lemma}\label{kerprop} Let $S_{t,\la}$ denote the operator
$\eta(t/T) \beta(P/\la)e^{-it\la^{-1}\Delta_g}$.
Then if $M$ has nonpositive sectional curvatures and $T=c_0\log\la$ with 
$c_0=c_0(M)>0$ sufficiently small, we have for $\la\gg 1$
\begin{equation}\label{j1}
\|S_{t,\la}S^*_{s,\la}\|_{L^1(M)\to L^\infty(M)}\le C\la^{\frac{n-1}2}|t-s|^{-\frac{n-1}2} \exp(C_M|t-s|).
\end{equation}
\end{lemma}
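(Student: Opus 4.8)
The plan is to follow the dispersive estimate strategy familiar from the compact case (cf. \cite{blair2023strichartz}, \cite{HSst}), lifting the computation to the universal cover and exploiting the bounded geometry hypothesis to count deck transformations. First I would write the kernel of $S_{t,\la}S_{s,\la}^*$ using the spectral calculus. Since $S_{t,\la}=\eta(t/T)\beta(P/\la)e^{-it\la^{-1}\Delta_g}=\eta(t/T)\beta(P/\la)e^{it\la^{-1}P^2}$, we have
\begin{equation*}
S_{t,\la}S_{s,\la}^*=\eta(t/T)\eta(s/T)\,\beta(P/\la)^2\,e^{i(t-s)\la^{-1}P^2}.
\end{equation*}
Writing $\beta^2=\tilde\beta_0$, a Littlewood-Paley bump supported in $(1/2,2)$, the kernel is a superposition over the half-wave group: using Euler's formula and the even extension of $\tilde\beta_0$,
\begin{equation*}
\bigl(\beta(P/\la)^2 e^{i(t-s)\la^{-1}P^2}\bigr)(x,y)=\frac1{2\pi}\int_{-\infty}^\infty a_{t-s,\la}(r)\,(\cos rP)(x,y)\,dr,
\end{equation*}
where $a_{t-s,\la}(r)=\int e^{ir\tau}\tilde\beta_0(\tau/\la)e^{i(t-s)\la^{-1}\tau^2}\,d\tau$. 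A stationary phase analysis in $\tau$ (the critical point is $\tau_c=-\la r/(2(t-s))$, which lies in the support $\tau\approx\la$ only when $|r|\approx|t-s|$) shows $a_{t-s,\la}$ is essentially concentrated in $|r|\lesssim|t-s|$ with $\|a_{t-s,\la}\|_{L^1_r}\lesssim \la$ and the familiar $\la^{1/2}|t-s|^{-1/2}$-type dispersive gain after one more integration; in dimension $n-1$ the product structure over the $(n-1)$ spatial variables yields the net factor $\la^{\frac{n-1}2}|t-s|^{-\frac{n-1}2}$.

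Next I would pass to the universal cover $(\R^{n-1},\tilde g)$ of $(M,g)$, which is legitimate since $M$ has nonpositive sectional curvature (Cartan--Hadamard), using the standard formula $(\cos rP)(x,y)=\sum_{\alpha\in\Gamma}(\cos r\sqrt{-\Delta_{\tilde g}})(\tilde x,\alpha(\tilde y))$ as in \eqref{k.3}. By finite propagation speed each summand vanishes unless $d_{\tilde g}(\tilde x,\alpha(\tilde y))\le |r|\lesssim|t-s|$. Because $\Inj(M)>1$ and $M$ is of bounded geometry, the number of deck transformations $\alpha$ with $d_{\tilde g}(\tilde x,\alpha(\tilde y))\lesssim|t-s|$ is $O(\exp(C_M|t-s|))$ by the volume-counting argument referenced after \eqref{k.7}. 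On the universal cover $(\cos r\sqrt{-\Delta_{\tilde g}})$ is given locally by the Hadamard parametrix, and there one has the clean pointwise bound $|(\beta(P_{\tilde g}/\la)^2e^{i(t-s)\la^{-1}P_{\tilde g}^2})(\tilde x,\tilde z)|\lesssim\la^{\frac{n-1}2}|t-s|^{-\frac{n-1}2}$ for each fixed pair of points, together with the requisite derivative bounds \eqref{k16}, \eqref{k19}, \eqref{k19a} on the parametrix coefficients with exponential-in-$|t-s|$ constants. Since $|t-s|\le 2T=2c_0\log\la$, these exponential factors are at most $\la^{C_Mc_0}$, which is harmless; in fact the lemma is stated to keep the $\exp(C_M|t-s|)$ explicitly, so I would not even need $|t-s|$ bounded — the statement simply tracks the constant. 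Summing over the $O(\exp(C_M|t-s|))$ relevant $\alpha$ and using that $|a_{t-s,\la}|$ is integrable in $r$ with the dispersive gain already extracted gives
\begin{equation*}
\bigl|\bigl(S_{t,\la}S_{s,\la}^*\bigr)(x,y)\bigr|\lesssim \la^{\frac{n-1}2}|t-s|^{-\frac{n-1}2}\exp(C_M|t-s|),
\end{equation*}
uniformly in $x,y$, which is exactly the claimed $L^1\to L^\infty$ bound.

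The main obstacle will be the careful accounting of constants in the Hadamard parametrix contribution: one must verify that combining the error term $R_N$ from \eqref{k16}, the parametrix coefficient bounds \eqref{k19}, and the exponential deck-transformation count produces a \emph{single} exponential $\exp(C_M|t-s|)$ rather than a worse growth, and that the tail of the $r$-integral (where $|r|\gg|t-s|$, controlled by non-stationary phase in $\tau$) and the high-order parametrix terms $W_\nu$, $\nu\ge1$, are genuinely negligible after the $\la^{\frac{n-1}2}|t-s|^{-\frac{n-1}2}$ main term is extracted. This is technical but routine and parallels the compact-manifold arguments in \cite{blair2023strichartz} and \cite{HSst}; the only genuinely new input is the bounded-geometry volume count, which is supplied by the discussion surrounding \eqref{b.0}--\eqref{b.1} and \eqref{k.7} in the next section. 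Once \eqref{j1} is in hand, the large-height estimate for \eqref{s.17} on $A_+$ follows by the usual $TT^*$/Bourgain-type argument, exactly as in the spectral-projection case treated above.
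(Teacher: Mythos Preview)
Your proposal is correct and follows essentially the same approach as the paper: write $S_{t,\la}S_{s,\la}^*$ as a multiplier in $\cos rP$ via the Fourier transform of $\beta^2(\cdot/\la)e^{i(t-s)\la^{-1}(\cdot)^2}$, localize in $r$ to $|r|\lesssim|t-s|$ by non-stationary phase, lift to the universal cover via \eqref{k.3}, count $O(\exp(C_M|t-s|))$ deck transformations using the positive injectivity radius and volume comparison, and then apply the Hadamard parametrix with stationary phase on each summand to extract $\la^{\frac{n-1}2}|t-s|^{-\frac{n-1}2}$. The only cosmetic difference is that the paper organizes the $r$-localization via a dyadic cutoff $a(2^{-j}r)$ with $|t-s|\le 2^j$ rather than directly estimating $\|a_{t-s,\la}\|_{L^1_r}$, but the substance is identical.
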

We shall postpone the proof of this lemma until the end of this section and first see how we can use it to prove  \eqref{s.17} on the set $A_+$.

\begin{proof}[Proof of \eqref{s.17} on the set $A_+$.]
We first note that, by \eqref{s.9}, \eqref{s.16} and \eqref{s.norm},
we have
$$ \| \A \sigma_\la S_\la f\|_{L^p_tL^q_x\ell^q_j(A_+)} \le \| \A S_\la f\|_{L^p_tL^q_x\ell^q_j(A_+)} + CT^{\frac1{p}-\frac12}\la^{\frac1{p}}.
$$
Since $p\ge2$ for $(p,q)$ as in \eqref{i.3},  \eqref{s.17} would follow from
\begin{equation}\label{high'}
\| \A S_\la f\|_{L^p_tL^q_x\ell^q_j(A_+)}\le C\la^{\frac1{p}}+\tfrac12 \|\A \sigma_\la S_\la f\|_{L^p_tL^q_x\ell^q_j(A_+)}.
\end{equation}

To prove this, similar to what was done in \cite{HSst}, we  choose $g=g(x,t,j)$ such that
\begin{multline}\label{s.20}
    \|g\|_{L^{p'}_tL^{q'}_x\ell^{q'}_j(A_+)}=1
\,\, \text{and } \, \,
\|\A S_\la f\|_{L^p_tL^q_x\ell^q_j(A_+)} \\
=\sum_j\iint \A S_\la f(x,t,j)\cdot
\overline{\bigl(\1_{A_+}\cdot g (x,t,j)\bigr)} \, dx dt.
\end{multline}
Then, since we are assuming that $\|f\|_2=1$, by
the Schwarz inequality
\begin{align}\label{s.21}
\|\A S_\la f\|^2_{L^p_tL^q_x\ell^q_j(A_+)}
&= \Bigl( \, \int f(x) \, \cdot \,  
\overline{\bigl(S_\la^*\A^*\bigr)(\1_{A_+}
\cdot g\bigr)(x)} \, dx \, \Bigr)^2
\\
&\le \int |S^*_\la \A^*(\1_{A_+}\cdot g)(x)|^2 \, dx
\notag
\\
&=\sum_j\iint \bigl(\A S_\la S^*_\la \A^*\bigr)(\1_{A_+}\cdot
g)(x,t,j) \, \overline{(\1_{A_+}\cdot
g)(x,t,j)} \, dx dt
\notag
\\
&=\sum_j\iint \bigl(\A\circ L_\la\circ \A^*\bigr)(\1_{A_+}\cdot
g)(x,t,j) \, \overline{(\1_{A_+}\cdot
g)(x,t,j)} \, dx dt \notag
\\
&\qquad+\sum_j\iint \bigl(\A\circ G_\la\circ \A^*\bigr)(\1_{A_+}\cdot
g)(x,t,j) \, \overline{(\1_{A_+}\cdot
g)(x,t,j)} \, dx dt \notag
\\
&=I + II, \notag
\end{align}
where $L_\la$ is the integral operator with kernel equaling that of $S_{t,\la}S^*_{s,\la}$ if $|t-s|\le 1$ and 
$0$ otherwise, i.e,
\begin{equation}\label{ok}
L_\la(x,t;y,s)=
\begin{cases}S_{t,\la}S^*_{s,\la}(x,y), \, \,
\text{if } \, \, |t-s|\le 1,
\\
0 \, \, \, \text{otherwise}.
\end{cases}
\end{equation}
Since $p\ge 2$, it is straightforward to see that \eqref{s.4} and \eqref{j1}
yield
\begin{equation}\label{s.22}
\|L_\la \|_{L^{p'}_tL^{q'}_x \to L^p_tL^q_x}
=O(\la^{\frac 2p}).
\end{equation}

If we use this, along with H\"older's inequality, \eqref{b.23} and 
\eqref{s.20}, we obtain for the term $I$ in \eqref{s.21}
\begin{align}\label{k301}
|I|&\le \|\A L_\la \A^*(\1_{A_+}\cdot g)\|_{ L^p_tL^q_x\ell^q_j}
\cdot \| \1_{A_+}\cdot g \|_{L^{p'}_tL^{q'}_x}
\\
&\lesssim \|L_\la \A^*(\1_{A_+}\cdot g)\|_{L^p_tL^q_x\ell^{q}_j} 
\cdot
\|\1_{A_+}\cdot g \|_{L^{p'}_tL^{q'}_x\ell^{q'}_j}
\notag
\\
&\lesssim \la^{\frac2{p}} \|\A^*(\1_{A_+}\cdot g) \|_{L^{p'}_tL^{q'}_x\ell^{q'}_j}
\cdot
\|\1_{A_+}\cdot g \|_{L^{p'}_tL^{q'}_x\ell^{q'}_j}
\notag
\\
&\lesssim \la^{\frac2{p}}\|g\|^2_{L^{p'}_tL^{q'}_x\ell^{q'}_j(A_+)}
=\la^{\frac2{p}}. \notag
\end{align}

To estimate $II$, note that 
if we choose $c_0$ small enough
so that if $C_M$ is the constant
in \eqref{j1}
$$\exp(2C_MT)\le \la^{\e_1}, \quad \text{if  }
\,
T=c_0\log\la \, \, \text{and } \, \, \la \gg 1.
$$
Then, since $\eta(t)=0$ for $|t|\ge1$, it follows
 \eqref{j1} that
$$\|G_\la \|_{L^1(M\times \R) 
\to L^\infty (M\times \R)}\le C
\la^{\frac{n-1}2+{\e_1}}.
$$
As a result, by H\"older's inequality \eqref{b.23} and 
\eqref{s.20}, we can repeat the arguments
to estimate $I$ to see that
\begin{multline}
    |II|\le C\la^{\frac{n-1}2}\la^{\e_1}
\| \1_{A_+}\cdot g\|^2_{L^1_{t,x}\ell^1_j}
\le C \la^{\frac{n-1}2}\la^{\e_1}
\|g\|^2_{L^{p'}_tL^{q'}_x\ell^{q'}_j} \cdot
\|\1_{A_+}\|_{L^{p}_tL^{q}_x\ell^q_j}^2 \\
=C \la^{\frac{n-1}2}\la^{\e_1}
\|\1_{A_+}\|_{L^{p}_tL^{q}_x\ell^q_j}^2.
\end{multline}

If we recall the definition of $A_+$ in
\eqref{s.18}, we can estimate the last
factor:
$$\|\1_{A_+}\|_{L^{p}_tL^{q}_x\ell^q_j}^2
\le \bigl(\la^{\frac{n-1}4+\e_1}\bigr)^{-2}
\|\A\sigma_\la S_\la f\|^2_{{L^{p}_tL^{q}_x}\ell^q_j(A_+)}.$$
Therefore, 
$$|II|\lesssim \la^{-\e_1}
\|\A\sigma_\la S_\la f\|^2_{{L^{p}_tL^{q}_x\ell^q_j}(A_+)}
\le \bigl(\tfrac12 \|\A\sigma_\la S_\la f\|_{{L^{p}_tL^{q}_x\ell^q_j}(A_+)}
\bigr)^2,
$$
assuming, as we may, that $\la$ is large enough.

If we combine this bound with the earlier one,
\eqref{k301} for $I$, we conclude that
\eqref{high'} is valid, which completes the 
proof of \eqref{s.17} on the set $A_+$. 
\end{proof}

Next, we shall give the proof of \eqref{s.17} on the set $A_-$. This requires the use of  local bilinear harmonic
analysis. Following the approach in the previous section, in view of \eqref{s.11}, it suffices to carry out the analysis in geodesic normal coordinates of each individual balls $B(x_j,2\delta)$, since our assumption of bounded geometry ensures
bounded transition maps and uniform bounds on derivatives of the metric. Also note that since the case $p=\infty, q=2$ in \eqref{s.3} simply follows from spectral theorem, to prove  \eqref{s.17} on the set $A_-$, for the remaining of this section, we shall assume
\begin{equation}\label{p}
(p, q)=(2,\tfrac{2(n-1)}{n-3}) \, \,
\text{if } \, n\ge 4, \, \, \, 
\text{or } \, (n-1)(\tfrac12-\tfrac1q)=\tfrac2p, \,\,4< q<\infty \, \, 
\text{if } \, \, n=3.
\end{equation}
The condition $q>4$ is equivalent to $q>p$ when $n-1=2$,
this will allow us to simplify some of the calculations to follow.

To set up
the second microlocalization needed for the Schr\"odinger setting, let us fix $j$ in \eqref{b.12}, as well as $\ell_0\in \{1,\dots,K\}$ and consider the resulting
pseudodifferential cutoff, $A_{j,\ell_0}$, which is a summand in \eqref{s.11}.  Its symbol then satisfies
the conditions in \eqref{aas}.  The resulting geodesic normal coordinates on $B(x_j,2\delta)$
vanish at $x_j$.  We may also
assume that $\xi_{j,\ell_0}=(0,\dots,0,1)$.  Since we are fixing
$j$ and $\ell_0$ for now, analogous to \cite{HSst}, let us simplify the notation a bit by letting
\begin{equation}\label{s.23}
\tilde \sigma_\la =A_{j,\ell_0}\sigma_\la,
\end{equation}

The $Q_\nu^\theta$ operators constructed in the last section provide “directional” microlocalization. We 
also need a “height” localization since the characteristics of the symbols of our scaled Schr\"odinger operators lie on
paraboloids. The variable coefficient operators that we shall use  are analogs of ones that are used in the study of Fourier restriction
problems involving paraboloids.

To construct these, choose $b\in C^\infty_0(\R)$ supported in $|s|\le1$  satisfying $\sum_{-\infty}^\infty b(s-\ell)\equiv 1$.
 We then define the compound symbols $Q^\theta_\ell
=Q^\theta_{j,\ell_0,\ell}$ and associated ``height'' operators by
\begin{multline}\label{s.24}
Q^\theta_\ell(x,y,\xi)= \tilde{\tilde \psi}(y)b(\theta^{-1}\la^{-1}(p(x,\xi) -\la\kappa^\theta_\ell)), \, \,
\kappa^\theta_\ell=1+\theta\ell, \quad
|\ell|\lesssim \theta^{-1}, \\ \text{and}\,\,\,
Q^\theta_\ell h(x) = (2\pi)^{-(n-1)}
\iint e^{i(x-y)\cdot \xi}   Q^\theta_\ell(x,y,\xi)\,  h(y) \, d\xi dy.
\end{multline}
Here $\tilde{\tilde \psi} \in C^\infty_0$ is supported in 
$|x|<2\delta$ which equals one when $|x|\le 3\delta/2$, as defined in \eqref{b.50}.

Unlike in the earlier works \cite{HSst,blair2023strichartz}, the height operators here are defined in local coordinates and have cutoffs in $y$ variable in order to avoid issues at infinity since $M$ is not assumed to be compact. In the compact case, the analogous height operator can be simply defined using spectral multipliers, see e.g., \cite{blair2023strichartz, HSst}.
 These operators microlocalize $p(x,\xi)$ to 
intervals of size $\approx \theta\la$ about ``heights''
$\la\kappa^\theta_\ell\approx \la$. By a simple integration by parts argument, if $Q^\theta_\ell(x,y)$ is the kernel of this operator then
\begin{equation}\label{s.25}
Q^\theta_\ell(x,y)=O(\la^{-N}) \, \forall \, N, \quad
\text{if } \, d_g(x,y)\ge C_0\theta,
\end{equation}
 for a fixed constant $C_0$ if $\theta\in [\la^{-1/2+\e},1]$ with $\e>0$.

For $\nu=(\nu', \ell)=(\theta k, \theta \ell)\in \theta{\mathbb Z}^{2(n-2)+1}$ we now define the cutoffs that we shall use:
\begin{equation}\label{s.26}
Q^\theta_{\nu}=Q^\theta_{\nu'}\circ Q^\theta_\ell.
\end{equation}
where $Q^\theta_{\nu'}$ are the directional microlocalization operators defined in \eqref{b.50}. Both $Q^\theta_{\nu'}$ and $Q^\theta_\ell$ operators here depend on our fixed $j,\ell_0$, and as in \eqref{b.51}, due to the way they are constructed, for small enough $\delta_0>0$ the principle symbol $q^\theta_\nu(x,y,\xi)$ of the $Q_\nu^\theta$ operators satisfy
\begin{multline}\label{s.27}
q^\theta_\nu(x,y,\xi)=q^\theta_\nu(z,y,\eta), \, (z,\eta)=\Phi_t(x,\xi), 
\\ \text{if } \, \text{dist }((x,\xi), \, \text{supp }A_{j,\ell_0})\le \delta_0
\, \text{and } \, \, |t|\le 2\delta_0.
%
\end{multline}

The symbol of $Q_\nu^\theta$ operators  in \eqref{s.26} vanishes when either $x$ or $y$ is outside the $2\delta$-ball about
the origin in our coordinates for $\Omega$.  By \eqref{s.7} \eqref{s.12} and \eqref{s.23}, we can fix $\delta_1$ in \eqref{b.3} small enough so that
we also have, analogous to (2.39) in \cite{HSst},
\begin{multline}\label{s.28}
\tilde \sigma_\la = \sum_\nu \tilde \sigma_\la Q^{\theta_0}_\nu + R, \quad R=R_{\la,j,\ell_0},  \, \, 
\, \,  \tilde \sigma_\la =A_{j,\ell_0}\sigma_\la,  
\\
\text{where } \, \, R(x, t; y, s)=O(\la^{-N}), \, \forall \,\,N \,\,\, \\
\text{and } R(x,t;y,s)=0, \, \, \text{if } x\notin B(x_j,2\delta) \, \,
\text{or } y\notin B(x_j,2\delta),
\end{multline}
with bounds for the remainder kernel independent of $j$. Here unlike in the previous section we take $\theta_0=\la^{-\e_0}$ for some small constant $\e_0$
that we shall specify later, the choice of $\e_0$ depend on the dimension $n-1$.

Let us now point out straightforward but useful properties of our operators.  First, by \eqref{s.12}, \eqref{s.28}
and the support properties of $\tilde \psi$, $\tilde{\tilde \psi}$, we have
\begin{multline}\label{s.29}
\tilde \sigma_\la Q^{\theta_0}_\nu H=\1_{B(x_j,2\delta)} \cdot \tilde \sigma_\la Q^{\theta_0}_\nu\bigl(
\1_{B(x_j,2\delta)} \cdot H\bigr), \, \, Q^{\theta_0}_\nu =Q^{\theta_0}_{j,\ell_0,\nu}
\\
\text{and } \, \, RH=\1_{B(x_j,2\delta)}\cdot R \bigl(
\1_{B(x_j,2\delta)} \cdot H\bigr), \, \, R=R_{\la,j,\ell_0}.
\end{multline}

Also, we have the uniform bounds
\begin{equation}\label{s.30}
\begin{split}
\|Q^{\theta_0}_\nu h\|_{\ell^q_\nu L^q(M)}\lesssim \|h\|_{L^q(M)}, \, \, 2\le q\le \infty
\\
\bigl\| \sum_{\nu'}(Q^{\theta_0}_\nu)^*H(\nu', \, \cdot \, )\|_{L^p(M)}\lesssim 
\|H\|_{\ell^p_{\nu'}L^p(M)}, \, \, 1\le p\le 2.
\end{split}
\end{equation}
The second estimate follows via duality from the first.  The first one is the analog of (2.42) in \cite{HSst}.  By interpolation,
one just needs to verify that the estimate holds for the two endpoints, $p=2$ and $p=\infty$.  The former follows via an almost
orthogonality argument, and the latter from the fact that 
for each $x$ the symbols vanish outside of cubes of sidelength $\theta \la$ and 
$| \partial^\gamma_\xi Q^\theta_\nu(x,y,\xi)|=O((\la\theta)^{-|\gamma|})$, thus it is not hard to show we have the uniform bounds
$$\sup_{x\in B(x_j,2\delta)}\int_{B(x_j,2\delta)}|Q^{\theta_0}_\nu(x,y)| \, dy\le C.$$

Note that if we use \eqref{s.30}, the support properties of the $Q_\nu^{\theta_0}$ operators and the finite overlap of the balls $\{B(x_j,2\delta)\}$ we obtain
for our fixed $\ell_0=1,\dots,K$
\begin{equation}\label{s.31}
\begin{split}
\bigl(\sum_{j,\nu}\|Q^{\theta_0}_{j,\ell_0,\nu} h\|_{ L^q(M)}^q\bigr)^{1/q}\lesssim \|h\|_{L^q(M)}, \, \, 2\le q\le \infty
\\
\bigl\| \sum_{j',\nu'}(Q^{\theta_0}_{j',\ell_0,\nu'})^*H(\nu',j', \, \cdot \, )\|_{L^p(M)}\lesssim 
\|H\|_{\ell^p_{\nu'}L^p(M)}, \, \, 1\le p\le 2.
\end{split}
\end{equation}

In addition to this inequality and \eqref{s.9}, we shall also require the following commutator bounds
\begin{equation}\label{comms}
\bigl\| (A_{j,\ell_0}\sigma_\la Q^{\theta_0}_{j,\ell_0,\nu}-A_{j,\ell_0}Q^{\theta_0}_{j,\ell_0,\nu}\sigma_\la)H\|_{L^p_tL^q_x(M\times [0,T])}
\le C_q \la^{\frac1p-\frac12+2\e_0}\|H\|_{L^2_{t,x}(B(x_j,2\delta)\times \R)},
\end{equation}
assuming that $\delta$, as well as $\delta_1$ in \eqref{b.3} are fixed small enough.

To see this, if we use the auxiliary operator $\tilde A_{j,\ell_0}$ and Young's inequality as in the previous section, and apply Bernstein inequality in time, it suffices to show 
\begin{equation}\label{commsa}
\bigl\| (A_{j,\ell_0}\sigma_\la Q^{\theta_0}_{j,\ell_0,\nu}-A_{j,\ell_0}Q^{\theta_0}_{j,\ell_0,\nu}\sigma_\la)H\|_{L^2_tL^2_x(M\times [0,T])}
\le C_q \la^{-1+2\e_0}\|H\|_{L^2_{t,x}(B(x_j,2\delta)\times \R)},
\end{equation}
since $(n-1)(\frac12-\frac1q)+\frac12-\frac1p=\frac1p+\frac12$ for $(p,q)$ satisfying \eqref{p}.

This follows from the 
proof of (2.59) in \cite{HSst} since, by \eqref{aas}, $A_{j,\ell_0}f$ vanishes outside
$B(x_j,2\delta)$ and the two operators in \eqref{comms} vanish when acting on functions vanishing on
$B(x_j,2\delta)$.  This
allows one to prove \eqref{commsa},
exactly as in \cite{HSp},  by just working in a coordinate chart
($B(x_j,2\delta)$ here) and, to obtain the inequality
using \eqref{s.27} and  Egorov's theorem related to the properties of the half wave operator $e^{itP}$ in this local coordinate.

Next, as in \cite{blair2023strichartz} and \cite{HSst}, if $H=S_\la f$, we note that we can write for $\theta_0$ and $\tilde \sigma_\la$ as in \eqref{s.23}
\begin{equation}\label{s.32}
\bigl(\tilde \sigma_\la H\bigr)^2 =
\sum_{\nu,\nu'}
\bigl(\tilde \sigma_\la Q^{\theta_0}_\nu H\bigr)\cdot 
\bigl(\tilde \sigma_\la Q^{\theta_0}_{\nu'}H\bigr) +O(\la^{-N}\|H\|^2_{L^2_{t,x}(B(x_j,2\delta)\times \R)}), \, \, \forall \, N.
\end{equation}
Recall that the $\nu=\theta_0\cdot {\mathbb Z}^{2(n-2)+1}$ index a $\la^{-\e_0}$-separated lattice in ${\mathbb R}^{2(n-2)+1}$.  
If we repeat the Whitney decomposition and the arguments in \eqref{b.57}-\eqref{b.64} as in the previous section. We can write 
\begin{equation}\label{s.33}
(\tilde \sigma_\la H)^2 = \diag_{j,\ell_0}(H) +\far_{j,\ell_0}(H)
\end{equation}
 when $n-1\ge4$. And when $n-1=3$, we further decompose $\diag(H)$ and write
 \begin{equation}\label{s.34}
(\tilde \sigma_\la H)^4\lesssim2{\overline\Upsilon_{j,\ell_0}^{\text{diag}}}(H)+2{\overline\Upsilon_{j,\ell_0}^{\text{far}}}(H)+2(\far(H))^2.
\end{equation}
 Here the operators $\diag_{j,\ell_0}, \far_{j,\ell_0}, {\overline\Upsilon_{j,\ell_0}^{\text{diag}}} $ and ${\overline\Upsilon_{j,\ell_0}^{\text{far}}}$ are defined exactly in the same manner as in \eqref{b.58}, \eqref{b.59} and \eqref{organize}, except that they now act on functions that also depend on the time variable.
 And we are treating the case $n-1=3$ separately as $\frac{2(n-1)}{n-3}=6$ when $n-1=3$, which requires a slight modification when we use bilinear ideas from \cite{TaoVargasVega}. The remaining case $n-1=2$ is analogous to $n-1=3$, we shall briefly outline the necessary arguments in the end of this section.

We shall need the the following variant of  Lemma 3.1 in \cite{HSst} 
\begin{lemma}\label{lemmas1}  Let $\theta_0=\la^{-\e_0}$ with $\la \gg 1$.
If $n-1\ge4$, $q_e=\frac{2(n-1)}{n-3}$ and $Q^{\theta_0}_\nu=Q^{\theta_0}_{j,\ell_0,\nu}$ as in \eqref{s.26}, 
\begin{equation}\label{s.35}
\begin{aligned}
    \int &\big(\sum_j \int|\diag_{j,\ell_0}(H)|^{\frac{q_e}{2}}dx\big)^{\frac{2}{q_e}} dt \\
&\le  \,\int \big(\sum_{j,\nu}\bigl\| A_{j,\ell_0}\sigma_\la Q^{\theta_0}_{j,\ell_0,\nu} H\bigr\|^{q_e}_{L^{q_e}_x(B(x_j,2\delta))}\big)^{\frac{2}{q_e}}dt+O(\la^{1-}\|H\|^2_{L^2_{t,x}}).
\end{aligned}
\end{equation}
Additionally, for $n-1=3$ we have 
\begin{equation}\label{s.35a}
\begin{aligned}
    \int &\big(\sum_j \int|\diagbar_{j,\ell_0}(H)|^{\frac{q_e}{4}}dx\big)^{\frac{2}{q_e}} dt \\
&\le  \,\int \big(\sum_{j,\nu}\bigl\| A_{j,\ell_0}\sigma_\la Q^{\theta_0}_{j,\ell_0,\nu} H\bigr\|^{q_e}_{L^{q_e}_x(B(x_j,2\delta))}\big)^{\frac{2}{q_e}}dt+O(\la^{1-}\|H\|^2_{L^2_{t,x}}).
\end{aligned}
\end{equation}
\end{lemma}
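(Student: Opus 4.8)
\textbf{Proof plan for Lemma~\ref{lemmas1}.}
The plan is to reduce both \eqref{s.35} and \eqref{s.35a} to the compact-manifold estimates of \cite[Lemma~3.1]{HSst} via the usual device, namely that $\diag_{j,\ell_0}(H)$ and $\diagbar_{j,\ell_0}(H)$ are, by \eqref{s.29}, spatially localized to the single coordinate patch $B(x_j,2\delta)$ equipped with geodesic normal coordinates about $x_j$, and that by our standing assumption of uniformly bounded geometry the metric $g_{kl}$, all of its derivatives, the transition maps, the symbols of the $A_{j,\ell_0}$ (as in \eqref{aas}), and the symbols of $Q^{\theta_0}_{j,\ell_0,\nu}$ (as in \eqref{s.24}--\eqref{s.26} and \eqref{s.27}) obey bounds that are uniform in $j$. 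Hence the entire bilinear analysis of \cite{HSst}, which was carried out in a fixed coordinate chart, applies verbatim inside each $B(x_j,2\delta)$ with constants independent of $j$. The only new feature compared with \cite{HSst} is the presence of the $\ell^{q_e/2}_j$ (respectively $\ell^{q_e/4}_j$) summation over the index $j$, which is handled by the finite-overlap property \eqref{b.00} of the doubled balls $\{B(x_j,2\delta)\}$.

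First I would fix the time variable $t$ and work with the functions $x\mapsto H(x,t)$ and $x\mapsto (\tilde\sigma_\la H)(x,t)$; for each $j$ one then has, exactly as in \cite[Lemma~3.1]{HSst}, the single-chart bilinear bound
\[
\int |\diag_{j,\ell_0}(H)(\cdot,t)|^{q_e/2}\,dx
\le \Bigl(\sum_\nu \|A_{j,\ell_0}\sigma_\la Q^{\theta_0}_{j,\ell_0,\nu}H(\cdot,t)\|^{q_e}_{L^{q_e}_x(B(x_j,2\delta))}\Bigr)^{2/q_e}
+ O(\la^{1-}\|H(\cdot,t)\|^2_{L^2_x(B(x_j,2\delta))}),
\]
and similarly with $\diagbar$ and exponent $q_e/4$ when $n-1=3$. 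This step is where the variable-coefficient bilinear oscillatory integral estimates of Lee~\cite{LeeBilinear} and the restriction-type argument of Tao--Vargas--Vega~\cite{TaoVargasVega} enter; since they only require $C^\infty$ control of the phase and amplitude on the chart, and our bounded-geometry hypotheses supply exactly that uniformly in $j$, no new work beyond \cite{HSst} is needed here. Next I would raise to the power $1$ after taking $\ell^1_j$ of the $q_e/2$-th powers (using \eqref{b.00} so that $\sum_j$ of the error terms collapses to a single $\|H(\cdot,t)\|^2_{L^2_x(M)}$), then take $(\cdot)^{2/q_e}$, integrate in $t$ over $[0,T]$, and note that the $t$-integral of $O(\la^{1-}\|H(\cdot,t)\|^2_{L^2_x})$ is $O(\la^{1-}\|H\|^2_{L^2_{t,x}})$, with the $\la^{1-\e_0}$ absorbing the extra $T=c_0\log\la$ factor. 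The reorganization $\int(\sum_j(\cdots))^{2/q_e}\,dt$ on the left matches the left-hand side of \eqref{s.35}, and likewise for \eqref{s.35a}.

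The main obstacle — and it is a bookkeeping obstacle rather than a genuinely new estimate — is to make sure that the $\ell^{q_e/2}_j$-norm on the left of \eqref{s.35} is correctly recovered. One must check that $|\A\sigma_\la S_\la f(x,t,j)|^{q_e}= |A_{j,\ell_0}\sigma_\la S_\la f(x,t)|^{q_e}$ and that, because of \eqref{b.00} and the support properties in \eqref{s.12} and \eqref{s.29}, summing the pointwise inequality $|\diag_{j,\ell_0}(H)|^{q_e/2}\le(\cdots)$ over $j$ and then taking the $L^1_x$ norm is equivalent, up to the uniform overlap constant $C_0$, to the mixed norm appearing on the left; the subtle point is that $\diag_{j,\ell_0}(H)$ for different $j$ have overlapping supports, so one cannot simply pull the $\ell^{q_e/2}_j$ through the $L^1_x$ integral, and one instead uses finite overlap to pass from $\|\cdot\|_{\ell^{q_e/2}_j L^1_x}\lesssim\|\cdot\|_{L^1_x\ell^1_j}$ and back. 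Once this is set up, the proof is a transcription of \cite[Lemma~3.1]{HSst} chart by chart; I would therefore state the single-chart estimate, invoke \cite{HSst} for its proof, and then carry out the two-line summation over $j$ together with the $t$-integration, emphasizing that all implicit constants are uniform in $j$ by bounded geometry.
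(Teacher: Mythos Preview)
Your overall strategy---localize to a single chart $B(x_j,2\delta)$, invoke the compact-manifold result from \cite{HSst} with constants uniform in $j$ by bounded geometry, then sum over $j$ using the finite overlap \eqref{b.00} and integrate in $t$---is correct and is exactly what the paper does.  However, you have mis-identified the analytical ingredient that drives the single-chart estimate.  Lemma~\ref{lemmas1} concerns the \emph{diagonal} sum $\diag_{j,\ell_0}$ (and $\diagbar_{j,\ell_0}$), where by construction the pairs $(\nu,\nu')$ are \emph{not} angularly separated.  Lee's bilinear oscillatory integral theorem and the Tao--Vargas--Vega argument require transversality between the two wave packets; they are used for the \emph{off-diagonal} terms $\far_{j,\ell_0}$ and $\farbar_{j,\ell_0}$ in Lemma~\ref{lemmas2}, not here.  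The correct mechanism for Lemma~\ref{lemmas1} is almost-orthogonality: the paper isolates as its main step the spatial estimate \eqref{s.35b}, namely
\[
\Bigl\|\sum_{(\nu,\tilde\nu)\in\Xi_{\theta_0}} Q^{\theta_0}_{\nu,j,\ell_0}h_\nu\cdot Q^{\theta_0}_{\tilde\nu,j,\ell_0}h_{\tilde\nu}\Bigr\|_{L^{q_e/2}_x}
\le C\Bigl(\sum_{(\nu,\tilde\nu)\in\Xi_{\theta_0}}\|Q^{\theta_0}_{\nu,j,\ell_0}h_\nu\cdot Q^{\theta_0}_{\tilde\nu,j,\ell_0}h_{\tilde\nu}\|_{L^{q_e/2}_x}^{q_e/2}\Bigr)^{2/q_e}+O(\la^{-N}\cdots),
\]
which is the analog of (3.20) in \cite{HSst} and comes from the essentially disjoint microlocal supports of the $Q^{\theta_0}_\nu$.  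Because the $Q^{\theta_0}_\nu$ are time-independent spatial operators, this almost-orthogonality can be applied at each fixed $t$ to the functions $h_\nu=(\tilde\sigma_\la Q^{\theta_0}_\nu H)(\cdot,t)$, even though $\sigma_\la$ itself (see \eqref{s.5}) acts in space-time.

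Two smaller points.  First, your displayed single-chart inequality has a power mismatch: the left side is $\|\diag\|_{L^{q_e/2}_x}^{q_e/2}$ while the right side is $(\sum_\nu\|\cdot\|^{q_e})^{2/q_e}$, which has different homogeneity; drop the outer $2/q_e$ exponent on the right (or take the $q_e/2$-th root on the left).  Second, your error term $O(\la^{1-}\|H(\cdot,t)\|^2_{L^2_x(B(x_j,2\delta))})$ at fixed $t$ is not quite available, since $\tilde\sigma_\la$ mixes times; the actual error comes from the $O(\la^{-N})$ remainder in \eqref{s.35b} and is controlled after the $t$-integration by $\|H\|_{L^2_{t,x}}$, as the paper writes.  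The paper also flags that, unlike Lemma~\ref{lemma1}, one cannot state the estimate for a single fixed $j$---the $\ell^{q_e/2}_j$ sum must sit inside the $dt$ integral---which is exactly the bookkeeping issue you identified.
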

In the above and what follows $O(\la^{\mu-})$ denotes $O(\la^{\mu-\e})$ for some $\e>0$. As we shall see later in the proof, unlike Lemma~\ref{lemma1} in the previous section, we can not fix $j,\ell_0$ here. It is crucial that the $\ell^{\frac{q_e}{2}}_j$ norm is taken inside the $dt$ integral.  As in \cite{HSst}, since the $Q_\nu^\theta$ operators are time independent, 
the main step in the proof of \eqref{s.35} is to show that 
for arbitrary $h_{\nu}, h_{\tilde \nu}$, which may depend on $\nu$ and $\tilde\nu$, we have 
\begin{equation}\label{s.35b}
    \begin{aligned}
        \bigl\|\sum_{(\nu,\tilde \nu)\in \Xi_{\theta_0}}
Q_{\nu,j,\ell_0}^{\theta_0} h_{ \nu}\cdot 
Q_{\tilde \nu,j,\ell_0}^{\theta_0} h_{\tilde \nu}\bigr\|
_{L^{q_e/2}_x} \le C &
\Bigl(
\sum_{(\nu,\tilde \nu)\in \Xi_{\theta_0}}
\| Q_{\nu,j,\ell_0}^{\theta_0}  h_{ \nu}\cdot 
Q_{\tilde \nu,j,\ell_0}^{\theta_0}  h_{\tilde \nu}\|_{L^{q_e/2}_{x}}^{q_e/2}\,
\Bigr)^{2/q_e} \\
&\qquad
+O\big(\la^{-N}\sum_{(\nu,\tilde \nu)\in \Xi_{\theta_0}}\|h_{\nu}\|_{L^1_x}\|h_{\tilde \nu}\|_{L^1_x}\big),\,\,\,\forall N.
    \end{aligned}
\end{equation}
The constant $C$ here is independent of $j,\ell_0$. This result is analogous to (3.20) in \cite{HSst} and follows from the same proof provided there. Similarly, the proof of \eqref{s.35a} follows from a variant of \eqref{s.35a} involving the product of four $Q_{j,\ell_0, \nu}^{\theta_0}$ operators.

If we  fix $\delta$ as well as 
$\delta_1,\delta_2$ in \eqref{b.3} small enough, then we can use Lee's \cite{LeeBilinear} bilinear oscillatory integral theorem 
and repeat the proof of Lemma 3.2 in \cite{blair2023strichartz} as well as the arguments in (3.38)-(3.42) of \cite{HSst} for the case $n-1=3$
to obtain the following.

\begin{lemma}\label{lemmas2}
Let $\far_{j,\ell_0}(H), \farbar_{j,\ell_0}(H)$ be as above with $\theta_0=\la^{-\e_0}$ and $q=\tfrac{2(n+2)}n$.  Then for all $\e>0$ there is a $C_\e=C(\e,M)$ so that 
\begin{equation}\label{s.36}
\int_M |\far_{j,\ell_0}(H)|^{q/2} \, dxdt \le C_\e \la^{1+\e} \, \bigl(\la^{1-\e_0}\bigr)^{\frac{n-1}2(q-\frac{2(n+1)}{n-1})}
\|H\|^q_{L^2_{t,x}(B(x_j,2\delta)\times\R)}.
\end{equation}
Similarly, for $n-1=3$ and $q=\tfrac{2(n+2)}n$,
\begin{equation}\label{s.37}
\int_M |\farbar_{j,\ell_0}(H)|^\frac{q}{4} \, dx dt\le C_\e \la^{1+\e} \, \bigl(\la^{1-\e_0}\bigr)^{\frac{n-1}2(q-\frac{2(n+1)}{n-1})}
\|H\|^q_{L^2_{t,x}(B(x_j,2\delta)\times\R)}.
\end{equation}
\end{lemma}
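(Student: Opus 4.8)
The plan is to follow the proof of Lemma~\ref{lemma2} from the spectral projection setting (cf. \eqref{b.68}--\eqref{b.70}), with the local operators \eqref{b.4} replaced by their space-time analog $\tilde\sigma_\la=A_{j,\ell_0}\sigma_\la$ from \eqref{s.23}, and with Lee's bilinear oscillatory integral theorem~\cite{LeeBilinear} used in the variable-coefficient form adapted to the scaled Schr\"odinger propagator in \cite[Lemma~3.2]{blair2023strichartz}. Since $j$ and $\ell_0$ are fixed throughout, the whole argument can be carried out in the single coordinate chart $\Omega=B(x_j,2\delta)$ in geodesic normal coordinates about $x_j$; the bounded geometry hypothesis gives uniform control of the metric and all its derivatives there (see \S3), so the constants will be independent of $j$. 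First I would recall from \eqref{b.59} that, up to the admissible error $O(\la^{-N}\|H\|_{L^2_{t,x}(\Omega\times\R)}^2)$, the term $\far_{j,\ell_0}(H)$ is the sum over $\theta_0$-cubes $(\nu,\nu')\notin\Xi_{\theta_0}$, and then organize it by the Whitney decomposition exactly as in \eqref{b.57}: for dyadic $\theta=2^k\theta_0\ll1$ and close cubes $\tau^\theta_\mu\sim\tau^\theta_{\mu'}$, setting $Q^\theta_\mu=\sum_{\nu\in\tau^\theta_\mu}Q^{\theta_0}_\nu$ (which microlocalizes to a $\theta$-cap in direction and a $\theta\la$-slab in $p(x,\xi)$), so that
\[
\far_{j,\ell_0}(H)=\sum_{\{k:\ \theta=2^k\theta_0\ll1\}}\ \sum_{\tau^\theta_\mu\sim\tau^\theta_{\mu'}}\bigl(\tilde\sigma_\la Q^\theta_\mu H\bigr)\bigl(\tilde\sigma_\la Q^\theta_{\mu'}H\bigr)+O\bigl(\la^{-N}\|H\|_{L^2_{t,x}(\Omega\times\R)}^2\bigr).
\]

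The main step, and the place where the work really happens, is the bilinear estimate at each fixed scale $\theta$ and close pair $(\mu,\mu')$. Here I would rescale parabolically so that $e^{-it\la^{-1}\Delta_g}\beta(P/\la)$ becomes a unit-frequency Schr\"odinger-type propagator; in those coordinates $\tilde\sigma_\la Q^\theta_\mu H$ and $\tilde\sigma_\la Q^\theta_{\mu'}H$ have wave front sets supported in $\theta$-transverse, suitably curved pieces of a variable-coefficient paraboloid, so Lee's theorem~\cite{LeeBilinear} --- in precisely the form used in \cite[Lemma~3.2]{blair2023strichartz} --- should give, for $q=\tfrac{2(n+2)}{n}$,
\[
\bigl\|\bigl(\tilde\sigma_\la Q^\theta_\mu H\bigr)\bigl(\tilde\sigma_\la Q^\theta_{\mu'}H\bigr)\bigr\|_{L^{q/2}_{t,x}(\Omega\times\R)}\lesssim_\e \la^{\e}\,(\la\theta)^{\gamma}\,\|Q^\theta_\mu H\|_{L^2_{t,x}}\,\|Q^\theta_{\mu'}H\|_{L^2_{t,x}},
\]
with $\gamma=\gamma(n,q)$ the same exponent as in the compact case. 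I expect the main obstacle to be checking that the transversality and curvature hypotheses of~\cite{LeeBilinear} genuinely hold in these rescaled coordinates, uniformly over $\theta$ and over the chart; this is done for compact manifolds in~\cite{blair2023strichartz}, so the only new point is the uniformity, which the bounded geometry assumption supplies exactly as in the treatment of \eqref{b.68}--\eqref{b.70} above, via the uniform metric bounds in normal coordinates and Egorov's theorem for $e^{itP}$ together with \eqref{s.27}.

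Finally I would sum. For fixed $\theta$ the microlocal supports of the $\{\tilde\sigma_\la Q^\theta_\mu H\}_\mu$ have bounded overlap, so Cauchy--Schwarz and $L^2$ almost orthogonality as in~\cite{TaoVargasVega} bound $\sum_{\tau^\theta_\mu\sim\tau^\theta_{\mu'}}$ of the left side by $\la^{\e}(\la\theta)^{\gamma}\|H\|_{L^2_{t,x}(\Omega\times\R)}^2$ times a fixed power of $\theta^{-1}$ (the cap count). Because $q<\tfrac{2(n+1)}{n-1}$, the resulting powers of $\theta$ are summable over the dyadic range $\theta=2^k\theta_0\ll1$, and the total is governed by the smallest scale $\theta=\theta_0=\la^{-\e_0}$; raising to the power $q/2$ and using $\la\theta_0=\la^{1-\e_0}$ then yields the factor $\bigl(\la^{1-\e_0}\bigr)^{\frac{n-1}{2}(q-\frac{2(n+1)}{n-1})}$ and the $\la^{1+\e}$ in \eqref{s.36}. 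For the remaining case $n-1=3$, where the bilinear exponent $q/2$ is too large to pass directly to a useful linear bound, I would square once more and run the second Whitney decomposition exactly as in \eqref{s.34}; the four-fold product of $Q^{\theta_0}$-microlocalized pieces is then controlled at exponent $q/4$ (using subadditivity of $t\mapsto t^{q/4}$ there) by the same bilinear input together with the extra orthogonality and combinatorial arguments of \cite[(3.38)--(3.42)]{HSst}, which gives \eqref{s.37}. All constants are uniform in $j$ by the bounded-geometry metric bounds, so these estimates can subsequently be summed over $j$ using the finite overlap of the balls $\{B(x_j,2\delta)\}$.
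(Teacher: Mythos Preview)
Your proposal is correct and follows essentially the same approach as the paper: the paper simply states that one should use Lee's~\cite{LeeBilinear} bilinear oscillatory integral theorem and repeat the proof of Lemma~3.2 in~\cite{blair2023strichartz}, together with the arguments in (3.38)--(3.42) of~\cite{HSst} for the case $n-1=3$, noting that the bounded geometry hypothesis gives the required uniformity in $j$. Your outline spells this out in somewhat more detail than the paper does, but the strategy (Whitney decomposition, Lee's bilinear estimate at each scale, almost-orthogonal summation, and the extra iteration for $n-1=3$) is the same.
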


We now have collected the main ingredients that we need to prove the critical low height estimates.

\begin{proof}[Proof of \eqref{s.17} on the set $A_-$.]
Let us assume that $n-1\ge4$ and thus it suffices to consider $p=2, q=q_e=\frac{2(n-1)}{n-3}$.  A main step in the proof of the $A_-$ estimates then is to obtain the analog of (2.45) in \cite{HSst}.
We shall do so largely by repeating its proof, which we do so for the sake of completeness in order to note the small changes needed
to take into account that, unlike (2.45) in \cite{HSst}, \eqref{s.17} here is a vector valued inequality.  As noted before, we have taken
this framework to help us exploit our assumption of bounded geometry, and,  in particular, the fact that finitely many  of the 
 doubles of the balls $\{B(x_j,2\delta)\}$
in our covering of $M$  overlap.

We first note that if $q=\tfrac{2(n+2)}n<q_e$, then by \eqref{s.18a} and  \eqref{s.33} for our fixed $j, \ell_0$ we have for $H=S_\la f$
\begin{align*}
&\bigl|\bigl(\A\sigma_\la (H)(x,t,j)\bigr)^2\bigr|^{q_e/2}=\bigl|\bigl(A_{j,\ell_0}(\sigma_\la H)(x,t,j) \bigr)^2\bigr|^{q_e/2}
\\
&=|A_{j,\ell_0}(\sigma_\la H)(x,t) \cdot A_{j,\ell_0}(\sigma_\la H)(x,t)|^{\frac{q_e-q}2}
\bigl| \diag_{j,\ell_0}(H)(x,t) +\far_{j,\ell_0}(H)(x,t)\bigr|^{q/2}
\\
&\le |A_{j,\ell_0}(\sigma_\la H)(x,t) \cdot A_{j,\ell_0}(\sigma_\la H)(x,t)|^{\frac{q_e-q}2}
 2^{q/2}  \bigl( 
|\diag_{j,\ell_0}(H)(x,t)|^{q/2}
+|\far_{j,\ell_0}(H)(x,t)|^{q/2} \bigr).
\end{align*}
Thus if $A_-$ is as in \eqref{s.18},
\begin{equation}
    \begin{aligned}\label{s.38}
&\| \A \sigma_\la H
\|_{L^{2}_tL^{q_e}_x(A_-)}^2
=\int \Big(\int \sum_j \1_{A_-}(x,t,j) \, \left|A_{j,\ell_0}(\sigma_\la H)\cdot A_{j,\ell_0}(\sigma_\la H)|(x,t)\right|^{q_e/2}dx\Big)^{\frac2{q_e}}dt
\\
&\lesssim 
\int \Big(\sum_j \int 
\bigl[ \1_{A_-}(x,t,j) \, |A_{j,\ell_0}(\sigma_\la H)(x) \cdot A_{j,\ell_0}(\sigma_\la H)(x)|^{\frac{q_e-q}2}
\bigr] \, |\diag_{j,\ell_0}(H)(x)|^{q/2}\Big)^{\frac2{q_e}}dt
\\
&+\int \Big(\sum_j \int 
\bigl[ \1_{A_-}(x,t,j) \, |A_{j,\ell_0}(\sigma_\la H)(x) \cdot A_{j,\ell_0}(\sigma_\la H)(x)|^{\frac{q_e-q}2}
\bigr] \, |\far_{j,\ell_0}(H)(x)|^{q/2}dx\Big)^{\frac2{q_e}}dt \\ &=C(I+II).\notag
\end{aligned}
\end{equation}

To estimate $II$, 
first note that by H\"older's inequality
\begin{equation}\label{s.39}
\begin{aligned}
     II &\lesssim \|\1_{A_-}(x,t,j) A_{j,\ell_0}(\sigma_\la H)(x,t)\|_{L^\infty(A_-)}^{\frac{2(q_e-q)}{q_e}} \cdot 
\big(  \int \big(\sum_j \int|\far_{j,\ell_0}(H)|^{\frac{q}{2}}dx\big)^{\frac{2}{q_e}} dt \big) \\
&\lesssim T^{1-\frac{2}{q_e}}\|\1_{A_-}(x,t,j) A_{j,\ell_0}(\sigma_\la H)(x,t)\|_{L^\infty(A_-)}^{\frac{2(q_e-q)}{q_e}} \cdot 
\big(  \sum_j\int  \int|\far_{j,\ell_0}(H)|^{\frac{q}{2}}dxdt \big)^{\frac{2}{q_e}},
\end{aligned}
\end{equation}
Recall that by \eqref{s.18} and \eqref{s.18a},
$$|\1_{A_-}(x,t,j) A_{j,\ell_0}(\sigma_\la H)(x,t) |\lesssim \la^{\frac{n-1}{4}+\e_1}.
$$
Thus by \eqref{s.36}
\begin{equation}\label{far}
\begin{aligned}
    II \le  T^{1-\frac{2}{q_e}}& \la ^{(\frac{n-1}4+\e_1)(\frac{2(q_e-q)}{q_e})} \\
&\cdot \Big(\la^{1+\e} \, \bigl(\la^{1-\e_0}\bigr)^{\frac{n-1}2
(q-\frac{2(n+1)}{n-1})} \Big)^{\frac{2}{q_e}}(\sum_j\|H\|^q_{L^2_{t,x}(B(x_j,2\delta)\times\R)})^{\frac{2}{q_e}}.
\end{aligned}
\end{equation}
If we take $\e_0, \e_1$ and $\e$ to be small enough, e.g., $\e=\e_1=\frac1{100}$ and $\e_0=\frac{1}{2n+2}$, it is straightforward to check that 
\begin{equation}\label{far1}
\begin{aligned}
    II \lesssim\la^{1-}(\sum_j\|H\|^q_{L^2_{t,x}(B(x_j,2\delta)\times\R)})^{\frac{2}{q_e}}
\lesssim \la^{1-}\|H\|_{L^2_{t,x}}^{\frac{2q}{q_e}}=O(\la^{1-}\|H\|_{L^2_{t,x}}^{2}).
\end{aligned}
\end{equation}
Here we also  used the fact that 
$\|H\|_{L^2_{t,x}}^{2}$ dominates 
$\|H\|_{L^2_{t,x}}^{\frac{2q}{q_e}}$ since $q_e>q$ and
$\|H\|_{L^2_{t,x}}\approx T$ since $H=S_\la f$, $\|f\|_2=1$
and $e^{-it\la^{-1}\Delta_g}$ is a unitary operator on $L^2_x$.

To control $I$, as in \cite{HSst},  we
use H\"older's inequality followed by Young's 
inequality along with \eqref{s.35} to get
\begin{equation}
    \begin{aligned}
        I=&
\int \Big(\sum_j \int 
\bigl[ \1_{A_-}(x,t,j) \, |A_{j,\ell_0}(\sigma_\la H)(x) \cdot A_{j,\ell_0}(\sigma_\la H)(x)|^{\frac{q_e-q}2}
\bigr] \, |\diag_{j,\ell_0}(H)(x)|^{q/2}\Big)^{\frac2{q_e}}dt\\
\le & \int\Big( \|\1_{A_-}\cdot \A \sigma_\la H \cdot
 \A \sigma_\la H\|_{\ell_j^{\frac{q_e}{2}}L_x^{\frac{q_e}{2}}}^{\frac{q_e-q}2} \, 
 \big(\sum_j \int|\diag_{j,\ell_0}(H)|^{\frac{q_e}{2}}dx\big)^{\frac{q}{q_e}}\,\Big)^{\frac{2}{q_e}}dt \\
\le &\| \1_{A_-}\A \sigma_\la H \cdot
 \A \sigma_\la H\|_{L^1_t\ell_j^{\frac{q_e}{2}}L_x^{\frac{q_e}{2}}}^{\frac{q_e-q}{q_e}} \, 
\big( \int \big(\sum_j \int|\diag_{j,\ell_0}(H)|^{\frac{q_e}{2}}dx\big)^{\frac{2}{q_e}} dt \big)^{\frac{q}{q_e}} \\
\le &\tfrac{q_e-q}{q_e}\| \A \sigma_\la H \|^2_{L^2_t\ell_j^{q_e}L_x^{q_e}(A_-)} +\tfrac{q}{q_e}
\big( \int \big(\sum_j \int|\diag_{j,\ell_0}(H)|^{\frac{q_e}{2}}dx\big)^{\frac{2}{q_e}} dt \big)\\
\le &\tfrac{q_e-q}{q_e}\| \A \sigma_\la H \|^2_{L^2_t\ell_j^{q_e}L_x^{q_e}(A_-)} +C(\,\int \big(\sum_{j,\nu}\bigl\| A_{j,\ell_0}\sigma_\la Q^{\theta_0}_{j,\ell_0,\nu} H\bigr\|^{q_e}_{L^{q_e}_x(B(x_j,2\delta))}\big)^{\frac{2}{q_e}}dt)\\
&\quad+O(\la^{1-}\|H\|^2_{L^2_{t,x}}).
    \end{aligned}
\end{equation}

If we combine the above two estimate, we have the following which is the analog of proposition 2.3 in \cite{HSst}.
\begin{proposition}\label{locprop}
Fix a complete $n-1\ge 2$ dimensional Riemannian manifold $(M,g)$ of bounded geometry and assume that \eqref{s.norm} is valid.
If $H=S_\la f$ is as in \eqref{s.2}, $(p,q)$ satisfies \eqref{p} and $\e_0,\e_1$ in the definition of $A_-$ and $\theta_0$ are small enough, we have
\begin{equation}\label{s.40}
\|\A \sigma_\la H\|_{L^{p}_tL^{q}_x\ell_j^q(A_-)}
\lesssim \big(\,\int \big(\sum_{j,\nu}\bigl\| A_{j,\ell_0}\sigma_\la Q^{\theta_0}_{j,\ell_0,\nu} H\bigr\|^{q}_{L^{q}_x(B(x_j,2\delta))}\big)^{\frac{2}{q}}dt\big)^{\frac12}
+\la^{\frac1{p}-}.
\end{equation}
\end{proposition}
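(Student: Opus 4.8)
The plan is to treat \eqref{s.40} as the Strichartz analog of Proposition~2.3 in \cite{HSst}: once Lemmas~\ref{lemmas1} and \ref{lemmas2} are in hand, the inequality follows by assembling the height decomposition \eqref{s.18}, H\"older's and Young's inequalities, and an absorbing argument, with the assembly carried out separately according to whether $q_e=\tfrac{2(n-1)}{n-3}$ satisfies $q_e\le4$ or not. First I would complete the case $n-1\ge4$, where by \eqref{p} only the endpoint $p=2$, $q=q_e$ is needed and $q_e\le4$. Continuing the computation displayed just above the statement, one writes $\|\A\sigma_\la H\|^2_{L^2_tL^{q_e}_x\ell^{q_e}_j(A_-)}\le C(I+II)$, obtained from the pointwise splitting $|A_{j,\ell_0}\sigma_\la H|^{q_e}=|A_{j,\ell_0}\sigma_\la H|^{q_e-q}\,(|A_{j,\ell_0}\sigma_\la H|^{2})^{q/2}$ with $q=\tfrac{2(n+2)}n<q_e$ and $(A_{j,\ell_0}\sigma_\la H)^2=\diag_{j,\ell_0}(H)+\far_{j,\ell_0}(H)$ as in \eqref{s.33}. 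For the $\far$-term $II$ one bounds the prefactor by $\la^{(q_e-q)((n-1)/4+\e_1)}$ on $A_-$ and invokes \eqref{s.36}; choosing $\e_0,\e_1,\e$ small (e.g.\ $\e=\e_1=\tfrac1{100}$, $\e_0=\tfrac1{2n+2}$) makes the exponents combine to give $II\lesssim\la^{1-}\|H\|^2_{L^2_{t,x}}$. For the $\diag$-term $I$, H\"older in $(x,j)$ and then in $t$, followed by Young's inequality and \eqref{s.35}, gives
\[
I\le\tfrac{q_e-q}{q_e}\|\A\sigma_\la H\|^2_{L^2_tL^{q_e}_x\ell^{q_e}_j(A_-)}+C\int\Bigl(\sum_{j,\nu}\|A_{j,\ell_0}\sigma_\la Q^{\theta_0}_{j,\ell_0,\nu}H\|^{q_e}_{L^{q_e}_x(B(x_j,2\delta))}\Bigr)^{2/q_e}dt+O(\la^{1-}\|H\|^2_{L^2_{t,x}}).
\]
Since $\|\A\sigma_\la H\|_{L^2_tL^{q_e}_x\ell^{q_e}_j(A_-)}$ is a priori finite (by the crude $L^2$ bound) and a direct check shows the constant multiplying it on the right is strictly less than $1$, it is absorbed into the left side; because $\|H\|^2_{L^2_{t,x}}\approx T=c_0\log\la$ is polynomially bounded, $\la^{1-}\|H\|^2_{L^2_{t,x}}=\la^{1-\e'}$ for some $\e'>0$, and taking square roots (recalling $\tfrac1p=\tfrac12$) yields \eqref{s.40}.

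For $n-1=3$ one has $q_e=6$, so $q_e/2=3>2$ and the quadratic decomposition no longer suffices; the plan is to use instead the fourth-power Whitney decomposition \eqref{s.34}, namely $(\tilde\sigma_\la H)^4\lesssim\diagbar_{j,\ell_0}(H)+\farbar_{j,\ell_0}(H)+(\far_{j,\ell_0}(H))^2$, together with \eqref{s.35a} and \eqref{s.37} in place of \eqref{s.35} and \eqref{s.36}. The steps mirror the previous case: split $|A_{j,\ell_0}\sigma_\la H|^{q_e}$ so as to expose the fourth power, bound a low power of $|A_{j,\ell_0}\sigma_\la H|$ by $\la^{3/4+\e_1}$ on $A_-$ and use \eqref{s.37} to gain $\la^{1-}$ from the $\farbar$ and $(\far)^2$ pieces, use H\"older and Young with \eqref{s.35a} to peel off the $\diagbar$ piece as a fraction of the left side plus the main term, and absorb. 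This is the Strichartz counterpart of the $n=2$ spectral-projection argument sketched after \eqref{b.70}. The one genuinely new feature relative to \cite{HSst} is that the $\ell^{q_e/2}_j$ (resp.\ $\ell^{q_e/4}_j$) sum must be kept inside the $dt$-integral --- precisely the form in which Lemma~\ref{lemmas1} is stated --- since, unlike in the spectral-projection section, the $S_\la$ depend on $t$ and $j$ cannot be fixed.

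The remaining case $n-1=2$ ($n=3$, with $4<q<\infty$ and $2(\tfrac12-\tfrac1q)=\tfrac2p$) is handled exactly as $n-1=3$, running the fourth-power argument with the $q=\tfrac{2(n+2)}n=\tfrac{10}3$ instances of Lemmas~\ref{lemmas1} and \ref{lemmas2} (and interpolating for the intermediate exponents). The only change is that the height exponent $\e_1$ in \eqref{s.18} and the microlocal scale $\e_0$ in $\theta_0=\la^{-\e_0}$ must be chosen depending on $q$, with $\e_1\to0$ as $q\to\infty$, and the hypothesis $q>4$ (equivalently $q>p$) is what guarantees that $\|H\|^2_{L^2_{t,x}}$ dominates the lower power $\|H\|^{2q/q_e}_{L^2_{t,x}}$ so that all error terms end up of size $\la^{2/p-}$. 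The hard part will be the exponent bookkeeping in the $A_-$/$\far$ estimate rather than anything conceptual: one must produce a single admissible triple $(\e_0,\e_1,\e)$ for which the height bound $\la^{(n-1)/4+\e_1}$ raised to the relevant power, the ``far'' gain $(\la^{1-\e_0})^{\frac{n-1}2(q-\frac{2(n+1)}{n-1})}$ from \eqref{s.36}/\eqref{s.37}, and the factor $T^{O(1)}$ together fall strictly below $\la^{2/p}$, and separately one must arrange that the self-referential diagonal term has coefficient $<1$ so the absorbing argument closes --- it is here that the dichotomy $q_e\le4$ versus $q_e>4$ enters. These are the same verifications made in \cite{HSst} and \cite{blair2023strichartz} in the compact case; the bounded-geometry ingredients needed here (the vector-valued cutoffs, the reduction to individual charts $B(x_j,2\delta)$, and the finite overlap of their doubles) have all already been set up in \eqref{s.11}--\eqref{s.37}, so Proposition~\ref{locprop} requires no further global input.
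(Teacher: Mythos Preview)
Your treatment of $n-1\ge 4$ and $n-1=3$ is exactly the paper's argument: split $|A_{j,\ell_0}\sigma_\la H|^{q_e}$ using \eqref{s.33} (resp.\ \eqref{s.34}), control the $\far$/$\farbar$ contribution on $A_-$ via Lemma~\ref{lemmas2} together with the height bound $\la^{\frac{n-1}4+\e_1}$, and absorb the $\diag$/$\diagbar$ contribution using Lemma~\ref{lemmas1}, H\"older and Young. You also correctly identify the one new feature relative to the compact case, namely that the $\ell^{q_e/2}_j$ (or $\ell^{q_e/4}_j$) sum must sit inside the $dt$-integral.

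Your $n-1=2$ paragraph has a gap. Saying this case ``is handled exactly as $n-1=3$, running the fourth-power argument'' is only valid for $4<q\le 8$: the almost-orthogonality step behind Lemma~\ref{lemmas1} (the analog of Lemma~6.1 in \cite{TaoVargasVega}) needs the Lebesgue exponent on the diagonal term to lie in $[1,2]$, i.e.\ $q/4\le 2$. For $n-1=2$ one must cover all $4<q<\infty$, and interpolation does not rescue you since there is no admissible endpoint at $q=\infty$. The paper's remedy is to iterate the Whitney decomposition: if $q\in[2^{k+1},2^{k+2}]$ one repeats the $n-1=3$ step $k$ times, so that the diagonal term becomes a product of $2^{k+1}$ factors and the relevant exponent $q/2^{k+1}$ lands in $[1,2]$; each iteration throws off additional off-diagonal terms handled by the bilinear estimate. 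It is precisely this iterated scheme that forces $\e_0,\e_1$ to depend on $q$ (and $\e_1\to 0$ as $q\to\infty$), not merely the single fourth-power step.
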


The case $n-1\ge 4$ in \eqref{s.40} directly follows from the above estimates for $I$ and $II$.
One can similarly use \eqref{s.35a} and \eqref{s.37} and
 modify the arguments in \cite{HSst}
to handle the case when $n-1=3$.

The arguments for $n-1=2$ and general $(p,q)$ in \eqref{p} is similar to the case $n-1=3$. Recall that when $n-1=3$, $q_e=\frac{2(n-1)}{n-3}=6\in [2^2, 2^3]$. As a result, an additional round of 
Whitney decomposition is needed for $(\diag)^2$ in order to get the desired estimate \eqref{s.35a} in Lemma~\ref{lemmas1}.  When $n-1=2$, $q$ can be arbitrary large, if $q\in [2^{k+1}, 2^{k+2}]$ for some $k\in \mathbb{N}^+$, then one can repeat the arguments for $n-1=3$ in \cite{HSst} $k$ times,  the resulting diagonal term will involve a product of $2^{k+1}$ terms of involving $A_{j,\ell_0}\sigma_\la Q^{\theta_0}_{j,\ell_0,\nu} H$ and will satisfy the analog of \eqref{s.35a} with $q_e/4$ replaced by $q/2^k$. Each iteration of 
Whitney decomposition also generates off-diagonal terms,
which can be treated using bilinear oscillatory integral estimates. However, as $q\rightarrow \infty$, unlike \eqref{far}, we need to take $\e_0$ and $\e_1$ to be small enough depending on $q$, instead of some fixed small constant.

Thus to prove \eqref{s.17}, it remains to control the first term on the right side of \eqref{s.40}.
By \eqref{comms} along with the fact that $\ell^{2}\subset \ell^{q}$ if $q\ge 2$ , we have 
\begin{equation}\label{s.41}
    \begin{aligned}
      \big(\,\int & \big(\sum_{j,\nu}\bigl\| A_{j,\ell_0}\sigma_\la Q^{\theta_0}_{j,\ell_0,\nu} H\bigr\|^{q}_{L^{q}_x(B(x_j,2\delta))}\big)^{\frac{2}{q}}dt\big)^{\frac12}
\\
&\lesssim    \big(\,\int  \big(\sum_{j,\nu}\bigl\| A_{j,\ell_0} Q^{\theta_0}_{j,\ell_0,\nu} \sigma_\la H\bigr\|^{q}_{L^{q}_x(B(x_j,2\delta))}\big)^{\frac{2}{q}}dt\big)^{\frac12} \\
&\quad+   \big(\,\int  \big(\sum_{j,\nu}\bigl\| (A_{j,\ell_0}\sigma_\la Q^{\theta_0}_{j,\ell_0,\nu}-A_{j,\ell_0} Q^{\theta_0}_{j,\ell_0,\nu}) \sigma_\la H\bigr\|^{q}_{L^{q}_x(B(x_j,2\delta))}\big)^{\frac{2}{q}}dt\big)^{\frac12}
\\
&\lesssim 
 \big(\int\sum_{j,\nu}\bigl\| A_{j,\ell_0} Q^{\theta_0}_{j,\ell_0,\nu}\sigma_\la H\bigr\|^{q}_{L^{q}_x(B(x_j,2\delta))}\big)^{\frac{2}{q}}dt\big)^{\frac12}+\la^{\frac1p-\frac12+2\e_0}\big( \sum_\nu \| H\,\|^2_{L^2_{t,x}}\big)^{\frac12} .
    \end{aligned}
\end{equation}
Since the number of choices of $\nu$ is $O(\la^{(2n-3)\e_0})$ and $H$ is independent of $\nu$, the second term in the right is dominated by $\la^{(n-\frac32)\e_0}\| \, 
H\,\|_{L^2_{t,x}}$. Thus if we choose $\e_0<\tfrac{1}{2n+1}$,  the second term on the right side of \eqref{s.41} is 
$O(\la^{\frac1{p}-})$.

Next recall that $H=S_\la f$ and $\|f\|_2=1$, if we use \eqref{b.16}, \eqref{s.31},  followed by \eqref{s.9}, we can control the term in the right as follows
\begin{equation}\label{s.42}
\begin{aligned}
 \big(\int&\sum_{j,\nu}\bigl\| A_{j,\ell_0} Q^{\theta_0}_{j,\ell_0,\nu}\sigma_\la S_\la f\bigr\|^{q}_{L^{q}_x(B(x_j,2\delta))}\big)^{\frac{2}{q}}dt\big)^{\frac12} \\
 &\le  \big(\int\sum_{j,\nu}\bigl\|  Q^{\theta_0}_{j,\ell_0,\nu}\sigma_\la S_\la f\bigr\|^{q}_{L^{q}_x}\big)^{\frac{2}{q}}dt\big)^{\frac12}\\
&\le  \big(\int\sum_{j,\nu}\bigl\|  Q^{\theta_0}_{j,\ell_0,\nu} S_\la f\bigr\|^{q}_{L^{q}_x}\big)^{\frac{2}{q}}dt\big)^{\frac12} +  \big(\int\sum_{j,\nu}\bigl\|  Q^{\theta_0}_{j,\ell_0,\nu}(I-\sigma_\la) S_\la f\bigr\|^{q}_{L^{q}_x}\big)^{\frac{2}{q}}dt\big)^{\frac12} 
\\ &\le   \big(\int\sum_{j,\nu}\bigl\|  Q^{\theta_0}_{j,\ell_0,\nu} S_\la f\bigr\|^{q}_{L^{q}_x}\big)^{\frac{2}{q}}dt\big)^{\frac12} +  \big(\int\bigl\|  (I-\sigma_\la) S_\la f\bigr\|^{q}_{L^{q}_x}\big)^{\frac{2}{q}}dt\big)^{\frac12} 
\\ &\le   \big(\int\sum_{j,\nu}\bigl\|  Q^{\theta_0}_{j,\ell_0,\nu} S_\la f\bigr\|^{q}_{L^{q}_x}\big)^{\frac{2}{q}}dt\big)^{\frac12} + \la^{\frac1p}T^{\frac1p-\frac12}.
\end{aligned}
\end{equation}

If we combine \eqref{s.40} and the preceding two inequalities we conclude that we would obtain \eqref{s.17} and consequently
finish the proof of the estimates in Theorem~\ref{bgst} if, for $(p,q)$ as in \eqref{p} and $T$ as in \eqref{s.2}, we could show that
\begin{equation}\label{s.43}
Uf(t,x,j,\nu)  =(Q^{\theta_0}_{j,\ell_0,\nu} S_\la f)(x,t),
\end{equation}
satisfies
\begin{equation}\label{st}
\| Uf \|_{L^p_t\ell_j^{q}\ell_\nu^{q}L^{q}_x} \lesssim \la^{\frac1{q}} \|f\|_{L^2(M)}.
\end{equation}

We shall require the following lemma
\begin{lemma}\label{kerprop1}  Fix $t, j,\ell_0,\nu$, let $K_{t,\la}$ denote  the operator 
$$\eta( t/T)Q^{\theta_0}_{j,\ell_0,\nu} \beta(P/\la)e^{-it\la^{-1}\Delta_g}.$$
Then if $(M,g)$ is a complete manifold of bounded geometry all of whose sectional curvatures are nonpositive and $T=c_0\log\la$ is fixed with $c_0=c_0(M)>0$ sufficiently small, we have for $\la\gg 1$
\begin{equation}\label{j1a}
\|K_{t,\la}K^*_{s,\la}\|_{L^1(M)\to L^\infty(M)}\le C\la^{\frac{n-1}2}|t-s|^{-\frac{n-1}2}.
\end{equation}
\end{lemma}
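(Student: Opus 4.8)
\textbf{Proof plan for Lemma~\ref{kerprop1}.}
The plan is to reduce the operator norm bound for $K_{t,\la}K^*_{s,\la}$ to a dispersive estimate for the scaled Schr\"odinger propagator $e^{-i(t-s)\la^{-1}\Delta_g}$ truncated by the microlocal cutoffs, using the nonpositive curvature hypothesis to control the contribution of each deck transformation when we lift to the universal cover. First I would note that, since the $Q^{\theta_0}_{j,\ell_0,\nu}$ are fixed zero-order operators with uniformly bounded kernels satisfying the support localization \eqref{s.25}, and $\beta(P/\la)$ has kernel $O(\la^n(1+\la d_g(x,y))^{-N})$, it suffices to bound the kernel of $\eta(t/T)\eta(s/T)Q^{\theta_0}_{j,\ell_0,\nu}\circ e^{-i(t-s)\la^{-1}\Delta_g}\beta(P/\la)^2\circ (Q^{\theta_0}_{j,\ell_0,\nu})^*$ pointwise by $C\la^{(n-1)/2}|t-s|^{-(n-1)/2}$. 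Because the operators involved are spatially localized to a ball $B(x_j,2\delta)$ of radius comparable to the injectivity radius, one passes to geodesic normal coordinates on this ball and writes the kernel as an oscillatory integral; the microlocal cutoffs confine the frequency support to $|\xi|\approx\la$ and a small cone, so after rescaling $\xi=\la\zeta$ one has an oscillatory integral with phase $\la(\langle x-y,\zeta\rangle-\la^{-1}\cdot\la(t-s)|\zeta|_{g(x)}^2)$, i.e. effectively the phase $\mu(\langle x-y,\zeta\rangle - \tau|\zeta|^2_{g})$ with $\mu=\la$ and $\tau=(t-s)$, which is exactly the setting for the standard fixed-time dispersive estimate for Schr\"odinger on a small coordinate patch.

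The key steps, in order, would be: (i) reduce as above to estimating the Schwartz kernel of a frequency-localized, spatially-localized scaled Schr\"odinger operator; (ii) invoke the finite-propagation-speed / Hadamard-parametrix representation as in \eqref{k.3} to lift the wave kernel $\cos(t\sqrt{-\Delta_g})$ to the universal cover $(\R^{n-1},\tilde g)$ and sum over deck transformations $\alpha\in\Gamma$ — here the nonpositive curvature, bounded geometry, and positive injectivity radius (Cartan--Hadamard) guarantee that only $O(\la^{C_M c_0})$ deck transformations contribute for $|t|,|s|\lesssim T=c_0\log\la$, and each of these, by a Gauss-lemma argument on the negatively curved cover, contributes a term of size $O(\la^{(n-1)/2}|t-s|^{-(n-1)/2})$ up to the exponential factor $\exp(C_M|t-s|)$; (iii) absorb the factor $\exp(C_M|t-s|)\le \exp(C_M T)=\la^{C_M c_0}$, which by choosing $c_0=c_0(M)>0$ small enough is negligible — this is precisely why the statement of Lemma~\ref{kerprop1}, unlike Lemma~\ref{kerprop}, has no $\exp(C_M|t-s|)$ on the right, and where the hypothesis $T=c_0\log\la$ with $c_0$ small is used. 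Strictly speaking one should first convert the half-wave representation into the Schr\"odinger propagator via the standard subordination formula $e^{-it\la^{-1}\Delta_g}=c\int e^{i s^2\la/(4t)}\cos(s P)\,ds$ (suitably localized), after which the $\cos(sP)$ kernel on a patch of size $\lesssim\Inj(M)$ is a single term of the Hadamard parametrix plus a harmless error, so the sum over $\Gamma$ is trivial for the Schr\"odinger kernel on the patch and only the local Euclidean-type stationary phase estimate remains.

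In fact the cleanest route, and the one I would actually carry out, is to observe that on the fixed ball $B(x_j,2\delta)$ the frequency-localized scaled Schr\"odinger kernel differs from the corresponding Euclidean kernel (in the normal coordinates, with the metric coefficients $g_{jk}(y)=\delta_{jk}+O(d_g(x_j,y)^2)$ frozen and then perturbed) by an acceptable error, because the bounded-geometry hypothesis gives uniform control of all derivatives of $g_{jk}$; then the classical fixed-time $L^1\to L^\infty$ dispersive bound $\|e^{-i\tau\Delta_{\R^{n-1}}}\|_{L^1\to L^\infty}=O(|\tau|^{-(n-1)/2})$ with $\tau=(t-s)$ — after the rescaling $\xi=\la\zeta$ which inserts the $\la^{(n-1)/2}$ — yields \eqref{j1a} directly, with no growing exponential, precisely because the operator is \emph{localized} in space so no contribution from distant deck transformations arises. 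The main obstacle is making rigorous that the microlocal cutoffs $Q^{\theta_0}_{j,\ell_0,\nu}$ (which belong to $S^0_{7/8,1/8}$ when $\theta_0=\la^{-\e_0}$, by \eqref{s.24}--\eqref{s.26}) can be commuted past the propagator and composed with it while preserving both the $O(\la^{(n-1)/2}|t-s|^{-(n-1)/2})$ kernel bound and the spatial localization; this is handled by an Egorov-type argument on the short time scales $|t-s|\le 2$ (note $\eta$ is supported in $(-1,1)$, so $|t-s|<2$ always, and for $|t-s|\gtrsim \la^{-1+\delta}$ stationary phase applies while for $|t-s|\lesssim \la^{-1+\delta}$ the trivial $L^2$ bound combined with $\|Q^{\theta_0}_\nu\beta(P/\la)\|_{L^1\to L^2}=O(\la^{(n-1)/2})$ suffices), exactly as in the proof of Lemma~3.3 of \cite{HSst}, to which the remaining routine details can be referred.
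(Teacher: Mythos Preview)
Your proposal has a genuine gap. The two routes you sketch both fail to establish the bound \emph{without} the exponential factor $\exp(C_M|t-s|)$, and neither identifies the mechanism the paper actually uses.

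First, a factual error underlies your ``cleanest route'': you write that $\eta$ is supported in $(-1,1)$ so $|t-s|<2$ always, but the cutoff is $\eta(t/T)$, so in fact $|t|,|s|<T$ and $|t-s|$ ranges up to $2T=2c_0\log\la$. The scaled Schr\"odinger propagator $e^{-i(t-s)\la^{-1}\Delta_g}$ at frequency $|\xi|\approx\la$ transports wave packets a distance $\approx 2|t-s|$, so for $|t-s|\sim\log\la$ the wave has left the ball $B(x_j,2\delta)$ many times over. The spatial localization of the $Q^{\theta_0}_{j,\ell_0,\nu}$ on both sides does \emph{not} prevent contributions from nontrivial deck transformations: the kernel of $e^{-i(t-s)\la^{-1}\Delta_g}(z,w)$ at nearby points $z,w\in B(x_j,2\delta)$ still encodes geodesics on $M$ of length up to $\sim|t-s|$, and when lifted via \eqref{k.3} these correspond to $\alpha(\tilde y)$ with $d_{\tilde g}(\tilde x,\alpha(\tilde y))$ as large as $\sim\log\la$. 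So the local Euclidean comparison is simply unavailable for the relevant time range.

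Second, your other route --- accepting the $\exp(C_M|t-s|)$ factor from the crude deck-transformation count and then ``absorbing'' it as $\la^{C_Mc_0}$ --- does not give \eqref{j1a}: it gives $C\la^{\frac{n-1}2+C_Mc_0}|t-s|^{-\frac{n-1}2}$, which is strictly weaker for every $c_0>0$ and would produce a $\la^\epsilon$ loss in the Keel--Tao output \eqref{st}. The whole point of Lemma~\ref{kerprop1}, as opposed to Lemma~\ref{kerprop}, is that the exponential factor is genuinely \emph{absent}, and this is what makes the log-scale Strichartz estimate lossless.

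The missing idea is that the microlocal cutoff $Q^{\theta_0}_{j,\ell_0,\nu}=Q^{\theta_0}_{\nu'}\circ Q^{\theta_0}_\ell$ localizes in \emph{both} direction and height $p(x,\xi)\approx\la\kappa^{\theta_0}_\ell$. After lifting to the universal cover and applying stationary phase to each summand $K_\alpha$, one finds (as in \cite{blair2023strichartz}) that $K_\alpha(\tilde x,\tilde y)=O(\la^{-N})$ unless $\alpha(\tilde y)$ lies within $O(1)$ of the lifted geodesic $\tilde\gamma_{j,\ell_0,\nu}$ \emph{and} $d_{\tilde g}(\tilde x,\alpha(\tilde y))$ lies in the thin shell $2|t-s|\kappa^{\theta_0}_\ell+O(|t-s|\la^{-\e_0})$. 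The intersection of a tube of bounded radius about a geodesic with a shell of width $O(|t-s|\la^{-\e_0})$ has bounded volume, so by the injectivity-radius argument \eqref{k.7} only $O(1)$ deck transformations survive --- not $O(\exp(CT))$. Each contributes $O(\la^{\frac{n-1}2}|t-s|^{-\frac{n-1}2})$, and the sum therefore carries no exponential growth. You need to exploit this two-parameter microlocalization; the directional cutoff alone (which gives $O(N)$ terms as in Lemma~\ref{G2}) is also insufficient.
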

We shall postpone the proof of this lemma until the end of this section and first see
how we can use it to prove \eqref{st}.
By applying the abstract theorem of Keel-Tao \cite{KT} and a simple rescaling argument, 
 we 
would have \eqref{st} if
\begin{equation}\label{s.44}
\| Uf(t,\cdot)\|_{\ell^2_j\ell_\nu^{2}L^{2}_{x}}
\le C \|f\|_{L^2_x},
\end{equation}
and 
\begin{equation}\label{s.45}
\| U(t)U^*(s)G\|_{\ell^\infty_j\ell_\nu^{\infty}L^{\infty}_x}
\le  C\la^{\frac{n-1}2} \, |t-s|^{-\frac{n-1}2}  \|G\|_{\ell_j^1\ell_\nu^{1}L^{1}_x},
\end{equation}
with
\begin{align}\label{s.46}
&\big(U(t)U^*(s)G\big)(x,j,\nu)=
\\
&= \eta(t/T) \sum_{j', \nu'} 
 \eta(s/T) \Bigl[ \bigr( Q_{j,\ell_0, \nu}^{\theta_0} e^{-i(t-s)\la^{-1}\Delta_g} (Q^{\theta_0}_{j',\ell_0,\nu'})^* \bigr)G( \, \cdot ,\,j',\,\nu')\Bigr] (x)., \notag
\end{align}
It is not hard to check that \eqref{s.44} follows from  \eqref{s.31}  with $p=2$ and the fact that $e^{-it\la^{-1}\Delta_g}$ is unitary, and \eqref{s.45} follows from the estimate \eqref{j1a}.
\end{proof}


\noindent{\bf 3.3. Kernel estimates}

Let us start out by proving the bounds in Lemmas \ref{Gl} and \ref{G2} that were used to prove the spectral projection
estimates in Theorem~\ref{bgsp}.

\begin{proof}[Proof of Lemma~\ref{Gl}] 
We first note that since $P$ is nonnegative, if we replace $e^{-itP}$ in \eqref{b.33} with $e^{itP}$, then, by \eqref{ii.3}, 
the resulting operator maps $L^1(M)\to L^\infty(M)$ with norm $O(\la^{-N}) \, \forall \, N$.  Thus, by Euler's formula, if
\begin{equation}\label{k1}
\tilde G_\la(x,y)= \int_{-\infty}^\infty (1-a(t)) \, T^{-1}
\Hat \Psi(t/T) \, \bigl(\cos t\sqrt{-\Delta_g}\bigr)(x,y) \, dt,
\end{equation}
it suffices to show that, under the assumptions of Lemma~\ref{Gl}, we have
\begin{equation}\label{k.2}
\tilde G_\la(x,y)=O(\la^{\frac{n-1}2} \exp(C_MT)),
\end{equation}
assuming that $T=c_0\log\la$, with $c_0=c_0(M)>0$ sufficiently small.

To prove this, we can use the arguments of B\'erard~\cite{Berard}.  Indeed, if we use the covering map coming from
the exponential map $\kappa=\exp_x: T_xM \simeq \Rn \to M$ at $x$, then $\kappa$ is a covering map and
$(\Rn, \tilde g)$ $\kappa^*g=\tilde g$, is the universal cover.  Like $(M,g)$, all of the sectional curvatures of
$(\Rn,\tilde g)$ are nonpositive.  As in \eqref{k.3} above, let $\Gamma$ be the associated
deck transformations and choose a Dirichlet domain $D$ associated with the origin, which is  in the lift of $x$.
If $\tilde x,\tilde y$ are the lifts to $D$ of $x,y\in M$, we have the formula 
\begin{equation}\label{k.3'}
\bigl(\cos t\sqrt{-\Delta_g} \bigr)(x,y)=\sum_{\alpha\in \Gamma} \bigl(\cos t\sqrt{-\Delta_{\tilde g}} \bigr)(\tilde x, \alpha(\tilde y)).
\end{equation}
As a result, 
\begin{equation}\label{k.4}
\tilde G_\la(x,y)=\sum_{\alpha\in \Gamma}
\int_{-\infty}^\infty (1-a(t)) \, T^{-1}\Hat \Psi(t/T) \,
\bigl(\cos t\sqrt{-\Delta_{\tilde g}}\bigr)(\tilde x,\alpha(\tilde y)) \, dt.
\end{equation}

To use this formula, we first note that, by \eqref{b.3},
$\Hat \Psi(s)=0$ if $|s|>2$, which means that the integrands in \eqref{k.4} vanishes for $|t|>2T$.  Also, by finite propagation
speed for the wave operator,
$$\bigl(\cos t\sqrt{-\Delta_{\tilde g}}\bigr)(\tilde x,\tilde z)=0 \, \, \text{if } \, \,
d_{\tilde g}(\tilde x,\tilde z)>|t|,$$
and so each of the summands in \eqref{k.4}
\begin{multline}\label{k.5}
K_\alpha (\tilde x,\tilde y)=\int_{-\infty}^\infty (1-a(t)) \, T^{-1}\Hat \Psi(t/T)
\,  \bigl(\cos t\sqrt{-\Delta_{\tilde g}}\bigr)(\tilde x,\alpha (\tilde y))\, dt =0,
\\
\text{if } \, \, d_{\tilde g}(\tilde x,\alpha(\tilde y))>2T=2c_0\log\la.
\end{multline}
Furthermore, if $c_0>0$ here is small enough then since $(\Rn,\tilde g)$ is of bounded geometry and all of its
sectional curvatures are nonpositive, as in \cite{Berard}, \cite[\S 3.6]{SoggeHangzhou}, one can use the
Hadamard parametrix
and stationary phase arguments to see that for $T$ as above one has the uniform bounds
\begin{equation}\label{k.6}
K_\alpha(\tilde x,\tilde y)=O(\la^{\frac{n-1}2}).
\end{equation}

As a result, we would obtain the bound \eqref{k.2} if we could verify that there are $O(\exp(C_MT))$ nonzero summands
in \eqref{k.4} for $T$ as above.  To do this, we let $r=\Inj (M)/4$.  Then if $B_{\tilde g}(\tilde z,r)$ is the geodesic ball
in $(\Rn,\tilde g)$ with center $\tidle z$ and radius $r$, we must have
\begin{equation}\label{k.7}
B_{\tilde g}(\alpha(\tilde y),r) \cap B_{\tilde g}(\alpha'(\tilde y),r)=\emptyset
\, \, \text{if } \, \, \alpha\ne \alpha', \, \,
\text{and } \, \alpha,\alpha'\in \Gamma.
\end{equation}
 Note that, by the above, in order for $K_\alpha(\tilde x,\tilde y)$ to be nonzero
we must also have that $B_{\tilde g}(\alpha(\tilde y),r)\subset B_{\tilde g}(\tilde x,2T+r)$.
Additionally, the volume of $B_{\tilde g}(\tilde z,r)$ must be $O(1)$ due to the fact that $(\Rn,\tilde g)$ is
of bounded geometry.  Similarly, since the sectional curvatures of $(\Rn,\tilde g)$ must be bounded below,
by standard volume comparison theorems (see e.g., \cite{ChavelRiemannianGeometry}) the volume of
$B_{\tilde g}(\tilde x,2T+r)$ must be $O(\exp(C_MT))$, assuming, as we may that $T>r$.  These two crude
volume estimates along with \eqref{k.7} yield the above claim about the number of nonzero summands
in \eqref{k.5}, which finishes the proof.
\end{proof}

\begin{proof}[Proof of Lemma~\ref{G2}] 
In view of the first estimate in \eqref{b.55} for $q=\infty$, we can use Euler's formula as above to see that
we would have \eqref{b.83} if we could show that for $T=c_0\log\la$ with $c_0>0$ sufficiently small we have
for $\la\gg 1$
\begin{multline}\label{k.8}
Q^{\theta_0}_{j,\ell_0,\nu} \tilde G_{\la,N}(x,y)
=\int_{-\infty}^\infty (1-a(t)) \, T^{-1}  \Hat \Psi(t/T) \, \beta(|t|/N) \,
Q^{\theta_0}_{j,\ell_0,\nu}  \bigl(\cos t\sqrt{-\Delta_g}\bigr)(x,y)\, dt
\\
=O(T^{-1}\la^{\frac{n-1}2}N^{1-\frac{n-1}2}),
\end{multline}
assuming that the sectional curvatures of $(M,g)$ are nonpositive.

We can use \eqref{k.3'} to write
\begin{multline}\label{k.9}
Q^{\theta_0}_{j,\ell_0,\nu} \tilde G_{\la,N}(x,y) =\sum_{\alpha\in \Gamma}K^{j,\ell_0,\nu}_\alpha(\tilde x,\tilde y),
\, \, \text{where } 
\\
K^{j,\ell_0,\nu}_\alpha(\tilde x,\tilde y) =\int_{-\infty}^\infty (1-a(t)) T^{-1}\Hat \Psi(t/T) \beta(|t|/N)
Q^{\theta_0}_{j,\ell_0,\nu}  \bigl(\cos t\sqrt{-\Delta_{\tilde g}}\bigr)(\tilde x,\alpha (\tilde y))\, dt,
\end{multline}
abusing notation a bit here by letting $Q^{\theta_0}_{j,\ell_0,\nu}$ here denote the lift of the operator on $(M,g)$
to $(\Rn,\tilde g)$ via the covering map.

Since the integrand in \eqref{k.9} vanishes when $|t|\notin (N/2,2N)$ one can use the Hadamard parametrix along
with \eqref{b.55} to see that, by the arguments in \cite{BSTop}, 
\begin{equation}\label{k.10}
K^{j,\ell_0,\nu}_\alpha(\tilde x,\tilde y) =
\begin{cases} O(\la^{\frac{n-1}2}N^{-\frac{n-1}2}) \, \, \,
\text{if } \, d_{\tilde g}(\tilde x,\alpha(\tilde y)) \in [N/4,4N]
\\
O(\la^{-m}) \, \, \forall \, m\in {\mathbb N}\, \, \text{otherwise},
\end{cases}
\end{equation}
if $T=c_0\log\la$ with $c_0>0$ sufficiently small.

This, by itself will not yield \eqref{b.83}.  For this, let $\tilde \gamma =\tilde \gamma_{j,\ell_0,\nu}\subset \Rn$
be the geodesic through the origin of the lift of the geodesic $\gamma_{j,\ell_0,\nu}\subset M$ associated
with $Q^{\theta_0}_{j,\ell_0,\nu}$.  Then the arguments in \cite{BSTop} also yield that if
$T=c_0\log\la$ with $c_0>0$ small enough one has
\begin{equation}\label{k.11}
K^{j,\ell_0,\nu}_\alpha(\tilde x,\tilde y) =O(\la^{-m}) \, \,
\forall \, m\in {\mathbb N} \, \, \text{if } \,
d_{\tilde g}(\tilde \gamma, \alpha(\tilde y))\ge C_0,
\end{equation}
for some fixed $C_0=C_0(M)$.  Since we can also use the volume counting arguments in \cite{BSTop} to see that
that number of $\alpha\in \Gamma$ for which $d_{\tilde g}(\tilde x,\alpha (\tilde y))\in [N/4,4N]$
and $d_{\tilde g}(\tilde \gamma, \alpha(\tilde y))\le C_0$ is $O(N)$, we obtain 
\eqref{b.83} from \eqref{k.9}, \eqref{k.10} and \eqref{k.11}.

If we assume that the sectional curvatures of $(M,g)$, and hence $(\Rn,\tilde g)$, are pinched below zero as in
\eqref{b.84}, then we have much more favorable dispersive estimates for the main term in the Hadamard parametrix,
as noted in \cite{BHSsp} and \cite{HSp}.  This leads to the improvement of the first part of \eqref{k.10} under this
curvature assumption:
\begin{equation}\label{k.12}
K^{j,\ell_0,\nu}_\alpha(\tilde x,\tilde y)=O_m(\la^{\frac{n-1}2}N^{-m}) \, \, \forall \, m\in {\mathbb N}.
\end{equation}
By using this along with the above arguments, we obtain the other estimate, \eqref{b.84}, in
Lemma~\ref{G2}.
\end{proof}

Now we shall prove the bounds that were used for the Strichartz estimates.
\begin{proof}[Proof of Lemma~\ref{kerprop}]
    
To prove \eqref{j1}, we shall mostly follow the proof of Proposition 4.1 in \cite{blair2023strichartz} as well as the ideas in the proof of Lemma~\ref{Gl} above.  Note that for fixed $t$ and $s$,
$\beta^2(P/\la)e^{-i(t-s)\la^{-1}\Delta_g}=\beta^2(P/\la)e^{i(t-s)\la^{-1}P^2}$ is the  Fourier multiplier operator
on $M$ with
\begin{equation}\label{j2}
     m(\la,t-s;\tau)=\beta^2(|\tau|/\la)e^{i(t-s)\la^{-1}\tau^2}.
\end{equation}
We have extended $m$ to be an even function of $\tau$ so that we can write
\begin{equation}\label{j3}
\beta^2(P/\la)e^{-i(t-s)\la^{-1}\Delta_g}=(2\pi)^{-1}\int_{-\infty}^\infty 
\Hat m(\la,t-s;r) \, \cos r\sqrt{-\Delta_g} \, dr,
\end{equation}
where
\begin{equation}\label{j4}
\Hat m(\la,t-s;r)=\int_{-\infty}^\infty e^{-i\tau r}
\beta^2(|\tau|/\la) \, e^{i(t-s)\la^{-1}\tau^2} \, d\tau.
\end{equation}

We note that, by a simple integration by parts argument,
\begin{multline}\label{j6}
\partial^k_r \Hat m(\la,t-s;r)=O(\la^{-N}(1+|r|)^{-N}) \, \forall \, N, 
\\
 \text{if } \, |t-s|\le 2^j, \, \, \text{and } \, \, |r|\ge C_02^j, \, \, 
j=0,1, 2,\dots,
\end{multline}
with $C_0$ fixed large enough.  Since $\beta(|\tau|/\la)=0$ if $|\tau|\notin [\la/4,2\la]$ one may take 
$C_0=100$, as we shall do.

To use this fix an even function $a\in C^\infty_0(\R)$ satisfying
$$a(r)=1, \, \, |r|\le 100 \quad \text{and } \, \, a(r)=0\, \, \text{if } \, \,  |r|\ge200.$$
Then  if 
we let
\begin{equation}\label{j8}
\tilde S_{\la,j}(t,s)(P)=(2\pi)^{-1}\int a(2^{-j}r)\Hat m(\la,t-s,r) \cos rP\, dr
\end{equation}
we have the symbol $F_{\la,j}(\tau)$ of the multiplier operator $$F_{\la,j}(P)=\tilde S_{\la,j}(t,s)(P)-\beta^2(P/\la)e^{i(t-s)\la^{-1}P^2}$$ is $O(\la^{-N_1}(1+\tau)^{-N_2})\,\,\,\forall N_1, N_2$ if $ |t-s|\le 2^j$. Thus by \eqref{ii.3} we have $$\|F_{\la,j}(P)\|_{L^1(M)\to L^\infty (M)}\lesssim 1, \,\,\,\text{if}\,\,\, |t-s|\le 2^j.$$

Consequently, if we let  $  \tilde S_{\la,j}(x,t;y,s)$ denote
the kernel of the multiplier operator $\tilde S_{\la,j}(t,s)(P) $,
we would have \eqref{j1} if we could show that
\begin{multline}\label{j12}
|\tilde S_{\la,j}(x,t;y,s)|\le \la^{\frac{n-1}2}|t-s|^{-\frac{n-1}2} \exp(C2^j), \quad \text{if } \, |t-s|\le 2^j
\\
\text{with } \, \, j=0,1,2,\dots \, \, \text{and } \, \,  \, 2^j \le c_0\log\la
\end{multline}
with $c_0=c_0(M)$ fixed small enough.

To prove \eqref{j12}, as in the proof of Lemma~\ref{Gl}, we shall use the 
Hadamard parametrix and the 
Cartan-Hadamard theorem to lift the 
calculations that will be needed up to the
universal cover $({\mathbb R}^{n-1},\tilde g)$
of $(M,g)$. Let $\Gamma$ be the associated
deck transformations and choose a Dirichlet domain $D$ associated with the origin.
If $\tilde x,\tilde y$ are the lifts to $D$ of $x,y\in M$,
by \eqref{k.3} if we set 
\begin{equation}\label{j14}
K_{\la,j}(\tilde x,t;\tilde y,s)=
(2\pi)^{-1}\int a(2^{-j}r)\Hat m(\la,t-s;r)
\, \bigl( \cos r\sqrt{-\Delta_{\tilde g}}\bigr)(\tilde x,
\tilde y) \, dr,
\end{equation}
we have the formula
\begin{equation}\label{j15}
\tilde S_{\la,j}(x,t;y,s)= 
\sum_{\alpha \in \Gamma}
K_{\la,j} (\tilde x,t; \alpha(\tilde y),s).
\end{equation}

Also, by Huygen's principle and the support properties
of $a$, we have that
\begin{equation}\label{j18}
 K_{\la,j}(\tilde x,\tilde y)=0 \, \, \text{if } \, \, 
d_{\tilde g}(\tilde x,\tilde y)\le C_12^j
\end{equation}
for a uniform constant $C_1$.  Based on this, if we argue as in the proof of Lemma~\ref{Gl} using \eqref{k.7} along with simple volume estimates related to the bounded geometry assumption, it is not hard to show that 
 the number
of non-zero summands on the right side of  \eqref{j15} is $O(\exp(C2^j))$.  As a result, we would obtain \eqref{j12} if we could show that
\begin{multline}\label{j20}
|K_{\la,j}(\tilde x,t;\tilde y,s)|\le C\la^{\frac{n-1}2}|t-s|^{-\tfrac{n-1}2},  \quad \text{if } \, |t-s|\le 2^j \, \, \\ \text{with}\,\,\,j=0,1,2,\dots, \, 2^j\le  c_0\log\la.
\end{multline}

As in the previous section, to prove \eqref{j20}, we can use the Hadamard
parametrix for $\partial_r^2-\Delta_{\tilde g}$
since $({\mathbb R}^{n-1},\tilde g)$ is a Riemannian
manifold without conjugate points, i.e., its
 injectivity radius is infinite.  More explicitly for
$\tilde x\in D$, $\tilde y\in {\mathbb R}^{n-1}$
and $|r|>0$
\begin{equation}\label{k13}
\bigl(\cos r\sqrt{-\Delta}_{\tilde g}\bigr)(\tilde x,
\tilde y)=
\sum_{\nu=0}^N
w_\nu(\tilde x,\tilde y)W_\nu(r,\tilde x,\tilde y)
+R_N(r,\tilde x,\tilde y)
\end{equation}
where $w_\nu, W_\nu$ and $R_N$ satisfies \eqref{k14}-\eqref{k19}.

 By \eqref{k13}, it suffices
to see that if we replace $(\cos r\sqrt{-\Delta_{\tilde g}})(\tilde x,\tilde y)$ in \eqref{j14} by each
of the terms in the right side of \eqref{k13} then each such expression will satisfy the bounds
in \eqref{j20}.

Let us start with the contribution of the main term in the Hadamard parametrix which is the
$\nu=0$ term in \eqref{k13}.  In view of \eqref{k14} and \eqref{k17} it would give rise to these
bounds if
\begin{multline}\label{j28}
(2\pi)^{-n}\int_{-\infty}^\infty \int_{{\mathbb R}^{n-1}} e^{id_{\tilde g}(\tilde x,\tilde y)\xi_1}
\cos (r|\xi|) \, a(2^{-j}r) \, \Hat m(\la,t-s;r) \, dr d\xi
\\
=O(\la^{\frac{n-1}2}|t-s|^{-\frac{n-1}2}) \quad
\text{when }\,\,\, |t-s|\le 2^j .
\end{multline}
However, by \eqref{j2} and \eqref{j6} and the support properties of $a$, 
\begin{align}\label{j29}
(2\pi)^{-1}&\int_{-\infty}^\infty \int_{{\mathbb R}^{n-1}} e^{id_{\tilde g}(\tilde x,\tilde y)\xi_1}
\cos (r|\xi|) \, a(2^{-j}r) \, \Hat m(\la,t-s;r) \, dr d\xi
\\
&=
(2\pi)^{-1}\int_{-\infty}^\infty \int_{{\mathbb R}^{n-1}} e^{id_{\tilde g}(\tilde x,\tilde y)\xi_1}
\cos (r|\xi|) \Hat m(\la,t-s;r) \, dr d\xi+O(\la^{-N}) \notag
\\
&=\int_{{\mathbb R}^{n-1}} e^{id_{\tilde g}(\tilde x,\tilde y)\xi_1} \beta^2(|\xi|/\la) e^{i(t-s)\la^{-1}|\xi|^2} \, d\xi
+O(\la^{-N}). \notag
\end{align}
A simple stationary phase argument shows that the last integral is $O(\la^{\frac{n-1}2}|t-s|^{-\frac{n-1}2})$, and so  
we conclude that the main term in the Hadamard parametrix leads to the desired bounds.

Similarly, one can use stationary phase to show that if $ |t-s|\le 2^j $
\begin{equation}
    \begin{aligned}
        (2\pi)^{-1} \iint &e^{id_{\tilde g}(\tilde x,\tilde y)\xi_1} e^{\pm ir|\xi|} \alpha_\nu(|\xi|) \, a(2^{-j}r)\Hat m(\la,t-s;r) \, dr d\xi
\\
&=\int_{{\mathbb R}^{n-1}} e^{id_{\tilde g}(\tilde x,\tilde y)\xi_1} \beta^2(|\xi|/\la) e^{i(t-s)\la^{-1}|\xi|^2} \, \alpha_\nu(|\xi|) \, d\xi
+O(\la^{-N})\\
&=O(\la^{\frac{n-1}2-\nu}|t-s|^{-\frac{n-1}2}).
    \end{aligned}
\end{equation}
Note that  by \eqref{j18}
we may assume that $d_{\tilde g}(\tilde x,\tilde y)\le Cc_0\log\la$ .  So by \eqref{k13} and \eqref{k19}, if we choose $c_0$ small enough, the contributions from the higher order terms would
be
$O(\la^{\frac{n-1}2-\frac12}|t-s|^{-\frac{n-1}2})$.

We also need to see that the remainder term in \eqref{k13} leads to the bounds
\begin{multline}\label{j30}
\int^\infty_{-\infty} a(2^{-j}r)\Hat m(\la,t-s;r) R(r,\tilde x,\tilde y) \, dr
\\
=\int^\infty_{-\infty} \beta^2(|\tau|/\la) e^{i(t-s)\la^{-1}\tau^2} \, 
\bigl[ a(2^{-j}\, \cdot \, )R(\, \cdot\, ,\tilde x,\tilde y)\bigr]\, \widehat{}\, \,(\tau) \, d\tau =O(\la^{-N}), \, \, \forall N.
\end{multline}
Since we are assuming that $d_{\tilde g}(\tilde x,\tilde y)\le Cc_0\log\la$,
by \eqref{k16} and support properties of $\alpha$, the last factor in the integral in the right, which is the Fourier transform of
$r\to a(r)R(r,\tilde x,\tilde y)$, is $O(|\tau|^{-N}\exp(CNc_0\log\la))$.  So, by the support properties of $\beta$, the last integral
in \eqref{j30} is $O(\la^{-N})$ if we fix $c_0$ small enough.
\end{proof}

\begin{proof}[Proof of Lemma~\ref{kerprop1}]
If we use the second part of \eqref{s.30}, it suffices to show 
\begin{equation}\label{j31}
\|\eta( t/T)\eta(s/T)Q^{\theta_0}_{j,\ell_0,\nu} \beta^2(P/\la)e^{-i(t-s)\la^{-1}\Delta_g}\|_{L^1(M)\to L^\infty(M)}\le C\la^{\frac{n-1}2}|t-s|^{-\frac{n-1}2}.
\end{equation}
Recall that by the first part of \eqref{s.30}, we have $\|Q^{\theta_0}_{j,\ell_0,\nu}\|_{L^\infty(M)\to L^\infty(M)}\le C$. Thus if we repeat the arguments in the proof of Lemma~\ref{kerprop} above, it suffices to show 
\begin{multline}\label{j32}
|\sum_{\alpha \in \Gamma}
K_{\la,j} (\tilde x,t; \alpha(\tilde y),s))|\le C\la^{\frac{n-1}2}|t-s|^{-\tfrac{n-1}2}, \, \,  \quad \text{if } \, |t-s|\le 2^j \\ \text{with}\,\,\,j=0,1,2,\dots, \, 2^j\le  c_0\log\la.
\end{multline}
where 
\begin{equation}\label{j33}
K_{\la,j}(\tilde x,t;\tilde y,s)=
(2\pi)^{-1}\int a(2^{-j}r)\Hat m(\la,t-s;r)\,\big(Q^{\theta_0}_{j,\ell_0,\nu} \circ
\,  \cos r\sqrt{-\Delta_{\tilde g}}\big)(\tilde x,
\tilde y) \, dr,
\end{equation}
and
\begin{equation}\label{j34}
 K_{\la,j}(\tilde x,\tilde y)=0 \, \, \text{if } \, \, 
d_{\tilde g}(\tilde x,\tilde y)\ge C_12^j
\end{equation}
for a uniform constant $C_1$.  As in \eqref{j15},
 the number
of non-zero summands on the right side of \eqref{j32} is $O(\exp(C2^j))$.

If we repeat the arguments in \eqref{j28}-\eqref{j30}, it suffices to replace $ \cos r\sqrt{-\Delta_{\tilde g}}$ by the main term in the Hadamard parametrix as the higher order terms and remainder term will contribute errors of
 $O(\la^{\frac{n-1}2-\frac12})$ as long as we choose $c_0$ small enough as above. Thus the proof of Lemma~\ref{kerprop1}  would be complete if we can show that
 \begin{equation}\label{j35}
     \begin{aligned}
     (2\pi)^{-2n-1} &\sum_{\alpha \in \Gamma}\int_{-\infty}^\infty \iiint e^{i((\tilde x-\tilde z)\cdot \eta+ d_{\tilde g}(\tilde z,\alpha(\tilde y))\xi_1)}  Q^{\theta_0}_{j, \ell_0, \nu}(\tilde x,\tilde z,\eta)\\
         &\qquad\qquad\qquad\cdot
\cos (r|\xi|) \, a(2^{-j}r) \, \Hat m(\la,t-s;r) \, dr d\eta  d\tilde yd\xi \\ &
=O(\la^{\frac{n-1}2}|t-s|^{-\frac{n-1}2}) \quad
\text{when }\,\,\, |t-s|\le 2^j .
     \end{aligned}
 \end{equation}
As in \eqref{j29}, 
by \eqref{j2} and \eqref{j6} and the support properties of $a$, each term in the summand of \eqref{j35} can be simplified as 
\begin{align}\label{j36}
(2\pi)^{-2n-2}\int e^{i((\tilde x-\tilde z)\cdot \eta+ d_{\tilde g}(\tilde z,\alpha(\tilde y))}Q^{\theta_0}_{j, \ell_0, \nu}(\tilde x,\tilde z,\eta)\beta^2(|\xi|/\la) e^{i(t-s)\la^{-1}|\xi|^2} \, d\eta  d\tilde yd\xi
+O(\la^{-N}). \notag
\end{align}
Here $Q^{\theta_0}_{j, \ell_0, \nu}(\tilde x,\tilde z,\eta)$ is the symbol for the operator $Q^{\theta_0}_{j, \ell_0, \nu}$.

By a simple stationary phase argument, the last integral is $O(\la^{\frac{n-1}2}|t-s|^{-\frac{n-1}2})$. On the other hand, 
recall that as in \eqref{s.26}
$Q^\theta_{j, \ell_0, \nu}=Q^\theta_{j,\ell_0, \nu'}\circ Q^\theta_{j,\ell_0, \ell}.$  If we let $\tilde \gamma =\tilde \gamma_{j,\ell_0,\nu}\subset \Rn$
be the geodesic through the origin of the lift of the geodesic $\gamma_{j,\ell_0,\nu}\subset M$ associated
with $Q^{\theta_0}_{j,\ell_0,\nu'}$ and $\kappa^{\theta_0}_\ell$ as in the definition of $Q^{\theta_0}_{j,\ell_0, \ell}$ in \eqref{s.24},  then
one can follow the arguments in the proof of Proposition 4.2 in \cite{blair2023strichartz} to see that the last integral is $O(\la^{-m}) \, \,
\forall \, m\in {\mathbb N}$ unless
\begin{equation}\label{k.11a}
d_{\tilde g}(\tilde \gamma, \alpha(\tilde y))\le C_0,
\end{equation}
and
\begin{equation}\label{k.11b}
    d_{\tilde g}(\tilde x, \alpha(\tilde y))\in
\bigl[ 2|t-s|(\kappa^{\theta_0}_\ell-C_0\la^{-\e_0}), \,
2|t-s|(\kappa^{\theta_0}_\ell+C_0\la^{-\e_0})\big]
\end{equation}
for some fixed $C_0=C_0(M)$ and $\theta_0=\la^{-\e_0}$. By \eqref{k.7} with simple volume counting arguments, one can see
that number of $\alpha\in \Gamma$ for which \eqref{k.11a} and \eqref{k.11b} hold is $O(1)$. This finishes the proof of Lemma~\ref{kerprop1}.
\end{proof}

\newsection{Littlewood-Paley estimates}

\begin{lemma}\label{littlewood}
    Let 
    $\beta\in C_0^\infty (1/2, 2)$ with $\sum_{k=-\infty}^\infty \beta(s/2^k)=1$, and define $\beta_k(s)=\beta(s/2^k)$, $\beta_0(s)= \sum_{k\le 0} \beta(s/2^k)$. If $(M,g)$ is a complete manifold of bounded geometry, we have for  $2\le q<\infty$
    \begin{equation}\label{little1}
        \|u\|_{L^q(M)}\lesssim \|Su\|_{L^q(M)} +\|u\|_{L^2(M)},
    \end{equation}
    where $Su=\left(\sum_{k\ge0} |\beta_k(P)u|^2\right)^{\frac12}$ with $P=\sqrt{-\Delta_g}$.
\end{lemma}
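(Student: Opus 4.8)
The plan is to establish the Littlewood--Paley inequality \eqref{little1} by a duality and square-function argument, following the standard approach for hyperbolic quotients as in Bouclet~\cite{Boulp}, but taking care to use only properties available for manifolds of bounded geometry. First I would reduce matters to a vector-valued estimate. By duality, \eqref{little1} will follow once we show that for $q\ge 2$ the square function $S$ is bounded on $L^q(M)$ \emph{up to the lower-order term}, i.e.\ it suffices to prove the reverse-type bound together with an $\ell^2$-valued Calder\'on--Zygmund estimate. Concretely, writing $u=\beta_0(P)u+\sum_{k\ge 1}\beta_k(P)u$, the term $\beta_0(P)u$ is handled by Sobolev embedding and the uniform bound $\|\beta_0(P)\|_{L^2\to L^2}=O(1)$ since $q<\infty$ and $\beta_0$ localizes to bounded frequencies (this is where the $\|u\|_{L^2(M)}$ summand enters, exactly as in the flat case where low frequencies cannot be controlled by the high-frequency square function alone). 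So the heart of the matter is to bound $\|\sum_{k\ge1}\beta_k(P)u\|_{L^q(M)}$ by $\|Su\|_{L^q(M)}$.

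Next I would introduce Rademacher functions $\{r_k(\omega)\}$ and use Khintchine's inequality: $\|Su\|_{L^q(M)}\approx \bigl(\int_0^1\|\sum_{k\ge1}r_k(\omega)\beta_k(P)u\|_{L^q(M)}^q\,d\omega\bigr)^{1/q}$. Thus it is enough to show the uniform (in $\omega$ and in the choice of signs) bound
\begin{equation}\label{lp:randomized}
\Bigl\| \sum_{k\ge1} \epsilon_k \,\beta_k(P) h\Bigr\|_{L^q(M)}\lesssim \|h\|_{L^q(M)},\qquad \epsilon_k\in\{-1,+1\},\ 2\le q<\infty.
\end{equation}
The operator $m_\epsilon(P)=\sum_{k\ge1}\epsilon_k\beta_k(P)$ is a spectral multiplier whose symbol $m_\epsilon(\tau)=\sum_{k\ge1}\epsilon_k\beta(\tau/2^k)$ satisfies the Mikhlin--H\"ormander condition $|\tau|^{j}|\partial_\tau^j m_\epsilon(\tau)|\le C_j$ uniformly in $\epsilon$. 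For this I would invoke a spectral multiplier theorem valid on manifolds of bounded geometry: one writes $m_\epsilon(P)$ via the wave equation, $m_\epsilon(P)=\tfrac1{2\pi}\int \hat m_\epsilon(r)\cos(rP)\,dr$, decomposes dyadically in $r$, uses finite propagation speed together with the covering \eqref{b.0}--\eqref{b.00} and the partition of unity \eqref{b.1} to localize to coordinate charts, and then applies the Hadamard parametrix on the universal cover exactly as in the kernel estimates of \S3.3 (cf.\ the proof of Lemma~\ref{Gl}). The bounded-geometry hypothesis gives uniform control of the metric and its derivatives in normal coordinates, and the volume comparison together with \eqref{k.7} controls the number of nonzero deck-transformation summands, yielding the Calder\'on--Zygmund kernel bounds $|K_\epsilon(x,y)|\lesssim d_g(x,y)^{-n}$ and $|\nabla_y K_\epsilon(x,y)|\lesssim d_g(x,y)^{-n-1}$ for $d_g(x,y)\lesssim 1$, plus rapid off-diagonal decay for $d_g(x,y)\gtrsim 1$ coming from the $L^2$ boundedness and finite propagation speed. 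Standard Calder\'on--Zygmund theory on spaces of homogeneous type (which $(M,g)$ is, by bounded geometry) then gives $L^q$ boundedness of $m_\epsilon(P)$ for $1<q<\infty$, with constants independent of $\epsilon$, hence \eqref{lp:randomized}.

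I expect the main obstacle to be the global, non-local part of the kernel estimate: unlike the compact case, $M$ is unbounded, so one must show that contributions from $d_g(x,y)\gtrsim 1$ (equivalently, from large times $|r|$ in the wave-equation representation) are summable. The resolution is the same mechanism used throughout \S3.3: finite propagation speed forces $\cos(rP)(x,y)=0$ for $d_g(x,y)>|r|$, the $L^2(M)\to L^2(M)$ bound $\|m_\epsilon(P)\|_{2\to2}=O(1)$ gives an a priori control, and lifting to the universal cover with the volume-counting bound $\#\{\alpha\in\Gamma: d_{\tilde g}(\tilde x,\alpha(\tilde y))\le R\}=O(e^{C_M R})$ combined with the exponential gain from the Hadamard parametrix remainder (for $R$ of size a small multiple of $\log\lambda$, but here we need it for all $R$, which still works since we only need polynomial-in-$R$ decay of the smooth-kernel pieces against the exponential volume growth, and the remainder term contributes $O(e^{CR})$ times the frequency localization which we may take as sharp as we wish). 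Assembling these pieces—randomization, Mikhlin symbol bound, local CZ kernel estimate via Hadamard parametrix on the cover, and global summability via finite propagation speed—yields \eqref{little1}. I would close by noting that the remark following the lemma (allowing one to replace $u$ by $\beta(\sqrt{-\Delta_g}/\lambda)u$ in the Strichartz arguments) is an immediate consequence of \eqref{little1} together with almost-orthogonality of the dyadic pieces.
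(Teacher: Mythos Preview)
Your argument has a genuine gap at the multiplier step. The uniform bound \eqref{lp:randomized} is precisely what fails on manifolds of bounded geometry with exponential volume growth: as the paper remarks immediately after the statement of the lemma, if \eqref{little1} held without the $\|u\|_{L^2}$ term one would deduce that each $\beta_k(P)$ is $L^q$-bounded uniformly in $k$, and this is false in general (e.g.\ on hyperbolic quotients). Your attempt to control the large-$|r|$ part of the wave representation of $m_\epsilon(P)$ by lifting to the cover and volume counting cannot work for the same reason: the number of deck transformations at distance $\le R$ grows like $e^{C_M R}$, and there is no compensating exponential decay in the Hadamard parametrix for a Mikhlin multiplier with no high-frequency localization. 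The phrase ``rapid off-diagonal decay for $d_g(x,y)\gtrsim 1$ coming from the $L^2$ boundedness and finite propagation speed'' does not describe a valid mechanism.

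The paper's proof avoids this by never claiming that $\beta_k(P)$ is a Calder\'on--Zygmund operator. Instead it writes (Lemma~\ref{parametrix}) $\beta_k(P)=B_k+C_k$ where $B_k=(2\pi)^{-1}\int \rho(t)\hat\beta_k(t)\cos tP\,dt$ with $\rho$ supported in $|t|\le\delta<\tfrac12\Inj(M)$. By finite propagation speed the kernel of $B_k$ is supported in $d_g(x,y)\le\delta$, so $B_k$ is a genuine pseudodifferential operator built from the local Hadamard parametrix, and the randomized multiplier bound \eqref{bk} (your \eqref{lp:randomized} but for $B_k$, not $\beta_k(P)$) follows from standard CZ theory on each coordinate ball. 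The remainder $C_k$ has symbol $O((1+|\tau|+2^k)^{-N})$, whence $\|C_k\|_{L^2\to L^q}\lesssim 2^{-Nk}$; summing these produces exactly the $\|u\|_{L^2}$ error term. With this splitting in hand, the paper runs the duality argument you allude to but do not carry out: from \eqref{bk} and Rademacher one gets $\|S_Bv\|_{L^{q'}}\lesssim\|v\|_{L^{q'}}$, then almost-orthogonality $\beta_{k_1}\beta_{k_2}\equiv 0$ for $|k_1-k_2|\ge 2$ and H\"older give $\int u\bar v\lesssim \|S_Bu\|_{L^q}\|S_Bv\|_{L^{q'}}+\text{(errors)}$, hence $\|u\|_{L^q}\lesssim\|S_Bu\|_{L^q}+\|u\|_{L^2}$, and finally $S_B$ is replaced by $S$ using the $C_k$ bounds again. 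The essential idea you are missing is that the $\|u\|_{L^2}$ term is not merely the low-frequency piece $\beta_0(P)u$; it is forced by the non-local tails $C_k$ of every dyadic projector.
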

Lemma~\ref{littlewood} is a generalization of  Bouclet~\cite[Theorem 1.3]{Boulp} to complete manifolds of bounded geometry, and its proof mostly follows from the same arguments there. For the sake of completeness, we provide the detailed proof below.

On compact manifolds, the above estimate holds without the additional term $\|u\|_{L^2(M)}$. However, on non-compact manifolds,  the estimate may fail without this term, since otherwise it would imply the $L^q$ boundness of the multiplier operator $\beta_k(P)$. See \cite{AGL} for a discussion in the context of hyperbolic spaces.

By Minkowski's integral inequality, \eqref{little1} implies 
 \begin{equation}\label{litte1}
        \|u\|_{L^q(M)}\lesssim \left(\sum_{k\ge0}\|\beta_k(P)u\|^2_{L^q(M)}\right)^{\frac12} +\|u\|_{L^2(M)}.
    \end{equation}
This combined with Theorem~\ref{bgst} and $L^2$ orthogonality yield
Corollary~\ref{bgsta}.

To prove Lemma~\ref{littlewood}, we shall require the following
\begin{lemma}\label{parametrix} For $k\ge 1$, we can write 
$\beta_k(P)=B_k+C_k$ with
\begin{equation}\label{bk}
    \|\sum_{k\ge 0}a_kB_ku\|_{L^q(M)}\lesssim  \|u\|_{L^q(M)},\,\,\,\text{if}\,\,\,a_k=\pm 1 \,\,\forall k \,\,\,\text{and}\,\,\,1<q<\infty.
\end{equation}
And for $q\ge 2$, 
\begin{equation}\label{ck}
    \|C_ku\|_{L^q(M)}\lesssim_N 2^{-Nk}\|u\|_{L^2(M)}.
\end{equation}
\end{lemma}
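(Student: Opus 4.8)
The plan is to split each $\beta_k(P)$ into a piece $B_k$ whose Schwartz kernel is supported in a fixed small neighborhood of the diagonal of $M\times M$ and a globally defined smoothing remainder $C_k$, then to prove \eqref{bk} by Calder\'on--Zygmund theory and \eqref{ck} by the spectral theorem together with Sobolev embedding. To set up the splitting I would extend $\beta$ to an even function $\tilde\beta(\tau)=\beta(|\tau|)$, so that $\beta_k(P)=\tilde\beta(P/2^k)$, and write, by Fourier inversion and $P\ge0$,
\begin{equation*}
\beta_k(P)=\frac{2^k}{2\pi}\int_{-\infty}^\infty \widehat{\tilde\beta}(2^kt)\,\cos(tP)\,dt .
\end{equation*}
Fixing $\chi\in\Coi((-r_0,r_0))$ with $\chi\equiv1$ near $0$, where $0<r_0<\Inj(M)$ is small enough that the Hadamard parametrix for $\cos(tP)$ (as in \eqref{k13}--\eqref{k19a}, which holds verbatim on a manifold of uniformly bounded geometry for $|t|<\Inj(M)$, with amplitudes and remainder controlled uniformly in the base point) is available on all geodesic balls of radius $r_0$, I set $B_k=\tfrac{2^k}{2\pi}\int\chi(t)\widehat{\tilde\beta}(2^kt)\cos(tP)\,dt$ and $C_k=\beta_k(P)-B_k$ for $k\ge1$, and split $\beta_0(P)=B_0+C_0$ in the same way (so $B_0$ is a zero--order pseudodifferential operator with $\xi$--support in $|\xi|\lesssim1$ and kernel supported in $d_g(x,y)<r_0$).

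For \eqref{ck}, note that $C_k=m_k(P)$ with $m_k(\tau)=\tfrac{2^k}{2\pi}\int(1-\chi(t))\widehat{\tilde\beta}(2^kt)\cos(t\tau)\,dt$; repeated integration by parts in $t$, using that $\widehat{\tilde\beta}$ is Schwartz and $1-\chi$ vanishes near $0$, gives $|\partial_\tau^jm_k(\tau)|\le C_{j,N}2^{-Nk}(1+\tau)^{-N}$ for all $j,N$. I would then write $m_k(\tau)=(1+\tau^2)^{-M}\tilde m_k(\tau)$ with $M=M(q)$ chosen so large that the Sobolev embedding $(I+P^2)^{-M}\colon L^2(M)\to L^q(M)$ holds on manifolds of uniformly bounded geometry (see e.g. \cite[\S2]{normhyp}); since $\|\tilde m_k\|_{L^\infty}\le C_N2^{-Nk}$, the spectral theorem gives $\|\tilde m_k(P)\|_{L^2\to L^2}\le C_N2^{-Nk}$, hence $\|C_ku\|_{L^q}\lesssim_N2^{-Nk}\|u\|_{L^2}$ for $q\ge2$.

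For \eqref{bk}, the key observation is that, since $\chi$ is supported in $|t|<r_0<\Inj(M)$, finite propagation speed forces $B_k(x,y)=0$ for $d_g(x,y)\ge r_0$, so the kernel $K_a(x,y)=\sum_{k\ge0}a_kB_k(x,y)$ of $T_a=\sum_{k\ge0}a_kB_k$ is supported in $\{d_g(x,y)<r_0\}$. On that set I would insert the Hadamard parametrix, carry out the $t$--integral (which reconstitutes $\tilde\beta(|\xi|/2^k)$ up to an $O_N(2^{-Nk}(1+|\xi|)^{-N})$ error, the parametrix remainder contributing $O_N(2^{-Nk})$ after a Taylor expansion in $t$ at $t=0$ and using $\tilde\beta(0)=0$), and rescale $\xi\mapsto2^k\xi$; using \eqref{k17}, \eqref{k19}, \eqref{k19a} to control the amplitudes $w_\nu$ and the phase $d_g(x,y)$ on balls of radius $r_0$, this should yield the uniform bounds
\begin{equation*}
|\nabla_{x,y}^\alpha B_k(x,y)|\le C_{\alpha,N}\,2^{k(n+|\alpha|)}\bigl(1+2^kd_g(x,y)\bigr)^{-N},\qquad n=\dim M .
\end{equation*}
Summing over $k$ with $N>n+|\alpha|$ (the $k=0$ term giving $|\nabla^\alpha B_0|\lesssim1$) then shows that, uniformly in the signs, $|K_a(x,y)|\lesssim d_g(x,y)^{-n}$ and $|\nabla_{x,y}K_a(x,y)|\lesssim d_g(x,y)^{-n-1}$ for $d_g(x,y)<r_0$, i.e. $K_a$ is a Calder\'on--Zygmund kernel. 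Since $T_a=b(P)$ with $b=\sum_{k\ge0}a_kb_k$, $b_k=\tilde\beta(\cdot/2^k)-m_k$, and for each $\tau$ only $O(1)$ of the $b_k(\tau)$ are non-negligible, the spectral theorem also gives $\|T_a\|_{L^2\to L^2}\le C$ uniformly in the signs. Since $M$, equipped with $d_g$ and its Riemannian volume, is doubling uniformly at scales $\le r_0$ (by bounded geometry and $\Inj(M)>0$) and $K_a$ is supported at such scales, the Calder\'on--Zygmund theorem yields $\|T_a\|_{L^q\to L^q}\le C$ for $1<q<\infty$ uniformly in the signs; alternatively one localizes with the partition of unity \eqref{b.1} and applies the Euclidean Calder\'on--Zygmund theorem chartwise, summing via the finite overlap \eqref{b.00}. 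This gives \eqref{bk}.

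I expect the main obstacle to be the displayed kernel estimate for $B_k$ with constants independent of the base point: it requires running the Hadamard parametrix construction and the attendant (non)stationary phase bounds uniformly over the noncompact manifold, which is precisely where the uniformly bounded geometry hypothesis---uniform injectivity radius, uniform bounds on the curvature tensor and its covariant derivatives, hence uniform control of $w_\nu$, $d_g$ and the transition maps on balls of radius $r_0$---is needed. The remaining ingredients (finite propagation speed, the spectral theorem, Sobolev embedding on manifolds of bounded geometry, and Calder\'on--Zygmund theory on a locally homogeneous space) are standard, and the $k=0$ piece is a single fixed zero--order operator handled by the same local/global splitting.
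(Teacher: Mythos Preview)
Your proposal is correct and follows essentially the same approach as the paper: the same time-cutoff splitting $\beta_k(P)=B_k+C_k$ via the wave representation, the same symbol estimate for $C_k$ combined with Sobolev embedding to obtain \eqref{ck}, and the same use of finite propagation speed plus the Hadamard parametrix (with bounded geometry providing uniformity) to control $B_k$ and obtain \eqref{bk}. You spell out the Calder\'on--Zygmund step in more detail than the paper, which simply invokes ``standard arguments using the Hadamard parametrix,'' but the substance is identical.
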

\begin{proof} We can extend $\beta\in C_0^\infty (1/2, 2)$ to an even function by letting $\beta(s)=\beta(|s|)$. For $\delta<\Inj(M)/2$,  we can fix $\rho\in C_0^\infty$ satisfying $\rho(t)= 1$, $|t|\le \delta/2$ and $\rho(t)=0 $, $|t|\ge \delta$, and define
\begin{equation}
\begin{aligned}
     \beta_k(P)&=(2\pi)^{-1}\int\hat \beta_k(t)\cos tP dt  \\
      &=(2\pi)^{-1}\int\rho(t)\hat \beta_k(t)\cos tP dt+ (2\pi)^{-1}\int (1-\rho(t))\hat \beta_k(t)\cos tP dt\\
      &=B_k+C_k.
\end{aligned}
\end{equation}
    It is not hard to check that the symbol of $C_k$ is $O((1+|\tau|+2^k)^{-N})$, thus \eqref{ck} follows from Sobolev estimates. To prove \eqref{bk}, we cover $M$ by geodesic balls of radius $\delta$. Using the finite propagation speed property of the wave propagator $\cos tP$ and locally finite property of the covering, we can reduce the calculations needed to a fixed geodesic ball. Then, \eqref{bk} follows from standard arguments using the Hadamard parametrix for $\cos tP$.
\end{proof}
\begin{proof}[Proof of Lemma~\ref{littlewood}]
    Let us denote $S_B=\left(\sum_{k\ge0} |B_ku|^2\right)^{\frac12}$. Note that by using a standard argument using Rademacher functions( see e.g.,\cite[§ 0]{SFIO2}), \eqref{bk} implies the following square function estimate
    \begin{equation}\label{Bsquare}
            \|S_Bu\|_{L^q(M)}\lesssim  \|u\|_{L^q(M)},\,\, 1<q<\infty.
    \end{equation}

    Since  $\beta_{k_1}(P)\beta_{k_2}(P)\equiv 0$ if $|k_1-k_2|\ge 2$, we have 
    \begin{equation}\label{lia}
        \begin{aligned}
          \int_M &u_1 \bar u_2 dx\lesssim \|S_B u_1\|_{L^q(M)}\|S_B u_2\|_{L^{q'}(M)} 
            +\| u_2\|_{L^{q'}(M)}
            \\
             &\times \big(\sum_{\{k_1, k_2\ge 0, |k_1-k_2|\le 1\}}   \|B_{k_1}C_{k_2} u_1\|_{L^{q}(M)}+\|C_{k_1}B_{k_2} u_1\|_{L^{q}(M)}+\|C_{k_1}C_{k_2} u_1\|_{L^{q}(M)}\big)
        \end{aligned}
    \end{equation}
  By \eqref{bk}, we have $\|B_k\|_{L^{q}(M)\to L^{q}(M) }\lesssim1$. If we combine this with \eqref{Bsquare} and \eqref{ck}, it is not hard to show that 
  \begin{equation}
              \int_M u_1 \bar u_2 dx\lesssim \| u_2\|_{L^{q'}(M)}(\|S_B u_1\|_{L^{q}(M)}
           +\|u_1\|_{L^{2}(M)}).
  \end{equation}
This implies 
  \begin{equation}\label{liaa}
              \| u\|_{L^{q}(M)}\lesssim\|S_B u\|_{L^{q}(M)}
           +\|u\|_{L^{2}(M)}.
  \end{equation}
  
  To replace $S_B$ by $S$, let us define $S_C=\left(\sum_{k\ge0} |C_ku|^2\right)^{\frac12}$, then 
  \begin{equation}\label{lib}
      \begin{aligned}
          \|S_B u\|_{L^{q}(M)}&\le \|Su\|_{L^{q}(M)}+\|S_C u\|_{L^{q}(M)}\\
          &\le \|Su\|_{L^{q}(M)}+\sum_{k\ge 0}\|C_k u\|_{L^{q}(M)} \\
          &\le \|Su\|_{L^{q}(M)}+\| u\|_{L^{2}(M)}. 
      \end{aligned}
  \end{equation}
  In the last inequality we used \eqref{ck}. This finished the proof of Lemma~\ref{littlewood}
\end{proof}

\bibliography{refs2.bib}
\bibliographystyle{abbrv}

%

\end{document}